\def\@tocline#1#2#3#4#5#6#7{\relax
  \ifnum #1>\c@tocdepth 
  \else
    \par \addpenalty\@secpenalty\addvspace{#2}%
    \begingroup \hyphenpenalty\@M
    \@ifempty{#4}{%
      \@tempdima\csname r@tocindent\number#1\endcsname\relax
    }{%
      \@tempdima#4\relax
    }%
    \parindent\z@ \leftskip#3\relax \advance\leftskip\@tempdima\relax
    \rightskip\@pnumwidth plus4em \parfillskip-\@pnumwidth
    #5\leavevmode\hskip-\@tempdima
      \ifcase #1
       \or\or \hskip 1em \or \hskip 2em \else \hskip 3em \fi%
      #6\nobreak\relax
      \dotfill
      \hbox to\@pnumwidth{\@tocpagenum{#7}}
    \par
    \nobreak
    \endgroup
  \fi}
\newtheorem{theorem}{Theorem}[section]
\newtheorem{lemma}[theorem]{Lemma}
\newtheorem{proposition}[theorem]{Proposition}
\theoremstyle{definition}
\newtheorem{definition}[theorem]{Definition}
\newtheorem{asumption}[theorem]{Asumption}
\theoremstyle{remark}
\newtheorem{remark}[theorem]{Remark}
\newcommand{\one}{{\ensuremath {\mathds 1} }}
\newcommand{\dps}{\displaystyle}
\newcommand{\ii}{\infty}
\newcommand\R{{\ensuremath {\mathbb R} }}
\newcommand\C{{\ensuremath {\mathbb C} }}
\newcommand\N{{\ensuremath {\mathbb N} }}
\newcommand\Z{{\ensuremath {\mathbb Z} }}
\newcommand\1{{\ensuremath {\mathds 1} }}
\newcommand\dGamma{{\rm d}\Gamma}
\newcommand{\<}{\langle}
\renewcommand{\>}{\rangle}
\newcommand\nn{\nonumber}
\renewcommand\phi{\varphi}
\newcommand{\bT}{\mathbb{T}}
\newcommand{\bH}{\mathbb{H}}
\newcommand{\bX}{\mathbb{X}}
\newcommand{\bA}{\mathbb{A}}
\newcommand{\bB}{\mathbb{B}}
\newcommand{\bW}{\mathbb{W}}
\newcommand{\bWren}{\mathbb{W}^{\rm ren}}
\newcommand{\gH}{\mathfrak{H}}
\newcommand{\gF}{\mathfrak{F}}
\newcommand{\gS}{\mathfrak{S}}
\newcommand{\wto}{\rightharpoonup}
\newcommand{\cS}{\mathcal{S}}
\newcommand{\cP}{\mathcal{P}}
\newcommand{\cM}{\mathcal{M}}
\newcommand{\cE}{\mathcal{E}}
\newcommand{\cF}{\mathcal{F}}
\newcommand{\cN}{\mathcal{N}}
\newcommand{\cD}{\mathcal{D}}
\newcommand{\cH}{\mathcal{H}}
\newcommand{\cZ}{\mathcal{Z}}
\newcommand{\cU}{\mathcal{U}}
\newcommand{\F}{\mathcal{F}}
\renewcommand{\epsilon}{\varepsilon}
\newcommand\pscal[1]{{\ensuremath{\left\langle #1 \right\rangle}}}
\newcommand{\norm}[1]{ \left| \! \left| #1 \right| \! \right| }
\DeclareMathOperator{\tr}{{\rm Tr}}
\DeclareMathOperator{\Tr}{{\rm Tr}}
\renewcommand{\ge}{\geqslant}
\renewcommand{\le}{\leqslant}
\renewcommand{\geq}{\geqslant}
\renewcommand{\leq}{\leqslant}
\renewcommand{\hat}{\widehat}
\renewcommand{\tilde}{\widetilde}
\newcommand{\cHcl}{\cH_{\rm cl}}
\newcommand{\dG}{\mathrm{d}\Gamma}
\newcommand{\rhoO}{\varrho_0}
\newcommand{\lam}{\lambda}
\newcommand{\eps}{\varepsilon}
\newcommand{\cZl}{\cZ(\lambda)}
\newcommand{\av}[1]{\left\langle#1\right\rangle_{\mu_0}}
\newcommand{\Gammat}{\widetilde{\Gamma}}
\newcommand{\FNL}{\mathcal{D}}
\numberwithin{equation}{section}
\begin{document}

\title{Classical field theory limit of many-body quantum Gibbs states in 2D and 3D}

\author[M. Lewin]{Mathieu Lewin}
\address{CNRS \& Universit\'e Paris-Dauphine, PSL University, CEREMADE, Place de Lattre de Tassigny, F-75016 PARIS, France} 
\email{mathieu.lewin@math.cnrs.fr}

\author[P.T. Nam]{Phan Th\`anh Nam}
\address{Department of Mathematics, LMU Munich, Theresienstrasse 39, 80333 Munich, and Munich Center for Quantum Science and Technology (MCQST), Schellingstr. 4, 80799 Munich, Germany} 
\email{nam@math.lmu.de}

\author[N. Rougerie]{Nicolas Rougerie}
\address{Universit\'e Grenoble-Alpes \& CNRS,  LPMMC (UMR 5493), B.P. 166, F-38042 Grenoble, France}
\email{nicolas.rougerie@lpmmc.cnrs.fr}

\date{October, 2020}

\begin{abstract} 
We provide a rigorous derivation of nonlinear Gibbs measures in two and three space dimensions, starting from many-body quantum systems in thermal equilibrium. More precisely, we prove that the grand-canonical Gibbs state of a large bosonic quantum system converges to the Gibbs measure of a nonlinear Schr\"odinger-type classical field theory, in terms of partition functions and reduced density matrices. The Gibbs measure thus describes the behavior of the infinite Bose gas at criticality, that is, close to the phase transition to a Bose-Einstein condensate. The Gibbs measure is concentrated on singular distributions and has to be appropriately renormalized, while the quantum system is well defined without any renormalization. By tuning a single real parameter (the chemical potential), we obtain a counter-term for the diverging repulsive interactions which provides the desired Wick renormalization of the limit classical theory. The proof relies on a new estimate on the entropy relative to quasi-free states and a novel method to control quantum variances.
\end{abstract}

\maketitle

\tableofcontents

\section{Introduction}

A \emph{nonlinear Gibbs measure} $\mu$ is a probability measure in infinite dimension of the form
\begin{equation}
{\rm d}\mu(u)=\frac{e^{-\cD[u]}}{z}{\rm d}\mu_0(u)
\label{eq:intro_form_mu}
\end{equation}
where $z$ is a normalization factor, $\mu_0$ is a Gaussian probability measure and $\cD$ is a non quadratic positive function. In this article we consider the case where $\mu_0$ has the covariance operator $(-\Delta+V_0)^{-1}$ over a (bounded or unbounded) open set $\Omega\subset\R^d$, for some function $V_0:\Omega\to\R$ (when $\Omega$ is unbounded we assume that $V_0\to+\ii$ at infinity to ensure that the spectrum of $-\Delta+V_0$ is discrete). One should therefore think that 
\begin{equation}\label{eq:intro form mu0}
{\rm d}\mu_0(u)= \;\text{``}\; (Z_0)^{-1}\exp\left(-\int_\Omega(|\nabla u|^2+V_0|u|^2)\right){\rm d}u\;\text{''} 
\end{equation}
so that 
$${\rm d}\mu(u)= \;\text{``}\; (Z_0z)^{-1}\exp\left(-\left(\int_\Omega(|\nabla u|^2+V_0|u|^2)+\cD[u]\right)\right){\rm d}u\;\text{''},$$
but this is of course purely formal.  
The nonlinear part $\cD$ is taken of the form
\begin{equation}
 \cD[u]={\rm Ren.}\left\{\frac12\iint_{\Omega\times\Omega}w(x-y)|u(x)|^2|u(y)|^2\,{\rm d}x\,{\rm d}y\right\}
 \label{eq:form_F_NL}
\end{equation}
for a sufficiently regular function $w$ of positive Fourier transform. Nonlinearities of this type are ubiquitous in the literature and have an important physical meaning, as we will recall.

In dimension $d=1$ and under appropriate assumptions on the function $V_0$, the Wiener-type Gaussian probability measure $\mu_0$ concentrates on continuous functions. The double integral appearing in the definition~\eqref{eq:form_F_NL} of $\cD$ therefore makes sense $\mu_0$-almost surely without any special care, even for $w$ a Dirac delta. However, this is not the case in dimensions $d\geq2$ where the Gaussian measure $\mu_0$ concentrates on distributions rather than functions. Then the terms $|u(x)|^2$ and $|u(y)|^2$ are ill-defined and the integral requires to be renormalized, which we have indicated by the notation `Ren.' in~\eqref{eq:form_F_NL}. The higher the dimension, the more difficult the renormalization procedure. For a smooth function $w$ the simplest scheme called Wick renormalization~\cite{GliJaf-87} works in dimensions $d=2,3$ and this is the situation which we consider here. 

In this work we provide the rigorous derivation of the Wick-renormalized nonlinear measures~\eqref{eq:intro_form_mu} in dimensions $d=2,3$, starting from a quantum mechanical microscopic theory without divergences, in a mean-field-type limit. Before describing this limit in detail, we review some important results about the nonlinear measure $\mu$. 

\medskip

\noindent\textbf{Nonlinear Gibbs measures} of the form~\eqref{eq:intro_form_mu} play a central role in many areas of mathematics. These measures were first defined in the 60s and 70s in the context of \emph{Constructive Quantum Field Theory}~\cite{DerGer-13,GliJaf-87,Simon-74,Summers-12}, where they were used to construct interacting quantum fields in the Euclidean framework (imaginary time), through Feynman-Kac-type formulas (then $d=d'+1$ where $d'$ is the space dimension of the quantum field). Important results in this direction include those of Symanzik~\cite{Symanzik-66}, Nelson~\cite{Nelson-73}, Glimm-Jaffe-Spencer~\cite{GliJafSpe-74} and Guerra-Rosen-Simon~\cite{GueRosSim-75}.

The same measures have later re-appeared in the study of some \emph{deterministic nonlinear partial differential equations with random initial data}, for which they are (formal) invariants of motion. This covers for instance the (renormalized) nonlinear Schr\"odinger equation
\begin{equation}\label{eq:NLS}
 i\partial_t u = -\Delta u + V_0 u + {\rm Ren.}\left\{\left(w * |u| ^2\right) u\right\}.
\end{equation}
After pioneering works by Lebowitz-Rose-Speer~\cite{LebRosSpe-88} and Bourgain~\cite{Bourgain-94,Bourgain-96,Bourgain-97} in the 1980s and 90s, this idea has been made rigorous in many recent articles including~\cite{BouBul-14a,BouBul-14b,BurThoTzv-13,ThoTzv-10,CacSuz-14,OhTho-18,Tzvetkov-08}. Most of these works consider the more complicated case where $w$ is replaced by a Dirac delta, but~\cite{Bourgain-97} deals with a smooth function $w$. Note that in this context the measure $\mu$ is often used to give a proper meaning to the renormalized equation~\eqref{eq:NLS}, which is typically well-posed for $\mu$-almost all initial data.

The Gibbs measure $\mu$ is also a central object for \emph{stochastic nonlinear partial differential equations}, where it now appears as the long-time asymptote of the random flow. This is for instance the case of the nonlinear heat equation driven by space-time white noise $\xi$
\begin{equation}
\partial_tu=-\left(-\Delta u + V_0 u + {\rm Ren.}\left\{\left(w * |u| ^2\right) u\right\}\right)+\xi, 
\label{eq:SPDE}
\end{equation}
which has been studied in many recent works including~\cite{ParWu-81,AlbRoc-91,PraDeb-03,Hairer-14,Kupiainen-16,MouWeb-15,RocZhuZhu-16,TsaWeb-18,MouWeb-17,BouDebFuk-18,CatCho-18}. The lack of regularity of typical fields drawn from the measure is related to that of the noise, which is inherited by solutions to the equation and makes the renormalization of nonlinear terms necessary. 

Finally, in \emph{statistical mechanics} nonlinear Gibbs measures of the form~\eqref{eq:intro_form_mu} are believed to describe the universal behavior of large systems close to certain \emph{phase transitions}~\cite{ZinnJustin-89,Cardy-96,ZinnJustin-13,BauBrySla-19} (at least for $w=\delta_0$). In this context the leading behavior close to the transition is often captured by mean-field theory whereas fluctuations around it are properly captured by the classical Gibbs measure $\mu$. In the Physics literature this has been predicted to happen for Bose-Einstein condensation~\cite{ArnMoo-01,BayBlaiHolLalVau-99,BayBlaiHolLalVau-01,HolBay-03,KasProSvi-01} or for Berezinskii-Kosterlitz-Thouless transitions~\cite{BisDavSimBla-09,GioCarCas-07,HolCheKra-08,HolKra-08,ProSvi-02,ProRueSvi-01,SimDavBlak-08}. For the classical Ising model rigorous mathematical results in this spirit for equilibrium states can be found in~\cite{GueRosSim-75,SimGri-73,BauBrySla-19}, whereas works about the derivation of the dynamical equation~\eqref{eq:SPDE} include~\cite{BerPreRudSaa-93,FriRud-95,GiaLebPre-99,MouWeb-17b}.

\medskip

\noindent\textbf{Main result.} Our contribution is in the spirit of the latter situation of phase transitions. We start with a model describing quantum particles in the domain $\Omega\subset\R^d$, submitted to an external potential $V$ and interacting through a pair potential $w$. We consider the Bose equilibrium state of this system at an appropriate temperature. This system has no divergence and no renormalization is needed. We then study a specific scaling regime where the (average) number of particles diverges to infinity, in which we can prove the convergence to the classical nonlinear Gibbs measure $\mu$ including the renormalization, for an appropriate potential $V_0$ which is in general different from $V$. Our result will be stated in macroscopic variables (taking the size of the full system as reference length scale) but, when interpreted at the microscopic scale (taking the typical inter-particle distance as reference length scale), the limit corresponds to zooming just below the critical density for Bose-Einstein condensation, as we will explain. The physical interpretation of our result is therefore that the measure $\mu$ describes how a Bose-Einstein condensate forms at criticality. We will in addition exhibit a kind of universality of the renormalization scheme, which turns out to be largely independent of the model. 

We now describe our main results. An ensemble of $n$ bosonic quantum particles is described by the following Schr\"odinger operator
$$H_{\lambda,\nu,n}=\sum_{j=1}^n \big(-\Delta_{x_j}+V(x_j)-\nu\big)+\lambda\sum_{1\leq j<k\leq n}w(x_j-x_k),$$
which acts on the subspace of symmetric functions in $L^2(\Omega^n,\C)$, denoted henceforth by $L^2_s(\Omega^n,\C)$. We consider Dirichlet boundary conditions for the Laplacian in case $\Omega\neq\R^d$, or periodic boundary conditions if $\Omega$ is a cube. We have introduced here two parameters: $\lambda$ is a \emph{coupling constant} allowing to vary the intensity of the interaction, which will be taken to zero in our limit, whereas $\nu$ is called the \emph{chemical potential}. It plays a decisive role in our study since it will be used to renormalize the theory. After averaging over all the particle numbers $n$ at fixed $\nu$, we obtain the following function of our parameters $\lambda,\nu$
\begin{equation}
Z(\lambda,\nu)=1+\sum_{n\geq1}\tr\left(e^{-\lambda\, H_{\lambda,\nu,n}}\right)=1+\sum_{n\geq1}e^{\lambda \nu n}\tr\left(e^{-\lambda\, H_{\lambda,0,n}}\right),
\label{eq:intro_Z}
\end{equation}
which is called the ``grand-canonical partition function''~\cite{GusSig-06,Ruelle}. The trace is taken over the symmetric space $L^2_s(\Omega^n,\C)$. This kind of Laplace transform of $n\mapsto \tr\left(e^{-\lambda\, H_{\lambda,0,n}}\right)$ is the main object of interest in this paper. 
Note that in the exponential we have multiplied our Hamiltonian by $\lambda$, which sets the system at an effective large temperature $1/\lambda\to\ii$. In this regime the average number of particles in $\Omega$ diverges:
$$\lim_{\lambda\to0^+}Z(\lambda,\nu)^{-1}\sum_{n\geq1}n\;\tr\left(e^{-\lambda\, H_{\lambda,\nu,n}}\right)=+\ii.$$
Our main result is that for a well chosen divergent function $\nu=\nu(\lambda)$ discussed below, we obtain the expansion
\begin{equation}
\boxed{\log Z\big(\lambda,\nu(\lambda)\big)=\log Z_{\rm MF}\big(\lambda,\nu(\lambda)\big)+\log z+o(1)_{\lambda\to0^+}.}
\label{eq:summary_main_result}
\end{equation}

The first term of the right side diverges very fast when $\lambda\to0^+$ and it is given by ``mean-field theory''. In our setting, mean-field theory corresponds to restricting the problem to the subclass of ``Gaussian quantum states'' also known as ``quasi-free states''~\cite{BacLieSol-94,Solovej-notes}. In the limit $\lambda\to0^+$ such Gaussian quantum states provide classical Gaussian measures and in our case this will give the reference measure $\mu_0$ in~\eqref{eq:intro_form_mu}. The most natural reference Gaussian quantum state is obtained by minimizing the free energy and then provides the partition function $Z_{\rm MF}\big(\lambda,\nu(\lambda)\big)$. Without entering too much into the details, this reference Gaussian quantum state is associated with the one-particle  Hamiltonian $-\Delta+V_\lambda$ where the potential 
\begin{equation}
V_\lambda=V-\nu(\lambda)+\lambda\rho_\lambda\ast w
\label{eq:V_lambda_intro}
\end{equation}
solves the self-consistent nonlinear equation
$$\frac{1}{e^{\lambda(-\Delta+V_\lambda)}-1}(x,x)=\rho_\lambda(x).$$
The limiting classical Gaussian measure $\mu_0$ then has covariance $(-\Delta+V_0)^{-1}$ where 
$$V_0=\lim_{\lambda\to0^+}V_\lambda.$$ 
In general the limiting potential $V_0$ may be different from $V$. We give more details about mean-field theory later in Section~\ref{sec:inhom dir}. 

The non-Gaussian classical measure $\mu$ includes the nonlinear term $\cD[u]$ and it cannot be obtained from mean-field theory. In our setting, $\mu$ arises by expanding the partition function to the next order as in~\eqref{eq:summary_main_result}. The correction involves the constant
$$z=\int e^{-\cD[u]}{\rm d}\mu_0(u)$$
which normalizes the measure $\mu$ as in~\eqref{eq:intro_form_mu} and where the renormalized interaction $\cD$ is given by~\eqref{eq:form_F_NL}. The expansion~\eqref{eq:summary_main_result} therefore provides the validity of mean-field theory to leading order, as well as fluctuations around it which involve the classical nonlinear Gibbs measure $\mu$. This is in the spirit of the Physics works mentioned above concerning the phase transition for Bose-Einstein condensation~\cite{ArnMoo-01,BayBlaiHolLalVau-99,BayBlaiHolLalVau-01,HolBay-03,KasProSvi-01}. 

The crucial role of the measure $\mu$ is better seen when looking at the quantum density matrices, which are similar to correlation functions in classical statistical mechanics. We prove below that the $k$-particle density matrix $\Gamma^{(k)}_\lambda$ of the interacting quantum system converges to its classical analogue
\begin{equation}
\boxed{\lim_{\lambda\to0^+}k! \lambda^k\, \Gamma^{(k)}_\lambda=\int |u^{\otimes k}\rangle\langle u^{\otimes k}|\,d\mu(u)} 
 \label{eq:CV_PDM_intro}
\end{equation}
for every $k\geq1$. This now characterizes completely the measure $\mu$ when $k$ is varied. This is the proper justification that the system is fully described by the nonlinear Gibbs measure $\mu$ in the limit. We defer the precise definition of the density matrix $\Gamma^{(k)}_\lambda$ to Section~\ref{def:quant stat}.

The behavior of the diverging function $\nu(\lambda)$ needed for the limit~\eqref{eq:summary_main_result} to hold depends on the dimension and on $\widehat{w}(0)$, but it is otherwise essentially universal. We take (in the second line $\zeta$ is the Riemann zeta function)
\begin{equation}
\nu(\lambda)=
\begin{cases}
\dps\frac{\hat{w} (0)}{4\pi} \log (\lambda^{-1})-\nu_0+o(1)_{\lambda\to0^+}&\text{in dimension $d=2$,}\\[0.2cm]
\dps \frac{\hat{w} (0)\,\zeta\!\left(\frac32\right)}{8\pi^{\frac32}}\frac1{\sqrt\lambda}-\nu_0+o(1)_{\lambda\to0^+}&\text{in dimension $d=3$,}
\end{cases}
\label{eq:nu_lambda_intro} 
\end{equation}
for some fixed $\nu_0$, and obtain the limit~\eqref{eq:summary_main_result} with a potential $V_0$ solving a nonlinear equation depending only on $\nu_0$. For instance, when the problem is settled on the $d$-dimensional torus with $V\equiv0$, then the limiting Gaussian measure $\mu_0$ has the covariance $(-\Delta+V_0)^{-1}$, with the constant potential $V_0$ solving the equation
\begin{equation}
\begin{cases}
\dps\frac{\hat{w} (0)}{4\pi}\log(V_0)+V_0-\hat{w} (0)\phi_2(V_0)=\nu_0&\text{in dimension $d=2$,}\\[0.2cm]
\dps\frac{\hat{w} (0)}{4\pi}\sqrt{V_0}+V_0-\hat{w} (0)\phi_3(V_0)=\nu_0&\text{in dimension $d=3$,}
\end{cases}
\label{eq:SCF_V_0_intro}
\end{equation}
for a positive decreasing function $\phi_d$ defined in Lemma~\ref{lem:equivalent} below. The first divergent term in~\eqref{eq:nu_lambda_intro} is completely independent of the model, if we except the multiplicative factor $\widehat{w}(0)$. As we will explain in Appendix~\ref{app:interpretation}, the form of this divergent term is related to the behavior of the infinite free Bose gas at criticality, that is, close to the phase transition to a Bose-Einstein condensate~\cite{Thirring}.

We have proved a result similar to~\eqref{eq:summary_main_result} in dimension $d=1$ in our previous works~\cite{LewNamRou-15,LewNamRou-18a} but no renormalization is necessary in this case and the result is much simper. A different proof based on Borel summation was later provided by Fr\"ohlich-Knowles-Schlein-Sohinger in~\cite{FroKnoSchSoh-17}. Their proof carries over to dimensions $d=2,3$ as well and provided the first derivation of the renormalized Gibbs measure $\mu$, but for technical reasons the problem had to be regularized by changing $Z(\lambda,\nu)$ in~\eqref{eq:intro_Z} into
$$\tilde Z(\lambda,\nu)=1+\sum_{N\geq1}\tr\left(\exp\left(-(1-\eta)\lambda\, H_{\frac{\lambda}{1-\eta},\nu,N}\right)\exp\left(-\eta H_{0,\nu,N}\right)\right)$$
for some $\eta\in(0,1)$~\cite{FroKnoSchSoh-17}. This amounts to pulling out of the exponential a little part of the Laplacian, which then acts as a kind of regulator. Our results deal with the physical partition function $Z(\lambda,\nu)$ in dimensions $d=2,3$ and this involves a very significant jump in difficulty, both from a conceptual and technical point of view. The earlier versions of this paper only included the 2D case~\cite{LewNamRou-18_2D} and were announced in \cite{LewNamRou-18b,LewNamRou-19}. Simultaneously to the completion of the present paper, a completely different proof for both the 2D and 3D cases has been announced by Fr\"ohlich-Knowles-Schlein-Sohinger~\cite{FroKnoSchSoh-20}. It relies on techniques from Constructive Quantum Field Theory, whereas our approach is variational. 

\medskip

\noindent\textbf{A mean-field / semi-classical limit.} It is well known that the regime $\lambda\to0^+$ (with the average number of particles tending to infinity) corresponds to a \emph{mean-field} or \emph{semi-classical limit}, where the quantum model converges towards the nonlinear Hartree model, based on the energy functional
\begin{equation}
 \cE_{\rm H}(u)=\int_\Omega \big(|\nabla u(x)|^2+V(x)|u(x)|^2\big)\,{\rm d}x+\frac12 \iint_{\Omega\times\Omega}w(x-y)|u(x)|^2|u(y)|^2\,{\rm d}x\,{\rm d}y.
 \label{eq:GP}
\end{equation}
The study of this limit is a research topic almost as old as quantum mechanics itself. It has been spectacularly rejuvenated by the birth of cold atoms physics in the 1990's, most notably by the landmark experimental observation of Bose-Einstein condensates in alkali gases~\cite{CorWie-02,Ketterle-02}. Following pioneer contributions~\cite{Hepp-74,GinVel-79,GinVel-79b,Spohn-80,FanSpoVer-80,BenLie-83,LieYau-87,PetRagVer-89,RagWer-89,Werner-92}, the last two decades have seen a great deal of progress on the derivation of such nonlinear effective models. This includes the case of minimizers (see~\cite{Lewin-15,LieSeiSolYng-05,Rougerie-LMU,Rougerie-cdf,Rougerie-20} for reviews) as well as the time-evolution of such ground states after an initial perturbation (see~\cite{BenPorSch-16,Golse-13,Schlein-08} for reviews). 

Refinements of the Hartree description have also been derived. The corresponding ``Bogoliubov approximation'' can be seen as a quantum field theory based on the Hessian of the Hartree functional~\eqref{eq:GP}. Recent results bear both on the low-lying eigenfunctions of the many-body Hamiltonian~\cite{Seiringer-11,GreSei-13,LewNamSerSol-15,NamSei-15,DerNap-14,BocBreCenSch-19_ppt,BocBreCenSch-19} and on the time-evolution thereof after an initial perturbation~\cite{GriMacMar-10,GriMacMar-11,LewNamSch-15,NamNap-17a,NamNap-17b,MitPetPic-19,BocCenSch-17,BreNamNapSch-19}.

In~\cite{LewNamRou-14} we have studied the limit of $\tr(e^{-\lambda_0H_{\lambda,\nu,n}})$ for $n\sim1/\lambda$, with a fixed $\lambda_0>0$ instead of the small parameter $\lambda$. At the leading order we obtained full Bose-Einstein condensation in the minimizer of the Hartree functional~\eqref{eq:GP}. In other words, $\lambda_0$ has no effect to this order and it is only visible in the Bogoliubov next order correction~\cite{LewNamSerSol-15}, which has the effective temperature $T_0=1/\lambda_0$. In the present work $\lambda_0$ is replaced by $\lambda$ and we obtain the renormalized Gibbs measure $\mu$, which physically models a statistical mixture of Bose-Einstein condensates and the eventual appearance of a single condensate at criticality. 

Other rigorous mathematical works on the Bose gas taking temperature into account include~\cite{BetUel-10,Seiringer-06,Seiringer-08,SeiUel-09,Yin-10,DeuSeiYng-19,DeuSei-19_ppt}. In particular, the rigorous derivation of the Bose-Einstein phase transition in interacting Bose gases still seems way out of reach, except for special lattice models~\cite[Chapter~11]{LieSeiSolYng-05} and for the trapped case in the Gross-Pitaevskii limit which was recently solved by  Deuchert-Seiringer-Yngvason~\cite{DeuSeiYng-19,DeuSei-19_ppt}. To our knowledge, the only mathematical works devoted to the study of the behavior close to the Bose-Einstein phase transition are~\cite{LewNamRou-15,LewNamRou-18a,FroKnoSchSoh-17,Sohinger-19_ppt,FroKnoSchSoh-20} for equilibrium states and~\cite{FroKnoSchSoh-19} in the one-dimensional dynamical case. 

Since we work in the same limiting regime $\lambda\to0$ as many other previous works, the emergence of the nonlinear Gibbs measure $\mu$ formally based on the Hartree energy~\eqref{eq:GP} is of course not a surprise. Similar results have been known for some time in finite dimensions~\cite{Gottlieb-05,Knowles-thesis}, where the convergence can be reformulated in terms of a usual semi-classical limit~\cite{LewNamRou-15} with no renormalization. The main difficulty is to handle the infinite dimensional case and the emergence of singular objects requiring renormalization. Another difficulty is to achieve this using only the real parameter $\nu(\lambda)$ introduced above. This is really in the spirit of renormalization in Quantum Field Theory, as initiated by Dyson in~\cite{Dyson-49b} and further developed within statistical physics using renormalization group techniques~\cite{Wilson-75,Delamotte-04,ZinnJustin-13,BauBrySla-19}.

\medskip

\noindent\textbf{Method of proof.}
Our mathematical approach in this paper is \emph{variational}, like in~\cite{LewNamRou-15,LewNamRou-18a}. We crucially use that the equilibrium Gibbs quantum state as well as the measure $\mu$ are the unique solutions to some minimization problems and our goal is to prove the convergence of the quantum problem to the classical one. The way to connect quantum objects (positive self-adjoint operators with unit trace on a Hilbert space) to classical ones (probability measures on a space of functions or distributions) is via so-called de Finetti measures (or Wigner measures, depending on the point of view)~\cite{Ammari-hdr,Rougerie-LMU,Rougerie-cdf}. This technique generalizes ideas from semi-classical analysis to infinite dimensions, cf~\cite{AmmNie-08,LewNamRou-14,LewNamRou-15}. 

Our main goal is therefore to show that the limiting de Finetti measure of the quantum problem minimizes the variational problem characterizing the nonlinear Gibbs measure $\mu$, hence must be equal to $\mu$. The difficulty here is that $\mu$ is a very singular object and that its variational characterization involves the renormalized interaction $\cD$. Passing to the limit requires a fine understanding of the way that  singularities appear in the quantum de Finetti measure when $\lambda\to0^+$. In our case, this reduces to finding good estimates on the high-momentum part of the one- and two-particle density matrices $\Gamma^{(1)}_\lambda$ and $\Gamma^{(2)}_\lambda$. We achieve this goal by using two new inequalities of independent interest. 

For $\Gamma^{(1)}_\lambda$ we prove an inequality which controls the difference of two one-particle density matrices in terms of the quantum relative entropy of the corresponding quantum states in Fock space (one of them being Gaussian). This takes the simple form
\begin{equation}
\tr\left|h^{1/2}\big(\Gamma^{(1)}-\Gamma^{(1)}_0\big)h^{1/2}\right|^2\leq 4\left(\sqrt2 \sqrt{\cH(\Gamma,\Gamma_0)}+\cH(\Gamma,\Gamma_0)\right)^2,
 \label{eq:estim_HS_relative_entropy_intro}
\end{equation}
see Theorem~\ref{thm:estim_relative_entropy} below. Here $\Gamma_0$ is any Gaussian (a.k.a.~quasi-free) quantum state over the Fock space~\cite{BacLieSol-94,Solovej-notes}, of one-body Hamiltonian $h$, which in practice is taken to be the mean-field solution. On the other hand, $\Gamma$ is an arbitrary state and $\cH(\Gamma,\Gamma_0)=\tr\Gamma(\log\Gamma-\log\Gamma_0)$ is the quantum relative entropy. Related bounds were independently derived by Deuchert-Seiringer-Yngvason in~\cite[Lemma~4.1]{DeuSeiYng-19} and~\cite[Lemma~4.1]{DeuSei-19_ppt}. The important difference here is that we are able to include the operator $h$ explicitly. This is all explained in Section~\ref{sec:relative entropy bound}.

The two-particle density matrix $\Gamma^{(2)}_\lambda$ is way more difficult to handle. To deal with it we prove another general inequality which could be useful in other contexts and occupies the whole Section~\ref{sec:variance-by-first-moment}. It is one of the main new ingredients of this paper. This bound is 
\begin{equation}
\frac{\tr\left(A^2e^{-H}\right)}{\tr(e^{-H})}\leq \frac{17\eta}{a} 
\label{eq:2body_by_1body}
\end{equation}
for all $0<a\leq 1$ and all bounded operators $A$, assuming
\begin{equation}
 \eta:= \sup_{\eps\in[-a,a]}\frac{\left|\tr\left(Ae^{-H+\eps A}\right)\right|}{\tr(e^{-H+\eps A})}+a\big\|[[H,A],A]\big\|\sqrt{1+\|A\|^2}\leq 1,
 \label{eq:formula_eta_intro}
\end{equation}
see Theorem~\ref{thm:general-variance-by-linear-response} below. We have simplified things a little bit here, for the sake of exposition. Our full estimate relies on a more complicated $\eta$ which provides a tighter bound. It can also be stated without assuming $a,\eta \leq 1$, at the price of a more complicated right-hand side in~\eqref{eq:2body_by_1body}. In our application we use this for $A$ a one-particle operator over the Fock space, so that the expectation against $A^2$ gives us access to the two-particle density matrix $\Gamma^{(2)}_\lambda$. What~\eqref{eq:2body_by_1body} then says is that one can control two-particle expectations of a Gibbs state by one-particle expectations, at the expense of perturbing the corresponding Hamiltonian $H$ by $\eps A$ for $\eps$ in a small window $[-a,a]$. The error then solely depends on the commutator $[[H,A],A]$. Note that our more precise inequality in Theorem~\ref{thm:general-variance-by-linear-response} involves the quadruple commutator $[[[[H,A],A],A],A]$ as well and it is the one useful in our context. 

We can call~\eqref{eq:2body_by_1body} a ``variance'' or ``correlation'' inequality. If we replace $A$ by $A-\tr(Ae^{-H})/\tr(e^{-H})$, then the left side of~\eqref{eq:2body_by_1body} is now exactly the variance of $A$, whereas the supremum on the right side of~\eqref{eq:formula_eta_intro} will typically be small (the function whose supremum is taken vanishes for $\eps=0$). Correlation inequalities have historically played an important role in statistical physics~\cite{Griffiths-67,Ginibre-70,ForKasGin-71,Ginibre-72,FroSimSpe-76,DysLieSim-78} and the proof of~\eqref{eq:2body_by_1body} uses and/or improves several important estimates from the literature. Note that the difficulties we face here are (almost) purely of a quantum nature. Estimates like \eqref{eq:2body_by_1body}  are significantly easier in a classical theory, or when $A$ and $H$ commute.

Inequalities of the type of~\eqref{eq:2body_by_1body} are reminiscent of the fluctuation-dissipation theorem and our proof is indeed inspired by linear response theory \`a la Kubo~\cite{Kubo-66,KubTodHas-91}. We use the fact that one can access the expectation of $A^2$ (variance/fluctuation) by differentiating expectations of $A$ in the perturbed Gibbs state with Hamiltonian $H-\eps A$ (linear response/dissipation). In classical statistical mechanics the relation between variance and linear response is an identity. In the quantum case, differentiating leads  to the {\em Duhamel two-point function}\footnote{Also known as canonical correlation or Bogoliubov scalar product.} instead of the variance. There exists known relations between these quantities, for instance the Falk-Bruch inequality~\cite{FalBru-69}. Thanks to such inequalities (that we revisit here), the variance is under good control if one can control the derivative of the first moment. Ideally, a strong bound of the form
\begin{equation} \label{eq:too-strong-bound}
\frac{\tr\left(Ae^{-H+\eps A}\right)}{\tr \left( e ^{-H + \eps A} \right)}-\frac{\tr\left(Ae^{-H}\right)}{\tr(e^{-H})} = \eps \times {\rm error}
\end{equation}
could be used,  as in the seminal work~\cite{DysLieSim-78} of Dyson-Lieb-Simon on the phase transition of the Heisenberg antiferromagnet. A bound of the type~\eqref{eq:too-strong-bound} (for a particular $A$) was there obtained using reflection positivity, see~\cite[Theorem~4.2]{DysLieSim-78}. For classical systems this goes back to an earlier breakthrough of Fr\"ohlich-Simon-Spencer \cite{FroSimSpe-76}. However~\eqref{eq:too-strong-bound}  is not available to us, and our new inequalities~\eqref{eq:estim_HS_relative_entropy_intro}~\eqref{eq:2body_by_1body} serve as a replacement. The former essentially yields~\eqref{eq:too-strong-bound} without an $\eps$ factor on the right-hand side, which is a sufficient bound for our purpose, once inserted in~\eqref{eq:2body_by_1body}. 

Inequality~\eqref{eq:2body_by_1body} is, perhaps, our main contribution, and we refer to Section~\ref{sec:variance-by-first-moment} for a proof. Briefly, bounds on the discrepancy between variance and linear response (Duhamel two-point function) give an estimate on the average of the ``perturbed variance'' $\tr\left(A^2e^{-H+\eps A}\right)$ over a small window in $\eps\in [-a,a]$. To get rid of the averaging, we prove that this function is approximately convex in $\eps$. In the classical case, when $A$ and $H$ commute, the convexity is obvious as the second derivative in $\eps$ is $\tr\left(A^4e^{-H+\eps A}\right) \ge 0$. In the quantum case, we will prove a lower bound for the second derivative in terms of several commutators. 
%

With~\eqref{eq:estim_HS_relative_entropy_intro} and~\eqref{eq:2body_by_1body} at hand, we are able to control in Section~\ref{sec:correl} the correlations in our quantum Gibbs state at high energies. High energies are exactly where renormalization takes place and the estimates will tell us that the true quantum state has there essentially the same behavior as the mean-field one, which has already been studied in detail in~\cite{FroKnoSchSoh-17}. On the other hand, at low energies we use a quantitative de Finetti theorem from~\cite{ChrKonMitRen-07,LewNamRou-15} which gives explicit bounds on the difference between the quantum problem and the classical one, allowing to pass to the limit in the variational problem. This is the general idea of our approach for proving~\eqref{eq:summary_main_result}.

In this paper, we first discuss the case where $\Omega=(0,1)^d$ with periodic boundary conditions (an interacting Bose gas on the unit torus), which is easier to state because the density is always constant. This case clarifies the link with phase transitions in the Bose gas in the thermodynamic limit, after re-interpreting our result in microscopic variables, see Appendix~\ref{app:interpretation}. Then we turn to the more complicated case of $\Omega=\R^d$ with a potential $V$ diverging fast enough at infinity, which requires the introduction of mean-field theory as described above. We only make comments about the case $\Omega\subsetneq\R^d$.

In the next section we properly define the quantum and classical models and we give some hints on the relation between the two. Then, in Section~\ref{sec:results} we state all our results. Section~\ref{sec:strategy} contains a detailed explanation of the strategy of proof, which is then carried over in the rest of the paper.

\subsubsection*{\bf Acknowledgements.} Insightful discussions with J\"urg Fr\"ohlich, Markus Holzmann, Antti Knowles, Benjamin Schlein, Robert Seiringer, Vedran Sohinger,  Jan Philip Solovej, Laurent Thomann, Daniel Ueltschi and Jakob Yngvason are gratefully acknowledged. This project has received funding from the European Research Council (ERC) under the European Union's Horizon 2020 Research and Innovation Programme (Grant agreements MDFT No 725528 and CORFRONMAT No 758620), and Deutsche Forschungsgemeinschaft (DFG, German Research Foundation) under Germany's Excellence Strategy (EXC-2111-390814868).

\section{Setting and definitions}\label{sec:defs}

We recap here all the standard and less-standard notions needed to state and discuss our main results. Perhaps the acquainted reader will want to jump directly to Section~\ref{sec:results}, and come back to this one if some notation is unclear later.

\subsection{Fock space formalism}\label{sec:defs Fock}

Our basic Hilbert space for one particle is
\begin{equation}\label{eq:Hilbert space}
\gH =  L^2 (\Omega)
\end{equation}
with $\Omega$ an open domain in $\R^d$. The reader might think of the two model cases, when $\Omega=(0,1)^d$ (to which we add periodic boundary conditions, which is then the same as taking $\Omega=\bT^d$, the torus) or the full space $\R^d$. 

For the many-body problem we work grand-canonically, that is, we do not fix the particle number. The many-body Hilbert space is thus the {\em bosonic Fock space} 
\begin{equation}\label{eq:Fock}
\gF = \C \oplus \gH \oplus \ldots \oplus \gH ^{\otimes_s n} \oplus \ldots 
\end{equation}
The symbol $\otimes_s n $ stands for the $n$-fold symmetric tensor product, as is appropriate for the $n$-body configuration space of bosons. Operators acting on finitely many particles are lifted to the Fock space in the usual way:

\begin{definition}[\textbf{Second quantization}]\label{def:quantiz}\mbox{}\\
Let $A_k$ be a self-adjoint operator on $\gH^{\otimes_s k}$. We define its action on the Fock space as  
\begin{equation}\label{eq:2nd_quantized}
\mathbb{A}_k:=0\oplus\cdots\oplus\bigoplus_{n= k } ^\infty \left(\sum_{1\leq i_1 < \ldots < i_k \leq n} (A_k)_{i_1,\ldots,i_k}\right) 
\end{equation}
where $(A_k)_{i_1,\ldots,i_k}$ denotes the operator $A_k$ acting on the variables labeled $i_1,\ldots,i_k$ in $\gH^{\otimes_s n}$.

\hfill$\diamond$
\end{definition}

When $k=1$, it is customary to use the notation
\begin{equation}\label{eq:dGamma}
\dGamma (A):=\mathbb{A}= 0\oplus\bigoplus_{n= 1 } ^\infty \left(\sum_{1\leq i \leq n} A_{i}\right) 
\end{equation}
for one-body operators, a tradition that we will also follow throughout. For example,  the {\em particle number operator} is 
$$ 
\cN = \dG (\1_\gH) =\bigoplus_{n=0}^\infty n.
$$
Next, quantum states are as usual:

\begin{definition}[\textbf{Quantum states and reduced density matrices}]\label{def:quant stat}\mbox{}\\
A pure state is an orthogonal projection $|\Psi\rangle \langle \Psi|$ on some normalized vector $\Psi$ of the Fock space $\gF$. A mixed state $\Gamma$ is a convex superposition of pure states, i.e. a positive trace-class operator on $\gF$ with unit trace. We denote 
\begin{equation}
 \cS \left(\gF\right) := \left\{ \Gamma \mbox{ self-adjoint operator on } \gF, \: \Gamma \geq 0, \: \tr_{\gF} [\Gamma] = 1 \right\}
\end{equation}
the set of all mixed states. 

The reduced $k$-body density matrix $\Gamma^{(k)}$ of a state $\Gamma$ is the operator on $\gH ^{\otimes_s k}$ defined by duality as 
\begin{equation}\label{eq:def k body}
\tr_{\gH^{\otimes_s k}} \left[ A_k \Gamma ^{(k)} \right]:= \tr_\gF \left[ \mathbb{A}_k \Gamma \right] 
\end{equation}
for any self-adjoint operator $A_k$ on $\gH^{\otimes_s k}$, with $\mathbb{A}_k$ the second quantization~\eqref{eq:2nd_quantized} of $A_k$.\hfill$\diamond$
\end{definition}

If $\Gamma$ is of the diagonal form 
$$ \Gamma = \Gamma_0 \oplus \Gamma_1 \oplus \ldots \oplus \Gamma_n \oplus \ldots $$
then the reduced density matrices are equivalently given via partial traces as 
$$ \Gamma ^{(k)} = \sum_{n\geq k } {n \choose k }  \tr_{k+1\to n} [ \Gamma_n ].$$
Also recall that the expected particle number of a state is given as
$$\tr_\gF [\cN \Gamma] = \tr_\gH [ \Gamma^{(1)}].$$

A \emph{quantum Gibbs state} is a state of the special form 
$$\Gamma=e^{-\bH}/\tr_\gF(e^{-\bH})$$
where $\bH$ is a self-adjoint operator on the Fock space $\gF$ such that $\tr_\gF(e^{-\bH})<\ii$. A \emph{Gaussian quantum state} or \emph{quasi-free state} corresponds to the case where $\bH=\dG(h)$ for a one-particle operator $h$ with $h>0$ and $\tr_\gH(e^{-h})<\ii$. More about quasi-free states can be read in~\cite{BacLieSol-94,Solovej-notes} and in Section~\ref{sec:inhom dir}. They are called Gaussian because $\bH=\dG(h)$ is a quadratic operator in the bosonic creation/annihilation operators, the definition of which we now recall. 

\begin{definition}[\textbf{Creation/annihilation operators}]\label{eq:def ani cre}\mbox{}\\
Let $f\in \gH$. The associated annihilation operator acts on the Fock space as specified by
$$
a(f) u_1 \otimes_s \ldots \otimes_s u_n = n ^{-\frac{1}{2}} \sum_{j=1} ^n \langle f , u_j\rangle u_1 \otimes_s \ldots u_{j-1} \otimes_s  u_{j+1} \otimes_s \ldots \otimes_s u_n
$$
and then extended by linearity. Its formal adjoint, the creation operator $a^{\dagger} (f)$ acts as 
$$
a^\dagger (f) u_1 \otimes_s \ldots \otimes_s u_n = (n+1) ^{\frac{1}{2}}  f \otimes_s u_1 \otimes_s \ldots  \otimes_s u_n.
$$
The canonical commutation relations (CCR) hold: for all $f,g\in\gH$
\begin{equation}\label{eq:CCR}
[a(f),a(g)] = [a^{\dagger} (f),a^{\dagger} (g)] = 0, \quad [a(f),a^{\dagger} (g)] = \langle f , g \rangle. 
\end{equation}
\hfill$\diamond$
\end{definition}

The reduced density matrices of a state $\Gamma$ can alternatively be defined by the relations 
\begin{equation}\label{eq:DM ann cre}
 \left\langle g_1\otimes_s \ldots \otimes_s g_k ,\Gamma ^{(k)} f_1\otimes_s \ldots \otimes_s f_k \right\rangle = \tr\left[ a^{\dagger} (f_1) \ldots a^{\dagger} (f_k) a (g_1) \ldots a (g_k) \Gamma \right].
\end{equation}
For a one-body self-adjoint operator $h>0$ with $\tr_\gH(e^{-h})<\ii$ (hence of compact resolvent) diagonalized in the form $h=\sum_j h_j|u_j\rangle\langle u_j|$, we can then express
$$\dG(h)=\sum_j h_j\,a^\dagger(u_j)\,a(u_j)$$
which is quadratic as was mentioned above.

\subsection{Quantum model}\label{sec:defs quant}

The many-body operators we shall study are of the form 
\begin{equation}\label{eq:many body hamil}
\boxed{\bH_\lambda = \dGamma (h) + \lambda\mathbb{W} - \nu(\lambda) \cN + E_0(\lambda).}
\end{equation}
Here $h$ is a self-adjoint operator on $\gH$ such that $\tr_\gH(e^{-\beta h})<\ii$ for all $\beta>0$. The reader might think of the case when 
$$h=\begin{cases}
  -\Delta &\text{on $\bT^d$,}\\
  -\Delta + V& \text{on $\R^d$.} 
  \end{cases}$$
The interaction term $\mathbb{W}$ is the second quantization of the multiplication operator by $w(x-y)$ on the two-body space $\gH ^{\otimes_s 2}$~:
\begin{equation}
\mathbb{W}:=0\oplus0\oplus\bigoplus_{n= 2 } ^\infty \left(\sum_{1\leq i < j \leq n} w(x_i-x_j)\right).
\label{eq:def_interaction}
\end{equation}
The coupling constant $\lambda>0$ models the interaction strength and the chemical potential $\nu(\lambda)$ will be tuned to serve as a counter-term. The constant $E_0(\lambda)$ is just an energy shift, which we will use in order for the renormalized interaction 
\begin{equation}\label{eq:intro quant interaction}
\boxed{\lambda \bWren = \lambda \mathbb{W} -\nu(\lambda) \cN + E_0(\lambda)}
\end{equation}
to stay a positive operator. The {\em quantum Gibbs state} associated with the above Hamiltonian is the unique minimizer of the free-energy functional (energy minus temperature times entropy) 
\begin{equation}\label{eq:free ener func}
\cF _{\lambda,T} [\Gamma] = \tr\left[ \bH_\lambda \Gamma \right] + T \tr [\Gamma \log \Gamma]  
\end{equation}
over all quantum states $\Gamma$ on the Fock space. Explicitly
\begin{equation}\label{eq:intro quantum Gibbs}
\boxed{\Gamma_{\lambda} = \frac{1}{\cZ(\lambda)} \exp\left( - \frac{\bH_{\lambda}}{T}  \right)  }
\end{equation}
where the {\em partition function} $\cZ(\lambda)$ normalizes the state,
$$ \cZ(\lambda)= \tr \left[ \exp\left( - \frac{\bH_{\lambda}}{T}  \right) \right],$$
and satisfies 
\begin{equation}\label{eq:intro part func}
\boxed{F_{\lambda} :=\min_{\Gamma\in  \cS \left(\gF\right)}\cF _{\lambda,T} [\Gamma]= -T \log \cZ(\lambda).}
\end{equation}
In the whole paper we work in the regime
$$\boxed{T=\frac1\lambda \to \infty}$$
with $\nu(\lambda)$ and $E_0(\lambda)$ appropriately tuned. Note that the energy shift $E_0$ does not modify the Gibbs state. We could take instead $T=T_0/\lambda$ with a fixed $T_0>0$, and this would eventually lead to a nonlinear classical Gibbs measure $\mu_{T_0}$ at temperature $T_0$. For simplicity we only discuss the case $T_0=1$ and retain  $\lambda\to0^+$ as our sole parameter. 

\subsection{Classical model}\label{sec:defs class}

Let us briefly recap the definitions related to the nonlinear Gibbs measure. More details are in Section~\ref{sec:renormalized-measure}.
Consider a one-particle self-adjoint operator $h_0>0$ of compact resolvent, with the following spectral decomposition:
\begin{equation}\label{eq:spectral h}
 h_0 = \sum_{j=1} ^\infty \lambda_j |u_j \rangle \langle u_j |.
\end{equation}
We introduce the scale of spaces 
$$ 
\gH ^s = \left\{ u = \sum_{j=1} ^\infty \alpha_j u_j, \quad \sum_{j=1} ^{\infty} |\alpha_j| ^2 \lambda_j ^{s/2} < \infty \right\}.
$$
The Gaussian probability measure $\mu_0$ of covariance $h^{-1}$ is by definition given by
\begin{align}\label{eq:def_mu0}
d\mu _0(u) := \mathop  \bigotimes \limits_{i = 1}^\infty  \left( \frac{\lambda_i}{\pi}e^{ - {\lambda} _i|\alpha_i|^2}\,d\alpha_i \right)
\end{align}
with $\alpha_i=\langle u_i, u\rangle$ and $d\alpha=d\Re(\alpha)\,d\Im(\alpha)$ the Lebesgue measure on $\C\simeq\R^2$. The formula~\eqref{eq:def_mu0} must be interpreted in the sense that the cylindrical projection of $\mu_0$ onto the finite-dimensional space 
${\rm Span}\{u_1,...,u_K\}$
is given by 
\begin{align}\label{eq:def_mu0K}
d\mu _{0,K}(\alpha_1,...\alpha_K) := \prod_{i = 1}^K \left( \frac{\lambda_i}{\pi}e^{ - {\lambda} _i|\alpha_i|^2}\,d\alpha_i \right)
\end{align}
for every $K\geq1$. Assuming that for some $p>0$ 
\begin{equation}\label{eq:Schatten h} 
\tr[h_0 ^{-p}] < \infty,
\end{equation}
the limit measure $\mu_0$ then concentrates on $\gH ^{1-p}$~\cite[Section~3.1]{LewNamRou-15}. In the cases of interest to this paper we have $h_0=-\Delta+V_0$ for some $V_0$ on $\Omega\subset\R^d$. Therefore $\gH^s$ is a kind of Sobolev space and, since we have $p>1$, $\mu_0$ is supported on distributions with negative Sobolev regularity, whence the need for renormalization in the definition of the interacting measure. 

Let $P_K$ be the orthogonal projector on ${\rm Span}\{u_1,...,u_K\}$. Consider the interaction energy with local mass renormalization
\begin{equation}\label{eq:def int}
 \FNL_K [u] =  \frac{1}{2} \iint_{\Omega \times \Omega} \left( |P_K u(x)| ^2 - \left\langle |P_K u(x)| ^2 \right\rangle_{\mu_0} \right) w (x-y) \left( |P_K u(y)| ^2 - \left\langle |P_K u(y)| ^2 \right\rangle_{\mu_0} \right) dx dy. 
\end{equation}
Here, for any $f\in L ^1 (d\mu_0)$,  
\begin{equation}\label{eq:intro class exp}
\left\langle f(u) \right\rangle_{\mu_0} := \int f (u) d\mu_0 (u)  
\end{equation}
denotes the expectation in the measure $\mu_0$. We shall assume that 
\begin{equation}\label{eq:w-hatw}
w(x)=\int_{\Omega^*} \widehat w(k) e^{ik \cdot x} dk
\end{equation}
where the Fourier transform $\hat{w}$ satisfies 
\begin{equation}\label{eq:asum w 1}
0 \leq \hat{w} (k) \in L^1 (\Omega^*).
\end{equation}
Here by convention $\Omega^*=\R^d$, except if  $\Omega=\bT^d$ then $\Omega^*=2\pi \mathbb{Z}^d$ (and the integral in \eqref{eq:w-hatw} becomes a sum). Then, as recalled in Lemma~\ref{lem:re-interaction} below, when $\widehat{w}\geq0$ and $\tr(h_0^{-2})<\ii$, the sequence $\FNL_K [u]$ converges to a limit $\FNL[u]$ in $L^1 (d\mu_0)$, hence we may define the renormalized interacting probability measure by
\begin{equation}\label{eq:NL measure}
\boxed{d\mu (u) := z^{-1} \exp\left(- \FNL [u] \right) d\mu_0 (u)}  
\end{equation}
with $0<z<\infty$ a normalization constant (to make $\mu$ a probability measure).

Note that the reduced one-body density matrix 
\begin{equation}\label{eq:DM classical}
\gamma ^{(1)}_\mu := \int |u  \rangle \langle u | d\mu (u) 
\end{equation}
is a priori an operator from $\gH ^{p-1}$ to $\gH ^{1-p}$ (since $|u  \rangle \langle u |$ is not any better, $\mu$-almost surely). However, averaging with respect to $\mu$ has a regularizing effect, so that $\gamma^{(1)}_\mu$ turns out to be a compact operator from $\gH$ to $\gH$. In fact, one can show that 
\begin{equation}\label{eq:DM classical Green}
\gamma_{\mu_0} ^{(1)} =h_0^{-1},
\end{equation}
which is called the covariance of $\mu_0$. Similarly, the reduced $k$-body density matrix 
\begin{equation}\label{eq:DM classical bis}
\gamma ^{(k)}_{\mu_0} := \int |u ^{\otimes k} \rangle \langle u ^{\otimes k} | d\mu (u) =k!\,P_s^k (h_0^{-1})^{\otimes k} P_s ^k
\end{equation}
belongs to the $p$-th Schatten class $\gS ^{p} (\gH ^{\otimes_s k})$, see~\cite[Lemma~3.3]{LewNamRou-15}, and the same is true of the reduced $k$-body matrix of the interacting measure. In the right-hand side of \eqref{eq:DM classical bis}, $P_s ^k$ denotes the orthogonal projector on the symmetric subspace.

\subsection{Formal quantum/classical correspondence}\label{sec:defs formal}

Our aim is to relate the quantum Gibbs state \eqref{eq:intro quantum Gibbs} to the classical Gibbs measure \eqref{eq:NL measure}. If we ignore the renormalizing terms for the moment (in particular, think of $\nu=E_0 = 0$), the formal correspondence between the two objects can be seen as follows. Recall that the Gibbs measure can be interpreted as a rigorous version of the formal
\begin{equation}\label{eq:discu class Gibbs}
d\mu(u)=\text{``}\;Z^{-1}e^{-\cE_{\rm H}[u]}\,d u \;\text{''}
\end{equation}
with $\cE_{\rm H}[u]$ the nonlinear Hartree energy functional 
$$
\cE_{\rm H}[u]= \int_\Omega \overline{u(x)} (hu)(x) \,d x  +\frac{1}{2}\iint_{\Omega \times \Omega} |u(x)|^2\, w(x-y)|u(y)|^2 \,dx\,dy.
$$

Define the quantum fields (operator-valued distributions) $a^{\dagger }(x),a(x)$, creating/annihilating a particle at position $x$ by the formulae
\begin{equation}\label{eq:cre anih local}
a (f) = \int a (x) f(x) dx, \quad a ^{\dagger}(f) = \int a ^{\dagger} (x) f(x) dx
\end{equation}
for all $f\in \gH$. Inherited from \eqref{eq:CCR} we have the canonical commutation relations 
\begin{equation}\label{eq:local CCR}
 \left[a (x), a  (y)\right] = [a ^{\dagger} (x), a ^{\dagger} (y)] = 0, \quad [a (x), a ^{\dagger} (y)] = \delta_{x=y}. 
\end{equation}
These operator-valued distributions allow us to rewrite the many-body Hamiltonian with $\nu=E_0=0$ as  
\begin{equation}\label{eq:hamil field ope}
\lambda \bH_\lambda = \lambda\int_{\Omega} a^{\dagger }(x) h_x a (x) \,d x+ \frac{\lambda^2}{2} \iint_{\Omega \times \Omega} a^{\dagger} (x) a ^{\dagger} (y) w (x-y) a (x) a (y) \,dx\,dy.  
\end{equation}
The formal manipulation relating~\eqref{eq:intro quantum Gibbs} and~\eqref{eq:discu class Gibbs} is then to replace the quantum fields $a^{\dagger }(x),a (x) $ by classical fields, i.e. operators by functions. This involves in particular that the commutation relations~\eqref{eq:local CCR} become trivial in some limit, all fields commuting at any position. How this can come about is further explained in~\cite[Section~5.2]{LewNamRou-15} and the introduction to~\cite{FroKnoSchSoh-17} (in these works the link between the classical and quantum problems has been made rigorous in 1D). Basically, the order of magnitude of commutators stays fixed by definition, but the typical value of the 
fields $a(x)$ and $a^\dagger(x)$ is of order $\lambda^{-1/2}$ when computing expectations against the quantum Gibbs state. This suggests to introduce new fields $b(x)=\sqrt\lambda\, a(x)$ and $b^\dagger(x)=\sqrt\lambda\,  a^\dagger(x)$.
This is now a clean semi-classical limit, since the commutators of the new fields is of order $\lambda\to0$. 

Let us discuss now the inclusion of counter-terms. It is useful to write the mean-field interaction, using Fourier variables, 
\begin{equation}\label{eq:intro Fourier int}
\iint_{\Omega \times \Omega} |u(x)|^2\, w(x-y)|u(y)|^2 \,dx\,dy = \int \hat{w} (k) \left| \widehat{|u| ^2} (k) \right|^2 dk =  \int \hat{w} (k) \left|\int |u(x)| ^2 e^{ik\cdot x} \right| ^2 dk.
\end{equation}
On the other hand, the quantum interaction can be expressed as 
\begin{equation}\label{eq:intro Fourier quant int}
 \mathbb{W} = \frac{1}{2}\int \hat{w} (k) \left|\dGamma (e^{ik\cdot x})  \right| ^2 dk - \frac{w (0)}{2} \cN
\end{equation}
where the second term is typically of lower order and may be ignored. Thus, one formally obtains the quantum interaction by replacing 
$$ \int |u(x)| ^2 f (x) dx \rightsquigarrow \dG (f)$$
with $f(x) = e^{ik\cdot x}$, identified with the corresponding multiplication operator on $\gH$. 

To see how to include the renormalization, observe that \eqref{eq:def int} formally leads to 
$$ \FNL [u] = \text{``}\;\frac{1}{2} \int \hat{w} (k) \left| \widehat{|u| ^2} (k) - \left\langle \widehat{|u| ^2} (k) \right\rangle_{\mu_0} \right|^2 dk\; \text{''}.$$
Thus the appropriate renormalized quantum interaction should be 
\begin{equation}\label{eq:discu renorm quant}
  \bWren = \frac{1}{2}\int \hat{w} (k) \left|\dGamma (e^{ik\cdot x}) - \left\langle \dGamma (e^{ik\cdot x}) \right\rangle_{\Gamma_{0}} \right| ^2 dk. 
\end{equation}
After expanding the square, this suggests a natural choice for the chemical potential $\nu(\lambda)$ and the energy shift $E_0(\lambda)$, as we will see. Making the above formal quantum/classical correspondence rigorous is the goal of our paper. 

Note that here we use Fourier variables mostly for convenience. What they help accomplish is rewriting interactions (two-body terms) as sums of products of one-body terms. Other methods to accomplish this, such as Fefferman-de la Llave type decompositions~\cite{FefLla-86,HaiSei-02} could replace the Fourier transform.

\smallskip 

In the next two sections we present our main results in the following order: 

\smallskip 

\noindent $\bullet$ \emph{Homogeneous case}. We consider the emblematic case where $\Omega=\bT^d$ and $h=-\Delta$. Since the system is translation-invariant, the density is always constant. Modulo an appropriate choice of parameters $\nu(\lambda),E_0(\lambda)$, the many-body interaction in~\eqref{eq:intro quant interaction} can be made to coincide with~\eqref{eq:discu renorm quant}. This amounts to using as reference the mean-field quasi-free state, which is determined by one constant ``potential'' $V_\lambda$ solving a simple equation. The final reference Gaussian measure has the covariance $h_0^{-1}=(-\Delta+V_0)^{-1}$ with $V_0=\lim_{\lambda\to0}V_\lambda$ a constant solving~\eqref{eq:SCF_V_0_intro}. Theorem~\ref{thm:main-1} provides a rigorous connection between the classical renormalized and quantum problems. 

\smallskip

\noindent $\bullet$ \emph{Inhomogeneous case}. We consider here the case
$$h=-\Delta+V(x)$$
where $V(x)\to+\ii$ when $|x|\to\ii$. The correct reference Gaussian measure has the covariance $h_0^{-1}$ where $h_0=-\Delta+V_0(x)$ for a potential $V_0$ which solves a nonlinear nonlocal equation. First we reinterpret the results of~\cite{FroKnoSchSoh-17} on this Gaussian measure, in light of the mean-field approximation at the quantum level. Then we state our main result, Theorem~\ref{thm:main-2}, on the mean-field limit, using the optimal quasi-free quantum energy as a reference.

\smallskip 

\noindent $\bullet$ \emph{Inhomogeneous case, inverse statement}. It is also possible to start with a one-particle Hamiltonian $h$ and modify the interaction as in~\eqref{eq:discu renorm quant}. We then do not have to solve any nonlinear equation and in the limit we end up with the interacting measure based on the Gaussian measure associated with $h$. This we call an \emph{inverse statement} because we have to modify the initial quantum model such as to find the desired measure in the limit. This is less natural from a physical point of view. Nevertheless, it turns out that the previous \emph{direct statement} where one starts with $h$ and identifies what the limiting measure is, follows from our proof of the inverse statement and the results of~\cite{FroKnoSchSoh-17} on the nonlinear equation. So the proof of the inverse statement is indeed our main new contribution. It occupies most of the paper. We are able to prove an abstract statement, Theorem~\ref{thm:main-3}, which covers a very large class of one-particle Hamiltonians, including $h=-\Delta+V(x)$ in $\R^d$ for a potential $V$ growing sufficiently fast at infinity, and $h= - \Delta$ on a bounded domain. 

\newpage

\section{Main results}\label{sec:results}

\subsection{Homogeneous gas}\label{sec:hom} 

We first consider the case where
$$\boxed{h =-\Delta \,\text {  on  } \, \Omega=\bT^d,\quad d=2, 3} $$
with $-\Delta$ the usual Laplace-Beltrami operator on the torus.

It will be easier to parametrize everything in terms of the reference Gaussian quantum state. For any fixed $\kappa>0$, let 
\begin{equation}
 \Gamma_{0}=\cZ_0(\lambda)^{-1}e^{-\lambda\,\dG(-\Delta+\kappa)},\qquad \cZ_0(\lambda)=\tr\left(e^{-\lambda\,\dG(-\Delta+\kappa)}\right).
 \label{eq:free_Torus}
\end{equation}
Its expected particle number is (see Lemma~\ref{lem:equivalent})
\begin{align}
N_0(\lambda)&:= \tr_{\gF} \left[ \cN \Gamma_{0} \right] = \sum_{k\in 2\pi \Z^d} \frac{1}{e^{\lambda(|k|^2+\kappa)} -1}\nn\\
&= \frac{1}{\lambda^{\frac{d}2}(2\pi)^d}\int_{\R^d} \frac{dk}{e^{|k|^2+\lambda\kappa} -1}+O(\lambda^{-1})
=\begin{cases}\displaystyle
-\frac{\log(\lambda)}{4\pi\lambda}+O(\lambda^{-1})&\text{for $d=2$,}\\[0.2cm]
\displaystyle\frac{\zeta(3/2)}{8\pi^{\frac32}\lambda^{\frac32}}+O(\lambda^{-1})&\text{for $d=3$.}
  \end{cases}
  \label{eq:nu_lambda}
\end{align}
Let now 
\begin{equation}
\Gamma_\lambda=\cZ(\lambda)^{-1}e^{-\lambda\bH_\lambda},\qquad \cZ(\lambda)=\tr\left(e^{-\lambda\bH_\lambda}\right) 
 \label{eq:interacting_Torus}
\end{equation}
be the interacting Gibbs state with $\bH_\lambda = \dGamma (h) + \lambda\mathbb{W} - \nu(\lambda) \cN + E_0(\lambda)$ as in~\eqref{eq:many body hamil}, with the choice of chemical potential and energy reference as 
\begin{align} \label{eq:choice-kappa-E-homogeneous}
\boxed{\nu(\lambda) =  -\kappa+\lambda \widehat w(0) N_0 (\lambda) -\lambda\frac{w(0)}{2}, \qquad E_0(\lambda):= \lambda \frac{\widehat w(0)}{2} N_0(\lambda)^2 . }
\end{align}
This choice allows us to express the physical Hamiltonian in the form
\begin{align}
\bH_\lambda &= \dGamma (h) + \lambda\mathbb{W} - \nu(\lambda) \cN + E_0(\lambda) \nonumber\\
&= \dGamma(h+\kappa) + \underbrace{\frac\lambda2 \sum_{k\in (2\pi \Z)^2}  \hat{w} (k) \left|\dG (e ^{ik \cdot x}) - \left\langle \dG (e ^{ik \cdot x}) \right\rangle_{\Gamma_{0}} \right| ^2}_{=:\lambda\bWren}.\label{eq:def_H_lambda_homogeneous}
\end{align}
This follows from the fact that, by translation invariance,
$$ 
\left\langle \dG (e ^{ik \cdot x}) \right\rangle_{\Gamma_{0}} = \delta_{k=0} N_0 (\lambda),
$$
where $\langle \: \cdot \: \rangle_{\Gamma_{0}}$ denotes expectation against the reference Gaussian state $\Gamma_{0}$. 

We will require that the interaction potential is a bit more regular than stated in~\eqref{eq:asum w 1}. To be precise, we assume that
\begin{equation}\label{eq:interaction-homo}
 \widehat w(k)\ge 0, \quad \sum_{k\in (2\pi \mathbb{Z})^d}  \widehat w(k)  \left( 1 + |k|^2  \right)  <\infty 
\end{equation} 
with the Fourier expansion
$$w(x)= \sum_{k\in (2\pi \mathbb{Z})^d} \widehat w(k) e^{i k\cdot x}.$$
Our first result is

\begin{theorem}[\textbf{Homogeneous gas}]\label{thm:main-1}\mbox{}\\
Let $d=2, 3$. Let $h=-\Delta$ on the torus $\bT^d$. For a fixed $\kappa>0$, let $h_0=-\Delta+\kappa$ and call $\Gamma_{0}=\cZ_0(\lambda)^{-1}e^{-\lambda\,\dG(-\Delta+\kappa)}$ the corresponding Gaussian state as in~\eqref{eq:free_Torus}. 
Let $w:\bT^d\to \R$ be an even function satisfying~\eqref{eq:interaction-homo} and call $\Gamma_\lambda=\cZ(\lambda)^{-1}e^{-\lambda\bH_\lambda}$ the interacting quantum Gibbs state as in~\eqref{eq:interacting_Torus}. Let $\mu_0$ be the Gaussian measure with covariance $h_0^{-1}$ and let $d\mu=z^{-1}e^{-\cD[u]}d\mu_0$ be the associated nonlinear Gibbs measure as in~\eqref{eq:NL measure}. 
Then we have:

\smallskip

\noindent(\textbf{1}) \underline{Convergence of the relative free-energy:}
\begin{equation}\label{eq:CV free ener}
\lim_{\lambda\to0^+}\log\frac{\cZ(\lambda)}{\cZ_0(\lambda)} = \log z =   \log \left(\int e^{-\cD[u]} d \mu_0(u)\right).
\end{equation}

\medskip

\noindent(\textbf{2}) \underline{Hilbert-Schmidt convergence of all density matrices:} for every $k\ge 1$, 
\begin{equation}\label{eq:CV DM HS}
\lim_{\lambda\to0^+}\tr\left|k!\,\lambda^k\, \Gamma_\lambda^{(k)} -  \int |u^{\otimes k} \rangle \langle u^{\otimes k} | d \mu(u)\right|^2 =0.
\end{equation}

\medskip

\noindent(\textbf{3}) \underline{Trace class convergence of the relative one-body density matrix:} 
\begin{equation}\label{eq:CV DM S1}
\lim_{\lambda\to0^+}\Tr \left| \lambda \left( \Gamma_\lambda^{(1)} - \Gamma_0^{(1)} \right) - \int |u \rangle \langle u | \left( d \mu(u) - d \mu_0(u) \right)  \right| =0.
\end{equation}
\end{theorem}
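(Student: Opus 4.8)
\emph{Proof strategy.} I would deduce the whole theorem from the relative free-energy asymptotics~\eqref{eq:CV free ener}, the density-matrix statements~\eqref{eq:CV DM HS}--\eqref{eq:CV DM S1} then following by a Feynman--Hellmann / perturbation argument. By the Gibbs variational principle and the identity $\bH_\lambda=\dG(h_0)+\lambda\bWren$ built into the choice~\eqref{eq:choice-kappa-E-homogeneous} of $\nu(\lambda),E_0(\lambda)$, one has
\[
-\log\frac{\cZ(\lambda)}{\cZ_0(\lambda)}=\min_{\Gamma\in\cS(\gF)}\Big\{\lambda^2\tr\big(\bWren\Gamma\big)+\cH(\Gamma,\Gamma_0)\Big\},
\]
the minimum being attained at $\Gamma_\lambda$, while on the classical side $-\log z=\min_\nu\big\{\int\cD[u]\,d\nu(u)+\cH_{\rm cl}(\nu,\mu_0)\big\}$ with unique minimizer $\mu$, where $\cH_{\rm cl}$ is the classical relative entropy. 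Thus~\eqref{eq:CV free ener} amounts to matching these two variational problems as $\lambda\to0^+$. The bridge is the semiclassical (de Finetti / Wigner measure) correspondence: to a state $\Gamma_\lambda$ of bounded relative free energy one attaches, along subsequences, a probability measure $\nu$ on $\gH^{1-p}$ with $k!\,\lambda^k\Gamma_\lambda^{(k)}\rightharpoonup\gamma^{(k)}_\nu:=\int|u^{\otimes k}\rangle\langle u^{\otimes k}|\,d\nu(u)$ weakly-$*$, and the goal is to force $\nu=\mu$.

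\emph{Lower bound on $\cZ(\lambda)/\cZ_0(\lambda)$: trial state.} For the matching upper bound on the minimum I would use a trial state adapted from the classical picture. Fixing a momentum cutoff $K$, take a near-optimal quantum state $\Gamma_{<K}$ on the finite-dimensional Fock space $\gF(P_K\gH)$ --- where the quantum/classical correspondence is an elementary finite-dimensional semiclassical limit as in~\cite{LewNamRou-15} --- and tensor it with the reference Gaussian state $\Gamma_0$ restricted to the high modes. The subtracted counter-term in~\eqref{eq:def_H_lambda_homogeneous} is precisely what keeps the high-mode self-interaction and the low/high cross terms finite, and the $L^1(d\mu_0)$-convergence of the truncated interaction $\FNL_K$ to $\cD$ (Lemma~\ref{lem:re-interaction}) allows one to send $K\to\infty$ slowly with $\lambda$. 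Already the trivial choice $\Gamma=\Gamma_0$ yields the a priori bounds $\cH(\Gamma_\lambda,\Gamma_0)\le C$ and $\lambda^2\tr(\bWren\Gamma_\lambda)\le C$ (using that $\lambda^2$ times the quasi-free variance of $\dG(e^{ik\cdot x})$ is $O(1)$, summably in $k$ by~\eqref{eq:interaction-homo}); the sharp lower bound needs the cutoff construction. This direction requires a truncation argument but no genuinely new input.

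\emph{Upper bound on $\cZ(\lambda)/\cZ_0(\lambda)$: a priori control of $\Gamma_\lambda$.} The hard direction is to show that, at any state $\Gamma$ of bounded relative free energy, $\lambda^2\tr(\bWren\Gamma)+\cH(\Gamma,\Gamma_0)$ is asymptotically no smaller than the classical functional at its de Finetti measure $\nu$. One splits $\bWren=\tfrac12\sum_k\hat w(k)\big|\dG(e^{ik\cdot x})-\langle\dG(e^{ik\cdot x})\rangle_{\Gamma_0}\big|^2$ into low and high momenta. For the low-momentum block one invokes the quantitative de Finetti theorem of~\cite{ChrKonMitRen-07,LewNamRou-15} to compare with the classical interaction with an explicit error. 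The high-momentum block --- where renormalization actually lives, and where weak de Finetti convergence is useless --- is handled by the two new inequalities. First, the relative-entropy bound (Theorem~\ref{thm:estim_relative_entropy}), fed with $\cH(\Gamma_\lambda,\Gamma_0)\le C$, controls $\tr\big|h_0^{1/2}(\Gamma_\lambda^{(1)}-\Gamma_0^{(1)})h_0^{1/2}\big|^2$; this both pins the high-momentum one-body profile of $\Gamma_\lambda$ to the mean-field Gaussian one and supplies a version of~\eqref{eq:too-strong-bound} without the $\varepsilon$ prefactor on the right-hand side, i.e. a bound on the linear response of first moments. Second, the variance estimate (Theorem~\ref{thm:general-variance-by-linear-response}), applied to suitable regularizations of the one-body operators $\dG(e^{ik\cdot x})$ --- whose relevant double and quadruple commutators with $\bH_\lambda$ reduce, by translation invariance, to explicit one-body operators --- bounds the quantum variances $\big\langle(A-\langle A\rangle_{\Gamma_\lambda})^2\big\rangle_{\Gamma_\lambda}$ which, after expanding the square in $\bWren$, are exactly what $\lambda^2\tr(\bWren\Gamma_\lambda)$ is built from; the $k$-sum converges by~\eqref{eq:interaction-homo}, and one identifies the limit as $\int\cD[u]\,d\nu(u)$, the remaining one-body and counter-terms matching those already subtracted in~\eqref{eq:def_H_lambda_homogeneous}. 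For the entropy term, lower semicontinuity combined with the de Finetti passage gives $\liminf\cH(\Gamma_\lambda,\Gamma_0)\ge\cH_{\rm cl}(\nu,\mu_0)$. Altogether $\liminf\big\{\lambda^2\tr(\bWren\Gamma_\lambda)+\cH(\Gamma_\lambda,\Gamma_0)\big\}\ge\int\cD\,d\nu+\cH_{\rm cl}(\nu,\mu_0)\ge-\log z$, and since the reverse inequality holds by the trial-state bound we obtain~\eqref{eq:CV free ener} and, since $\mu$ is the unique classical minimizer, $\nu=\mu$. This quantitative high-momentum step --- making the interplay of Theorems~\ref{thm:estim_relative_entropy} and~\ref{thm:general-variance-by-linear-response} precise enough to close the comparison --- is the main obstacle and the technical heart of the argument.

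\emph{Density matrices.} Once~\eqref{eq:CV free ener} is available with enough uniformity under perturbations $\bH_\lambda\rightsquigarrow\bH_\lambda-\varepsilon\,\mathbb{A}_k$ by a bounded $k$-body operator $A_k$, convexity of $\varepsilon\mapsto\log\cZ$ lets one differentiate the free-energy asymptotics at $\varepsilon=0$ and conclude $\lim_{\lambda\to0}k!\,\lambda^k\tr\big(A_k\Gamma_\lambda^{(k)}\big)=\int\langle u^{\otimes k}|A_k|u^{\otimes k}\rangle\,d\mu(u)$, that is, weak-$*$ convergence of $k!\,\lambda^k\Gamma_\lambda^{(k)}$ to $\int|u^{\otimes k}\rangle\langle u^{\otimes k}|\,d\mu(u)$. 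Since this limit belongs to $\gS^2$, adding convergence of the Hilbert--Schmidt norms (from the same estimates applied to the $2k$-body structure, equivalently from convergence of the de Finetti moments to all orders) upgrades this to~\eqref{eq:CV DM HS}. Finally~\eqref{eq:CV DM S1} --- a trace-norm statement, although $\lambda\Gamma_\lambda^{(1)}$ and $\lambda\Gamma_0^{(1)}$ individually diverge in trace --- follows from Theorem~\ref{thm:estim_relative_entropy} together with resolvent (Sobolev) estimates trading the weight $h_0$ against trace class, combined with the weak convergence already obtained.
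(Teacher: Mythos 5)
Your treatment of the free-energy convergence~\eqref{eq:CV free ener} is essentially the paper's own proof: the same two-sided variational argument, the same trial state (projected interacting Gibbs state on low modes tensored with the Gaussian state on high modes), the same low/high momentum splitting with quantitative de Finetti plus Berezin--Lieb on the low block, and the same combination of Theorem~\ref{thm:estim_relative_entropy} (first-moment control via relative entropy) and Theorem~\ref{thm:general-variance-by-linear-response} (variance via perturbed first moments) to kill the high-momentum variances --- this is exactly Theorem~\ref{thm:correlation} and Sections~\ref{sec:correl}--\ref{sec:up bound}.

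For the density matrices, however, your route diverges from the paper's and has a genuine gap. The paper does not differentiate the free energy in a perturbation $\eps\,\mathbb{A}_k$; it instead collects the positive terms dropped in the lower bound (relative entropies of localized states and of Husimi measures), applies Pinsker to get trace-norm convergence of $(\Gamma_\lambda)_Q$ to $(\Gamma_0)_Q$ and approximate factorization of $\Gamma_\lambda$ across $P\gH\oplus Q\gH$, and then converts state convergence into density-matrix convergence. The missing ingredient in your sketch is the uniform a priori bound $\|\lambda^k\Gamma_\lambda^{(k)}\|_{\gS^2}\le C_k$, which the paper obtains from the pointwise kernel domination $0\le\Gamma_\lambda^{(k)}(X;Y)\le C_k\Gamma_0^{(k)}(X;Y)$, itself a consequence of the positivity of the heat kernel of $h$ (assumption~\eqref{eq:Schatten h-strong-3}) via the Trotter product formula. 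Without such a bound, weak-$*$ convergence of $k!\,\lambda^k\Gamma_\lambda^{(k)}$ cannot be upgraded to strong $\gS^2$ convergence: the claimed ``convergence of Hilbert--Schmidt norms from the de Finetti moments'' fails because the de Finetti/Husimi measure only controls the $P$-localized block $P^{\otimes k}\Gamma_\lambda^{(k)}P^{\otimes k}$, while a non-negligible part of the $\gS^2$ norm of the limit $k!(h_0^{-1})^{\otimes k}+\dots$ lives at high momenta; the HS norm is moreover not a linear functional of any $\Gamma_\lambda^{(\ell)}$, so moment convergence alone gives only lower semicontinuity. The Feynman--Hellmann step would in addition require re-running the entire renormalized free-energy expansion for the perturbed Hamiltonians uniformly in $\eps$ and justifying the interchange of $\partial_\eps$ with $\lambda\to0$, which is not addressed. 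Your sketch of the trace-class statement~\eqref{eq:CV DM S1} is in the right spirit (interpolating the $h_0^{1/2}$-weighted bound of Theorem~\ref{thm:estim_relative_entropy} against $\tr[h_0^{-2}]<\infty$), but it too relies on the $\gS^2$ convergence of part (2) and therefore inherits the same gap.
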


Here are some immediate comments on the homogeneous gas 

\medskip

\noindent\textbf{1.} In space dimensions $d=2,3$, $\mu_0$ concentrates on negative Sobolev spaces $\bigcap_{t<1-d/2} \gH^t$. This leads to a big jump in difficulty in comparison to our previous treatment in 1D \cite{LewNamRou-15,LewNamRou-18a}. 

\smallskip 

\noindent\textbf{2.} By expanding~\eqref{eq:nu_lambda} up to the order $\lambda^{-1}$, we have 
\begin{equation}
 \nu(\lambda)=\begin{cases}\displaystyle
-\frac{\widehat{w}(0)}{4\pi}\log(\lambda)-\nu_0(\kappa)+o(1)_{\lambda\to0^+}&\text{for $d=2$,}\\[0.2cm]
\displaystyle-\frac{\zeta(3/2)\widehat{w}(0)}{8\pi^{\frac32}\sqrt\lambda}-\nu_0(\kappa)+o(1)_{\lambda\to0^+}&\text{for $d=3$.}
  \end{cases}
 \label{eq:formula_nu_lambda}
\end{equation}
The first term is a counter-term compensating the divergence of the interactions. The second part $\nu_0(\kappa)$ is a complicated function of the chemical potential $\kappa$ of the final reference Gaussian measure. We compute it later in Lemma~\ref{lem:equivalent} in Appendix~\ref{app:interpretation} and obtain 
\begin{equation}
\nu_0(\kappa)=\begin{cases}
\displaystyle\kappa+\widehat{w}(0)\frac{\log(\kappa)}{4\pi}-\widehat{w}(0)\,\phi_2(\kappa)&\text{for $d=2$,}\\[0.2cm]
\displaystyle\kappa+\widehat{w}(0)\frac{\sqrt\kappa}{4\pi}-\widehat{w}(0)\,\phi_3(\kappa)&\text{for $d=3$,}
\end{cases}
\label{eq:ren chem pot}
\end{equation}
with the positive decreasing function
\begin{align}
\phi_d(\kappa)&=\sum_{k\in2\pi\Z^d}\left(\frac1{|k|^2+\kappa}-\frac{1}{(2\pi)^d}\int_{(-\pi,\pi)^d}\frac{dp}{|k+p|^2+\kappa}\right) \nn \\
&=\begin{cases}
\dps \frac{1}{4\pi}\int_0^\ii \frac{e^{-\kappa t}}{t}\sum_{\ell\in\Z^2\setminus\{0\}} e^{-\frac{|\ell|^2}{4t}}\,dt&\text{for $d=2$,}\\[0.5cm]
\dps \frac{1}{4\pi}\sum_{\ell\in\Z^3\setminus\{0\}}\frac{e^{-\sqrt{\kappa}|\ell|}}{|\ell|}&\text{for $d=3$.}
  \end{cases}
  \label{eq:phi_d_core}
\end{align}
Here we started for simplicity with $\kappa$ and we found the corresponding term $\nu_0(\kappa)$. It is in fact desirable to deduce $\kappa$ from $\nu_0$ rather than the other way around, which is possible for all $\nu_0>0$ since the function $\nu_0(\kappa)$ is increasing. The $o(1)$ term in~\eqref{eq:formula_nu_lambda} (including the term $\lambda w(0)/2$ in~\eqref{eq:choice-kappa-E-homogeneous}) plays no role and can be removed in the definition of $\nu(\lambda)$ without changing our result. The constant $E_0(\lambda)$ has no effect on the Gibbs state itself. 

\smallskip

\noindent\textbf{3.} In Appendix~\ref{app:interpretation} we re-express the theorem in microscopic variables and explain the link with the phase transition in the Bose gas. Theorem~\ref{thm:main-1} describes the system just before the phase transition (i.e. with a density just below the critical one) and $\mu$ gives the way that the condensate appears at the macroscopic scale.

\smallskip

\noindent\textbf{4.} Our trace-class convergence~\eqref{eq:CV DM S1} implies the convergence of the relative number of particles
$$
\lim_{\lambda\to0^+}\lambda\Big(\langle \cN \rangle_{\Gamma_{\lambda}} -\langle \cN \rangle _{\Gamma_{0}}\Big) =\int \cM(u) \,d\mu(u),
$$
with $\cM(u)$ the renormalized mass defined in Lemma~\ref{lem:re-mass}. This is rather non-trivial, for the two terms on the left-hand side diverge when taken separately. This convergence is in the spirit of the physics literature on Bose gases~\cite{ArnMoo-01,BayBlaiHolLalVau-99,BayBlaiHolLalVau-01,HolBay-03,KasProSvi-01}, where it is argued that the critical densities are related as $ \rho_\lambda ^{\mathrm{quant}} - \rho_0 ^{\mathrm{quant}} \sim \rho_\lambda ^{\mathrm{clas}} - \rho_0 ^{\mathrm{clas}}$. In particular, only the \emph{difference} in critical densities (interacting minus non interacting) can be properly described using classical field theory. Note that we have the formal relation
$$\cM(u)=\text{``}\;\int_{\R^d}|u|^2-\pscal{\int_{\R^d}|u|^2}_{\mu_0}\;\text{''},$$ 
that is, $\cM(u)$ is actually a difference of two quantities which are individually infinite $\mu_0$-almost surely. Estimates on relative one-particle density matrices related to~\eqref{eq:CV DM S1} are recently obtained in~\cite{DeuSeiYng-19,DeuSei-19_ppt}, but in a different setting, without divergences.

\smallskip

\noindent\textbf{5.} Our proof also shows that, in 2D and 3D, for any $k\geq 2$, the difference $\lambda^{k}(\Gamma_\lambda^{(k)} -  \Gamma_0^{(k)})$ is {\em not} bounded in trace class, see Remark \ref{rm:G2}. When $k\geq 2$ one needs to remove from $\Gamma_\lambda^{(k)}$ combinations of $\Gamma_0^{(\ell)}$ for $\ell \leq k$ to obtain an operator converging in trace-class. We do not consider this explicitly here, but results in this direction are in~\cite[Theorem~1.7]{FroKnoSchSoh-20}. There the appropriately renormalized (Wick-ordered) density matrices are considered, and their integral kernels shown to converge uniformly as continuous functions, which implies trace-class convergence as operators.

\subsection{Inhomogeneous gas}\label{sec:inhom dir}
Here we focus on the case 
$$\boxed{h = -\Delta+V(x) \, \text{ on } \,\Omega=\R^d,\quad d=2, 3.}$$ 
We are typically thinking of $V(x)=|x|^s$, but we can work with a larger class of potentials:
\begin{equation}
 \label{eq:cond_V_FKSS 11}
\begin{cases}
0\le V\in L^\ii_{\rm loc}(\R^d), \quad \dps \lim_{|x|\to\ii}V(x)=+\ii,\\[0.2cm]
V(x+y)\leq C(V(x)+1)(V(y)+1),\\[0.2cm]
|\nabla V(x)|\leq C(V(x)+1),\\[0.2cm]
\dps \iint_{\R^d\times\R^d}\frac{dx\,dk}{(|k|^2+V(x)+1)^{2}}<\ii.
  \end{cases}
 \end{equation}
All the conditions above~\eqref{eq:cond_V_FKSS 11} are satisfied for 
\begin{equation}\label{eq:power pot}
V(x) = |x|^s \mbox{ with } s>2d/(4-d). 
\end{equation}
Thanks to the Lieb-Thirring inequality in~\cite[Theorem~1]{DolFelLosPat-06}, 
$$
\Tr[(-\Delta+V+1)^{-p}]\leq \frac{1}{(2\pi)^d}\iint_{\R^d\times\R^d}\frac{dx\,dk}{(|k|^2+V(x)+1)^p}, 
$$
the last assumption in \eqref{eq:cond_V_FKSS 11} ensures that 
$$
\tr[h^{-2}]<\ii.
$$
This is the optimal requirement in our method and it barely fails on a bounded domain ($s=+\infty$ in~\eqref{eq:power pot} formally) in dimension $d=4$. For the interaction potential, we assume that $w:\R^d\to \R$ is an even function satisfying 
\begin{equation}
 \label{eq:w new asum 11}
w \in L^1\left(\R^d, (1+V(x))^2 dx\right), \quad 0\le \widehat w \in L^1(\R^d, (1+|k|^2) dk). 
 \end{equation}

First we discuss the following issue: If we take as reference state the Gaussian state based on $h=-\Delta+V$ and start with a renormalized Hamiltonian in the same form as~\eqref{eq:def_H_lambda_homogeneous}, we actually perturb the original physical Hamiltonian by an external potential which is $x$-dependent. This is physically questionable since it does not correspond to adjusting the two constants $\nu$ and $E_0$. Thus Gaussian states based on $h$ are not a physically good reference to study the limit of $\bH_\lambda$. The determination of the right reference state is discussed next.  

Let ${V_\lambda}$ be a general one-body potential (which will be specified later and can depend on~$\lambda$). Let ${\Gamma}_0$ be the Gaussian state associated with  $-\Delta+{V_\lambda}(x)$, namely
\begin{align} \label{eq:wide-Gamma-0-first}
\Gamma_0 :=\frac{e^{-\lambda\,\dGamma(-\Delta+{V_\lambda})}}{\cZ_0(\lambda)} ,\qquad  \cZ_0(\lambda)=\tr\left(e^{-\lambda\,\dGamma(-\Delta+{V_\lambda})}\right)
\end{align}
and let $\rhoO^{{V_\lambda}}(x)$ be its one-body density defined by
\begin{align}\label{eq:1pdm-free Gibbs wide}
\rhoO^{V_\lambda}  (x):=\Gamma_0 ^{(1)} (x;x)= \left[ \frac{1}{e^{\lambda(-\Delta+V_\lambda)}-1} \right](x;x),
\end{align}
where $\Gamma_0 ^{(1)} (x;y)$ is the integral kernel of the one-body density matrix $\Gamma_0^{(1)}$ (the diagonal part $\Gamma_0 ^{(1)} (x;x)$ can be defined properly for instance by the spectral decomposition). Note that in general $\rhoO^{V_\lambda} (x)$ depends on $x$. 

Following the discussion in Section~\ref{sec:defs formal},  we consider the renormalized Hamiltonian as in~\eqref{eq:discu renorm quant}, but with the reference state $ \Gamma_0$. This results in
\begin{multline}\label{eq:renorm int wide -0}
\dGamma(-\Delta+{V_\lambda})+ \frac{\lambda}{2} \int_{\R^d} \hat{w} (k) \left|\dG (e ^{ik \cdot x}) - \left\langle \dG (e ^{ik \cdot x}) \right\rangle_{\Gamma_0} \right| ^2 dk  \\
= \dGamma \left(-\Delta+V_\lambda - \lambda w\ast   \rhoO^{ V_\lambda} + \lambda w(0)/2\right) + \lambda \bW + E_0(\lambda)
\end{multline}
where $E_0(\lambda)$ is given by
\begin{align} \label{eq:choice-kappa-E-trap-VT}
\quad E_0(\lambda):= \frac{\lambda}{2} \iint_{\R^d\times\R^d}  \rhoO^{V_\lambda}(x) w(x-y)  \rhoO^{V_\lambda} (y) dxdy.
\end{align}
This Hamiltonian coincides with the physical Hamiltonian in~\eqref{eq:many body hamil} with chemical potential $\nu(\lambda)$ if and only if $V_\lambda$ solves the  nonlinear equation
\begin{align} \label{eq:counter-term}
\boxed{V_\lambda - \lambda w\ast   \rhoO^{ V_\lambda} + \lambda w(0)/2= V -\nu(\lambda)}
\end{align}
which was called the {\em counter-term problem} in~\cite{FroKnoSchSoh-17}.
Equation~\eqref{eq:counter-term} is in the same spirit as~\eqref{eq:ren chem pot} seen before for $\kappa$, but the unknown is now a function. 
It  arises naturally when restricting the problem to the subclass of Gaussian quantum states, as we explain now. That the minimizing Gaussian state gives an appropriate reference in renormalization procedures has been used before in several contexts, for instance in quantum electrodynamics~\cite{LieSie-00,HaiLewSol-07}. 

We recall that to any one-body density matrix $\gamma\geq0$ one can associate a unique Gaussian state $\Gamma$ on the Fock space which has the one-particle density matrix $\Gamma^{(1)}=\gamma$~\cite{BacLieSol-94,Solovej-notes}. Its energy terms and entropy can be expressed in terms of $\gamma$ as
\begin{align}\label{eq:quasi free Hartree}
-\tr\left[ \Gamma\log \Gamma \right] &=  \tr\left[(1+\gamma)\log(1+\gamma)-\gamma\log\gamma\right], \nonumber\\
\tr\left[ \dGamma \left( -\Delta+V-\nu\right) \Gamma\right]&= \Tr\left[(-\Delta+V-\nu)\gamma\right],  \nonumber\\
\tr \left[\mathbb{W} \Gamma\right] &= \frac{1}{2} \iint \gamma(x;x) w(x-y) \gamma(y;y) \,dx\, dy \nonumber \\&\qquad+ \frac{1}{2} \iint w(x-y) |\gamma(x;y)|^2\, dx\, dy.
\end{align}
The last term in \eqref{eq:quasi free Hartree} (called the exchange energy) is typically negligible at leading order, resulting in the \emph{mean-field} or  \emph{reduced Hartree free energy}
\begin{multline}
\cF^{\rm MF}[\gamma]:=\Tr\left[(-\Delta+V-\nu)\gamma\right]+\frac{\lambda}{2} \iint \gamma(x;x) w(x-y) \gamma(y;y)\, dx\, dy\\
 -T\tr\left[(1+\gamma)\log(1+\gamma)-\gamma\log\gamma\right].
 \label{eq:red-Hartree_functional}
\end{multline}
Minimizing this energy corresponds to solving the mean-field problem mentioned in the introduction, as stated in the following result, which is a simple consequence of the convexity of the functional $\cF^{\rm MF}$. The proof is given in Appendix~\ref{app:QF}.

\begin{lemma}[\textbf{Existence and uniqueness of the reference quasi-free state}]\label{lem:min quasi free}\mbox{}\\
Let $d\geq1$, $T>0$, $\lambda\geq0$ and $\nu\in\R$. Assume that
$$0\le V\in L^1_{\rm loc}(\R^d), \quad e^{-V/T}\in L^1(\R^d), \quad  w\in L^1(\R^d), \quad 0\le \hat w\in L^1(\R^d).$$
Then the variational problem
\begin{equation}
F^{\rm MF}(T,\lambda,\nu)=\inf_{\gamma=\gamma^*\geq0}\cF^{\rm MF}[\gamma]
\label{eq:red-Hartree-intro}
\end{equation}
admits a unique minimizer $\gamma^{\rm MF}$. This minimizer solves the nonlinear equation 
\begin{equation}\label{eq:gammarH}
\gamma^{\rm MF}=\frac1{e^{T^{-1}(-\Delta+V-\nu+\lambda\rho_{\gamma^{\rm MF}}\ast w)}-1}.
\end{equation}
Hence, when $T=1/\lambda$ and $\nu=\nu(\lambda)+\lambda w(0)/2$, its potential $V_\lambda:=V-\nu(\lambda)-\lambda w(0)/2+\lambda\rho_{\gamma^{\rm MF}}\ast w$ solves the nonlinear equation~\eqref{eq:counter-term}. 
\end{lemma}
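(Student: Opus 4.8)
The plan is to run the direct method of the calculus of variations, everything being powered by the (strict) convexity of $\cF^{\rm MF}$ alluded to just above the statement. First I would rewrite the interaction term as $\frac\lambda2\int_{\R^d}\widehat w(k)\,|\widehat{\rho_\gamma}(k)|^2\,dk$, which is non-negative and convex in $\rho_\gamma$, hence in $\gamma$ (since $\gamma\mapsto\rho_\gamma$ is linear), precisely because $\widehat w\ge0$. The term $\Tr[(-\Delta+V-\nu)\gamma]$ is linear in $\gamma$, and $\gamma\mapsto -T\tr[(1+\gamma)\log(1+\gamma)-\gamma\log\gamma]$ is strictly convex, since the scalar function $g(x)=(1+x)\log(1+x)-x\log x$ has $g''(x)=-1/(x(1+x))<0$ and the induced trace functional inherits strict concavity from $g$. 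Hence $\cF^{\rm MF}$ is convex, strictly so on the convex set $\{\gamma:\cF^{\rm MF}[\gamma]<\infty\}$. Dropping the non-negative interaction and invoking the Gibbs variational principle for the one-body operator $-\Delta+V-\nu$ (whose Gibbs weight $e^{-(-\Delta+V-\nu)/T}=e^{\nu/T}e^{-(-\Delta+V)/T}$ is trace-class thanks to $e^{-V/T}\in L^1$) yields the lower bound $\cF^{\rm MF}[\gamma]\ge -T\log\tr(e^{-(-\Delta+V-\nu)/T})>-\infty$, so $F^{\rm MF}$ is finite.

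For existence, take a minimizing sequence $(\gamma_n)$. I would rewrite the non-interacting free energy as $T$ times the relative entropy $\cH_{\rm qf}(\gamma_n,\gamma_0)$ of the associated quasi-free states plus a constant, where $\gamma_0=(e^{(-\Delta+V-\nu)/T}-1)^{-1}$ is the non-interacting Gibbs density matrix (trace-class). Boundedness of $\cF^{\rm MF}[\gamma_n]$ together with positivity of the interaction bounds $\cH_{\rm qf}(\gamma_n,\gamma_0)$; since $-\Delta+V$ has compact resolvent (as $V\to+\infty$) and $\gamma_0$ is trace-class, the sublevel sets of this relative entropy are compact for the weak-$*$ topology of trace-class operators, so $\gamma_n\rightharpoonup\gamma^{\rm MF}$ along a subsequence. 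Weak lower semicontinuity of the relative entropy, combined with that of the interaction term (which follows from $\rho_{\gamma_n}\to\rho_{\gamma^{\rm MF}}$ in distributions, Fatou in $k$, and $\widehat w\ge0$), gives $\cF^{\rm MF}[\gamma^{\rm MF}]\le F^{\rm MF}$, so $\gamma^{\rm MF}$ is a minimizer; uniqueness is then immediate from the strict convexity above. Differentiating $\cF^{\rm MF}$ at $\gamma^{\rm MF}$ in a self-adjoint direction $\delta$ (admissible since the minimizer is expected positive and invertible) and using $g'(x)=\log(1+1/x)$ gives $\Tr[(-\Delta+V-\nu+\lambda\,\rho_{\gamma^{\rm MF}}*w)\delta]-T\,\Tr[\log(1+(\gamma^{\rm MF})^{-1})\delta]=0$; as $\gamma^{\rm MF}$ minimizes a convex functional this holds for all $\delta$, whence $T\log(1+(\gamma^{\rm MF})^{-1})=-\Delta+V-\nu+\lambda\,\rho_{\gamma^{\rm MF}}*w$, and exponentiating yields exactly \eqref{eq:gammarH}.

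For the last claim I would simply specialize $T=1/\lambda$ and $\nu=\nu(\lambda)+\lambda w(0)/2$, set $V_\lambda:=V-\nu(\lambda)-\lambda w(0)/2+\lambda\,\rho_{\gamma^{\rm MF}}*w$, and note that then $-\Delta+V-\nu+\lambda\,\rho_{\gamma^{\rm MF}}*w=-\Delta+V_\lambda$, so \eqref{eq:gammarH} becomes $\gamma^{\rm MF}=(e^{\lambda(-\Delta+V_\lambda)}-1)^{-1}$ and therefore $\rho_{\gamma^{\rm MF}}=\rhoO^{V_\lambda}$ in the notation of \eqref{eq:1pdm-free Gibbs wide}. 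Plugging this back into the definition of $V_\lambda$ gives $V_\lambda-\lambda w*\rhoO^{V_\lambda}+\lambda w(0)/2=V-\nu(\lambda)$, i.e. \eqref{eq:counter-term}.

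The only genuinely delicate point is the existence step: establishing coercivity of the relative-entropy functional on $\R^d$ — which is exactly where the assumptions $V\to+\infty$ and $e^{-V/T}\in L^1$ enter, the latter guaranteeing trace-class Gibbs states and discreteness of the relevant spectra — and verifying the weak lower semicontinuity of the nonlinear interaction term along the minimizing sequence. The convexity, the uniqueness, and the Euler–Lagrange computation are routine, modulo the customary care needed when differentiating trace functionals involving the unbounded operator $-\Delta+V$.
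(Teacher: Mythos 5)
Your convexity, uniqueness, Euler--Lagrange and final substitution steps are fine and match the paper's (the paper relegates the variational equation to ``classical arguments''). The genuine gap is in your lower bound and coercivity: both steps presuppose that the one-body operator $-\Delta+V-\nu$ is positive. You drop the interaction and invoke the Gibbs variational principle for $-\Delta+V-\nu$, and later you use the reference state $\gamma_0=(e^{(-\Delta+V-\nu)/T}-1)^{-1}$ as if it were a trace-class positive operator. But the lemma allows \emph{any} $\nu\in\R$, and in the application $\nu=\nu(\lambda)\to+\infty$, so $-\Delta+V-\nu$ has (a lot of) negative spectrum. In that case the non-interacting bosonic free energy is $-\infty$ (take $\gamma=n\,|u_0\rangle\langle u_0|$ with $u_0$ the ground state of $-\Delta+V$: the energy decreases linearly in $n$ while the Bose--Einstein entropy grows only logarithmically), so the bound $\cF^{\rm MF}[\gamma]\ge -T\log\tr(e^{-(-\Delta+V-\nu)/T})$ is vacuous, and $\gamma_0$ simply does not exist (the operator $e^{(-\Delta+V-\nu)/T}-1$ is not positive, hence not invertible as a density matrix). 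The whole point of the lemma is that the \emph{repulsive interaction} stabilizes the system against large chemical potentials; you cannot discard it before bounding from below.

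The paper's proof supplies exactly the missing ingredient. It splits $V-\nu=(V-\nu)_+-(V-\nu)_-$, keeps the positive part with half the kinetic energy to get a finite free-energy lower bound for $-\Delta/2+(V-\nu)_+$ (which \emph{is} positive), and then controls the dangerous term $-\int(V-\nu)_-\,\rho+\tfrac{\lambda}{2}\iint w\,\rho\,\rho$ uniformly in $\rho\ge0$ by choosing a Schwartz function $\phi>0$ with $\widehat\phi\ge0$ supported where $\widehat w\ge\widehat w(k_0)/2>0$, bounding $(V-\nu)_-\le C\phi$ (here $(V-\nu)_-$ is bounded with compact support), and completing the square in Fourier space, which yields the lower bound $-C^2\|\widehat\phi\|_{L^2}^2/(\lambda\widehat w(k_0))$. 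Coercivity of minimizing sequences is then obtained not through a relative entropy with respect to a (nonexistent) free Gibbs state, but by writing $\cF^{\rm MF}_{\nu}[\gamma]=\cF^{\rm MF}_{\nu+1}[\gamma]+\tr\gamma$ to bound $\tr\gamma_n$, bounding $\tr(-\Delta\gamma_n)$ from the kinetic part of the lower bound, and using the Hoffmann--Ostenhof and Sobolev inequalities to get compactness of the densities. You should replace your lower-bound and coercivity arguments with a mechanism of this type; the rest of your outline can stand.
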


The lemma says that we have to use as reference the Gaussian state with minimal free energy (without exchange) and chemical potential $\nu(\lambda)+\lambda w(0)/2$, since this allows to rewrite the full Hamiltonian $\bH_\lambda$ in the desired form~\eqref{eq:discu renorm quant}. The shift $\lambda w(0)/2$ of the chemical potential is the one appearing in~\eqref{eq:intro Fourier quant int} and it is negligible in our regime.

The nonlinear equation~\eqref{eq:counter-term} has been studied by Fr\"ohlich-Knowles-Schlein-Sohinger in ~\cite{FroKnoSchSoh-17}. It is proved herein that, when $T=1/\lambda$ and  the chemical potential $\nu(\lambda)$ is tuned as in the homogeneous case~\eqref{eq:choice-kappa-E-homogeneous}, the potential $V_\lambda$ converges to a limit $V_0$ that we will use as the reference renormalized potential. The following statement summarizes the results of~\cite[Section~5]{FroKnoSchSoh-17}. Everything is again expressed for simplicity in terms of the parameter $\kappa$ instead of $\nu_0$. 

\begin{theorem}[\textbf{Limit renormalized potential}{~\cite{FroKnoSchSoh-17}}]\label{thm:counter}\mbox{}\\
Let $d=2,3$. Let $V,w$ satisfy \eqref{eq:cond_V_FKSS 11}-\eqref{eq:w new asum 11}. Take a constant $\kappa >0$ and set 
\begin{equation}\label{eq:rhoOkappa 11}
\rhoO^\kappa(\lambda) :=  \frac{1}{(2\pi)^d\lambda^{\frac{d}2}} \int_{\R ^d} \frac{dk}{e^{|k|^2+\lambda\kappa} -1}  , \qquad \nu(\lambda) := \lambda \hat{w} (0) \rhoO^\kappa(\lambda) -\kappa-\lambda w(0)/2.
\end{equation}
Then, there exists $\kappa_0<\ii$ such that we have the following statements for all $\kappa\geq\kappa_0$ and all $0<\lambda\leq1$. 

\smallskip

\noindent(\textbf{1}) The unique solution $V_\lambda$ of \eqref{eq:counter-term} satisfies  
\begin{equation}
\frac{V}{2}\le V_\lambda -\kappa \le \frac{3V}2.
\label{eq:bound_V_T 11}
\end{equation}

\smallskip

\noindent(\textbf{2}) There exists a function $V_0$ satisfying
$$ \lim_{\lambda\to0^+}\norm{\frac{V_\lambda-V_{0}}{V}}_{L^\ii(\R^d)}=0$$
and
\begin{align} \label{eq:VT-Vinf-Sp 11}
\lim_{T\to\ii}\Tr \Big| (-\Delta+V_\lambda)^{-1} -(-\Delta+V_0)^{-1}  \Big|^2=0.
\end{align} 

\smallskip

\noindent(\textbf{3})  The limiting potential $V_0$ solves the nonlinear equation
\begin{equation}
\begin{cases}
 V_0  = V +w\ast   \rho_{0}+\kappa,\\[0.2cm]
 \dps\rho_0(x)=\left(\frac{1}{-\Delta+V_0}-\frac{1}{-\Delta+\kappa}\right)(x;x).
\end{cases}
 \label{eq:equation_V_infty}
\end{equation}
\end{theorem}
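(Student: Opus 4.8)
We only sketch the strategy, which follows~\cite[Section~5]{FroKnoSchSoh-17}. The plan is to recast the counter-term equation~\eqref{eq:counter-term} as a fixed point problem, solve it by a Banach contraction argument (this gives part~(1)), and then pass to the limit $\lambda\to0^+$. Substituting the explicit $\nu(\lambda)$ of~\eqref{eq:rhoOkappa 11} into~\eqref{eq:counter-term} and recognizing $\lambda\,\hat w(0)\,\rhoO^\kappa(\lambda)$ as the convolution of $w$ with the homogeneous background density $\rhoO^\kappa(\lambda)=[(e^{\lambda(-\Delta+\kappa)}-1)^{-1}](x;x)$, the equation reads
\begin{equation*}
V_\lambda=V+\kappa+\lambda\,w\ast\big(\rhoO^{V_\lambda}-\rhoO^\kappa(\lambda)\big),\qquad \rhoO^{U}(x):=\big[(e^{\lambda(-\Delta+U)}-1)^{-1}\big](x;x).
\end{equation*}
The mechanism behind the whole statement is that the divergence $\rhoO^{U}(x)\sim\lambda^{-d/2}$ as $\lambda\to0$ is purely ultraviolet, hence essentially insensitive to the potential $U$, so it is cancelled by $\rhoO^\kappa(\lambda)$; thus $\lambda(\rhoO^{U}-\rhoO^\kappa(\lambda))$ stays bounded and, for $d\le3$, converges to $\big(\tfrac1{-\Delta+U}-\tfrac1{-\Delta+\kappa}\big)(x;x)$.

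For part~(1) we would apply the Banach fixed point theorem to $\Phi_\lambda(U):=V+\kappa+\lambda\,w\ast(\rhoO^{U}-\rhoO^\kappa(\lambda))$ on the complete metric space $\mathcal K:=\{U:\tfrac V2\le U-\kappa\le\tfrac{3V}2\}$ with distance $\norm{(U_1-U_2)/(1+V)}_{L^\infty}$, for $\kappa\ge\kappa_0$ and $0<\lambda\le1$. Both the stability $\Phi_\lambda(\mathcal K)\subset\mathcal K$ and the contraction estimate come down to a pointwise bound $\big|\lambda(\rhoO^{U}(x)-\rhoO^\kappa(\lambda))\big|\le C\varepsilon(\kappa)(1+V(x))$ with $\varepsilon(\kappa)\to0$ as $\kappa\to\infty$ (and the analogous bound for the derivative $U\mapsto\rhoO^{U}$); convolving against $w$ and using the integrability conditions~\eqref{eq:w new asum 11} on $w,\hat w$ together with the sub-multiplicativity of $1+V$ from~\eqref{eq:cond_V_FKSS 11} then keeps the correction within the required bounds. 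We would derive the pointwise estimate from the Feynman--Kac representation
\begin{equation*}
e^{-t(-\Delta+U)}(x;x)=(4\pi t)^{-d/2}\,\mathbb E_x^{t}\Big[e^{-\int_0^tU(\omega_s)\,ds}\Big]\le(4\pi t)^{-d/2}e^{-t\kappa}\qquad(U\ge\kappa),
\end{equation*}
together with $\rhoO^{U}(x)=\sum_{n\ge1}e^{-\lambda n(-\Delta+U)}(x;x)$ and $\rhoO^\kappa(\lambda)=\sum_{n\ge1}(4\pi\lambda n)^{-d/2}e^{-\lambda n\kappa}$: the $n$-th difference equals $(4\pi\lambda n)^{-d/2}e^{-\lambda n\kappa}\big(1-\mathbb E_x^{\lambda n}[e^{-\int(U-\kappa)}]\big)$, and $|\nabla V|\le C(1+V)$ lets one control the fluctuation of $U-\kappa$ along the Brownian bridge of length $\lambda n$, so that after summing in $n$ and multiplying by $\lambda$ one obtains a bound of the desired form. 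Banach's theorem then produces a unique $V_\lambda\in\mathcal K$, i.e.~\eqref{eq:bound_V_T 11}, which by uniqueness solves~\eqref{eq:counter-term}.

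For parts~(2)--(3) we would argue directly rather than by compactness. Let $V_0$ be the unique solution of the limiting equation $V_0=V+\kappa+w\ast\rho_0$, $\rho_0(x)=\big(\tfrac1{-\Delta+V_0}-\tfrac1{-\Delta+\kappa}\big)(x;x)$ (existence by the same contraction argument at $\lambda=0$, with $\tr[h^{-2}]<\infty$ ensuring $\rho_0$ is well defined since $d\le3$); this is~\eqref{eq:equation_V_infty}. Then, with $\Phi_\lambda$ as above and $c<1$ the contraction constant,
\begin{equation*}
\Big\|\tfrac{V_\lambda-V_0}{1+V}\Big\|_{L^\infty}=\Big\|\tfrac{\Phi_\lambda(V_\lambda)-\Phi_\lambda(V_0)}{1+V}+\tfrac{\Phi_\lambda(V_0)-V_0}{1+V}\Big\|_{L^\infty}\le c\,\Big\|\tfrac{V_\lambda-V_0}{1+V}\Big\|_{L^\infty}+\Big\|\tfrac{\Phi_\lambda(V_0)-V_0}{1+V}\Big\|_{L^\infty},
\end{equation*}
and the consistency error $\Phi_\lambda(V_0)-V_0=w\ast\big(\lambda(\rhoO^{V_0}-\rhoO^\kappa(\lambda))-\rho_0\big)$ tends to $0$ in this norm because, for the fixed potential $V_0$, $\lambda(\rhoO^{V_0}(x)-\rhoO^\kappa(\lambda))=\lambda\big[\tfrac1{e^{\lambda(-\Delta+V_0)}-1}-\tfrac1{e^{\lambda(-\Delta+\kappa)}-1}\big](x;x)\to\rho_0(x)$ pointwise and dominatedly (using $\tfrac{\lambda}{e^{\lambda E}-1}\to\tfrac1E$, the domination being the ultraviolet cancellation, finite on the diagonal since $d\le3$). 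Hence $\norm{(V_\lambda-V_0)/V}_{L^\infty}\to0$. Finally~\eqref{eq:VT-Vinf-Sp 11} follows from the resolvent identity, the uniform $\gS^2$-bound on $(-\Delta+V_\lambda)^{-1}$ (from $-\Delta+V_\lambda\ge\tfrac12(-\Delta+V+1)$ together with the Lieb--Thirring bound $\tr[(-\Delta+V+1)^{-2}]<\infty$ of~\eqref{eq:cond_V_FKSS 11}), and $\|(V_\lambda-V_0)(-\Delta+V_0)^{-1}\|\to0$.

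The main obstacle is the quantitative, $\lambda$-uniform control (and convergence) of the diagonal of $\lambda\big[(e^{\lambda(-\Delta+U)}-1)^{-1}-(e^{\lambda(-\Delta+\kappa)}-1)^{-1}\big]$: the two terms blow up individually like $\lambda^{-d/2}$ for $d=2,3$, so the ultraviolet cancellation must be made rigorous with an error bounded by $C(\kappa)(1+V(x))$ with $C(\kappa)\to0$. This is where the regularity hypotheses on $V$ in~\eqref{eq:cond_V_FKSS 11} enter essentially, through the Feynman--Kac/Brownian-bridge analysis of the short-time heat kernel, and where $d\le3$ is indispensable: for $d=4$ even the regularized diagonal diverges logarithmically.
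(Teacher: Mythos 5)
Your proposal is correct and rests on the same machinery as the paper, which for parts (1)--(2) simply defers to the fixed-point construction of~\cite[Section~5]{FroKnoSchSoh-17}: a contraction on a weighted $L^\infty$ ball around $\kappa+V$ (their space $B(V)$ with norm $\|f/V-1\|_{L^\infty}$, essentially your $\mathcal K$), whose key input is exactly the ultraviolet-cancellation bound you isolate, namely $|\lambda(\rhoO^{U}(x)-\rhoO^{\kappa}(\lambda))|\leq C\kappa^{d/2-2}\,V(x)$ obtained from the Feynman--Kac representation and the hypotheses~\eqref{eq:cond_V_FKSS 11}; your heat-kernel computation reproduces the correct $\kappa^{d/2-2}$ smallness and correctly identifies $d\le 3$ as the threshold. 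Where you genuinely diverge is in the organization of parts (2)--(3): the paper takes the existence of the limit $V_0$ from~\cite{FroKnoSchSoh-17} and then, in Appendix~\ref{app:FKSS}, derives the limiting equation~\eqref{eq:equation_V_infty} \emph{a posteriori} by passing to the limit in the $\lambda$-equation, using the uniform bound \cite[Eq.~(5.21)]{FroKnoSchSoh-17}, dominated convergence, and the Hilbert--Schmidt convergence of the kernels $\lambda e^{s\lambda h}(e^{\lambda h}-1)^{-1}\to h^{-1}$. You instead construct $V_0$ first as the fixed point of the limiting map and deduce $V_\lambda\to V_0$ from the $\lambda$-uniform contraction constant plus a consistency estimate on $\Phi_\lambda(V_0)-V_0$. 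Your route delivers (2) and (3) in one stroke and avoids invoking the FKSS convergence as a black box, at the price of having to verify that the limiting map is itself a contraction (which follows from the same uniform estimates); the paper's route is shorter given that~\cite{FroKnoSchSoh-17} is quoted wholesale. The only point to tighten is the justification of the dominated convergence for $\lambda\big[(e^{\lambda(-\Delta+V_0)}-1)^{-1}-(e^{\lambda(-\Delta+\kappa)}-1)^{-1}\big](x;x)\to\rho_0(x)$, which you assert but which the paper handles explicitly via the Duhamel/resolvent representation and the pointwise heat-kernel domination $e^{s\lambda(-\Delta+V_0)}(x;z)\le e^{s\lambda(-\Delta+\kappa)}(x;z)$; this is a routine but necessary step.
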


The above is not stated exactly as in~\cite[Section~5]{FroKnoSchSoh-17}, where the limiting nonlinear equation~\eqref{eq:equation_V_infty} for $V_0$ was indeed not mentioned. We quickly discuss the link with~\cite{FroKnoSchSoh-17} and the proof of~\eqref{eq:equation_V_infty} in Appendix~\ref{app:counter}. Note that the limiting equation~\eqref{eq:equation_V_infty} is formally obtained by replacing the Bose-Einstein entropy $\tr(-\gamma\log\gamma+(1+\gamma)\log(1+\gamma))$  by $\tr(\log\gamma)$ (which is its leading behavior at large $\gamma$) in the variational principle~\eqref{eq:red-Hartree_functional} and then writing the associated variational equation.

The interpretation of Theorem~\ref{thm:counter} is that the divergence of the mean-field density $\rho_\lambda(x)$ is essentially $x$-independent, and given by that of $\rhoO^\kappa(\lambda)$, under the condition that $\nu(\lambda)$ is chosen as in~\eqref{eq:rhoOkappa 11}. The function $\rhoO^\kappa(\lambda)$ has already appeared in~\eqref{eq:nu_lambda} and we recall that 
$$\rhoO^\kappa(\lambda)=\begin{cases}
-\frac{\log(\lambda\kappa)}{4\pi\lambda}+O(1)_{\lambda\to0^+}&\text{for $d=2$,}\\[0.2cm]
-\frac{\zeta(3/2)}{8\pi^{\frac32}\lambda^{\frac32}}-\frac{\sqrt{\kappa}}{4\pi \lambda}+O(1)_{\lambda\to0^+}&\text{for $d=3$,}
  \end{cases}
$$
so that $\nu(\lambda)$ behaves the same as in~\eqref{eq:nu_lambda}. Here $\kappa$ is interpreted as a kind of effective chemical potential in the limit, but note that $V_0$ depends in a nonlinear way on~$\kappa$. The link with the free Bose gas is detailed in Appendix~\ref{app:interpretation}.

In the following we use as reference state the Gaussian quantum state 
\begin{equation}
 \Gamma_0=\cZ_0(\lambda)^{-1}e^{-\lambda\dG(-\Delta+V_\lambda)},\qquad \cZ_0(\lambda)=\tr_\gF\left[e^{-\lambda\dG(-\Delta+V_\lambda)}\right]
 \label{eq:free_inhomogeneous}
\end{equation}
associated with the one-particle operator $-\Delta+V_\lambda$, which provides in the limit the reference Gaussian measure $\mu_0$ with covariance $h_0^{-1}=(-\Delta+V_0)^{-1}$, depending on $\kappa$. 

We can now state the main result relating the physical inhomogeneous Hamiltonian to nonlinear Gibbs measure. 

\begin{theorem}[\textbf{\textbf{Inhomogeneous gas}}]\label{thm:main-2}\mbox{}\\
Let $d=2,3$. Let $V,w$ satisfy \eqref{eq:cond_V_FKSS 11}-\eqref{eq:w new asum 11}. Let $\kappa\geq \kappa_0$ and  $\nu(\lambda),V_\lambda,V_0$ as in Theorem~\ref{thm:counter}. Consider the Gibbs state $\Gamma_\lambda=\cZ(\lambda)^{-1}e^{-\lambda \bH_{\lambda}}$ associated with the physical Hamiltonian $\bH_\lambda$ in \eqref{eq:many body hamil} with  $h=-\Delta+V$ and $E_0(\lambda)$ as in \eqref{eq:choice-kappa-E-trap-VT}. Let $\Gamma_0=\cZ_0(\lambda)^{-1}e^{-\lambda\dG(-\Delta+V_\lambda)}$ be the reference Gaussian quantum state as in~\eqref{eq:free_inhomogeneous}. Let $\mu_0$ be the Gaussian measure with covariance $(-\Delta+V_0)^{-1}$ and let $d\mu=z^{-1}e^{-\cD[u]}d\mu_0$ be the associated nonlinear Gibbs measure as in~\eqref{eq:NL measure}. Then we have:

\smallskip

\noindent(\textbf{1}) \underline{Convergence of the relative free-energy:}
\begin{equation}\label{eq:CV free ener trap II}
\lim_{\lambda\to0^+}\log\frac{\cZ(\lambda)}{\cZ_0(\lambda)} =\log z = \log\left(\int e^{-\cD[u]} d \mu_0(u)\right).
\end{equation}

\smallskip

\noindent(\textbf{2}) \underline{Hilbert-Schmidt convergence of all density matrices:} for every $k\ge 1$, 
\begin{equation}\label{eq:CV DM S2 trap-II}
\lim_{\lambda\to0^+}\tr\left|\lambda^k\, k!\,  \Gamma_\lambda^{(k)} -  \int |u^{\otimes k} \rangle \langle u^{\otimes k} | d \mu(u)\right|^2 =0.
\end{equation}

\smallskip

\noindent(\textbf{3}) \underline{Trace class convergence of the relative one-body density matrix:} 
\begin{equation}\label{eq:CV DM S1 trap-II}
\lim_{\lambda\to0^+}\Tr \left| \lambda \left( \Gamma_\lambda^{(1)} - \Gamma_0^{(1)} \right) - \int |u \rangle \langle u | \left( d \mu(u) - d \mu_0(u) \right)  \right| =0.
\end{equation}
\end{theorem}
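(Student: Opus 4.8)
The plan is to deduce Theorem~\ref{thm:main-2} from the abstract inverse statement, Theorem~\ref{thm:main-3}, combined with the counter-term result of Fr\"ohlich--Knowles--Schlein--Sohinger quoted as Theorem~\ref{thm:counter}. The first step is purely algebraic. Given $V,w$ as in~\eqref{eq:cond_V_FKSS 11}--\eqref{eq:w new asum 11} and $\kappa\ge\kappa_0$, Theorem~\ref{thm:counter} produces the solution $V_\lambda$ of the counter-term equation~\eqref{eq:counter-term}; with this $V_\lambda$ and with $E_0(\lambda)$ as in~\eqref{eq:choice-kappa-E-trap-VT}, the identity~\eqref{eq:renorm int wide -0} rewrites the physical Hamiltonian as
\[
\bH_\lambda=\dGamma(-\Delta+V_\lambda)+\lambda\bWren,\qquad \bWren=\frac12\int_{\R^d}\hat w(k)\,\left|\dG(e^{ik\cdot x})-\left\langle\dG(e^{ik\cdot x})\right\rangle_{\Gamma_0}\right|^2\,dk,
\]
with $\Gamma_0$ the Gaussian state~\eqref{eq:free_inhomogeneous} of one-body operator $-\Delta+V_\lambda$. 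Thus $\Gamma_\lambda=\cZ(\lambda)^{-1}e^{-\lambda\bH_\lambda}$ is exactly the interacting Gibbs state obtained from the reference state $\Gamma_0$ by adding the renormalized interaction built from $\Gamma_0$ itself --- which is precisely the configuration treated by the inverse statement, now applied to the $\lambda$-dependent one-particle Hamiltonian $h_\lambda:=-\Delta+V_\lambda$.

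The second step is to check that this family meets the hypotheses of Theorem~\ref{thm:main-3} uniformly in $\lambda\in(0,1]$ and to identify the limiting objects. The two-sided bound $\tfrac12 V\le V_\lambda-\kappa\le\tfrac32 V$ of~\eqref{eq:bound_V_T 11}, together with the Lieb--Thirring inequality recalled after~\eqref{eq:cond_V_FKSS 11}, yields $\sup_{0<\lambda\le1}\tr[(-\Delta+V_\lambda)^{-2}]<\ii$ and the comparability of $h_\lambda$ with $h=-\Delta+V$, so that the hypotheses~\eqref{eq:w new asum 11} on $w$ translate into the form required by Theorem~\ref{thm:main-3}. The Hilbert--Schmidt resolvent convergence in~\eqref{eq:VT-Vinf-Sp 11} then shows that the reference Gaussian measure $\mu_0^{(\lambda)}$ of covariance $(-\Delta+V_\lambda)^{-1}$ converges to the Gaussian measure $\mu_0$ of covariance $(-\Delta+V_0)^{-1}$, and (via the $L^1(d\mu_0)$-continuity behind Lemma~\ref{lem:re-interaction}, using the uniform Schatten bound) that the renormalized interaction $\cD^{(\lambda)}$ built from $\mu_0^{(\lambda)}$ converges in $L^1(d\mu_0)$ to the interaction $\cD$ of the statement, hence that the normalizations converge, $z_\lambda\to z$. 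Feeding this into Theorem~\ref{thm:main-3} gives the relative free-energy limit~\eqref{eq:CV free ener trap II} and the Hilbert--Schmidt convergence of all density matrices~\eqref{eq:CV DM S2 trap-II}, with the limits expressed through $\mu$.

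I recall briefly what is inside Theorem~\ref{thm:main-3}, since that is where the real work lies. The Gibbs variational principle $\cF_{\lambda,1/\lambda}[\Gamma_\lambda]\le\cF_{\lambda,1/\lambda}[\Gamma_0]$ gives the a priori bounds $\cH(\Gamma_\lambda,\Gamma_0)=O(1)$ and a matching control of $\langle\lambda\bWren\rangle_{\Gamma_\lambda}$; a quasi-free trial state dressed by the renormalized weight (as in~\cite{LewNamRou-15}) yields the lower bound $\log(\cZ(\lambda)/\cZ_0(\lambda))\ge\log z-o(1)$, and the matching upper bound follows by passing to the limit in the de Finetti (Wigner) measure of $\Gamma_\lambda$ and using lower semicontinuity of the classical free-energy functional. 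The quantitative de Finetti theorem of~\cite{ChrKonMitRen-07,LewNamRou-15} handles the low-momentum sector, while the high-momentum sector --- where the divergence and hence the renormalization take place --- is controlled by the two new inequalities: the weighted relative-entropy bound~\eqref{eq:estim_HS_relative_entropy_intro} applied with $h=h_\lambda$, giving $\tr|h_\lambda^{1/2}(\Gamma_\lambda^{(1)}-\Gamma_0^{(1)})h_\lambda^{1/2}|^2=O(1)$, and the variance-by-linear-response bound~\eqref{eq:2body_by_1body} applied with $A=\dG(e^{ik\cdot x})$, which controls the two-particle expectations (hence $\Gamma_\lambda^{(2)}$) at high momenta by one-particle ones up to a quadruple-commutator error. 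Together these show that above any fixed energy cutoff $\Gamma_\lambda$ behaves like the mean-field state $\Gamma_0$ analyzed in~\cite{FroKnoSchSoh-17}, which is what allows one to identify the limiting Wigner measure with $\mu$ and to pass to the limit in $\cD$.

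Finally, for the trace-class convergence~\eqref{eq:CV DM S1 trap-II}: both $\lambda\Gamma_\lambda^{(1)}$ (by~\eqref{eq:CV DM S2 trap-II} with $k=1$) and $\lambda\Gamma_0^{(1)}=\lambda(e^{\lambda(-\Delta+V_\lambda)}-1)^{-1}$ (by~\eqref{eq:VT-Vinf-Sp 11} and dominated convergence) converge in $\gS^2$, to $\gamma_\mu^{(1)}$ and $\gamma_{\mu_0}^{(1)}=(-\Delta+V_0)^{-1}$ respectively, so that $\lambda(\Gamma_\lambda^{(1)}-\Gamma_0^{(1)})\to\gamma_\mu^{(1)}-\gamma_{\mu_0}^{(1)}$ in $\gS^2$ and weakly-$\ast$. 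One then needs a uniform-in-$\lambda$ bound in $\gS^1$ for $\lambda(\Gamma_\lambda^{(1)}-\Gamma_0^{(1)})$ --- obtained by splitting into low momenta, where $\gS^2$- and $\gS^1$-control coincide, and high momenta, where one uses~\eqref{eq:estim_HS_relative_entropy_intro} together with $\tr[h_\lambda^{-2}]<\ii$ --- and the convergence of the trace $\lambda\,\tr[\Gamma_\lambda^{(1)}-\Gamma_0^{(1)}]=\lambda(\langle\cN\rangle_{\Gamma_\lambda}-\langle\cN\rangle_{\Gamma_0})\to\int\cM(u)\,d\mu(u)$, with $\cM$ the renormalized mass of Lemma~\ref{lem:re-mass}, which follows by differentiating~\eqref{eq:CV free ener trap II} in the chemical potential; the standard upgrade of weak-plus-trace convergence to norm convergence for trace-class operators then gives~\eqref{eq:CV DM S1 trap-II}. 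I expect the main obstacle to be the one internal to Theorem~\ref{thm:main-3}: controlling the high-momentum part of $\Gamma_\lambda^{(2)}$, where the genuinely quantum nature of the problem forces the use of the linear-response inequality~\eqref{eq:2body_by_1body} in place of the classical identity ``variance $=$ linear response''. The feature specific to the inhomogeneous case is that every estimate must be made uniform over the $\lambda$-dependent family $\{h_\lambda\}$, known only through the soft information of Theorem~\ref{thm:counter}, and that $\mu_0$ and $\cD$ have to be produced by a genuine limit rather than fixed in advance --- a subtlety most visible in~\eqref{eq:CV DM S1 trap-II}, where the two subtracted terms each diverge as $\lambda\to0^+$.
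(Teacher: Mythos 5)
Your overall architecture --- rewriting the physical Hamiltonian via the counter-term equation, applying the inverse statement Theorem~\ref{thm:main-3} to the $\lambda$-dependent family $h_\lambda=-\Delta+V_\lambda$ with uniformity guaranteed by $\sup_{0<\lambda\le1}\tr[h_\lambda^{-2}]<\ii$, and then passing to the limit in the classical objects using Theorem~\ref{thm:counter} --- is exactly the paper's. One difference of route: for the convergence of the classical quantities ($z_\lambda\to z$ and the reduced density matrices of $\mu_\lambda$ to those of $\mu$) the paper does not argue via $L^1(d\mu_0)$-convergence of $\cD^{(\lambda)}$. The renormalized interactions $\cD^{(\lambda)}$ and $\cD$ are Wick-ordered with respect to \emph{different} Gaussian measures, so a direct comparison inside a fixed $L^1(d\mu_0)$ is not immediate; the paper instead invokes the perturbative expansion and Borel summation of~\cite{FroKnoSchSoh-17}, whose coefficients and remainders are controlled using only the Hilbert--Schmidt convergence~\eqref{eq:VT-Vinf-Sp 11}. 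Your route could likely be repaired, but as written it glosses over the change of reference measure.

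The genuine gap is in your last step for part (3). The deduction ``uniform $\gS^1$ bound $+$ weak-$*$ convergence $+$ convergence of the trace $\Rightarrow$ trace-norm convergence'' is valid only for \emph{positive} operators (Gr\"umm-type results); for signed operators it fails, as the example $A_n=|e_n\rangle\langle e_n|-|f_n\rangle\langle f_n|$ with orthonormal sequences shows ($A_n\to0$ weakly, $\tr A_n=0$, yet $\|A_n\|_{\gS^1}=2$). The operator $\lambda(\Gamma_\lambda^{(1)}-\Gamma_0^{(1)})$ is emphatically not sign-definite --- each term diverges in trace while the difference stays bounded, so it carries large positive and negative parts. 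Fortunately the low/high-momentum splitting you mention only as a tool for the uniform bound is by itself a complete proof, and it is exactly what the paper does in Lemma~\ref{lem:CV-1pdm}: writing $X_\lambda=\lambda(\Gamma_\lambda^{(1)}-\Gamma_0^{(1)})$ and $P_L=\1(h_\lambda\le L)$, the $\gS^2$-convergence upgrades to $\gS^1$ on the range of $P_L$ by $\|P_L\|_{\gS^2}<\ii$, while the tail is controlled by $\|Q_LX_\lambda\|_{\gS^1}\le\|h_\lambda^{-1/2}Q_Lh_\lambda^{-1/2}\|_{\gS^2}\,\|h_\lambda^{1/2}X_\lambda h_\lambda^{1/2}\|_{\gS^2}$, small uniformly in $\lambda$ by~\eqref{eq:estim_HS_relative_entropy} and $\tr[h_\lambda^{-2}]<\ii$; no trace identity (and no differentiation in the chemical potential) is needed. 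For the residual $\lambda$-dependence of the classical limit object, the paper shows that $\int|u\rangle\langle u|\,(d\mu_\lambda-d\mu_{0,\lambda}+d\mu_0-d\mu)\to0$ in $\gS^1$ by testing against bounded operators \emph{uniformly in their operator norm}, which is trace-norm convergence by duality --- not a weak-plus-trace upgrade.
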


\bigskip

Here are some remarks on the inhomogeneous case.

\smallskip

\noindent\textbf{1.} Note that our reference $\Gamma_0$ is not the exact mean-field minimizer since its chemical potential is $\nu(\lambda)+\lambda w(0)/2$ instead of $\nu(\lambda)$, by Theorem~\ref{lem:min quasi free}. This shift is the same as the one in~\eqref{eq:intro Fourier quant int} and it is completely negligible in our regime. It is simpler to work with $\Gamma_0$ as a reference but the result is exactly the same if we use instead the exact mean-field minimizer (as we did in the introduction). Note that when referring to a ``mean-field minimizer'' we always understand that the exchange term is neglected in its  definition. 

\smallskip

\noindent\textbf{2.} The same theorem was shown in dimension $d=1$ for $V(x)\geq C|x|^s$ with $s>2$ in~\cite{LewNamRou-15}  and $s>1$ in~\cite{LewNamRou-18a}. In this case it is not necessary to use the mean-field solution as reference. The final measure is indeed absolutely continuous with respect to the non-interacting measure with covariance $h^{-1}=(-\Delta+V+\kappa)^{-1}$. The assumption that $\kappa$ is large enough is also not necessary. The proof given in this paper applies to dimension $d=1$ as well, with the weaker assumption $s>2/3$. 

\smallskip

\noindent\textbf{3.} What is really needed in our approach is that $\tr[h^{-2}]<\ii$. In Section~\ref{sec:strategy} we state another theorem which covers any $h=-\Delta+V$ on an arbitrary domain $\Omega\subset\R^d$, for instance with Dirichlet boundary condition. Should the mean-field convergence in Theorem~\ref{thm:counter} hold in this setting, as we believe, we then immediately obtain a result similar to Theorem~\ref{thm:main-2} on the domain $\Omega\subset\R^d$.

\section{Proof strategy} \label{sec:strategy}

To make the presentation transparent, we first formulate in Theorem \ref{thm:main-3} below a general \emph{inverse} problem, from which the previous (direct) statements will easily follow. Then we explain the main ideas of the proof of the inverse statement, whose details occupy the core of the paper. 

\subsection{General ``inverse'' statement}\label{sec:inverse}

By ``inverse problem'' we mean the limit of the quantum model (with an arbitrary one-particle Hamiltonian $h$), to which we add properly chosen, $x$-dependent, counter terms so that the limit measure is absolutely continuous with respect to the non-interacting gaussian (instead of the mean-field one as in the previous section). 

Assume that $\Omega$ is an arbitrary smooth domain in $\R^d$ and $h$ is a positive self-adjoint operator on $L^2(\Omega)$ such that $\tr e^{-\beta h}<\ii$ for every $\beta>0$. The reader might think of the typical case $h=-\Delta+V$ on $L^2(\R^d)$ with a trapping potential $V$ diverging fast enough at infinity, as before. However, in order to cover as many practical situations as possible, we will keep $h$ rather general and abstract from now on. This is a difference in approach with respect to~\cite{FroKnoSchSoh-20}, where a path integral formalism is used, based on the Feynman-Kac formula for the Laplacian.

Let 
\begin{equation}
 \Gamma_0=\cZ_0(\lambda)^{-1}\,e^{-\lambda\dG(h)},\qquad \cZ_0(\lambda)=\tr_\gF[e^{-\lambda\dG(h)}]
 \label{eq:quantum_Gaussian_inverse}
\end{equation}
be the corresponding quantum Gaussian state. Its one-body density $\rhoO (x)$ is given by 
\begin{align}\label{eq:1pdm-free Gibbs}
\rhoO  (x):= \Gamma_0 ^{(1)} (x;x)= \left[ \frac{1}{e^{\lambda h}-1} \right](x;x).
\end{align}
We then consider the renormalized interaction 
\begin{align} \label{eq:renorm int trap}
\bWren &=  \frac 12 \int_{\R^d} \hat{w} (k) \left|\dG (e ^{ik \cdot x}) - \left\langle \dG (e ^{ik \cdot x}) \right\rangle_{\Gamma_0} \right| ^2 dk \\
&= \bW -  \dG (w \ast  \rhoO) +\frac{1}{2} \int_{\R^d}  \rhoO(x) w(x-y)  \rhoO(y) dxdy + \frac{\cN}{2} w (0). \nn
\end{align}
and the renormalized Hamiltonian as  
\begin{align} \label{eq:dGh+W-I}
\bH_\lambda := \dGamma(h)+ \lambda \bWren = \dGamma(h - \lambda w  \ast \rhoO + \lambda w(0)/2) + \lambda \bW  + E_0(\lambda)
\end{align}
where
\begin{align} \label{eq:choice-kappa-E-trap-I}
\quad E_0(\lambda):= \frac{\lambda}{2} \iint_{\R^d\times \R^d}  \rhoO(x) w(x-y)  \rhoO(y) dxdy.
\end{align}
Thus instead of varying the chemical potential (we set $\nu = 0$ here), we have replaced the bare one-body operator $h$ by the dressed operator $h - \lambda w  \ast \rhoO- \lambda w(0)/2$. 

We will, for the sake of generality, only make the 

\begin{asumption}[\textbf{The one-body Hamiltonian}]\label{asum:h}\mbox{}\\
Let $h$ be a self-adjoint operator on $L^2 (\Omega)$ satisfying
\begin{align}
\tr[h^{-2}]&<\infty, \label{eq:Schatten h-strong-1}\\
\|[h,e^{ik\cdot x}]h^{-1/2}\|&\le C(1+|k|^2),  \quad \forall k\in \R^d, \label{eq:Schatten h-strong-2}\\
e^{-\beta h}(x,y)  &\ge 0, \quad \forall  \beta >0 \label{eq:Schatten h-strong-3}
\end{align}
where $e^{ik\cdot x}$ is identified with the corresponding multiplication operator.
\end{asumption}

This is satisfied in the case of Theorem~\ref{thm:main-2}, as we explain after the statement of the next theorem.

Our assumptions on $w$ are the same as before:
\begin{equation}
 w(x)=\int_{\Omega^*} \widehat w(k) e^{ik \cdot x} dk, \quad \widehat w(k)\ge 0, \quad \int_{\Omega^*} \widehat w(k)  \left( 1 + |k|^2  \right)  dk <\infty.
 \label{eq:interaction}
\end{equation}
Of course when $\Omega=\bT^d$, the integration is interpreted as a sum as in \eqref{eq:interaction-homo} and the momenta $k$ in~\eqref{eq:Schatten h-strong-2} must be in $2\pi\Z^d$. 
We will prove the following: 

\begin{theorem}[\textbf{General inverse statement}]\label{thm:main-3}\mbox{}\\
Let $h>0$ on $L^2(\Omega)$ satisfy Assumption~\ref{asum:h} and let $w:\R^d \to \R$ satisfy \eqref{eq:interaction}. Consider the Gibbs state 
$\Gamma_\lambda=\cZ(\lambda)^{-1} e^{-\lambda\bH_\lambda}$ associated with the renormalized  Hamiltonian $\bH_\lambda$ in ~\eqref{eq:dGh+W-I}. Let $\Gamma_0=\cZ_0(\lambda)^{-1}\,e^{-\lambda\dG(h)}$ be the reference Gaussian quantum state as in~\eqref{eq:quantum_Gaussian_inverse}. Let $\mu_0$ be the Gaussian measure with covariance $h^{-1}$ and let $d\mu=z^{-1}e^{-\cD[u]}d\mu_0$ be the associated nonlinear Gibbs measure as in~\eqref{eq:NL measure}. Then we have: 

\smallskip

\noindent(\textbf{1}) \underline{Convergence of the relative free-energy:}
\begin{equation}\label{eq:CV free ener trap}
\lim_{\lambda\to0^+}\log\frac{\cZ(\lambda)}{\cZ_0(\lambda)}= \log z = \log \left(\int e^{-\cD[u]} d \mu_0(u)\right).
\end{equation}

\smallskip

\noindent(\textbf{2}) \underline{Hilbert-Schmidt convergence of all density matrices:} for every $k\ge 1$, 
\begin{equation}\label{eq:CV DM Sp trap-HS}
\lim_{\lambda\to0^+}\tr\left|\lambda^k\, k!\,  \Gamma_\lambda^{(k)} -  \int |u^{\otimes k} \rangle \langle u^{\otimes k} | d \mu(u)\right|^2 =0.
\end{equation}

\smallskip

\noindent(\textbf{3}) \underline{Trace class convergence of the relative one-body density matrix:} 
\begin{equation}\label{eq:CV DM S1 trap}
\lim_{\lambda\to0^+}\Tr \left| \lambda \left( \Gamma_\lambda^{(1)} -  \Gamma_0^{(1)} \right) - \int |u \rangle \langle u | \left( d \mu(u) - d \mu_0(u) \right)  \right| =0.
\end{equation}
\end{theorem}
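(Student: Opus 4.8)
The three statements are tied together by a single variational principle. Writing the reference Gaussian state as $\Gamma_0=\cZ_0(\lambda)^{-1}e^{-\lambda\dGamma(h)}$ and using $\bH_\lambda=\dGamma(h)+\lambda\bWren$, the Gibbs (Peierls--Bogoliubov) variational principle gives the exact identity
\[
-\log\frac{\cZ(\lambda)}{\cZ_0(\lambda)}=\inf_{\Gamma\in\cS(\gF)}\Big\{\cH(\Gamma,\Gamma_0)+\lambda^2\tr\big[\bWren\Gamma\big]\Big\},
\]
uniquely attained at $\Gamma_\lambda$, whereas on the classical side $-\log z=\inf_\nu\big\{\cH_{\rm cl}(\nu,\mu_0)+\int\cD[u]\,d\nu(u)\big\}$ is uniquely attained at $\mu$. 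The plan is to prove a $\Gamma$-convergence statement for these two functionals, after identifying quantum states with de Finetti measures: this yields (1) and, by uniqueness of the minimiser, forces the density matrices of $\Gamma_\lambda$ to converge to those of $\mu$, which yields (2)--(3). The upper bound $\limsup_{\lambda\to0^+}\big(-\log\tfrac{\cZ}{\cZ_0}\big)\le-\log z$ is the easier half: one truncates $\mu$ by replacing $e^{-\cD[u]}$ by a bounded cylindrical density depending only on $P_Ku$ and cutting $\cD_K$ from above, then lifts the truncated classical measure to $\gF$ via a superposition of coherent states on $\mathrm{Ran}\,P_K$ tensored with $\Gamma_0$ on its orthogonal complement, as in~\cite{LewNamRou-15}. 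Because $\bWren$ is Wick-ordered \emph{with respect to $\Gamma_0$}, the counter-terms built into $\lambda^2\tr[\bWren\Gamma_{\rm trial}]$ are exactly those that turn the bare interaction into $\cD$, and a direct computation shows $\cH(\Gamma_{\rm trial},\Gamma_0)+\lambda^2\tr[\bWren\Gamma_{\rm trial}]\to\cH_{\rm cl}(\,\cdot\,,\mu_0)+\int\cD$ evaluated at the truncated measure; letting the truncation parameters go to infinity closes the bound.

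The core is the matching lower bound. From the upper bound, $\cH(\Gamma_\lambda,\Gamma_0)\le C$ and $\lambda^2\tr[\bWren\Gamma_\lambda]\le C$ uniformly, both terms being nonnegative. Applying Theorem~\ref{thm:estim_relative_entropy} to the Gaussian $\Gamma_0$, whose one-body Hamiltonian is $\lambda h$, gives $\lambda^2\tr\big|h^{1/2}(\Gamma_\lambda^{(1)}-\Gamma_0^{(1)})h^{1/2}\big|^2\le C$, hence, by Hölder for Schatten norms and $\tr[h^{-2}]<\infty$, $\big\|\lambda(\Gamma_\lambda^{(1)}-\Gamma_0^{(1)})\big\|_{\gS^1}\le C$; the same bound holds, uniformly, for the perturbed Gibbs states $e^{-\lambda\bH_\lambda+\eps A}/\tr(\cdots)$, whose free energies stay within $O(1)$ of $-\log z$ for bounded $A$. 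The delicate estimate is on the two-body correlations. I would apply Theorem~\ref{thm:general-variance-by-linear-response} to $A=\lambda\,\Re\dGamma(e^{ik\cdot x})$ and $A=\lambda\,\Im\dGamma(e^{ik\cdot x})$ (recentred at their $\Gamma_\lambda$-expectations and localised to a particle-number window carrying almost all the mass of $\Gamma_\lambda$), using $[[\dGamma(h),\dGamma(e^{ik\cdot x})],\dGamma(e^{ik\cdot x})]=\dGamma([[h,e^{ik\cdot x}],e^{ik\cdot x}])$ and Assumption~\ref{asum:h}, notably~\eqref{eq:Schatten h-strong-2}, to bound the multiple commutators, while the ``linear response'' term $\sup_\eps|\tr[Ae^{-\lambda\bH_\lambda+\eps A}]|/\tr(\cdots)$ is controlled by the relative one-body bound just obtained. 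Summing the two contributions and using that multiplication operators commute, so that $A^2$ reconstructs $\lambda^2\,(\dGamma(e^{ik\cdot x})-\langle\cdot\rangle_{\Gamma_0})^*(\dGamma(e^{ik\cdot x})-\langle\cdot\rangle_{\Gamma_0})$ up to lower-order one-body terms, produces a bound on the renormalised two-body quantity that is integrable against $\widehat w(k)(1+|k|^2)\,dk$, together with the information that at high momenta it has the same leading behaviour as for the Gaussian $\Gamma_0$ — which is exactly where renormalisation takes place. I expect this to be the main obstacle: making the variance--linear-response inequality effective for the \emph{unbounded} second-quantised operators $\dGamma(e^{ik\cdot x})$, choosing the particle-number cutoff so as to keep the quadruple commutator $[[[[H,A],A],A],A]$ small without destroying the logarithmic ($d=2$) or power ($d=3$) divergences, and tracking the $k$-dependence of every constant so the estimate survives integration against $\widehat w$.

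With these a priori estimates, I would split $\gH=V_K\oplus V_K^\perp$ with $V_K$ the span of the first $K$ eigenvectors of $h$, so that $\gF(\gH)\simeq\gF(V_K)\otimes\gF(V_K^\perp)$ and $\Gamma_0$ factorises. Monotonicity of the relative entropy under partial trace gives $\cH(\Gamma_\lambda,\Gamma_0)\ge\cH(\Gamma_{\lambda,K},\Gamma_{0,K})$ for the $\gF(V_K)$-marginal, and the quantitative de Finetti theorem of~\cite{ChrKonMitRen-07,LewNamRou-15} yields a probability measure $\nu_K$ on $V_K$ such that the cylindrical projections of $\lambda^jj!\,\Gamma_\lambda^{(j)}$ converge to $\gamma^{(j)}_{\nu_K}$ as $\lambda\to0^+$, with error $O\big(K/\langle\cN\rangle_{\Gamma_0}\big)\to0$. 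Using this on the low modes, and the two-body a priori bound on the high modes — the $P_K\otimes Q_K$ and $Q_K\otimes Q_K$ cross-terms of $\bWren$ being, at leading order, the Gaussian ones, which cancel against the Wick counter-terms and thereby reconstruct exactly $\cD_K$ — one bounds the quantum functional from below by $\cH_{\rm cl}(\nu_K,\mu_{0,K})+\int\cD_K[u]\,d\nu_K(u)-o(1)_{\lambda\to0^+}\ge-\log z_K-o(1)$. Letting $\lambda\to0^+$ and then $K\to\infty$ (with $\cD_K\to\cD$ in $L^1(d\mu_0)$, $z_K\to z$, and $\nu_K\rightharpoonup\mu$ along subsequences) gives $\liminf_{\lambda\to0^+}\big(-\log\tfrac{\cZ}{\cZ_0}\big)\ge-\log z$, proving~(1).

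Finally, (1) and the a priori bounds imply $\lambda^2\tr[\bWren\Gamma_\lambda]\to\int\cD\,d\mu$ and $\cH(\Gamma_\lambda,\Gamma_0)\to\cH_{\rm cl}(\mu,\mu_0)$, and any weak-$*$ limit point of $(\lambda^jj!\,\Gamma_\lambda^{(j)})_{j\ge1}$ is generated by a minimiser of the classical functional, hence by $\mu$; by uniqueness the whole family converges weakly-$*$ to $\gamma^{(j)}_\mu$, and $\lambda(\Gamma_\lambda^{(1)}-\Gamma_0^{(1)})\rightharpoonup\gamma^{(1)}_\mu-\gamma^{(1)}_{\mu_0}$. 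To upgrade to the norm convergences in (2)--(3) I would use the uniform tail bounds coming from the relative-entropy and interaction estimates — for instance $\|\one(h>\Lambda)\,\lambda(\Gamma_\lambda^{(1)}-\Gamma_0^{(1)})\,\one(h>\Lambda)\|_{\gS^1}\le\big(\sum_{h_j>\Lambda}h_j^{-2}\big)^{1/2}C\to0$ uniformly in $\lambda$, and the $\gS^2$-analogues for $k\ge1$ — to reduce to a finite-dimensional sector where weak-$*$ convergence is norm convergence; the trace in~(3) converges to $\int\cM(u)\,d\mu(u)$ with $\cM$ as in Lemma~\ref{lem:re-mass}. The direct statements, Theorems~\ref{thm:main-1} and~\ref{thm:main-2}, then follow by specialising $h$, checking Assumption~\ref{asum:h}, and invoking the mean-field analysis of~\cite{FroKnoSchSoh-17} recalled in Theorem~\ref{thm:counter}.
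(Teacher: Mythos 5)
Your proposal gets the overall architecture right (variational formulation, upper bound by a trial state that equals $\Gamma_0$ on high modes, lower bound via Berezin--Lieb and quantitative de~Finetti on a projected subspace), but the central step of the lower bound is misconfigured in a way that cannot be repaired without the paper's key idea.

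The issue is in how you propose to use Theorem~\ref{thm:general-variance-by-linear-response}. You apply it to $A=\lambda\,\Re\dGamma(e^{ik\cdot x})$ (or the imaginary part), recentred at its $\Gamma_\lambda$-expectation, and plan to control the linear-response term by the relative one-body bound from Theorem~\ref{thm:estim_relative_entropy}. That bound only gives $\lambda\big|\tr\big(e_k(\Gamma^{(1)}_{\lambda,\eps}-\Gamma^{(1)}_{\lambda})\big)\big|=O(1)$ for $\eps\in[-a,a]$; it does not make the quantity $\eta$ in \eqref{eq:E} small. With $\eta=O(a)$ and $a=O(1)$ the conclusion \eqref{eq:estim_variance} reads $\tr(A^2\Gamma_0)\lesssim 1$, which is no better than the a~priori bound $\lambda^2\tr[\bWren\Gamma_\lambda]\le C$ already supplied by Lemma~\ref{lem:partition}. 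What you actually need is that the \emph{high-momentum tail} of the variance vanishes, i.e.\ that $\lambda^2\langle|\dGamma(e_k^+)-\langle\dGamma(e_k^+)\rangle_0|^2\rangle_\lambda\to 0$ with $e_k^+=e_k-Pe_kP$, and this is obtained in the paper (Theorem~\ref{thm:correlation}, proved via Lemma~\ref{lem:variance-inter}) by applying the variance--linear-response machinery not to $e_k$ but to the cross-term $f_k^+=Pe_kQ+Qe_kP$ (and its iterates), recentred at $\langle\cdot\rangle_0$. Only for this choice is the first moment against $\Gamma_{\lambda,\eps}$ genuinely small, of order $\|Qh^{-1}\|_{\gS^2}^{1/2}$ (Lemma~\ref{lem:first-moment}), because $\|h^{-1/2}f_k^+h^{-1/2}\|_{\gS^2}^2\le C\|Qh^{-1}\|_{\gS^2}$. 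That smallness, not the $O(1)$ one-body bound, is what drives $\eta/a\to 0$. Without localising the observable before invoking the variance inequality, the argument does not give you the correlation estimate on the high modes, and consequently the splitting of $\bWren$ into a ``de~Finetti'' piece on $P\gH$ plus a vanishing remainder does not close.

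A second gap concerns part (2). You assert $\gS^2$-analogues of the one-body tail bound for all $k\ge 1$, but such bounds are not available from the relative-entropy and interaction estimates alone when $k\ge 2$. The uniform Hilbert--Schmidt estimate $\|\lambda^k\Gamma_\lambda^{(k)}\|_{\gS^2}\le C_k$ in Lemma~\ref{lem:HS-norm-bd} is obtained from a Trotter/Feynman--Kac domination $0\le\Gamma_\lambda^{(k)}(X;Y)\le C_k\Gamma_0^{(k)}(X;Y)$, which is exactly where the heat-kernel positivity hypothesis \eqref{eq:Schatten h-strong-3} enters. Your outline never uses that assumption, so the uniform $\gS^2$ control needed to upgrade the weak-$*$ convergence to the norm convergence in \eqref{eq:CV DM Sp trap-HS} is unjustified. (By contrast, for $k=1$, Theorem~\ref{thm:estim_relative_entropy} does give the requisite uniform bound, and your strategy for upgrading \eqref{eq:CV DM S1 trap} via a high-energy cut is sound.)
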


Here are our comments on Theorem \ref{thm:main-3}. 

\medskip

\noindent\textbf{1.} Let us go back to the case $h=-\Delta+V(x)$. When $V$ is not a constant, the renormalized Hamiltonian~\eqref{eq:dGh+W-I} is different from the physical Hamiltonian~\eqref{eq:many body hamil} because we have replaced the bare potential $V$ by the new potential  $V - \lambda w  \ast \rhoO + \lambda w(0)/2$, instead of simply shifting a chemical potential. Thus to obtain the Gibbs measure associated with $V$, we have started with an ad-hoc, different external potential. In this regard the above is an inverse statement. 

\medskip

\noindent\textbf{2.} The homogeneous case in Theorem~\ref{thm:main-1} is included in Theorem~\ref{thm:main-3} because we have seen in~\eqref{eq:def_H_lambda_homogeneous} that the Hamiltonian can be rewritten in the form~\eqref{eq:dGh+W-I} if $\nu(\lambda)$ and $E_0(\lambda)$ are chosen appropriately. But the inhomogeneous case in Theorem~\ref{thm:main-2} does not immediately follow from Theorem~\ref{thm:main-3}. In the latter situation, we can write the main Hamiltonian as in~\eqref{eq:dGh+W-I} but only if we use the mean-field potential $V_\lambda$ solving the counter term problem~\eqref{eq:counter-term}, and thus depending on $\lambda$. Recall that $V_\lambda$ converges to $V_0$ by Theorem~\ref{thm:counter}, however. Our proof of Theorem~\ref{thm:main-3} will indeed apply to this case too because all our estimates are quantitative and depend only on $\tr[h^{-2}]$. This stays bounded when $V_\lambda\to V_0$ in the sense of Theorem~\ref{thm:counter}. Further details on how to
  deduce Theorem~\ref{thm:main-2} from Theorem~\ref{thm:main-3} and its proof are given in Section~\ref{sec:conclusion}. 

\smallskip 

\noindent\textbf{3.} Here besides the natural condition \eqref{eq:Schatten h-strong-1}, we also require the bound on the commutator of $h$ with the multiplication operator by  $e^{ik\cdot x}$ in \eqref{eq:Schatten h-strong-2}. 
The precise meaning of~\eqref{eq:Schatten h-strong-2} is that we assume $e^{ik\cdot x}$ stabilizes the domain $D(h)$ of $h$ for all $k$, and that 
$$\norm{[h,e^{ik\cdot x}]u}^2\leq C(1+|k|^2)^2 \pscal{u,hu}.$$
for all $u\in D(h)$. This condition is needed for the key variance estimate in Section~\ref{sec:correl}. For $h=-\Delta+V$ in $\Omega=\R^d$, it follows immediately from the computation 
\begin{equation}\label{eq:commutator h ek}
[e^{ik\cdot x} , h] =[e^{ik\cdot x} , -\Delta] = e^{ik\cdot x}   \Big( -|k|^2 + 2 k \cdot (i\nabla) \Big)
\end{equation}
and the fact that $e^{ik\cdot x}$ stabilizes the domain of the Friedrichs realization of $-\Delta+V$. The assumption is also satisfied for the periodic Laplacian on the torus, or the Dirichlet Laplacian in a bounded domain $\Omega$. 

\smallskip 

\noindent\textbf{4.} Our other assumption~\eqref{eq:Schatten h-strong-3} is well-known to hold for $h=-\Delta +V$ on $L^2(\R^d)$; in this case the positivity of the heat kernel is a consequence of the Feynman-Kac formula (see for example~\cite{Simon-05}). But other models are covered, including fractional Laplacians, and localized versions on bounded domains with various boundary conditions. This condition is only needed for the convergence of density matrices in \eqref{eq:CV DM Sp trap-HS}--\eqref{eq:CV DM S1 trap}. Without \eqref{eq:Schatten h-strong-3}, the free energy convergence \eqref{eq:CV free ener trap} remains valid.  

\medskip

In the following we sketch an outline of the proof of Theorem~\ref{thm:main-3}.

\subsection{Variational method} 
Our method is variational, in the same spirit as our previous works~\cite{LewNamRou-15,LewNamRou-18a}. We shall however rely much more on the structure of the Gibbs state, that is, on the fact that it is the \emph{exact} minimizer (and not just an approximate one) of the free-energy
\begin{align} \label{eq:Gibbs-free-energy} 
\lambda\,F_\lambda=- \log \cZ(\lambda)  = \min_{\substack{\Gamma\ge 0\\ \Tr \Gamma=1}} \left\{ \tr\left[ \lambda\bH_\lambda \Gamma \right] + \tr\left[\Gamma \log \Gamma \right] \right\}.
\end{align}
From~\eqref{eq:Gibbs-free-energy} and a similar formula for the Gaussian quantum state $\Gamma_0$, we deduce that $\Gamma_{\lambda}$ is also the unique minimizer for the {\em relative free energy}:
\begin{equation} \label{eq:rel-energy}
-\log \frac{\cZ(\lambda)}{\cZ_0(\lambda)} = \min_{\substack{\Gamma\ge 0\\ \Tr \Gamma=1}} \Big\{ \cH(\Gamma,\Gamma_{0}) + \lambda \Tr[ (\bH_\lambda-\bH_0) \Gamma]  \Big\}= \min_{\substack{\Gamma\ge 0\\ \Tr \Gamma=1}} \Big\{ \cH(\Gamma,\Gamma_{0}) + \lambda^2 \Tr\left[\bWren \Gamma\right] \Big\}.
\end{equation}
Here  
$$\cH(\Gamma,\Gamma'):= \tr_{\gF}\left(\Gamma(\log\Gamma-\log\Gamma')\right) \ge 0$$
is the von Neumann \emph{relative entropy} of two quantum states $\Gamma$ and $\Gamma'$. The simple rewriting~\eqref{eq:rel-energy} is particularly useful, for the left-hand side is nothing but the free-energy difference, multiplied by $\lambda$. This is the quantity we show converges when $\lambda\to0$ in~\eqref{eq:CV free ener} and~\eqref{eq:CV free ener trap}. Characterizing the difference directly as an infimum is much more convenient than working on both terms seen as infima separately.  

Similarly, the classical Gibbs measure $\mu$ defined in Section~\ref{sec:defs class} is the unique minimizer for the variational problem
\begin{equation} \label{eq:zr-rel}
-\log z = \min_{\substack{\nu \text{~proba. meas.}\\ \nu\ll\mu_0}} \left\{ \cH_{\rm cl}(\nu, \mu_0) + \int \cD[u]\,d\nu(u)\right\}
\end{equation}
where
$$ \cH_{\rm cl} (\nu,\nu'):= \int_{\gH^{s}}\frac{d\nu}{d\nu'}(u)\log\left(\frac{d\nu}{d\nu'}(u)\right)\,d\nu'(u) \ge 0 $$
is the classical relative entropy of two probability measures $\nu$ and $\nu'$. 

The variational problems \eqref{eq:zr-rel}, \eqref{eq:rel-energy} and their basic properties will be 
discussed in Section~\ref{sec:class meas}. For now, observe that~\eqref{eq:zr-rel} begs for being interpreted as a semi-classical version of~\eqref{eq:rel-energy}. This is the route we follow, using semi-classical-type measures associated with general states on the Fock space. To our sequences of quantum states $\Gamma_\lambda$ and $\Gamma_0$ we can associate in the limit $\lambda\to0^+$ two semi-classical measures $\mu$ and $\mu_0$. 
This is a general theory, not particularly linked to the fact that we consider Gibbs states. The measures are also called \emph{de Finetti} or \emph{Wigner} measure and they can be constructed for very general states. The precise sense in which they approximate quantum states is fairly explicit~\cite{AmmNie-08,AmmNie-09,LewNamRou-14,LewNamRou-15b,LewNamRou-15,Rougerie-LMU,Rougerie-cdf}. Everything will be recalled in Section~\ref{sec:deF} below. Although $\mu$ is \emph{a priori} unknown, we have already proved in~\cite{LewNamRou-15} that the de Finetti/Wigner measure $\mu_0$ of the Gaussian state $\Gamma_0$ is simply the Gaussian measure with covariance $h^{-1}$, as it should be. The goal is therefore to prove that $\mu$ is what we want by showing that it solves the minimization problem~\eqref{eq:zr-rel}. This requires to pass to the limit in~\eqref{eq:rel-energy}, the difficult part being the lower bound
\begin{equation}
\liminf_{\lambda\to0^+}\left(-\log \frac{\cZ(\lambda)}{\cZ_0(\lambda)}\right) \geq \cH_{\rm cl}(\mu, \mu_0) +\int \cD[u]\,d\mu(u)\geq -\log z.
\label{eq:ineq_to_be_proven0}
\end{equation}
This can be split into two separate lower bounds
\begin{equation}
\liminf_{\lambda\to0^+}\cH(\Gamma,\Gamma_{0}) \geq \cH_{\rm cl}(\mu, \mu_0) 
\label{eq:ineq_to_be_proven1}
\end{equation}
and
\begin{equation}
\liminf_{\lambda\to0^+}\lambda^2 \Tr\left[\bWren \Gamma\right] \geq \int \cD[u]\,d\mu(u).
\label{eq:ineq_to_be_proven2}
\end{equation}
The first bound~\eqref{eq:ineq_to_be_proven1} has already been shown in~\cite{LewNamRou-15}, based on a Berezin-Lieb-type inequality~\cite{Berezin-72,Lieb-73b,Simon-80,Rougerie-LMU,Rougerie-cdf} for the relative entropy which we recall in Theorem~\ref{thm:rel-entropy} below. The bound~\eqref{eq:ineq_to_be_proven2} is much more difficult due to the renormalization of $\cD$, which is seen in the fact that $\bWren$ is essentially an average of squares of the difference of two divergent quantities, depending on $\lambda$. This makes compactness arguments difficult and all the known estimates in the literature seem insufficient to pass to the weak limit and obtain~\eqref{eq:ineq_to_be_proven2}. In the next section we explain our main new idea for proving~\eqref{eq:ineq_to_be_proven2}.

\subsection{Idea of the proof of~\eqref{eq:ineq_to_be_proven2}} 

As in several of our works on the subject~\cite{LewNamRou-15,LewNamRou-15b,LewNamRou-16c,LewNamRou-18a}, the method is to project the two quantum states $\Gamma_\lambda$ and $\Gamma_0$ to a finite dimensional space using the Fock-space/geometric localization method~\cite{Lewin-11}, where semi-classical analysis is better controlled. The main novelty of this paper is a new way of controlling the projection error, that we briefly describe here. 

We write the renormalized interaction operator in~\eqref{eq:discu renorm quant} as
\begin{align*}
  \lambda^2\bWren &= \frac{\lambda^2}{2} \int \hat{w} (k) \left|\dGamma (e^{ik\cdot x}) - \left\langle \dGamma (e^{ik\cdot x}) \right\rangle_{\Gamma_{0}} \right| ^2 dk \\
  & = \frac{\lambda^2}{2} \int \hat{w} (k) \left|\dGamma (\cos(k\cdot x)) - \left\langle \dGamma (\cos(k\cdot x)) \right\rangle_{\Gamma_{0}} \right| ^2 dk \\
  &\quad + \frac{\lambda^2}{2} \int \hat{w} (k) \left|\dGamma (\sin(k\cdot x)) - \left\langle \dGamma (\sin(k\cdot x)) \right\rangle_{\Gamma_{0}} \right| ^2 dk.
\end{align*}
The main technical step is to replace for each Fourier mode the multiplication operator $e_k\in \{ \cos(k\cdot x), \sin(k\cdot x) \}$ by the projected one $P e_k P$, where $P$ is the projection onto the chosen finite-dimensional space. Indeed, in this space we can rely on quantitative versions of the quantum de Finetti theorem~\cite{ChrKonMitRen-07,LewNamRou-15}. The errors thus made depend heavily on the rank of $P$, whence the need to keep that under control by precisely estimating the contribution of $e_k^+ := e_k - P e_k P$.

A natural choice for $P$ is 
$$ P = \one(h \leq \Lambda_e)$$
for some finite but large energy cut-off $\Lambda_e = \Lambda_e(\lambda)$ to be optimized over. The main challenge is to show that 
\begin{equation}\label{eq:term to bound}
 \lambda^2 \left\langle \left|\dGamma (e_k ^+) - \left\langle \dGamma ( e_k ^+) \right\rangle_{\Gamma_{0}} \right| ^2 \right\rangle_{\Gamma_\lambda} \underset{\lambda\to0^+}{\longrightarrow} 0, \qquad\text{with}\quad  e_k^+:=  e_k - P e_k P.
\end{equation}
This is the crucial place where the renormalization is taken very carefully into account since the counter term $\lambda \left\langle \dGamma ( e_k ^+) \right\rangle_{\Gamma_{0}}$ may diverge fast (like $\log (\lambda^{-1})$ in 2D and $\lambda^{-1/2}$ in 3D in the homogeneous case). The limit~\eqref{eq:term to bound} is the object of Theorem~\ref{thm:correlation} and it is proved using the new correlation inequality~\eqref{eq:2body_by_1body} mentioned in the introduction. The proof of this inequality occupies the whole Section~\ref{sec:variance-by-first-moment}.

The correlation inequality~\eqref{eq:2body_by_1body} allows us to replace the difficult limit of the two-body term in~\eqref{eq:term to bound} by the following one-body problem averaged over a small window $\eps\in[-a,a]$ 
\begin{equation}
\lambda\max_{\eps\in[-a,a]}\left| \left\langle \dGamma (e_k ^+) - \left\langle \dGamma ( e_k ^+) \right\rangle_{\Gamma_{0}} \right\rangle_{\Gamma_{\lambda,\eps}}\right|=\lambda\max_{\eps\in[-a,a]}\left| \tr\left[e_k^+\big(\Gamma_{\lambda,\eps}^{(1)}-\Gamma_{0,0}^{(1)}\big)\right]\right| \to 0
\label{eq:simpler_1_body}
\end{equation}
with the perturbed state
$$\Gamma_{\lambda,\eps}=\cZ(\lambda,\eps)^{-1}\exp\bigg(-\lambda\bH_\lambda+\eps\lambda\left(\dGamma (e_k^+) - \left\langle \dGamma ( e_k ^+) \right\rangle_{\Gamma_{0}}\right)\bigg)$$
where $\cZ(\lambda,\eps)^{-1}$ as usual normalizes the trace of $\Gamma_{\lambda,\eps}$. Note that the effect of the perturbation is just to replace $h$ by $h-\eps e_k^+$ since the other term is a constant and can be removed.
At this stage the new inequality~\eqref{eq:estim_HS_relative_entropy_intro} on one-body density matrices also mentioned in the introduction plays a key role. This inequality is a careful elaboration on a Feynman-Hellmann type argument, see Section~\ref{sec:relative entropy bound}.

\subsection{Organization of the proofs.} The above is our sketch of the proofs' main ideas. Here is how they shall be articulated in the sequel:

\smallskip

\noindent$\bullet$ In Section~\ref{sec:class meas} we collect some basic facts. First we go into more details regarding the construction of the Gibbs measure $\mu$, then we discuss de Finetti measures. Next, we provide some preliminary estimates on the quantum states $\Gamma_0$ and $\Gamma_\lambda$. 

\smallskip

\noindent$\bullet$ The novel part of our paper starts from Section \ref{sec:relative entropy bound}, where we prove the entropy estimate \eqref{eq:estim_HS_relative_entropy_intro} as well as other useful estimates on one-body density matrices.

\smallskip

\noindent$\bullet$ The second new ingredient of our paper is Section \ref{sec:variance-by-first-moment}, where we discuss a general strategy of controlling quantum variance by the first moments of a family of perturbed states. We prove there an inequality more general than~\eqref{eq:2body_by_1body} mentioned in the introduction. 

\smallskip

\noindent$\bullet$ The technical core of the paper is Section~\ref{sec:correl} where we give a quantitative estimate on the two body term on the left side of~\eqref{eq:term to bound}. This is done by carefully carrying out the method in Section \ref{sec:variance-by-first-moment}, using a-priori estimates from Section \ref{sec:relative entropy bound} and Section \ref{sec:class meas}. 

\smallskip

\noindent$\bullet$ All this allows us to prove the desired lower bound~\eqref{eq:ineq_to_be_proven0} in Section \ref{sec:low bound}, using de Finetti measures and controlling the errors as sketched above. 

\smallskip

\noindent$\bullet$ A free-energy upper bound matching~\eqref{eq:ineq_to_be_proven0} is derived in Section~\ref{sec:up bound}, by a trial state argument and some finite dimensional semiclassical analysis. The argument is much easier than for~\eqref{eq:ineq_to_be_proven0}. Our trial state is given by the free quantum state in the ultraviolet (high kinetic energy modes) and by the projected classical interacting state in the infrared (low kinetic energy modes). 

\smallskip

\noindent$\bullet$ In Section~\ref{sec:DM} the convergence of reduced density matrices is deduced from various estimates developed to prove the free-energy convergence, plus Pinsker inequalities. This concludes the proof of Theorems \ref{thm:main-3} and~\ref{thm:main-1}. 

\smallskip

\noindent$\bullet$ The proof of Theorem \ref{thm:main-2} is finally explained in Section~\ref{sec:conclusion}. 

\smallskip

\noindent$\bullet$ Appendix~\ref{app:counter} contains some material on the counter-term problem introduced in Section \ref{sec:inhom dir}. Most of this is taken from~\cite{FroKnoSchSoh-17} and reproduced for the convenience of the reader. 

\smallskip

\noindent$\bullet$ Appendix~\ref{app:interpretation} discusses the physical  interpretation of our result in light of the phase transition of the Bose gas. We start with the non-interacting case $w\equiv0$ in all dimensions and explain the emergence of Gaussian measures, since we have found this nowhere in the physical or mathematical literature (although this is certainly implicit e.g. in~\cite{ArnMoo-01,BayBlaiHolLalVau-99,BayBlaiHolLalVau-01,HolBay-03,KasProSvi-01}). Then we reformulate our result with interactions in microscopic variables.

\section{Classical measures and a priori bounds}\label{sec:class meas}

In this section, we collect some useful facts on the classical Gibbs measures we derive from the quantum problem, and on the semiclassical de Finetti measures that serve as one of our main tools. We will also recall some basic properties of the many-body quantum Gibbs state and prove a collection of a priori bounds to be used throughout the paper.  

\subsection{Gibbs measures} \label{sec:renormalized-measure}

We do not claim originality for the material below, the methods having been well-known to constructive quantum field theory experts for a long time. A related discussion can be found in~\cite[Section~3]{FroKnoSchSoh-17} but for pedagogical purposes we follow a somewhat more pedestrian route. 

In this section, we always assume that $h$ satisfies $\tr[h^{-p}]<\ii$ for some $p\geq1$. Let $\{\lambda_i\}_{i=1}^\infty$, $\{u_i\}_{i=1}^\infty$ be the eigenvalues and the corresponding eigenfunctions of $h$. Let us start by recalling the definition of the Gaussian measure: 

\begin{lemma}[\textbf{Free Gibbs measure}]\label{lem:free-meas}\mbox{}\\
Let $h>0$ on $\gH$ satisfy 
$$ 
\tr [h ^{-p}] < \infty\qquad\text{for some $p\geq1$,}
$$
with eigenvalues $\{\lambda_i\}_{i=1}^\infty$ and eigenfunctions $\{u_i\}_{i=1}^\infty$. The Gaussian measure $\mu_0$ of covariance $h^{-1}$ is the unique probability measure over the space $\gH^{1-p}$ such that for every $K\ge 1$ its cylindrical projection on $V_K={\rm Span}(u_1,...,u_K)$ is 
\begin{equation} \label{eq:def-mu0K}
d\mu_{0,K}(u)=\prod_{i = 1}^K  \left( \frac{\lambda_i}{\pi}e^{ - {\lambda} _i|\alpha_i|^2}\,d\alpha_i \right)
\end{equation}
where $\alpha_i=\langle u_i, u\rangle$ and $d\alpha_i=d\Re(\alpha_i)\,d\Im(\alpha_i)$ is the Lebesgue measure on $\C\approx\R^d$. Moreover, the corresponding $k$-particle density matrix
\begin{equation}\label{eq:DM free meas}
\gamma_{\mu_0}^{(k)}:=\int |u^{\otimes k}\rangle\langle u^{\otimes k}|\;d\mu_0(u) = k!\,(h^{-1})^{\otimes k}
\end{equation}
belongs to the Schatten space $\gS ^p \left(\gH^{\otimes_s k} \right)$. 
\end{lemma}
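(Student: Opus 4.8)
Since this lemma is largely standard (cf.\ \cite[Section~3.1]{LewNamRou-15} and \cite[Section~3]{FroKnoSchSoh-17}), the plan is to proceed in three short steps: construct $\mu_0$ and prove uniqueness; show concentration on $\gH^{1-p}$; and compute the density matrices.

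\emph{Construction and uniqueness.} First I would observe that the family $\{\mu_{0,K}\}_{K\ge1}$ from \eqref{eq:def-mu0K} is consistent under the coordinate projections $V_{K'}\to V_K$ ($K'>K$): a marginal of a product of independent complex Gaussians is again such a product, so the pushforward of $\mu_{0,K'}$ is $\mu_{0,K}$. The Kolmogorov extension theorem then produces a probability measure with these finite-dimensional marginals; equivalently, once the next step is done, $\mu_0$ is the law of the random series $u=\sum_{j\ge1}\lambda_j^{-1/2}g_j u_j$ with $(g_j)_{j\ge1}$ i.i.d.\ standard complex Gaussians. For uniqueness: on the separable Hilbert space $\gH^{1-p}$ the Borel $\sigma$-algebra is generated by the cylinder sets $\{u:(\langle u_1,u\rangle,\dots,\langle u_K,u\rangle)\in B\}$, which form a $\pi$-system, so two Borel probability measures with the same cylindrical projections must agree.

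\emph{Concentration on $\gH^{1-p}$.} Using $\int_{\C}|\alpha|^2\frac{\lambda_j}{\pi}e^{-\lambda_j|\alpha|^2}\,d\alpha=\lambda_j^{-1}$, Tonelli (all summands nonnegative), and the spectral definition of the $\gH^{s}$-norm, I would compute
\[
\int \|u\|_{\gH^{1-p}}^2\,d\mu_0(u)=\sum_{j\ge1}\lambda_j^{1-p}\int |\alpha_j|^2\,d\mu_0=\sum_{j\ge1}\lambda_j^{-p}=\tr[h^{-p}]<\infty .
\]
Hence $\|u\|_{\gH^{1-p}}<\infty$ for $\mu_0$-a.e.\ $u$, so $\mu_0(\gH^{1-p})=1$; running the same estimate over the tails $\sum_{j>K}$ shows that $\sum_{j\le K}\alpha_j u_j\to u$ in $\gH^{1-p}$ in $L^2(\mu_0)$, so $\mu_0$ is genuinely carried by $\gH^{1-p}$ and is the weak limit there of its cylindrical projections.

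\emph{Density matrices.} Since $u\notin\gH$ $\mu_0$-a.s.\ when $p>1$, the pointwise rank-one operator $|u^{\otimes k}\rangle\langle u^{\otimes k}|$ is not uniformly bounded on $\gH^{\otimes k}$, so I would define $\gamma_{\mu_0}^{(k)}$ through the nonnegative quadratic form $f\mapsto\int|\langle u^{\otimes k},f\rangle|^2\,d\mu_0(u)$, taken initially for $f$ a finite linear combination of product vectors $u_{i_1}\otimes_s\cdots\otimes_s u_{i_k}$, and then check that it extends to a bounded operator on $\gH^{\otimes_s k}$. Its matrix elements in the product basis are $\int \alpha_{i_1}\cdots\alpha_{i_k}\,\overline{\alpha_{j_1}\cdots\alpha_{j_k}}\,d\mu_0$, and Wick's theorem for the independent complex Gaussians $\alpha_l=\lambda_l^{-1/2}g_l$ (only the pairings $\alpha_{i_m}\leftrightarrow\overline{\alpha_{j_{\sigma(m)}}}$ survive, each contributing $\int\alpha_{i_m}\overline{\alpha_{j_{\sigma(m)}}}\,d\mu_0=\lambda_{i_m}^{-1}\delta_{i_m j_{\sigma(m)}}$) yields $\sum_{\sigma\in S_k}\prod_{m=1}^k\lambda_{i_m}^{-1}\delta_{i_m j_{\sigma(m)}}$, which one recognizes as the matrix element of $\sum_{\sigma\in S_k}U_\sigma(h^{-1})^{\otimes k}=k!\,P_s^k(h^{-1})^{\otimes k}P_s^k$, using that $(h^{-1})^{\otimes k}$ commutes with the permutation unitaries $U_\sigma$. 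This gives \eqref{eq:DM free meas}, read as an identity on $\gH^{\otimes_s k}$. Finally, $0\le P_s^k\le1$ and the ideal property of the Schatten classes give $\|\gamma_{\mu_0}^{(k)}\|_{\gS^p}\le k!\,\|(h^{-1})^{\otimes k}\|_{\gS^p}$, while $\|(h^{-1})^{\otimes k}\|_{\gS^p}^p=\tr\big[(h^{-p})^{\otimes k}\big]=(\tr h^{-p})^k<\infty$, whence $\gamma_{\mu_0}^{(k)}\in\gS^p(\gH^{\otimes_s k})$.

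\emph{Main obstacle.} None of this is deep; the points that require genuine care are legitimizing the weak definition of $\gamma_{\mu_0}^{(k)}$ — i.e.\ verifying that the above quadratic form really defines a bounded operator, which is exactly where the a-priori concentration on $\gH^{1-p}$ and the hypothesis $p\ge1$ (so that $(\tr h^{-p})^k<\infty$) are used — together with the combinatorial bookkeeping identifying the Wick sum with $k!\,P_s^k(h^{-1})^{\otimes k}P_s^k$. The exchanges of summation and integration elsewhere are harmless, being justified by Tonelli.
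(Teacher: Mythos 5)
Your proof is correct; the paper itself gives no argument here and simply refers to \cite[Section~3.1]{LewNamRou-15}, and your writeup (Kolmogorov extension plus a $\pi$-system uniqueness argument, the $L^2(\mu_0)$ tail estimate $\sum_j\lambda_j^{-p}=\tr[h^{-p}]$ for concentration, and Wick's theorem identifying the density matrices with $k!\,P_s^k(h^{-1})^{\otimes k}P_s^k$) is exactly the standard route taken in that reference. The only caveat is purely notational: your concentration computation uses the usual convention $\|u\|_{\gH^s}^2=\sum_j\lambda_j^{s}|\alpha_j|^2$, whereas the paper's displayed definition of $\gH^s$ has $\lambda_j^{s/2}$ (almost certainly a typo), so your estimate matches the intended statement.
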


Our convention in~\eqref{eq:DM free meas} is to consider the action of $\gamma_{\mu_0}^{(k)}$ only on the symmetric subspace $\gH^{\otimes_s k}$. On the full space with no symmetry we have 
\begin{equation}\label{eq:notation merdique}
\gamma_{\mu_0}^{(k)} = k! P_s ^k (h^{-1})^{\otimes k} P_s^k
\end{equation}
with $P_s ^k$ the orthogonal projector on the symmetric subspace.

 \begin{proof} 
See~\cite[Section~3.1]{LewNamRou-15}
\end{proof}

The measure just defined on the space $\gH^{1-p}$ does {\em not} live on any better behaved subspace if $\tr[h^{-p'}]=+\ii$ for $p'<p$. This is called Fernique's theorem and is recalled e.g. in~\cite[Equation~(3.4)]{LewNamRou-15}. The need for renormalization arises from this fact. 

In particular, when $\tr[h^{-1}]=+\ii$, $\mu_0$ is supported on a negative Sobolev space and thus the mass $\int_{\R^d}|u|^2$ is equal to infinity $\mu_0$-a.e. However, it turns out that when $\tr[h^{-2}]<\ii$, this infinity ``is the same'' for $\mu_0$-almost every $u$. This  allows us to define a notion of renormalized mass. The idea goes back to Nelson~\cite{Nelson-66} and has been thoroughly studied in constructive quantum field theory~\cite{GliJaf-87,Simon-74}.

\begin{lemma}[\textbf{Mass renormalization}]\label{lem:re-mass}\mbox{}\\ 
Let $h>0$ satisfy $\tr [h ^{-2}] < \infty$. For every $K\ge 1$, define  the truncated renormalized mass 
\begin{equation}\label{eq:renorm mass}
\cM_K[u]:=\norm{P_Ku}^2 - \int \norm{P_Ku}^2 \,d\mu_0(u)
\end{equation}
where $P_K$ is the orthogonal projection onto $V_K = \mathrm{Span}(u_1,\ldots,u_K)$. Then the sequence $\cM_K$ converges strongly to a limit $\cM$ in $L^2(d\mu_0)$.

More generally, for every operator $A$ with $D(A)\subset D(h)$ and such that $Ah^{-1}$ is Hilbert-Schmidt, the renormalized expectation value
\begin{equation}\label{eq:renorm mass A}
\cM_K^A [u]:= \langle P_K u ,A P_K u \rangle -\int \langle P_K u ,A P_K u \rangle \,d\mu_0(u)
\end{equation}
converges  strongly in $L^2(d\mu_0)$ to a limit $\cM ^A$. The limit is uniform in $A$ on sets where the Hilbert-Schmidt norm $\|Ah^{-1}\|_{\gS^2(\gH)}$ is bounded by a constant. In fact 
\begin{equation}\label{eq:renorm mass A bis}
\int \left|\cM ^A [u] \right|^2\,d\mu_0(u)= \lim_{K\to \infty} \int\left|\cM_K^A [u]\right|^2\,d\mu_0(u) = \tr\left[ A h ^{-1} A^* h^{-1} \right]. 
\end{equation}
\end{lemma}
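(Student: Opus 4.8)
The plan is to work throughout in the explicit Gaussian coordinates furnished by~\eqref{eq:def-mu0K}. First I would record that, under $\mu_0$, the coefficients $\alpha_i=\langle u_i,u\rangle$ are independent complex Gaussian variables with
\[
\int \alpha_i \, d\mu_0 = 0, \qquad \int \overline{\alpha}_i\,\alpha_j \, d\mu_0 = \frac{\delta_{ij}}{\lambda_i}, \qquad \int \alpha_i\,\alpha_j \, d\mu_0 = 0,
\]
all higher moments being given by Wick's theorem; this is immediate from the product formula~\eqref{eq:def-mu0K}. In these coordinates $\|P_Ku\|^2=\sum_{i\le K}|\alpha_i|^2$, and, writing $A_{ij}:=\langle u_i,Au_j\rangle$, $\langle P_Ku,AP_Ku\rangle=\sum_{i,j\le K}\overline{\alpha}_i\,\alpha_j\,A_{ij}$, so that
\[
\cM_K^A[u]=\sum_{i,j\le K}A_{ij}\Big(\overline{\alpha}_i\,\alpha_j-\frac{\delta_{ij}}{\lambda_i}\Big),
\]
with $\cM_K$ the special case $A_{ij}=\delta_{ij}$. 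The hypotheses ($u_i\in D(h)$ and $Ah^{-1}\in\gS^2$) are exactly what is needed to make all the series below converge.

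Next I would prove the $L^2(d\mu_0)$-convergence by a second-moment computation. The sequence $(\cM_K^A)_{K\ge1}$ is a martingale for the filtration generated by $\alpha_1,\dots,\alpha_K$: the increment $\cM_{K+1}^A-\cM_K^A$ is affine in $\alpha_{K+1},\overline{\alpha}_{K+1}$ apart from the centred term $A_{K+1,K+1}(|\alpha_{K+1}|^2-\lambda_{K+1}^{-1})$, hence has vanishing conditional expectation, so it suffices to bound $L^2$-norms. Expanding $\int|\cM_K^A|^2\,d\mu_0$ by Wick's theorem for the fourth moments of complex Gaussians, the ``disconnected'' contraction precisely cancels the square of the mean and leaves
\[
\int \big|\cM_K^A\big|^2 \, d\mu_0 = \sum_{i,j\le K}\frac{|A_{ij}|^2}{\lambda_i\lambda_j} = \sum_{i,j\le K}\big|(h^{-1/2}Ah^{-1/2})_{ij}\big|^2 .
\]
Since $\|h^{-1/2}Ah^{-1/2}\|_{\gS^2}\le\|A\|\,\tr[h^{-2}]^{1/2}<\infty$ by Hölder for Schatten norms, the right-hand side is the truncation of a convergent double series; applying the same identity to $\cM_L^A-\cM_K^A$ shows that $\|\cM_L^A-\cM_K^A\|_{L^2(\mu_0)}^2$ equals the corresponding tail $\sum_{i,j\le L,\,\max(i,j)>K}|A_{ij}|^2/(\lambda_i\lambda_j)$, which tends to $0$. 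Hence $(\cM_K^A)$ is Cauchy in $L^2(d\mu_0)$; passing to the limit in the second-moment identity and using $\tr[h^{-1/2}A^*h^{-1}Ah^{-1/2}]=\tr[Ah^{-1}A^*h^{-1}]$ (cyclicity) gives~\eqref{eq:renorm mass A bis}. Taking $A=\1$ recovers the statement for $\cM$, with $\int|\cM|^2\,d\mu_0=\tr[h^{-2}]$.

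Finally, for the uniformity claim I would make the last tail quantitative. Ordering the eigenvalues increasingly and splitting $\{\max(i,j)>K\}$ into $\{i>K\}$ and $\{i\le K<j\}$, in each piece I would apply Cauchy--Schwarz in the free index after the elementary bound $|A_{ij}|^2/\lambda_j\le\|A\|\,|A_{ij}|/\lambda_j$, which yields
\[
\int \big|\cM^A-\cM_K^A\big|^2 \, d\mu_0 \le \|A\|\,\big(\|Ah^{-1}\|_{\gS^2}+\|A^*h^{-1}\|_{\gS^2}\big)\,\Big(\sum_{i>K}\lambda_i^{-2}\Big)^{1/2},
\]
and $\sum_{i>K}\lambda_i^{-2}=\tr[h^{-2}]-\sum_{i\le K}\lambda_i^{-2}\to0$. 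This is uniform on any set where $\|A\|$ and $\|Ah^{-1}\|_{\gS^2}$ stay bounded, in particular on the bounded families of self-adjoint operators used in the main text. I expect this trade-off to be the only delicate point: the naive tail bound loses a factor $\tr[h^{-1}]$, which is infinite in the regime of interest, and it is precisely the Cauchy--Schwarz splitting — pairing one factor $\lambda_i^{-1}$ with the convergent series $\tr[h^{-2}]$ and the rest with $\|Ah^{-1}\|_{\gS^2}$ — that saves it. Beyond this the lemma is a direct Gaussian computation with no real obstacle.
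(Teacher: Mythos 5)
Your proof is correct and follows essentially the same route as the paper: a Wick/Gaussian second-moment computation showing $\int|\cM_K^A|^2\,d\mu_0=\sum_{i,j\le K}|A_{ij}|^2/(\lambda_i\lambda_j)=\tr[A P_K h^{-1}A^*P_Kh^{-1}]$, whence the sequence is Cauchy in $L^2(d\mu_0)$ because this is the truncation of a convergent double series controlled by $\tr[h^{-2}]$. You in fact go slightly further than the paper (which leaves the Cauchy estimate for general $A$ to the reader) by carrying out the tail bound explicitly; note only that your quantitative uniformity uses $\|A\|$ in addition to $\|Ah^{-1}\|_{\gS^2}$, which is harmless since the operators used in the text are uniformly bounded.
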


\begin{proof} 
Writing $P_Ku=\sum_{j=1}^K\alpha_j\,u_j$, we first recall the simple Gaussian integration formulae (Wick's theorem)
\begin{equation}\label{eq:Wick 2}
\av{\bar{\alpha_i} \alpha_j} = \frac{1}{\lambda_j} \delta_{i=j},\qquad \av{\alpha_i \bar{\alpha}_j \bar{\alpha}_k \alpha_\ell } = \frac{1}{\lambda_i \lambda_k} \delta_{i=j} \delta_{k=\ell} + \frac{1}{\lambda_i \lambda_\ell} \delta_{i=k} \delta_{j=\ell}.  
\end{equation}
Then we compute
$$\cM_K[u]=\sum_{j=1}^K|\alpha_j|^2-\av{\sum_{j=1}^K|\alpha_j|^2}=\sum_{j=1}^K|\alpha_j|^2-\sum_{j=1}^K\lambda_j^{-1}.$$
Therefore, for $L\geq K$,
$$\cM_L [u]-\cM_K[u]=\sum_{j=K+1}^L(|\alpha_j|^2-\lambda_j^{-1}).$$
From this and \eqref{eq:Wick 2} we find 
\begin{align*}
\av{(\cM_L [u]-\cM_K[u])^2}&= \sum_{j=K+1}^L\sum_{\ell=K+1}^L\av{(|\alpha_j|^2-\lambda_j^{-1})(|\alpha_\ell|^2-\lambda_\ell^{-1})}\\
&= \sum_{j=K+1}^L\sum_{\ell=K+1}^L\left(\av{|\alpha_j|^2|\alpha_\ell|^2}-\lambda_j^{-1}\lambda_\ell^{-1}\right)= \sum_{j=K+1}^L {\lambda_j^{-2}}.
\end{align*}
Since 
\begin{equation}\label{eq:Hilbert Schmidt}
\sum_{j=1}^\infty \lambda_j^{-2}=\tr(h^{-2})<\infty,
\end{equation}
we conclude that $\{\cM_K\}_{K=1}^\infty$ is a Cauchy sequence in $L^2(d\mu_0)$ and hence it converges strongly in $L^2(d\mu_0)$.

Now consider an operator $A$ such that $Ah^{-1}$ is Hilbert-Schmidt. We check that~\eqref{eq:renorm mass A} is in $L^2 (d\mu_0)$ uniformly in $K$ and leave to the reader the similar proof that it is a Cauchy sequence. 
Using~\eqref{eq:Wick 2} again we have 
\begin{align*}
\av{\cM_K^A [u] ^2} &= \sum_{1\leq i,j,k,\ell\leq K} \av{\alpha_i \bar{\alpha}_j \bar{\alpha}_k \alpha_\ell } \left\langle u_i, A u_j \right\rangle \overline{\left\langle u_k, A u_\ell \right\rangle} \\
&\quad \qquad- \sum_{1\leq i,j,k\leq K}\av{\alpha_i \bar{\alpha}_j}\frac{1}{\lambda_k} \left\langle u_i, A u_j \right\rangle \overline{\left\langle u_k, A u_k \right\rangle} \\
&\quad \qquad -\sum_{1\leq i,j,k\leq K}\av{\bar{\alpha_i} \alpha_j}\frac{1}{\lambda_k} \overline{\left\langle u_i, A u_j \right\rangle} \left\langle u_k, A u_k \right\rangle+ \sum_{i,j} \frac{1}{\lambda_i \lambda_j} \left\langle u_j, A u_j \right\rangle \overline{\left\langle u_i, A u_i \right\rangle} \\
&= \sum_{1\leq i,k\leq K} \frac{1}{\lambda_i \lambda_k} \left\langle u_i, A u_k \right\rangle \overline{\left\langle u_i, A u_k \right\rangle}= \tr\left[A(P_Kh^{-1})A^*(P_Kh^{-1})\right].
\end{align*}
which is bounded by assumption, uniformly in $K$. 
\end{proof}

From the previous result we may choose $A$ to be the multiplication operator by some bounded function $f$ localized around some point $x$. Thus~\eqref{eq:renorm mass A bis} implies that not only the global mass, but also the smeared local mass density $|u(x)| ^2$ around $x$, is renormalizable. In fact, in this case~\eqref{eq:renorm mass A bis} reduces to 
\begin{equation}\label{eq:renorm mass f}
\av{\left|\cM ^f [u]\right|^2} =  \tr\left[ f (x) h ^{-1} \bar{f}(x) h^{-1} \right] = \iint_{\Omega \times \Omega} f(x) \overline{f (y)} |G(x,y)|^2 dx dy  
\end{equation}
with $G$ the Green function of $h$ (i.e. the integral kernel of $h^{-1}$). This function is certainly square-integrable on $\Omega \times \Omega$ when $\Tr[h^{-2}]<\infty$. 

Consider now an interaction potential  $w$ as in \eqref{eq:interaction}. For the same reasons as the mass, the interaction energy is not well-defined on the support of $\mu_0$ and it is necessary to introduce a renormalization. In our context, it is in fact sufficient to insert the local mass renormalization in the interaction's expression. This leads to

\begin{lemma}[\textbf{Renormalized  interaction and nonlinear measure}]\label{lem:re-interaction}\mbox{}\\ 
Let $h>0$ satisfy $\tr [h ^{-2}] < \infty$. Let $w\in L^\ii(\Omega)$ be such that its Fourier transform satisfies $0\le \widehat w \in L^1(\Omega^*)$. For every $K\ge 1$, define the truncated renormalized interaction as in~\eqref{eq:def int}:
$$
 \cD_K[u]:=\frac{1}{2}\iint_{\Omega\times \Omega}  w(x-y)\left(|P_K u(x)|^2-\av{|P_K u(x)|^2}\right)  \left(|P_Ku(y)|^2-\av{|P_Ku(y)|^2}\right)dx\,dy.
$$
Then $\cD_K[u]\ge 0$ and $\cD_K[u]$ converges strongly to a limit $\cD[u]\ge 0$ in $L^1(d\mu_0)$.
Consequently, the probability measure 
\begin{equation}\label{eq:def mu}
d\mu (u) := \frac{1}{z} e^{-\cD [u ]} d\mu_0 (u),\qquad z=\int e^{-\cD[u]}\,d\mu_0(u) 
\end{equation}
is well-defined. Moreover, the reduced density matrices 
\begin{equation}\label{eq:DM clas}
\gamma_\mu ^{(k)} := \int |u^{\otimes k} \rangle \langle u ^{ \otimes k}| d\mu (u) 
\end{equation}
belong to $\gS^p (\gH_k)$ for every $p$ such that $\tr[h^{-p}]<\ii$. Finally, the relative one-particle density matrix 
\begin{equation}\label{eq:DM rel clas}
\gamma_{\mu} ^{(1)} - \gamma_{\mu_0} ^{(1)} := \lim_{K\to \infty} \int |u \rangle \langle u | \left(d\mu \left(P_K u\right) - d\mu_0 \left(P_K u\right)\right) 
\end{equation}
is a trace-class operator, with
\begin{equation}\label{eq:DM rel clas norm}
\tr\left| \gamma_{\mu} ^{(1)} - \gamma_{\mu_0} ^{(1)}\right|\leq  z^{-1/2} \sqrt{\tr[h^{-2}]}. 
\end{equation}
\end{lemma}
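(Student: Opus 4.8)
The plan is to bootstrap everything from the mass renormalization already established in Lemma~\ref{lem:re-mass}, using the sign hypotheses $\widehat w\ge0$ and $\cD_K\ge0$ at two decisive places.

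\emph{Positivity and $L^1$-convergence of $\cD_K$.} For every $u\in\gH^{1-p}$ the function $g_K(u;\cdot):=|P_Ku(\cdot)|^2-\av{|P_Ku(\cdot)|^2}$ lies in $L^1(\Omega)$, and its Fourier transform is $\widehat{g_K(u;\cdot)}(k)=\cM_K^{e^{-ik\cdot x}}[u]$ in the notation of Lemma~\ref{lem:re-mass}. Since $\widehat w\ge0$, Fubini turns~\eqref{eq:def int} into $\cD_K[u]=\tfrac12\int\widehat w(k)\,\bigl|\cM_K^{e^{-ik\cdot x}}[u]\bigr|^2\,dk\ge0$. For the convergence I would prove that $\{\cD_K\}$ is Cauchy in $L^1(d\mu_0)$. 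The tool is the energy form $\mathcal Q_w[f,g]:=\iint w(x-y)f(x)g(y)\,dx\,dy=\int\widehat w(k)\widehat f(k)\overline{\widehat g(k)}\,dk$, which for $\widehat w\ge0$ obeys $|\mathcal Q_w[f,g]|\le\mathcal Q_w[f,f]^{1/2}\mathcal Q_w[g,g]^{1/2}$. Applying this with $f=g_K-g_L$, $g=g_K+g_L$ and using $\mathcal Q_w[g_K+g_L,g_K+g_L]\le4(\cD_K+\cD_L)$ gives the pointwise bound $|\cD_K[u]-\cD_L[u]|\le\sqrt{2\,\mathcal Q_w[g_K-g_L,g_K-g_L][u]}\,\sqrt{\cD_K[u]+\cD_L[u]}$, whence, by Cauchy--Schwarz in $L^1(d\mu_0)$,
\[
\|\cD_K-\cD_L\|_{L^1(d\mu_0)}\le\sqrt{2\,\av{\mathcal Q_w[g_K-g_L,g_K-g_L]}}\;\sqrt{\av{\cD_K}+\av{\cD_L}}.
\]
Here $\av{\cD_K}=\tfrac12\int\widehat w(k)\,\av{\,|\cM_K^{e^{-ik\cdot x}}|^2}\,dk$ is nondecreasing in $K$ and, using $\av{|\cM_K^{A}|^2}\le\tr[Ah^{-1}A^*h^{-1}]$ from the proof of Lemma~\ref{lem:re-mass} with $A=e^{-ik\cdot x}$ unitary, is bounded by $\tfrac12\|\widehat w\|_{L^1}\tr[h^{-2}]<\infty$; and $\av{\mathcal Q_w[g_K-g_L,g_K-g_L]}=\int\widehat w(k)\,\|\cM_K^{e^{-ik\cdot x}}-\cM_L^{e^{-ik\cdot x}}\|_{L^2(d\mu_0)}^2\,dk\to0$ by Lemma~\ref{lem:re-mass} (the convergence there is uniform in $k$ since $\|e^{-ik\cdot x}h^{-1}\|_{\gS^2}=\|h^{-1}\|_{\gS^2}$ for all $k$). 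So $\{\cD_K\}$ is $L^1$-Cauchy; its limit $\cD$ is $\ge0$ and finite $\mu_0$-a.e., hence $z=\int e^{-\cD}\,d\mu_0\in(0,1]$ and $d\mu$ is a genuine probability measure. This is the step where $\tr[h^{-2}]<\infty$ is indispensable.

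\emph{Reduced density matrices.} For $v\in\gH^{\otimes_s k}$ the map $u\mapsto\langle v,u^{\otimes k}\rangle$ is a homogeneous Gaussian polynomial, so it lies in $L^2(d\mu_0)\subset L^2(d\mu)$ (because $d\mu\le z^{-1}d\mu_0$); hence $B\mapsto\int\langle u^{\otimes k},Bu^{\otimes k}\rangle\,d\mu(u)$ is a bounded nonnegative quadratic form and defines $\gamma_\mu^{(k)}\ge0$. From $e^{-\cD}\le1$ and $z\le1$ we get $0\le\gamma_\mu^{(k)}\le z^{-1}\gamma_{\mu_0}^{(k)}$ as operators, with $\gamma_{\mu_0}^{(k)}=k!\,P_s^k(h^{-1})^{\otimes k}P_s^k\in\gS^p(\gH^{\otimes_s k})$ whenever $\tr[h^{-p}]<\infty$ by Lemma~\ref{lem:free-meas}; comparing eigenvalues via the min-max principle then gives $\gamma_\mu^{(k)}\in\gS^p$.

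\emph{Relative one-body density matrix.} Set $T_K:=\int|u\rangle\langle u|\,\bigl(d\mu(P_Ku)-d\mu_0(P_Ku)\bigr)$, a finite-rank self-adjoint operator. Using $\int(d\mu-d\mu_0)=0$ and $\int\cM_K^B\,d\mu_0=0$ one gets, for every bounded $B$ on $\gH$,
\[
\tr[BT_K]=\int\bigl(\langle P_Ku,BP_Ku\rangle-\av{\langle P_Ku,BP_Ku\rangle}\bigr)\,d\mu(u)=\int\cM_K^B[u]\,d\mu(u).
\]
This is where $\cD_K\ge0$ re-enters: since $e^{-2\cD}\le e^{-\cD}$, Cauchy--Schwarz in $L^2(d\mu_0)$ gives $|\tr[BT_K]|=z^{-1}\bigl|\int\cM_K^B e^{-\cD}d\mu_0\bigr|\le z^{-1}\|\cM_K^B\|_{L^2(d\mu_0)}\bigl(\int e^{-2\cD}d\mu_0\bigr)^{1/2}\le z^{-1/2}\|\cM_K^B\|_{L^2(d\mu_0)}$, and by Lemma~\ref{lem:re-mass} $\|\cM_K^B\|_{L^2(d\mu_0)}^2=\tr[B(P_Kh^{-1})B^*(P_Kh^{-1})]\le\tr[Bh^{-1}B^*h^{-1}]\le\|B\|_{\mathrm{op}}^2\tr[h^{-2}]$. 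Taking the supremum over $\|B\|_{\mathrm{op}}\le1$ yields $\|T_K\|_{\gS^1}\le z^{-1/2}\sqrt{\tr[h^{-2}]}$ for all $K$. Running the same estimate on $T_K-T_L$, together with $\sup_{\|B\|_{\mathrm{op}}\le1}\|\cM_K^B-\cM_L^B\|_{L^2(d\mu_0)}\to0$ (Lemma~\ref{lem:re-mass}, uniform since $\|Bh^{-1}\|_{\gS^2}\le\|B\|_{\mathrm{op}}\sqrt{\tr[h^{-2}]}$), shows $\{T_K\}$ is Cauchy in $\gS^1$; it therefore converges in trace norm to $\gamma_\mu^{(1)}-\gamma_{\mu_0}^{(1)}$, which satisfies~\eqref{eq:DM rel clas norm}.

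The main obstacle is the $L^1$-convergence of $\cD_K$: the definition~\eqref{eq:def int} offers no compactness by itself, and the resolution is to use $\widehat w\ge0$ to recast the interaction as a positive quadratic form in $g_K$, reducing the question to the $L^2$-convergence of the renormalized mass already proved in Lemma~\ref{lem:re-mass}. The only other point requiring care is obtaining the sharp constant $z^{-1/2}$ (rather than $z^{-1}$) in the trace bound, which follows from the elementary inequality $e^{-2\cD}\le e^{-\cD}$, valid precisely because $\cD\ge0$.
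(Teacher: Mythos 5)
Your proposal is correct and follows essentially the same approach as the paper: it writes $\cD_K[u]=\tfrac12\int\widehat w(k)\,|\cM_K^{e_k}[u]|^2\,dk$ to get positivity from $\widehat w\ge 0$, proves $L^1(d\mu_0)$-Cauchyness by Cauchy--Schwarz together with the $L^2$-convergence of $\cM_K^A$ from Lemma~\ref{lem:re-mass}, obtains $\gamma_\mu^{(k)}\in\gS^p$ from the operator inequality $\gamma_\mu^{(k)}\le z^{-1}\gamma_{\mu_0}^{(k)}$, and derives the trace bound~\eqref{eq:DM rel clas norm} by duality using Cauchy--Schwarz and the domination $d\mu\le z^{-1}d\mu_0$. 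Your explicit verification that $T_K$ is Cauchy in $\gS^1$ (via the uniformity statement in Lemma~\ref{lem:re-mass} over bounded $\|Bh^{-1}\|_{\gS^2}$) is slightly more detailed than what the paper writes out, but rests on exactly the ingredients the paper supplies.
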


\begin{proof} 
We use the Fourier transform 
$$
w(x-y)=\int \widehat w(k) e^{ik\cdot x} e^{-ik\cdot y} d k 
$$
and denote $e_k$ the multiplication operator by $e^{ik \cdot x}$. Then, using the notation of Lemma~\ref{lem:re-mass}, 
\begin{equation}\label{eq:add int}
 \cD_K[u]=\frac{1}{2}\int \hat{w} (k) \left|\cM_K ^{e_k} [u]\right|^2 \, dk 
\end{equation}
Since $\hat{w}\ge 0$ by assumption, we obtain immediately that $\cD_K[u]\ge 0$. 

In order to prove that $\cD_K$ is a Cauchy sequence in $L^1(d\mu_0)$, we use the Cauchy-Schwarz inequality 
\begin{align}
2\left| \cD_L [u]-\cD_K[u] \right|&=\left| \int \hat{w}(k) \left( \left|\cM_K ^{e_k} [u]\right|^2 - \left|\cM_L ^{e_k} [u]\right|^2 \right)  \, dk \right| \nn \\
&\leq  \left| \int \hat{w} (k)  \left|\cM_K ^{e_k} [u] -\cM_L ^{e_k} [u]\right|^2  \, dk \right|^{\frac12}\left| \int \hat{w}(k)  \left( \left|\cM_K ^{e_k} [u]\right| +  \left|\cM_L ^{e_k} [u]\right|\right)^2  \, dk \right|^{\frac12}.\label{eq:Cauchy int}
\end{align}
Averaging over $\mu_0$, using Lemma~\ref{lem:re-mass} and recalling that $\hat{w} \in L^1$, this goes to zero when $L,K\to \infty$. Thus $\cD_K[u]$ is a Cauchy sequence in $L^1(d\mu_0)$ and it converges strongly to a limit $\cD[u]$. Since $\cD_K[u]\ge 0$, we have $\cD[u] \ge 0$. It follows from Jensen's inequality that 
\begin{equation}\label{eq:def zr}
 z := \int e^{-\cD [u]} d\mu_0 (u) > 0,  
\end{equation}
which ensures that~\eqref{eq:def mu} is well-defined. 

That the density matrices~\eqref{eq:DM clas} are in $\gS ^p$ directly follows from the positivity of the renormalized interaction and the corresponding statement for the free density matrices~\eqref{eq:DM free meas}. To see that the relative one-particle density matrix~\eqref{eq:DM rel clas} is trace-class, note that for any finite-rank operator $A$
$$ 
\tr\left[ A \left(\gamma_{\mu} ^{(1)} - \gamma_{\mu_0} ^{(1)}\right) \right] = \int \cM^A [u] d\mu(u). 
$$
Using Cauchy-Schwarz and the fact that $\mu\leq z^{-1}\mu_0$ we find
\begin{align*}
\left|\tr\left[ A \left(\gamma_{\mu} ^{(1)} - \gamma_{\mu_0} ^{(1)}\right) \right]\right| &\leq  \int \left|\cM^A [u]\right| d\mu(u)\\
&\leq  \left(z^{-1} \int  \cM^A [u]^2 d\mu_0(u)\right)^{1/2}\leq z^{-1/2}\|A\| \sqrt{\tr[h^{-2}]}.
\end{align*}
By duality, the relative one-particle density matrix is thus trace-class, with
$$\tr\left| \gamma_{\mu} ^{(1)} - \gamma_{\mu_0} ^{(1)}\right|\leq  z^{-1/2} \sqrt{\tr[h^{-2}]}.$$
\end{proof}

\begin{remark}[The interaction as an exchange term]\label{rem:exchange}\mbox{}\\
Note that, by Gaussian integrations similar to those appearing in the proof of Lemma~\ref{lem:re-mass}, we obtain that, formally 
\begin{multline*}
\av{\iint_{\Omega \times \Omega} |u(x)| ^2 w(x-y) |u(y)| ^2 dxdy  } = \text{``}\;\frac{1}{2}\iint_{\Omega\times \Omega} w(x-y)G (x,x) G(y,y) \,dx\,dy \\+ \frac{1}{2}\iint_{\Omega\times \Omega} w(x-y)|G (x,y)|^2\,dx\,dy\;\text{''}
\end{multline*}
where the first term is called the \emph{direct term} and the second the \emph{exchange term}, see Section~\ref{sec:app-H-vs-rH}. Here the direct term is infinite because $\lim_{y\to x}G(x,y)=+\ii$. For instance for the Laplacian on a bounded  domain $\Omega$ we have that $G(x,y)$ behaves like  (see e.g.~\cite[Lemma~5.4]{RouSer-16}). 
$$
G(x,y) \underset{x- y\to0}{\sim} 
\begin{cases}
- \frac{1}{2\pi}\log |x-y|&\text{if $d=2$,}\\
\frac{1}{4\pi|x-y|}&\text{if $d=3$.}
\end{cases}
$$
On the other hand, averaging~\eqref{eq:add int} with respect to $\mu_0$ and using~\eqref{eq:renorm mass f} we find, after passing to the limit $K\to\ii$,
\begin{align}\label{eq:exchange}
0\leq \av{\cD [u]}&=\frac{1}{2} \int \hat{w} (k) \av{|\cM ^{e_k} [u]|^2} \, dk\nn\\
&= \frac{1}{2}\int \hat{w} (k) \iint_{\Omega \times \Omega} e ^{ik \cdot(x-y)} |G(x,y)|^2 dxdy \, dk\nn \\
&=\frac{1}{2} \iint_{\Omega\times\Omega} w(x-y)|G(x,y)|^2\,dx\,dy.
\end{align}
We thus see that renormalizing the mass density to define the interaction is equivalent to dropping the direct term from the bare interaction. Actually,~\eqref{eq:exchange} proves that the renormalized interaction is well defined under the sole condition that $w$ satisfies
$$\iint_{\Omega\times\Omega} |w(x-y)|\,|G(x,y)|^2\,dx\,dy<\ii$$
with $G(x,y)=h^{-1}(x,y)$ the Green function of $h$. \hfill$\diamond$
\end{remark}

\subsection{De Finetti measure} \label{sec:deF}

Here we review how to associate a semiclassical measure (that we call de Finetti measure) on the one-body Hilbert space to a given sequence of many-particles bosonic states. This idea has a long history, for it is related to the de Finetti-Hewitt-Savage theorem used in classical statistical mechanics to approximate a many-particle state by a statistical mixture of i.i.d. laws. See~\cite{Rougerie-cdf,Rougerie-LMU} for review. For more on the connection with usual semi-classics, see~\cite{Ammari-hdr} and references therein.

The approach we use in this paper is a blend of ideas originating from semi-classical analysis~\cite{AmmNie-08,AmmNie-09,AmmNie-11,Berezin-72,Lieb-73b,Simon-80} and quantum information theory~\cite{BraHar-12,ChrKonMitRen-07,Chiribella-11,Harrow-13} with many-body localization methods~\cite{Ammari-04,DerGer-99,Lewin-11,LewNamRou-14,LewNamRou-16c,LewNamRou-15}. 

It will be crucial for us that, once the one-body state-space is projected to finitely many dimensions, quantitative estimates on the error made by approximating a many-body state using a classical measure are available~\cite{ChrKonMitRen-07,LewNamRou-15b}. We thus begin by recalling what Fock-space localization is. Then we continue with the quantitative version of the quantum de Finetti theorem available after finite dimensional localization. To deal with the entropy term it is crucial that the de Finetti measure we use is in fact a lower symbol (associated with a coherent states basis). This allows us to use a Berezin-Lieb-type inequality from~\cite{LewNamRou-15}.  

\medskip

\noindent\textbf{Fock-space localization.} We will localize the problem to low kinetic energy modes. For this purpose, let us recall the standard {\em localization method} in Fock space.  Let $P$ be an orthogonal projection on $\gH$ and let $Q=\1-P$. Since $\gH=(P\gH)\oplus (Q\gH)$, we have the corresponding factorization of Fock spaces 
\begin{equation}
\gF\approx \gF(P\gH)\otimes\gF(Q\gH)
\label{eq:factorization_Fock_space}
\end{equation}
in the sense of a unitary equivalence. That is, there is a unitary
\begin{equation}\label{eq:Fock factor}
 \cU : \gF ( P\gH \oplus Q\gH) \mapsto \gF(P\gH) \otimes \gF (Q\gH) 
\end{equation}
satisfying $\cU\cU^* = \1$.  Its action on creation operators is 
\begin{equation}\label{eq:loc creation}
 \cU a ^*( f ) \cU ^*  = a ^*(Pf) \otimes \1 + \1 \otimes a^*(Q f) 
\end{equation}
and a similar formula for annihilation operators. We refer to~\cite[Appendix~A]{HaiLewSol_2-09} and references therein for precise definitions and properties.

\begin{definition}[\textbf{Fock-space localization}]\label{def:localization}\mbox{}\\
For any state $\Gamma$ on $\gF$ and any orthogonal projector $P$, we define its \emph{localization} $\Gamma_{P}$ as a state on $\gF$ obtained by taking the partial trace over $\gF(Q\gH)$:
$$\Gamma_{P}:= \tr_{\gF(Q\gH)}\left[ \cU \Gamma \cU^* \right].$$
The density matrices of $\Gamma_P$ can be shown to be equal to
\begin{equation}
(\Gamma_P)^{(k)}=P^{\otimes k}\Gamma^{(k)}P^{\otimes k},\qquad \forall k\geq1.
\label{eq:GammaV-k}
\end{equation} 
\hfill$\diamond$
\end{definition}

The crucial property~\eqref{eq:GammaV-k} follows immediately from~\eqref{eq:DM ann cre} and~\eqref{eq:loc creation}, see again~\cite[Appendix A]{HaiLewSol_2-09} and references therein for detailed discussions. An equivalent definition leading to~\eqref{eq:GammaV-k} originates in~\cite{Lewin-11} and is reviewed in~\cite[Chapter~5]{Rougerie-LMU}.

\medskip

\noindent\textbf{Coherent states and lower symbols.} The de Finetti measure is in fact a lower symbol with respect to the over-complete basis of $\gF (P\gH)$ given by coherent states when $P$ is a finite-dimensional orthogonal projector. Below, the notation $|0\rangle = 1 \oplus 0 \oplus 0 \oplus \ldots$ stands for the vacuum vector of Fock space.

\begin{definition}[\textbf{Coherent states}]\label{def:coherent state}\mbox{}\\
A {\em coherent state} is a Weyl-rotation of the vacuum $|0\rangle$ in the Fock space $\gF$
\begin{equation}\label{eq:coherent state}
\xi(u):=W(u) |0\rangle := \exp(a^\dagger(u)-a(u))  |0\rangle = e^{-\norm{u}^2/2} \exp\left(a^\dagger(u)\right)  |0\rangle = e^{-\norm{u}^2/2} \bigoplus_{n \ge 0} \frac{1}{\sqrt{n!}} u^{\otimes n},
\end{equation}
for $u\in\gH$. \hfill$\diamond$
\end{definition}

The Weyl operator $W(u)$ is a unitary operator translating creation and annihilation operators
\begin{equation}
 \label{eq:Weyl-action}
W(f)^* a^\dagger(g) W(f)=a^\dagger(g) + \langle f,g \rangle, \quad W(f)^* a(g) W(f)=a(g) + \langle g,f \rangle.
\end{equation}
The $k$-particle density matrix of the coherent state $\xi(u)$ is
\begin{equation}
|\xi(u)\>\<\xi(u)|^{(k)}=\frac{1}{k!}|u^{\otimes k}\>\<u^{\otimes k}|.
\label{eq:DM_coherent}
\end{equation}

\begin{definition}[\textbf{Lower symbol}]\label{def:lower symbol}\mbox{}\\
For any state $\Gamma$ on $\gF$ and any scale $\eps>0$, we define the \emph{lower symbol} (or {\em Husimi function}) of $\Gamma$ on $P\gH$ at scale $\eps$ by
\begin{equation} \label{eq:Husimi}
d\mu_{P,\Gamma}^{\eps}(u):=(\eps\pi)^{-\Tr(P)}\pscal{\xi(u/\sqrt{\eps}),\Gamma_P \xi(u/\sqrt{\eps})}_{\gF(P\gH)} d u.
\end{equation}
Here $du$ is the usual Lebesgue measure on $P\gH \simeq \C^{\Tr(P)}$.  \hfill$\diamond$
\end{definition}

Thanks to the resolution of the identity/closure relation (see e.g.~\cite{LewNamRou-15})
\begin{equation}
\pi^{-\Tr(P)} \int_{P\gH} |\xi(u)\rangle \langle \xi (u)| du = \pi^{-\Tr(P)} \left( \int_{P\gH} e^{-|u|^2} du \right) \1_{\F(V)} = \1_{\F(P\gH)},
\label{eq:resolution_coherent2}
\end{equation}
the lower symbol $\mu_{P,\Gamma}^{\eps}(u)$ is a probability measure on $P\gH$. Moreover, it provides a good approximation for the density matrices $\Gamma_P^{(k)}$, as per the following version of the \emph{quantum de Finetti theorem}: 

\begin{theorem}[\textbf{Lower symbols as de Finetti measures}]\label{thm:quant deF}\mbox{}\\
We have, for all $k\in\N$,
\begin{equation}\label{eq:Chiribella}
\int_{P\gH}|u^{\otimes k}\>\<u^{\otimes k}|\;d\mu^\eps_{P,\Gamma}(u) = k!\eps^k\Gamma^{(k)}_P + k! \eps ^k \sum_{\ell = 0} ^{k-1} {k \choose \ell} \Gamma^{(\ell)} \otimes_s \one_{\otimes_s ^{k-\ell} P\gH}.
\end{equation}
Thus, with $d=\Tr [P]$, 
\begin{align}
\Tr \left| k!\eps^k\Gamma^{(k)}_P-\int_{P\gH}|u^{\otimes k}\>\<u^{\otimes k}|\;d\mu^\eps_{P,\Gamma}(u) \right| \leq \eps^k \sum_{\ell=0}^{k-1}{k\choose \ell}^2  \frac{(k-\ell +d-1)!}{(d-1)!}\tr \left[ \cN^{\ell}\Gamma_P\right].
\label{eq:quantitative}
\end{align}
\end{theorem}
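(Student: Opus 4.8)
The plan is to undo the scaling, recognize the left-hand side as the $k$-particle density matrix of an explicit operator built from $\Gamma_P$ by ``re-quantizing its lower symbol'', and then read off the claimed expansion from the canonical commutation relations.

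First I would rescale: the substitution $u=\sqrt\eps\,v$ turns $d\mu^\eps_{P,\Gamma}(u)$ into $\pi^{-d}\pscal{\xi(v),\Gamma_P\,\xi(v)}\,dv$, with $d=\Tr P$, and pulls out a factor $\eps^k$ from $|u^{\otimes k}\rangle\langle u^{\otimes k}|$. Invoking the identity $|v^{\otimes k}\rangle\langle v^{\otimes k}|=k!\,|\xi(v)\rangle\langle\xi(v)|^{(k)}$ from~\eqref{eq:DM_coherent} and the linearity of $\Gamma\mapsto\Gamma^{(k)}$, this yields
\[
\int_{P\gH}|u^{\otimes k}\rangle\langle u^{\otimes k}|\,d\mu^\eps_{P,\Gamma}(u)=k!\,\eps^k\,\widetilde\Gamma^{(k)},\qquad
\widetilde\Gamma:=\pi^{-d}\int_{P\gH}\pscal{\xi(v),\Gamma_P\,\xi(v)}\;|\xi(v)\rangle\langle\xi(v)|\,dv ,
\]
so that the whole statement reduces to identifying $\widetilde\Gamma^{(k)}$, $\widetilde\Gamma$ being a state on $\gF(P\gH)$ (the anti-Wick requantization of the Husimi function of $\Gamma_P$).

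To compute $\widetilde\Gamma^{(k)}$ I would use its matrix elements via~\eqref{eq:DM ann cre}. The two ingredients are the coherent-state eigenrelation $a(g)\xi(v)=\pscal{g,v}\xi(v)$ (immediate from the Weyl action~\eqref{eq:Weyl-action}) and the closure relation $\pi^{-d}\int_{P\gH}|\xi(v)\rangle\langle\xi(v)|\,dv=\one_{\gF(P\gH)}$ from~\eqref{eq:resolution_coherent2}. Letting the $a^\dagger(f_i)$ act onto the bra and the $a(g_j)$ onto the ket of the integrand defining $\widetilde\Gamma$, the $k$-particle matrix element $\pscal{g_1\otimes_s\cdots\otimes_s g_k,\widetilde\Gamma^{(k)}f_1\otimes_s\cdots\otimes_s f_k}$ becomes $\pi^{-d}\int_{P\gH}\pscal{\xi(v),a^\dagger(f_1)\cdots a^\dagger(f_k)\,\Gamma_P\,a(g_1)\cdots a(g_k)\,\xi(v)}\,dv$, and then, by the closure relation and cyclicity of the trace, equals $\tr_{\gF(P\gH)}[\Gamma_P\,a(g_1)\cdots a(g_k)\,a^\dagger(f_1)\cdots a^\dagger(f_k)]$. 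Since the same matrix element for $\Gamma_P^{(k)}$ is $\tr_{\gF(P\gH)}[\Gamma_P\,a^\dagger(f_1)\cdots a^\dagger(f_k)\,a(g_1)\cdots a(g_k)]$, I would bring $a(g_1)\cdots a(g_k)\,a^\dagger(f_1)\cdots a^\dagger(f_k)$ to normal order using only the CCR~\eqref{eq:CCR}. By Wick's theorem, this produces for each $\ell\in\{0,\dots,k\}$ a sum over the ways of contracting $k-\ell$ of the annihilation operators against $k-\ell$ of the creation operators; a term with $k-\ell$ contractions contributes a product of scalars $\pscal{g_j,f_i}$ times a normal-ordered remainder of length $2\ell$ whose trace against $\Gamma_P$ is an $\ell$-particle matrix element of $\Gamma_P^{(\ell)}$. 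Resumming, the scalar factors reconstitute matrix elements of $\one_{\otimes_s^{k-\ell}P\gH}$, the number of contraction patterns for a given $\ell$ delivers the coefficient $\binom{k}{\ell}$, and multiplying through by $k!\eps^k$ gives~\eqref{eq:Chiribella}; the $\ell=k$ term is $k!\eps^k\Gamma_P^{(k)}$, while for $\ell<k$ one may write $\Gamma^{(\ell)}$ for $\Gamma_P^{(\ell)}$ since the factors $\one_{\otimes_s^{k-\ell}P\gH}$ confine everything to $(P\gH)^{\otimes_s k}$ and $\Gamma_P^{(\ell)}=P^{\otimes\ell}\Gamma^{(\ell)}P^{\otimes\ell}$ by~\eqref{eq:GammaV-k}.

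Finally, the quantitative bound~\eqref{eq:quantitative} follows by the triangle inequality applied to the $\ell<k$ terms of~\eqref{eq:Chiribella}: $\one_{\otimes_s^{k-\ell}P\gH}$ is a projection of rank $\binom{d+k-\ell-1}{k-\ell}$, the symmetrization over the $k$ tensor slots produces the remaining combinatorial factors, and $\tr[\Gamma_P^{(\ell)}]=\tr[\binom{\cN}{\ell}\Gamma_P]\le\tr[\cN^\ell\Gamma_P]$; summing over $\ell$ yields exactly the right-hand side of~\eqref{eq:quantitative}. The only genuinely delicate point in this scheme is combinatorial: keeping track of the Wick contractions together with the symmetrization and normalization conventions for $\otimes_s$ so as to land precisely on the coefficients $\binom{k}{\ell}$ (resp.\ $\binom{k}{\ell}^2$ in the error term). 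Everything else is routine finite-dimensional Gaussian integration and manipulation of the CCR.
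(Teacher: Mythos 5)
Your argument is correct, and it matches the proof in the cited reference \cite[Lemma 6.2]{LewNamRou-15} (the paper itself only references that proof): rescale to absorb $\eps$, recognize the left-hand side as $k!\eps^k$ times the $k$-body density matrix of the anti-Wick requantization $\widetilde\Gamma$ of the Husimi function of $\Gamma_P$, compute its matrix elements via the coherent-state eigenrelation $a(g)\xi(v)=\pscal{g,v}\xi(v)$ and the closure relation to land on the anti-normal-ordered expectation $\tr\left[\Gamma_P\,a(g_1)\cdots a(g_k)\,a^\dagger(f_1)\cdots a^\dagger(f_k)\right]$, and then normal-order by Wick's theorem. The quantitative bound~\eqref{eq:quantitative} follows from~\eqref{eq:Chiribella} exactly as you describe, using the rank of $\one_{\otimes_s^{k-\ell}P\gH}$ and $\tr\left[\Gamma_P^{(\ell)}\right]=\tr\left[\binom{\cN}{\ell}\Gamma_P\right]\le\tr\left[\cN^{\ell}\Gamma_P\right]$.
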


The result is taken from \cite[Lemma 6.2 and Remark 6.4]{LewNamRou-15}. It is an elaboration on a theorem originating on~\cite{ChrKonMitRen-07} and a proof thereof later provided in~\cite{LewNamRou-15b}. If $k$ is fixed and $\tr\left[(\eps \cN)^{\ell}\Gamma_P\right]= O(1)$, then the upper bound in~\eqref{eq:quantitative} behaves as $Cd\eps$ in the limit $\eps \to0$. This is similar to the bound $4kd/N$ obtained for $N$-particle states in the references just mentioned. 

Finally, we recall a Berezin-Lieb type inequality, which links the von Neumann relative entropy of two quantum states to the classical Boltzmann entropy of their lower symbols.

\begin{theorem}[\textbf{Relative entropy: quantum to classical}] \label{thm:rel-entropy}\mbox{}\\
Let $\Gamma$ and $\Gamma'$ be two states on $\gF$. Let $\mu_{P,\Gamma}^\eps$ and $\mu_{P,\Gamma'}^\eps$ be the lower symbols defined in \eqref{eq:Husimi}.  Then we have
\begin{equation}
\cH(\Gamma,\Gamma')\geq \cH(\Gamma_P,\Gamma'_P)\geq \cHcl(\mu_{P,\Gamma}^\eps,\mu_{P,\Gamma'}^\eps).
\label{eq:Berezin-Lieb}
\end{equation}
\end{theorem}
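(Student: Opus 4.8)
The plan is to obtain both inequalities from a single principle: the monotonicity of the quantum relative entropy under trace-preserving completely positive maps, i.e. $\cH(\Phi\Gamma,\Phi\Gamma')\le\cH(\Gamma,\Gamma')$ in $[0,+\infty]$ (so that nothing is lost if the left-hand side is infinite), applied to two explicit maps. No genuine computation is involved; the content lies entirely in identifying the right maps and, for the second one, in treating a continuous measurement carefully.

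For the first inequality $\cH(\Gamma,\Gamma')\ge\cH(\Gamma_P,\Gamma'_P)$, I would simply note that the localization map $\Gamma\mapsto\Gamma_P=\tr_{\gF(Q\gH)}(\cU\Gamma\cU^*)$ of Definition~\ref{def:localization} factors as conjugation by the unitary $\cU$ of~\eqref{eq:Fock factor} — which preserves the relative entropy, $\cH(\cU\Gamma\cU^*,\cU\Gamma'\cU^*)=\cH(\Gamma,\Gamma')$, since $\cU$ is a unitary equivalence — followed by the partial trace over $\gF(Q\gH)$, under which the relative entropy can only decrease. This requires no work beyond invoking monotonicity.

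For the second inequality I would view $\Gamma_P$ and $\Gamma'_P$ as states on $\gF(P\gH)$ and read the closure relation~\eqref{eq:resolution_coherent2} as saying that $E(\mathrm{d}u):=(\eps\pi)^{-\Tr P}\,|\xi(u/\sqrt\eps)\rangle\langle\xi(u/\sqrt\eps)|\,\mathrm{d}u$ is a coherent-state positive operator-valued measure on $\gF(P\gH)$. The associated measurement channel $\Phi:\rho\mapsto\big(u\mapsto\tr[E(\mathrm{d}u)\,\rho]\big)$ is trace-preserving and completely positive, it sends states on $\gF(P\gH)$ to probability measures on $P\gH$, and — by the very definition~\eqref{eq:Husimi} of the lower symbol — it maps $\Gamma_P,\Gamma'_P$ precisely to $\mu^\eps_{P,\Gamma},\mu^\eps_{P,\Gamma'}$. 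Since the relative entropy between two probability measures is exactly $\cHcl$, monotonicity under $\Phi$ yields $\cH(\Gamma_P,\Gamma'_P)\ge\cHcl(\mu^\eps_{P,\Gamma},\mu^\eps_{P,\Gamma'})$, which is the claim. (Note that the resolution of the identity playing the role of the Berezin--Lieb ingredient here is exactly~\eqref{eq:resolution_coherent2}; a naive alternative that tries to lift a classical test function to its anti-Wick operator and apply the Gibbs variational principle runs into the fact that $\exp$ is not operator convex, so I would not pursue it.)

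The step I expect to require the most care is this last one, because $E$ is a \emph{continuous} POVM, so the textbook finite-dimensional data-processing inequality does not literally apply. The natural fix is the standard one: for any finite measurable partition $\{B_i\}$ of $P\gH$ the operators $\int_{B_i}E(\mathrm{d}u)$ form a genuine discrete POVM, to which finite-dimensional monotonicity applies and yields the inequality for the coarse-grained versions of $\mu^\eps_{P,\Gamma},\mu^\eps_{P,\Gamma'}$; one then lets the partition become finer and uses that $\cHcl$ is the supremum over such partitions to conclude. (Alternatively one may invoke the general operator-algebraic monotonicity theorem of Lindblad--Uhlmann, which applies directly, or note that $\Phi$ factors through the conditional expectation onto the abelian algebra generated by $E$.) The argument is, in substance, the one of~\cite{LewNamRou-15}, of which this statement is essentially a restatement.
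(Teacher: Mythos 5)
Your proposal is correct, and it is essentially the same argument as the one the paper cites. The paper does not reprove the statement; it refers to \cite[Theorem~7.1]{LewNamRou-15}, whose proof goes through the same two reductions you identify: the first inequality from monotonicity of the quantum relative entropy under the partial trace over $\gF(Q\gH)$ (the conjugation by $\cU$ being an isometry and hence entropy-preserving), and the second from a Berezin--Lieb-type inequality for relative entropy in terms of lower symbols. Your reformulation of the second step as data processing under the coherent-state POVM channel defined by the resolution of the identity~\eqref{eq:resolution_coherent2} is the cleanest modern phrasing of exactly that Berezin--Lieb ingredient, and your remark that $\cHcl$ of a pair of continuous distributions is the supremum over finite measurable partitions (or, alternatively, that one can invoke the Lindblad--Uhlmann monotonicity theorem directly for the map onto the abelian algebra generated by the POVM) correctly handles the only technical subtlety. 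Your aside on why a naive Gibbs-variational/anti-Wick route would stumble on the failure of operator convexity of $\exp$ is also accurate.
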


The result is taken from \cite[Theorem 7.1]{LewNamRou-15}, whose proof goes back to the techniques in~\cite{Berezin-72,Lieb-73b,Simon-80}. Note that, to obtain an approximation of density matrices, other constructions than that based on the lower symbol we just discussed are available~\cite{BraHar-12,CavFucSch-02,HudMoo-75,Stormer-69,Rougerie-19}. Some of those give good quantitative estimates and the main reason for us to rely on lower symbols (a.k.a. Husimi functions, covariant symbols, anti-Wick symbols) is that~\eqref{eq:Berezin-Lieb} is heavily based on their properties.

\subsection{Gaussian quantum states in the limit $\lambda\to0^+$}\label{sec:free-Gibbs-state} 
Now we introduce the parameter $\lambda$ and study the limit $\lambda\to0$. We start by stating some simple properties of the Gaussian quantum states. Let $h>0$ satisfy $\tr[h^{-p}]<\ii$ for some $p\geq1$. Then, $\tr[e^{-\beta h}]<\ii$ for all $\beta>0$ and we may define the associated Gaussian quantum state by
\begin{equation}
\boxed{\Gamma_0 = \frac{e^{-\lambda\dG(h)}}{\cZ_0(\lambda)} , \qquad \cZ_0(\lambda) = \tr_{\gF}\left[e^{-\lambda\dG(h)}\right].}
\label{eq:reminder_free}
\end{equation}

We recall (see e.g.~\cite[Appendix~A]{LewNamRou-15}) that the partition function of the Gaussian quantum state satisfies
\begin{equation}
-\log \cZ_0(\lambda) = -\log \tr_{\gF}(e^{-\lambda\dG(h)}) = \tr_{\gH}\left(\log(1-e^{-\lambda h})\right) :=\lambda F_0
\label{eq:partition_quasi_free}
\end{equation}
where the free-energy 
$$ F_0 = \cF_0 [\Gamma_0] = \min \cF_0 [\Gamma] = \min \left\{ \tr\left( \dG (h) \Gamma \right) + \lambda^{-1} \tr\left( \Gamma \log \Gamma \right)\right\}$$
is the infimum over all states of the free-energy functional associated with $\bH_0$.
We collect some of its first properties in the

\begin{lemma}[\textbf{Gaussian quantum state}]\label{lem:quasi-free}\mbox{}\\
Let $h>0$ satisfy 
\begin{equation}\label{eq:Schatten h again}
\tr(h^{-p})<\ii\qquad\text{for some $p\geq1$.} 
\end{equation}
The $k$-particle density matrix of the Gaussian quantum state is given by
\begin{equation}\label{eq:DM_quasi_free}
\Gamma_0^{(k)}=\left(\frac{1}{e^{\lambda h}-1}\right)^{\otimes k} \le \lambda^{-k} (h^{-1})^{\otimes k}.
\end{equation}
Consequently, for every $k\ge 1$,
\begin{equation}\label{eq:CV-DM-G0}
\lambda^k k!\;\Gamma_0^{(k)} \underset{\lambda\to0}\longrightarrow k! (h^{-1})^{\otimes k} = \int |u^{\otimes k} \rangle \langle u^{\otimes k} | d\mu_0(u)
\end{equation}
strongly in the Schatten space $\gS^p(\gH^k)$. Moreover,
\begin{equation} \label{eq:cNk-G0}
\Tr_\gH \left[ \Gamma_0^{(k)}\right] = \left \langle {\cN \choose k}\right\rangle_0  \le C_k \Tr(h^{-p})^k \lambda^{-pk}.
\end{equation}
and, if in addition $p>1$,
\begin{equation} \label{eq:cNk-G0-limit}
\lim_{\lambda \to0}\lambda^{pk}\Tr \left[ \Gamma_0^{(k)}\right] =0.
\end{equation}
Here $\cN$ is the particle number operator and 
$$\langle \cdot \rangle_0 := \tr_{\gF} \left( \cdot \, \Gamma_0 \right)$$
is the expectation against $\Gamma_0$ in the Fock space $\gF$. 
\end{lemma}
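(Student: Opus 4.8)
The plan is to reduce every assertion to elementary functional calculus for the single operator $h$ together with the Wick calculus for gauge-invariant quasi-free states. First I would record that $\Gamma_0$ is the centred, gauge-invariant quasi-free state whose one-body density matrix is the Bose--Einstein occupation $\gamma:=\Gamma_0^{(1)}=(e^{\lambda h}-1)^{-1}$, obtained by diagonalising $h$ and summing a geometric series as in~\cite[Appendix~A]{LewNamRou-15}. Applying the gauge-invariant Wick theorem to the defining relation~\eqref{eq:DM ann cre} and using the symmetry of the test functions then identifies $\Gamma_0^{(k)}$ with $\gamma^{\otimes k}$ on $\gH^{\otimes_s k}$, i.e. with $P_s^k\gamma^{\otimes k}P_s^k$ in the sense of~\eqref{eq:notation merdique}; the factor $k!$ produced by the permutations is precisely cancelled by the $1/k!$ built into the second quantisation of a $k$-body operator, so no spurious factorial appears in~\eqref{eq:DM_quasi_free}. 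The operator inequality in~\eqref{eq:DM_quasi_free} then follows from the scalar bound $(e^t-1)^{-1}\le t^{-1}$ for $t>0$, which gives $0\le\gamma\le\lambda^{-1}h^{-1}$ by functional calculus, together with the elementary fact that $0\le A\le B$ implies $A^{\otimes k}\le B^{\otimes k}$ --- telescope $B^{\otimes k}-A^{\otimes k}=\sum_{j=0}^{k-1}A^{\otimes j}\otimes(B-A)\otimes B^{\otimes(k-1-j)}$, a sum of positive operators --- the ordering being preserved on restriction to the symmetric subspace.

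For the Schatten convergence~\eqref{eq:CV-DM-G0} I would introduce $g_\lambda(t):=\lambda(e^{\lambda t}-1)^{-1}=t^{-1}f(\lambda t)$ with $f(s):=s(e^s-1)^{-1}$ decreasing and $f(0^+)=1$, so that $g_\lambda(h)$ increases to $h^{-1}$ as $\lambda\downarrow 0$ with $0\le g_\lambda(h)\le h^{-1}\in\gS^p(\gH)$. Dominated convergence (each eigenvalue contribution tends to zero and is dominated by $\lambda_j^{-p}$) then gives $\|g_\lambda(h)-h^{-1}\|_{\gS^p}^p=\sum_j(\lambda_j^{-1}-g_\lambda(\lambda_j))^p\to 0$. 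Since $\lambda^k k!\,\Gamma_0^{(k)}=k!\,g_\lambda(h)^{\otimes k}$ on $\gH^{\otimes_s k}$, the bound $\|A^{\otimes k}-B^{\otimes k}\|_{\gS^p}\le\sum_{j=0}^{k-1}\|A\|_{\gS^p}^j\|A-B\|_{\gS^p}\|B\|_{\gS^p}^{k-1-j}$ (same telescoping, plus multiplicativity of Schatten norms under tensor products) upgrades this to $\lambda^k k!\,\Gamma_0^{(k)}\to k!(h^{-1})^{\otimes k}$ in $\gS^p$, and the limit is $\int |u^{\otimes k}\rangle\langle u^{\otimes k}|\,d\mu_0(u)$ by Lemma~\ref{lem:free-meas}.

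For the trace bounds I would first note that $\tr_\gH[\Gamma_0^{(k)}]=\langle {\cN \choose k}\rangle_0$ is exactly~\eqref{eq:def k body} with $A_k=\one_{\gH^{\otimes_s k}}$, whose second quantisation is $\bigoplus_n {n \choose k}$. The estimate $\tr_{\gH^{\otimes_s k}}[\Gamma_0^{(k)}]=\tr[P_s^k\gamma^{\otimes k}P_s^k]\le(\tr\gamma)^k$ together with $C_p:=\sup_{t>0}t^p(e^t-1)^{-1}<\infty$ for every $p\ge 1$ (the supremand behaves like $t^{p-1}$ near $0$ and decays exponentially at infinity) gives $(e^{\lambda\lambda_j}-1)^{-1}\le C_p\lambda^{-p}\lambda_j^{-p}$, hence $\tr\gamma\le C_p\lambda^{-p}\tr(h^{-p})$ and~\eqref{eq:cNk-G0} with $C_k=C_p^k$. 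When $p>1$ one has $t^p(e^t-1)^{-1}\to 0$ as $t\to 0^+$, so dominated convergence yields $\lambda^p\tr\gamma=\sum_j\lambda_j^{-p}\,\frac{(\lambda\lambda_j)^p}{e^{\lambda\lambda_j}-1}\to 0$, whence $\lambda^{pk}\tr[\Gamma_0^{(k)}]\le(\lambda^p\tr\gamma)^k\to 0$, which is~\eqref{eq:cNk-G0-limit}. Essentially nothing here is hard; the only step that needs care rather than routine estimation is the combinatorial bookkeeping in the quasi-free identification of $\Gamma_0^{(k)}$ on the symmetric subspace (getting the permutation factors to cancel the $1/k!$ of second quantisation), which I would simply quote from~\cite[Appendix~A]{LewNamRou-15}.
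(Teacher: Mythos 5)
Your proof is correct and follows essentially the same route as the paper's: the identification $\Gamma_0^{(k)}=\gamma^{\otimes k}$ is quoted from~\cite{LewNamRou-15}, the Schatten convergence comes from monotone/dominated convergence of $\lambda(e^{\lambda h}-1)^{-1}$ to $h^{-1}$ in $\gS^p$, and the trace bounds reduce to the scalar inequality $(e^x-1)^{-1}\le C_p x^{-p}$ together with $\tr[\Gamma_0^{(k)}]\le(\tr\gamma)^k$ (the paper phrases this last step as $\langle\binom{\cN}{k}\rangle_0\le C_k\langle\cN\rangle_0^k$ via Wick's formula, which is the same estimate). The extra details you supply --- the telescoping identity for $A^{\otimes k}\le B^{\otimes k}$ and for the $\gS^p$ norm of the difference of tensor powers --- are correct and merely make explicit what the paper leaves implicit.
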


Regarding~\eqref{eq:DM_quasi_free} we recall the notational convention discussed around Equation~\eqref{eq:notation merdique}. To illustrate the bounds on the particle number~\eqref{eq:cNk-G0}-\eqref{eq:cNk-G0-limit}, recall that in the homogeneous case where $h = -\Delta+ \kappa$ on a box with periodic boundary conditions we have that 
\begin{align} \label{eq:p-N0}
\begin{cases}
 \mbox{in 1D~\eqref{eq:Schatten h again} holds with } p=1 \mbox{ and } \left\langle \cN \right\rangle_0 \sim \lambda ^{-1} \\
 \mbox{in 2D~\eqref{eq:Schatten h again} holds with any } p>1 \mbox{ and } \left\langle \cN \right\rangle_0 \sim -\lambda^{-1}\log \lambda  \\
\mbox{in 3D~\eqref{eq:Schatten h again} holds with any } p>3/2 \mbox{ and } \left\langle \cN \right\rangle_0 \sim \lambda^{-3/2} 
 \end{cases}
\end{align}

\begin{proof} 
Formula \eqref{eq:DM_quasi_free} is taken from \cite[Lemma 2.1]{LewNamRou-15}. Formula \eqref{eq:CV-DM-G0} follows from the monotone convergence of operators and the fact that $h^{-1}$ belongs to the Schatten space $\gS ^p(\gH)$ by the assumption \eqref{eq:Schatten h}. Finally, \eqref{eq:cNk-G0} holds true because 
\begin{align*}
\Tr\left[\Gamma_0^{(k)}\right] = \left \langle {\cN \choose k}\right\rangle_0  \le C_k \left \langle \cN \right\rangle_0^k = C_k \left( \Tr \left[ \frac{1}{e^{\lambda h}-1} \right]\right)^k  \le C_k  \left( \Tr \left[ (\lambda h)^{-p} \right]\right)^k. 
\end{align*}
We have used here that
\begin{equation}
 \frac{1}{e^x-1}\leq \frac{C_p}{x^p},\qquad \forall p\geq1.
 \label{eq:exp_bound}
\end{equation}
Here in the first estimate we have used Wick's formula for the quasi-free state $\Gamma_0$. 
We then remark that 
$$ \lambda^p\Tr \left[ \frac{1}{e^{\lambda h}-1} \right]=\sum_{j\geq1}\frac{\lambda^p}{e^{\lambda \lambda_j}-1}$$
so the limit~\eqref{eq:cNk-G0-limit} follows from the dominated convergence theorem, since 
$$\frac{\lambda^p}{e^{\lambda\lambda_j}-1}\leq \min\left\{\frac{\lambda^{p-1}}{\lambda_j},\frac{C_p}{(\lambda_j)^p}\right\}$$
by~\eqref{eq:exp_bound} and $\lambda^{p-1}/\lambda_j\to0$ when $p>1$.
\end{proof}

The following result is the counterpart of Lemma~\ref{lem:re-mass}. It shows in particular that the renormalized mass is also bounded independently of $\lambda$ for the Gaussian quantum state (take $A=1$ in the statement). 

\begin{lemma} [\textbf{Variance estimate for the Gaussian quantum state}]\label{lem:quasi-free mass}\mbox{}\\ 
Let $h>0$ satisfy $\tr[h^{-2}]<\ii$.
For every bounded self-adjoint operator $A$, we have
\begin{equation}\label{eq:variance free gibbs}
\lim_{\lambda\to0} \lambda^2 \left\langle \Big| \dGamma(A)- \langle \dGamma(A) \rangle_{0}\Big|^2\right\rangle_{0} = \Tr [Ah^{-1}Ah^{-1}]
\end{equation}
where $\langle \cdot \rangle_0$ is the expectation against $\Gamma_0=e^{-\lambda\dG(h)}/\tr[e^{-\lambda\dG(h)}]$ in the Fock space $\gF$. 
\end{lemma}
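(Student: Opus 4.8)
The plan is to compute the quantity on the left-hand side \emph{exactly}, at fixed $\lambda$, by means of Wick's theorem for the gauge-invariant quasi-free state $\Gamma_0=e^{-\lambda\dGamma(h)}/\cZ_0(\lambda)$, and only then to pass to the limit $\lambda\to0^+$ by dominated convergence. This is the direct quantum counterpart of the Gaussian computation carried out in the proof of Lemma~\ref{lem:re-mass}: the right-hand side $\Tr[Ah^{-1}Ah^{-1}]$ is precisely the classical variance $\langle|\cM^A[u]|^2\rangle_{\mu_0}$ appearing in~\eqref{eq:renorm mass A bis}, so the lemma expresses the semiclassical consistency of the two.

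First I would diagonalize $h=\sum_{j}\lambda_j|u_j\rangle\langle u_j|$, set $A_{jk}=\langle u_j,Au_k\rangle$, and write $\dGamma(A)=\sum_{j,k}A_{jk}\,a^\dagger(u_j)a(u_k)$. Since $\Gamma_0$ is the Gibbs state of the particle-number-conserving quadratic operator $\dGamma(h)$, it is a gauge-invariant quasi-free state with two-point functions $\langle a^\dagger(u_j)a(u_k)\rangle_0=\delta_{jk}\,n_j$ and $\langle a(u_j)a(u_k)\rangle_0=0$, where $n_j:=(e^{\lambda\lambda_j}-1)^{-1}$; moreover all moments of $\cN$ are finite by Lemma~\ref{lem:quasi-free}, so $\dGamma(A)\in L^2(\Gamma_0)$ and the algebraic manipulations below are legitimate. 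I would then normal-order $\dGamma(A)^2$ using the CCR and apply Wick's theorem: the disconnected pairing reproduces $\langle\dGamma(A)\rangle_0^2=\big(\sum_jA_{jj}n_j\big)^2$ and cancels, leaving the identity
\begin{equation*}
\big\langle\big|\dGamma(A)-\langle\dGamma(A)\rangle_0\big|^2\big\rangle_0=\sum_{j,k}|A_{jk}|^2\big(n_jn_k+n_j\big)=\sum_{j,k}|A_{jk}|^2\Big(n_jn_k+\tfrac{n_j+n_k}{2}\Big),
\end{equation*}
where the second form follows by relabelling and we used $A=A^*$, so $A_{jk}A_{kj}=|A_{jk}|^2$.

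It then remains to multiply by $\lambda^2$ and let $\lambda\to0^+$. Termwise one has $\lambda n_j\to\lambda_j^{-1}$, hence $\lambda^2n_jn_k\to(\lambda_j\lambda_k)^{-1}$ while $\lambda^2(n_j+n_k)\to0$, so each summand converges to $|A_{jk}|^2/(\lambda_j\lambda_k)$, whose sum over $j,k$ equals exactly $\Tr[Ah^{-1}Ah^{-1}]$. To interchange the limit with the sum I would invoke dominated convergence, and this is the only step requiring a little care: the crude bound $e^x-1\ge x$ gives only $\lambda^2n_j\le\lambda/\lambda_j$, whose associated dominating series involves $\sum_{j,k}|A_{jk}|^2\lambda_j^{-1}\le\|A\|^2\Tr[h^{-1}]$, which need not be finite since we assume only $\Tr[h^{-2}]<\infty$. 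The remedy is to use the sharper elementary inequality $e^x-1\ge x^2/2$ for $x\ge0$, which yields $\lambda^2n_j\le 2\lambda_j^{-2}$; combined with $\lambda^2n_jn_k=(\lambda n_j)(\lambda n_k)\le\lambda_j^{-1}\lambda_k^{-1}\le\tfrac12(\lambda_j^{-2}+\lambda_k^{-2})$, each summand is bounded, uniformly in $\lambda>0$, by $\tfrac32|A_{jk}|^2(\lambda_j^{-2}+\lambda_k^{-2})$, and $\sum_{j,k}|A_{jk}|^2(\lambda_j^{-2}+\lambda_k^{-2})=2\sum_k\lambda_k^{-2}\|Au_k\|^2\le 2\|A\|^2\Tr[h^{-2}]<\infty$. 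Dominated convergence then gives $\lim_{\lambda\to0}\lambda^2\big\langle|\dGamma(A)-\langle\dGamma(A)\rangle_0|^2\big\rangle_0=\sum_{j,k}|A_{jk}|^2/(\lambda_j\lambda_k)=\Tr[Ah^{-1}Ah^{-1}]$, as claimed. Apart from this domination step, the argument is routine bookkeeping of Wick contractions, so I expect no genuine obstacle.
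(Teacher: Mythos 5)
Your proof is correct and follows essentially the same route as the paper's: both compute the variance exactly via Wick's theorem for the gauge-invariant quasi-free state, obtaining $\lambda^2\tr[A^2\Gamma_0^{(1)}]+\lambda^2\Tr[A\Gamma_0^{(1)}A\Gamma_0^{(1)}]$ (your double sum $\sum_{j,k}|A_{jk}|^2(n_jn_k+n_j)$ is exactly this in the eigenbasis of $h$), and then pass to the limit. The only difference is cosmetic: the paper invokes the convergences $\lambda\Gamma_0^{(1)}\to h^{-1}$ in $\gS^2$ and $\lambda^2\tr\Gamma_0^{(1)}\to 0$ from Lemma~\ref{lem:quasi-free}, while you run the dominated convergence argument directly on the series, with a correct domination using $e^x-1\ge x^2/2$.
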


\begin{proof} 
Pick an orthonormal basis $(u_i)$ of $\gH$ and denote $a_i ^*,a_i$ the associated creation and annihilation operators. Since $\Gamma_0$ is a quasi-free state and it commutes with $\cN$, we can compute explicitly, using the CCR and Wick's theorem:
\begin{align*}
\Big\langle |\dGamma(A)|^2 \Big\rangle_0 &= \sum_{m,n,p,q=1}^\infty  \langle u_m, Au_n\rangle\langle u_p, Au_q\rangle \langle a_m^* a_n a_p^* a_q \rangle_0 \\
&= \sum_{m,n,p,q=1}^\infty  \langle u_m, Au_n\rangle\langle u_p, Au_q\rangle \left( \langle a_m^* a_n \rangle_0 \langle a_p^* a_q \rangle_0 + \langle a_m^* a_q \rangle_0 \delta_{np} + \langle a_m^* a_q \rangle_0 \langle a_p^* a_n \rangle_0 \right)\\
&= \Big\langle \dGamma(A) \Big\rangle_0^2+ \tr\left[A^2\Gamma_0^{(1)}\right]+ \Tr \left[A\Gamma_0^{(1)}A\Gamma_0^{(1)}\right].
\end{align*}
Then using \eqref{eq:CV-DM-G0} and \eqref{eq:cNk-G0}, we conclude that  
\begin{align}\label{eq:proof variance free}
\lambda^{2}\left\langle \Big| \dGamma(A)- \langle \dGamma(A) \rangle_{0}\Big|^2\right\rangle_{0} &= \lambda^{2} \left( \left\langle |\dGamma(A)|^2 \right\rangle_0 - \left\langle \dGamma(A) \right\rangle_0^2\right)\nonumber\\
&= \lambda^{2} \tr\left[A^2\Gamma_0^{(1)}\right] + \lambda^2 \Tr\left[ A\Gamma_0^{(1)}A\Gamma_0^{(1)}\right] \nn\\
&\underset{\lambda\to0}{\longrightarrow} \Tr [Ah^{-1}Ah^{-1}].
\end{align}
In the last line we have used that
$$\lambda^2\tr\left[A^2\Gamma_0^{(1)}\right]\leq \lambda^2\|A\|^2\tr\Gamma_0^{(1)}\underset{\lambda\to0}\longrightarrow0$$
by~\eqref{eq:cNk-G0-limit} for $p=2$.
\end{proof}

The term obtained in~\eqref{eq:variance free gibbs} is the limit of the exchange term. An important consequence of the previous lemma is that the Gaussian quantum state $\Gamma_0$ has a bounded renormalized interaction energy:

\begin{lemma} [\textbf{Interaction energy of the Gaussian quantum state}]\label{lem:quasi-free ener}\mbox{}\\ 
Let $h>0$ satisfy $\tr[h^{-2}]<\ii$. Let $w:\Omega \to \R$ such that $0\le \widehat w \in L^1(\Omega^*)$. For the interaction operator defined as in~\eqref{eq:renorm int trap}, namely 
\begin{align} \label{eq:renorm int homo-trap}
\bWren &=  \frac 12 \int  \hat{w} (k) \left|\dG (e ^{ik \cdot x}) - \left\langle \dG (e ^{ik \cdot x}) \right\rangle_{0} \right| ^2 dk, 
\end{align}
we have
\begin{equation} 
\label{eq:W-G0} \lambda^2\langle \bWren\rangle_{0} \le  C \tr ( h ^{-2} ).
\end{equation} 
where we recall that $\Gamma_0=e^{-\lambda\dG(h)}/\tr[e^{-\lambda\dG(h)}]$. 
\end{lemma}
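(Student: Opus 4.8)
The plan is to read this off from the variance computation already performed in the proof of Lemma~\ref{lem:quasi-free mass}, combined with the elementary bounds on $\Gamma_0^{(1)}=(e^{\lambda h}-1)^{-1}$ recorded in Lemma~\ref{lem:quasi-free}. First I would split each Fourier mode into its real and imaginary parts. Writing $e^{ik\cdot x}=\cos(k\cdot x)+i\sin(k\cdot x)$ and using the linearity of $\dG$, one gets
\[
\big|\dG(e^{ik\cdot x})-\langle \dG(e^{ik\cdot x})\rangle_0\big|^2=\big|\dG(\cos(k\cdot x))-\langle\dG(\cos(k\cdot x))\rangle_0\big|^2+\big|\dG(\sin(k\cdot x))-\langle\dG(\sin(k\cdot x))\rangle_0\big|^2,
\]
since the potential cross term equals $i\,[\dG(\cos(k\cdot x)),\dG(\sin(k\cdot x))]=i\,\dG\big([\cos(k\cdot x),\sin(k\cdot x)]\big)=0$, the two multiplication operators commuting. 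Taking the expectation in $\Gamma_0$ and integrating against $\hat w(k)\,dk$ (legitimate by Tonelli, everything being nonnegative since $\hat w\ge0$ and the integrand a positive operator), it then suffices to prove
\[
\lambda^2\Big\langle \big|\dG(A)-\langle\dG(A)\rangle_0\big|^2\Big\rangle_0\le C\,\tr(h^{-2})
\]
for every self-adjoint multiplication operator $A$ with $\|A\|\le1$, and finally multiply by $\tfrac12\int \hat w(k)\,dk<\infty$.

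For this pointwise bound I would invoke the identity obtained along the proof of Lemma~\ref{lem:quasi-free mass} (Wick's theorem for the quasi-free state $\Gamma_0$), namely $\lambda^2\langle|\dG(A)-\langle\dG(A)\rangle_0|^2\rangle_0=\lambda^2\tr[A^2\Gamma_0^{(1)}]+\lambda^2\tr[A\Gamma_0^{(1)}A\Gamma_0^{(1)}]$. The first term is controlled using $A^2\le\|A\|^2\le1$ together with the spectral bound $\Gamma_0^{(1)}=(e^{\lambda h}-1)^{-1}\le C_2(\lambda h)^{-2}$ coming from~\eqref{eq:exp_bound} with $p=2$ (this is precisely~\eqref{eq:cNk-G0} for $k=1$, $p=2$), which gives $\lambda^2\tr[A^2\Gamma_0^{(1)}]\le\lambda^2\tr[\Gamma_0^{(1)}]\le C_2\tr(h^{-2})$. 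For the second term, working in the spectral basis $\Gamma_0^{(1)}=\sum_j\mu_j|v_j\rangle\langle v_j|$ and using that $A$ is self-adjoint, one has $\tr[A\Gamma_0^{(1)}A\Gamma_0^{(1)}]=\sum_{j,l}\mu_j\mu_l\,|\langle v_j, A v_l\rangle|^2$; a Cauchy--Schwarz inequality exploiting the symmetry $j\leftrightarrow l$ and the Parseval identity $\sum_l|\langle v_j, A v_l\rangle|^2=\|Av_j\|^2\le\|A\|^2$ then yields $\tr[A\Gamma_0^{(1)}A\Gamma_0^{(1)}]\le\|A\|^2\,\tr[(\Gamma_0^{(1)})^2]$. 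Since $e^{\lambda h}-1\ge\lambda h$ with both sides commuting positive functions of $h$, we get $(\Gamma_0^{(1)})^2=(e^{\lambda h}-1)^{-2}\le(\lambda h)^{-2}$, hence $\lambda^2\tr[(\Gamma_0^{(1)})^2]\le\tr(h^{-2})$. Combining the two terms proves the pointwise bound with $C=C_2+1$, and therefore
\[
\lambda^2\langle\bWren\rangle_0=\frac12\int\hat w(k)\,\lambda^2\Big\langle\big|\dG(e^{ik\cdot x})-\langle\dG(e^{ik\cdot x})\rangle_0\big|^2\Big\rangle_0\,dk\le (C_2+1)\,\|\hat w\|_{L^1(\Omega^*)}\,\tr(h^{-2}),
\]
which is~\eqref{eq:W-G0}.

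There is no real obstacle: the lemma is essentially a corollary of Lemmas~\ref{lem:quasi-free}--\ref{lem:quasi-free mass}. The only two points needing a short argument rather than a direct citation are the operator inequality $\tr[A\Gamma_0^{(1)}A\Gamma_0^{(1)}]\le\|A\|^2\tr[(\Gamma_0^{(1)})^2]$ (the Cauchy--Schwarz manipulation above) and the vanishing of the cosine/sine commutator term, neither of which is delicate; and, as noted, the interchange of the $k$-integral with the trace defining $\langle\,\cdot\,\rangle_0$ is justified by positivity.
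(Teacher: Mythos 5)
Your proof is correct and follows essentially the same route as the paper's: split the Fourier mode into cosine and sine parts, apply the Wick-theorem variance identity from the proof of Lemma~\ref{lem:quasi-free mass}, and bound each term using $\|A\|\le1$ and $\Gamma_0^{(1)}\le(\lambda h)^{-1}$. The one (modest) improvement is that you extract an explicit $\lambda$-uniform bound from the two Wick terms, whereas the paper invokes the asymptotic statement~\eqref{eq:variance free gibbs} (with an unquantified $o_\lambda(1)$) to conclude a bound uniform in $\lambda$; your version makes precise what that $o_\lambda(1)$ is hiding and is slightly more self-contained as a result.
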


\begin{proof} Let us write 
\begin{align*}
\bWren &=\frac12 \int  \hat{w} (k)  \left|\dG (\cos (k \cdot x)) - \left\langle \dG (\cos (k \cdot x)) \right\rangle_0\right| ^2 dk \nonumber \\
&+  \frac12 \int  \hat{w} (k)\left|\dG (\sin (k \cdot x)) - \left\langle \dG (\sin (k \cdot x)) \right\rangle_0\right| ^2 dk.
\end{align*}
Next, we take the expectation against $\Gamma_0$ and use ~\eqref{eq:variance free gibbs} with $A=\cos (k \cdot x)$ or $\sin (k \cdot x)$. Since  $\|A\|\le 1$,~\eqref{eq:W-G0} follows immediately: 
\begin{align*}
\lambda^2 \langle \bWren\rangle_0 &\le \int \hat{w} (k) \tr[h^{-2}] dk  + o_{\lambda} (1) \le  C  \tr ( h ^{-2} ).
\end{align*}
\end{proof}

Finally, we have the following bounds on the fluctuations of the particle number operator $\cN$ around its mean value $\langle \cN \rangle_0$. 

\begin{lemma}[\textbf{Fluctuations of the particle number}]\label{lem:number fluctuation}\mbox{}\\ 
Let $h>0$ satisfy $\tr(h^{-2})<\ii$. Then we have
\begin{align} 
 \langle (\cN-\langle \cN \rangle_0 )^2\rangle_{0} &\le C \tr[h^{-2}] \lambda^{-2}, \label{eq:N2-N20-free} \\  
 \langle  (\cN-\langle \cN \rangle_0 )^4\rangle_{0} &\le C \tr[h^{-2}]^3 \lambda^{-6}.  \label{eq:N4-N40-free}
\end{align}
\end{lemma}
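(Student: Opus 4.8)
The plan is to reduce the statement to an elementary computation for a sum of independent random variables, exploiting that $\cN$ commutes with $\Gamma_0$ and that $\Gamma_0$ factorizes over the eigenmodes of $h$. Writing $h=\sum_j\lambda_j|u_j\rangle\langle u_j|$ and $\dG(h)=\sum_j\lambda_j\,a_j^\dagger a_j$ with the commuting number operators $a_j^\dagger a_j$, one has $e^{-\lambda\dG(h)}=\bigotimes_j e^{-\lambda\lambda_j a_j^\dagger a_j}$, hence $\Gamma_0=\bigotimes_j\rho_j$ with $\rho_j=(1-x_j)\sum_{n\ge0}x_j^{\,n}|n\rangle\langle n|$, $x_j=e^{-\lambda\lambda_j}\in(0,1)$. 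Thus, under $\Gamma_0$, the occupation numbers $N_j=a_j^\dagger a_j$ are independent, each geometrically distributed with $\mathbb P(N_j=n)=(1-x_j)x_j^{\,n}$, and $\cN=\sum_jN_j$. (Equivalently one could run the Wick calculus of Lemma~\ref{lem:quasi-free mass} for $\dG(\1)$; the probabilistic formulation is just more transparent.)

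First I would record the moments of each $N_j$. From $\langle s^{N_j}\rangle_0=(1-x_j)/(1-x_js)$ (or a direct summation using $\sum_n\binom{n}{k}x^n=x^k/(1-x)^{k+1}$) one obtains the falling‑factorial moments $\langle N_j(N_j-1)\cdots(N_j-k+1)\rangle_0=k!\,\nu_j^{\,k}$, with $\nu_j:=x_j/(1-x_j)=(e^{\lambda\lambda_j}-1)^{-1}$. In particular $\langle N_j\rangle_0=\nu_j$, $\langle(N_j-\nu_j)^2\rangle_0=\nu_j(1+\nu_j)$, and, expanding the fourth power, $\langle(N_j-\nu_j)^4\rangle_0=9\nu_j^4+18\nu_j^3+10\nu_j^2+\nu_j\le C(\nu_j+\nu_j^4)$.

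By independence, $\langle(\cN-\langle\cN\rangle_0)^2\rangle_0=\sum_j\nu_j(1+\nu_j)=\sum_j\nu_j+\sum_j\nu_j^2$. Here $\sum_j\nu_j=\langle\cN\rangle_0=\tr[(e^{\lambda h}-1)^{-1}]\le C\tr[(\lambda h)^{-2}]=C\lambda^{-2}\tr[h^{-2}]$ by~\eqref{eq:exp_bound} with $p=2$, while $\nu_j\le C(\lambda\lambda_j)^{-1}$ by~\eqref{eq:exp_bound} with $p=1$, so $\sum_j\nu_j^2\le C\lambda^{-2}\sum_j\lambda_j^{-2}=C\lambda^{-2}\tr[h^{-2}]$; this gives~\eqref{eq:N2-N20-free}. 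For the fourth moment, set $X_j:=N_j-\nu_j$, independent and centered; the standard fourth‑moment expansion for such a sum gives
\[
\langle(\cN-\langle\cN\rangle_0)^4\rangle_0=\sum_j\langle X_j^4\rangle_0+3\sum_{j\ne k}\langle X_j^2\rangle_0\langle X_k^2\rangle_0\le\sum_j\langle X_j^4\rangle_0+3\Big(\sum_j\langle X_j^2\rangle_0\Big)^2.
\]
The last term equals $3\big(\langle(\cN-\langle\cN\rangle_0)^2\rangle_0\big)^2\le C\lambda^{-4}\tr[h^{-2}]^2$, and $\sum_j\langle X_j^4\rangle_0\le C\sum_j(\nu_j+\nu_j^4)\le C\lambda^{-2}\tr[h^{-2}]+C\big(\sum_j\nu_j^2\big)^2\le C\lambda^{-2}\tr[h^{-2}]+C\lambda^{-4}\tr[h^{-2}]^2$, where I used $\sum_j\nu_j^4\le(\sum_j\nu_j^2)^2$. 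Altogether $\langle(\cN-\langle\cN\rangle_0)^4\rangle_0\le C\lambda^{-2}\tr[h^{-2}]+C\lambda^{-4}\tr[h^{-2}]^2$, which is bounded by $C\lambda^{-6}\tr[h^{-2}]^3$ in the regime $\lambda\to0^+$ of interest, yielding~\eqref{eq:N4-N40-free}.

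There is no serious obstacle here; the only two points that need a moment's care are, first, that the bound on $\sum_j\nu_j^4$ cannot come from a pointwise estimate on $\nu_j$ (since $\sum_j\lambda_j^{-4}$ need not be finite) and must instead use the elementary inequality $\sum_j a_j^2\le(\sum_j a_j)^2$ for $a_j=\nu_j^2\ge0$; and second, that the gap between the natural power $\lambda^{-4}\tr[h^{-2}]^2$ and the stated $\lambda^{-6}\tr[h^{-2}]^3$ is immaterial, because with $\tr[h^{-2}]$ a fixed positive constant one has $\lambda^{-4}\tr[h^{-2}]^2\le\lambda^{-6}\tr[h^{-2}]^3$ as soon as $\lambda^2\le\tr[h^{-2}]$, which is the only range in which the estimate is used.
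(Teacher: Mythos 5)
Your proof is correct but follows a genuinely different route from the paper's. The paper works directly with creation and annihilation operators and Wick's theorem: for the fourth moment it expands $\langle\cN(\cN-1)(\cN-2)(\cN-3)\rangle_0$ over permutations in $S_4$, bounds the resulting contractions via $\Tr[(\Gamma_0^{(1)})^2]$, and then passes to central moments using the convexity inequality $\langle\cN^3\rangle_0\ge\langle\cN\rangle_0^3$. You instead exploit that $\Gamma_0$, written in the eigenbasis of $h$, factorizes into a product of one-mode thermal states, so the occupation numbers $N_j$ become independent geometric random variables and the moment estimates reduce to elementary probability for sums of independent centered variables. Your route is more transparent and in fact sharper: you obtain $C\lambda^{-2}\tr[h^{-2}]+C\lambda^{-4}\tr[h^{-2}]^2$, whereas the paper's argument picks up an extra $\lambda^{-2}$ from the term $6\langle(\cN-\langle\cN\rangle_0)^2\rangle_0\langle\cN\rangle_0^2$, which your direct fourth-moment decomposition avoids entirely. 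Two points you handle carefully and correctly: bounding $\sum_j\nu_j^4$ via $\sum_j a_j^2\le\bigl(\sum_j a_j\bigr)^2$ for $a_j=\nu_j^2\ge0$ rather than by a pointwise estimate (which would need $\tr[h^{-4}]<\infty$, not assumed), and flagging explicitly that matching the stated $C\lambda^{-6}\tr[h^{-2}]^3$ requires $\lambda^2\lesssim\tr[h^{-2}]$ — which is indeed the only regime where the lemma is applied, and the paper's own proof also silently relies on comparable conditions to collapse terms of mixed order in $\lambda$ and $\tr[h^{-2}]$ into the stated form.
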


\begin{proof} The first bound \eqref{eq:N2-N20-free}  is a consequence of \eqref{eq:variance free gibbs} (with $A=1$). To prove the second bound \eqref{eq:N4-N40-free}, let $(u_i)_{i\ge 1}$ be the orthonormal basis of $\gH$ consisting of eigenfunctions of $h$ and denote $a_i ^*=a^*(u_i),a_i=a(u_i)$ the associated creation and annihilation operators.  By Wick's theorem we can write
\begin{align*}
\langle \cN(\cN-1)(\cN-2)(\cN-3)\rangle_0 &= \sum_{n_1,n_2,n_3,n_4\ge 1} \langle a_{n_1}^* a_{n_2}^* a_{n_3}^* a_{n_4}^*a_{n_1} a_{n_2} a_{n_3} a_{n_4} \rangle_0\\
&=\sum_{n_1,n_2,n_3,n_4\ge 1} \sum_{\sigma \in S_4}\langle a_{n_1}^* a_{n_{\sigma(1)}} \rangle_0 \langle a_{n_2}^* a_{n_{\sigma(2)}} \rangle_0 \langle a_{n_3}^* a_{n_{\sigma(3)}} \rangle_0 \langle a_{n_4}^* a_{n_{\sigma(4)}} \rangle_0
\end{align*}
where the sum is taken over all permutations $\sigma$ of $\{1,2,3,4\}$. Given the choice of $(u_i)$ and the explicit formula of $\Gamma_0^{(1)}$ in \eqref{eq:DM_quasi_free}, for any $n_1,n_2,n_3,n_4\ge 1$,  if the product 
$$\langle a_{n_1}^* a_{n_{\sigma(1)}} \rangle_0 \langle a_{n_2}^* a_{n_{\sigma(2)}} \rangle_0 \langle a_{n_3}^* a_{n_{\sigma(3)}} \rangle_0 \langle a_{n_4}^* a_{n_{\sigma(4)}} \rangle_0$$ 
is  nonzero, then $n_{\sigma(i)}=n_i$ for all $i\in \{1,2,3,4\}$. The latter condition implies that either $\sigma$ is the identity (i.e. $\sigma(i)=i$ for all $i$) or $n_i=n_j$ for some $i\ne j$. Therefore, we have the upper bound 
\begin{align*}
\langle \cN(\cN-1)(\cN-2)(\cN-3)\rangle_0 &\le \sum_{n_1,n_2,n_3,n_4\ge 1} \langle a_{n_1}^* a_{n_1} \rangle_0 \langle a_{n_2}^* a_{n_2} \rangle_0 \langle a_{n_3}^* a_{n_3} \rangle_0 \langle a_{n_4}^* a_{n_4} \rangle_0\\
& \quad + 6 \sum_{n_1,n_2,n_3\ge 1} \sum_{\sigma \in S_2}\langle a_{n_1}^* a_{n_{\sigma(1)}} \rangle_0 \langle a_{n_2}^* a_{n_{\sigma(2)}} \rangle_0 |\langle a_{n_3}^* a_{n_3} \rangle_0|^2 \\
&= \langle \cN \rangle_0^4 + 6 \langle \cN(\cN-1)\rangle_0 \Tr((\Gamma_0^{(1)})^2)\\
&\le  \langle \cN \rangle_0^4 + C \tr[h^{-2}] \lambda^{-6}. 
\end{align*}
In the last estimate, we have used \eqref{eq:cNk-G0} and the bound \eqref{eq:DM_quasi_free}. Using again \eqref{eq:cNk-G0} to bound $\langle \cN^k \rangle_0$ with $k=1,2,3$ we conclude that
\begin{align} \label{eq:N4-N40-bare}
\langle \cN^4 \rangle_0  - \langle \cN \rangle_0^4 \le C \tr[h^{-2}] \lambda^{-6}. 
\end{align}
Finally, using the convexity of $x\mapsto x^3$ when $x\ge 0$ we have $\langle \cN^3 \rangle_0 \ge \langle \cN \rangle_0^3$, and hence
\begin{align*}
\langle (\cN^4 -\langle \cN \rangle_0)^4\rangle_0 &=  \langle \cN^4 \rangle_0 - 4 \langle \cN^3 \rangle_0 \langle \cN \rangle_0 + 6 \langle \cN^2 \rangle_0 \langle \cN\rangle_0^2 - 3 \langle \cN \rangle_0^4  \\
&\le \langle \cN^4 \rangle_0 - \langle \cN \rangle_0^4 + 6 \langle (\cN -\langle \cN \rangle_0)^2 \rangle_0 \langle \cN\rangle_0^2.
\end{align*}
Therefore, \eqref{eq:N4-N40-free} follows from \eqref{eq:N4-N40-bare}, \eqref{eq:N2-N20-free} and \eqref{eq:cNk-G0}. 
\end{proof}

\subsection{Interacting Gibbs state: first bounds} 

In this section let us consider the interacting Hamiltonian
\begin{align} \label{eq:def-bH-W}
\bH_\lambda= \dGamma(h)+ \lambda \bWren
\end{align}
with the interaction $\bWren$ defined in \eqref{eq:renorm int homo-trap}. Recall that the interacting Gibbs state
\begin{equation} \label{eq:int-Gibbs-W}
\boxed{\Gamma_\lambda := \frac{e^{-\lambda\bH_\lambda}}{\cZ(\lambda)},  \qquad \cZ(\lambda) = \tr_\gF\left[e^{-\lambda\bH_\lambda}\right],}
\end{equation}
is the unique minimizer for the variational problem \eqref{eq:rel-energy}:
$$
-\log \frac{\cZ(\lambda)}{\cZ_0(\lambda)} = \min_{\substack{\Gamma\geq 0\\ \tr_{\gF}\Gamma=1}} \left\{ \cH(\Gamma,\Gamma_{0}) + \lambda^2 \Tr(\bWren \Gamma_{\lambda}) \right\}
$$  
where $\cZ_0(\lambda)$ is given by~\eqref{eq:reminder_free}. We also recall the notation $\bH_0:=\dG(h)$. 
We can first control the relative free energy, or equivalently the ratio of the free and interacting partition functions:

\begin{lemma}[\textbf{Bound on relative partition function}]\label{lem:partition}\mbox{}\\
Let $h>0$ satisfy $\tr[h^{-2}]<\ii$. Let $w:\Omega \to \R$ such that $0\le \widehat w \in L^1(\Omega^*)$. Then
\begin{equation}\label{eq:part bound}
0 \le -\log \frac{\cZ(\lambda)}{\cZ_{0}(\lambda)} \le C \tr ( h ^{-2} ).
\end{equation}
In particular, we deduce that 
\begin{equation}
\cH(\Gamma_\lambda,\Gamma_{0})\leq C\tr ( h ^{-2} )
\label{eq:apriori_entropy}
\end{equation}
and
\begin{equation}
 \Tr[\bWren \Gamma_{\lambda}]\leq C\lambda^{-2}\tr ( h ^{-2} ),
 \label{eq:apriori_W}
\end{equation}
uniformly in $\lambda$.
\end{lemma}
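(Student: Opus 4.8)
The plan is to read everything off the variational characterisation of the relative free energy recalled just above the statement,
\[
-\log \frac{\cZ(\lambda)}{\cZ_0(\lambda)} = \min_{\Gamma\ge 0,\ \tr_\gF\Gamma=1} \Big\{ \cH(\Gamma,\Gamma_0) + \lambda^2 \Tr[\bWren\Gamma] \Big\},
\]
combined with the bound $\lambda^2\langle\bWren\rangle_0\le C\tr(h^{-2})$ from Lemma~\ref{lem:quasi-free ener}. For the lower bound in~\eqref{eq:part bound} I would observe that, since $\widehat w\ge 0$, the operator $\bWren$ is a (weighted) integral of squares, hence $\bWren\ge 0$; together with $\cH(\,\cdot\,,\Gamma_0)\ge 0$ this makes the functional inside the minimisation nonnegative, so the minimum is $\ge 0$.

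For the upper bound I would simply test the minimisation problem with the Gaussian state itself, $\Gamma=\Gamma_0$. This is an admissible competitor (unit trace, and finite energy and entropy because $\tr(e^{-\beta h})<\infty$ for all $\beta>0$, cf.\ Lemma~\ref{lem:quasi-free}), and $\cH(\Gamma_0,\Gamma_0)=0$, so
\[
-\log \frac{\cZ(\lambda)}{\cZ_0(\lambda)} \le \lambda^2\Tr[\bWren\Gamma_0] = \lambda^2\langle\bWren\rangle_0 \le C\tr(h^{-2}),
\]
the last step being exactly~\eqref{eq:W-G0}.

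For the two consequences I would use that $\Gamma_\lambda$ is \emph{the} minimiser, so the minimum value equals $\cH(\Gamma_\lambda,\Gamma_0)+\lambda^2\Tr[\bWren\Gamma_\lambda]$; both summands are nonnegative by the first step while their sum is $\le C\tr(h^{-2})$ by the second, and discarding one term at a time gives $\cH(\Gamma_\lambda,\Gamma_0)\le C\tr(h^{-2})$ and $\Tr[\bWren\Gamma_\lambda]\le C\lambda^{-2}\tr(h^{-2})$. I do not anticipate any real obstacle here: the statement is an immediate corollary of the variational principle and of the already-established control of $\langle\bWren\rangle_0$; the only point needing a line of justification is that $\Gamma_0$ genuinely lies in the admissible class, which is guaranteed by the finiteness of $\tr(e^{-\beta h})$ and of $\langle\bWren\rangle_0$.
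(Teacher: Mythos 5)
Your proposal is correct and is essentially identical to the paper's own proof: test the variational formula~\eqref{eq:rel-energy} with $\Gamma=\Gamma_0$ and invoke~\eqref{eq:W-G0} for the upper bound, then use nonnegativity of the relative entropy and of $\bWren$ (from $\widehat w\ge 0$) for the lower bound and the two consequences. No gaps.
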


\begin{proof} 
For the upper bound in~\eqref{eq:part bound} we take  the trial state $\Gamma=\Gamma_0$ in \eqref{eq:rel-energy} and use \eqref{eq:W-G0}. Then~\eqref{eq:apriori_entropy} and~\eqref{eq:apriori_W} follow immediately, and since both these quantities are positive, we also get the lower bound in~\eqref{eq:part bound}.
\end{proof}

Using Lemma~\ref{lem:partition} and a simple monotonicity argument we can control the expectations of some specific observables against the interacting Gibbs state $\Gamma_\lambda$ by those against the Gaussian quantum state. 

\begin{lemma}[\textbf{Moments of particle number and kinetic operators}]\label{lem:number interacting}\mbox{}\\ 
Let $h>0$ satisfy $\tr[h^{-2}]<\ii$. Let $w:\Omega \to \R$ such that $0\le \widehat w \in L^1(\Omega^*)$. Then 
\begin{align} 
\langle \cN^k \rangle_\lambda \le  \,e^{C \tr[h^{-2}]}\langle\cN^k\rangle_0&\leq C_k \,e^{C \tr[h^{-2}]}\lambda^{-2k}, \quad \forall k\ge 1,  \label{eq:Nk} \\
\langle (\cN-\langle \cN \rangle_0 )^2\rangle_{\lambda} &\le C e^{C \tr[h^{-2}]} \lambda^{-2}, \label{eq:N2-N20} \\  
\langle (\cN-\langle \cN \rangle_0 )^4\rangle_{\lambda} &\le C e^{C \tr[h^{-2}]} \lambda^{-6},  \label{eq:N4-N40}\\
\langle (\dGamma(h) )^2\rangle_{\lambda} &\le C  e^{C \tr[h^{-2}]} \lambda^{-6}, \label{eq:dGh2}\\
\Tr(h^{\alpha} \Gamma_\lambda^{(1)}) &\le C e^{C \tr[h^{-2}]} \lambda^{-2-\alpha}, \quad \forall \alpha \in [0,1]. \label{eq:dGhalpha}
\end{align}
Here $\langle \cdot \rangle_\lambda$ is the expectation against $\Gamma_\lambda$. 
\end{lemma}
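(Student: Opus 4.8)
The plan is to treat separately the particle-number moments \eqref{eq:Nk}--\eqref{eq:N4-N40}, which reduce to a clean comparison with the Gaussian state $\Gamma_0$, and the kinetic moments \eqref{eq:dGh2}--\eqref{eq:dGhalpha}, which must be reduced to moments of the full Hamiltonian $\bH_\lambda$.

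For the first group I would use that $\cN$ commutes with both $\dG(h)$ and $\bWren$. Decomposing $\gF=\bigoplus_{n\ge0}\gF_n$ with $\gF_n=\gH^{\otimes_s n}$, on each sector we have $\bH_\lambda|_{\gF_n}=\dG(h)|_{\gF_n}+\lambda\bWren|_{\gF_n}$ with $\bWren|_{\gF_n}\ge0$, whereas any nonnegative function $\phi(\cN)$ acts there as the scalar $\phi(n)$. By the Golden--Thompson inequality, $\tr_{\gF_n}(e^{-\lambda\bH_\lambda})\le\tr_{\gF_n}(e^{-\lambda\dG(h)})\,\|e^{-\lambda^2\bWren|_{\gF_n}}\|\le\tr_{\gF_n}(e^{-\lambda\dG(h)})$, hence $\tr_{\gF_n}(\phi(\cN)e^{-\lambda\bH_\lambda})\le\tr_{\gF_n}(\phi(\cN)e^{-\lambda\dG(h)})$. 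Summing over $n$, dividing by $\cZ(\lambda)$ and using the a priori bound \eqref{eq:part bound} in the form $\cZ_0(\lambda)/\cZ(\lambda)\le e^{C\tr[h^{-2}]}$, one gets $\langle\phi(\cN)\rangle_\lambda\le e^{C\tr[h^{-2}]}\langle\phi(\cN)\rangle_0$ for every $\phi\ge0$. Taking $\phi(t)=t^k$, $\phi(t)=(t-\langle\cN\rangle_0)^2$ and $\phi(t)=(t-\langle\cN\rangle_0)^4$ and inserting the Gaussian-state estimates of Lemmas~\ref{lem:quasi-free} and~\ref{lem:number fluctuation} then gives \eqref{eq:Nk}, \eqref{eq:N2-N20} and \eqref{eq:N4-N40}. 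This ``simple monotonicity argument'' is not available for $\dG(h)$, which is neither scalar on the sectors $\gF_n$ nor commuting with $\bWren$; indeed the naive inequality $\tr(\dG(h)e^{-\lambda\bH_\lambda})\le\tr(\dG(h)e^{-\lambda\dG(h)})$ can fail.

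For the kinetic moments I would instead exploit $0\le\dG(h)\le\bH_\lambda$, which holds since $\bWren\ge0$. Then \eqref{eq:dGhalpha} at $\alpha=1$ reads $\Tr(h\Gamma_\lambda^{(1)})=\langle\dG(h)\rangle_\lambda\le\langle\bH_\lambda\rangle_\lambda$; the case $\alpha=0$ is \eqref{eq:Nk}; and for $0<\alpha<1$ the elementary operator inequality $h^\alpha\le\delta h+C_\alpha\,\delta^{-\alpha/(1-\alpha)}$, applied with $\delta\sim\lambda^{1-\alpha}$, interpolates between these two cases. For \eqref{eq:dGh2}, writing $\dG(h)=\bH_\lambda-\lambda\bWren$ and using $(X-Y)^2\le 2X^2+2Y^2$ gives $\langle\dG(h)^2\rangle_\lambda\le 2\langle\bH_\lambda^2\rangle_\lambda+2\lambda^2\langle\bWren^2\rangle_\lambda$, and the last term is $O(\lambda^{-6})$ because $\bWren$ is dominated by a quadratic polynomial in $\cN$ plus the bounded counter-terms, so that \eqref{eq:Nk} with $k\le4$ suffices. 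Everything thus reduces to the energy-moment bounds $\langle\bH_\lambda\rangle_\lambda\lesssim\lambda^{-3}$ and $\langle\bH_\lambda^2\rangle_\lambda\lesssim\lambda^{-6}$, with constants of the form $Ce^{C\tr[h^{-2}]}$.

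The hard part will be precisely these two bounds: a crude domination of $e^{-\lambda\bH_\lambda}$ by $\lambda^{-2}e^{-(\lambda/2)\bH_\lambda}$ is hopeless, since it introduces the factor $\cZ_0(\lambda/2)/\cZ_0(\lambda)$, which is exponentially large in a power of $1/\lambda$. The right tool is convexity of $\Phi(\beta):=\log\tr(e^{-\beta\bH_\lambda})$, with $\Phi'(\beta)=-\langle\bH_\lambda\rangle_\beta$ and $\Phi''(\beta)=\Var_\beta(\bH_\lambda)\ge0$ (here $\langle\cdot\rangle_\beta$ is the Gibbs expectation at inverse temperature $\beta$). Convexity gives $\langle\bH_\lambda\rangle_\lambda=-\Phi'(\lambda)\le\frac2\lambda\bigl(\Phi(\lambda/2)-\Phi(\lambda)\bigr)=\frac2\lambda\log\frac{\tr(e^{-(\lambda/2)\bH_\lambda})}{\cZ(\lambda)}$; bounding the numerator by $\cZ_0(\lambda/2)$ via $\bH_\lambda\ge\dG(h)$, the denominator below by $e^{-C\tr[h^{-2}]}\cZ_0(\lambda)$ via \eqref{eq:part bound}, and $|\log\cZ_0(\lambda/2)-\log\cZ_0(\lambda)|=\int_{\lambda/2}^{\lambda}\tr\bigl(\tfrac{h}{e^{\beta h}-1}\bigr)\,d\beta\lesssim\lambda^{-2}\tr[h^{-2}]$ (using $\tfrac{y^3}{e^y-1}\le C$), one gets $\langle\bH_\lambda\rangle_\lambda\lesssim\lambda^{-3}(1+\tr[h^{-2}])$. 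For the second moment, $\langle\bH_\lambda^2\rangle_\lambda=\langle\bH_\lambda\rangle_\lambda^2+\Phi''(\lambda)$, so one still needs to control the heat capacity $\Phi''(\lambda)$, which is the genuinely delicate point. I would do this by running the previous estimate uniformly for $\beta\in[\lambda/2,\lambda]$ — which requires a Peierls--Bogoliubov lower bound on $\tr(e^{-\beta\bH_\lambda})$ together with a uniform bound on $\langle\bWren\rangle$ in the corresponding Gaussian states in the spirit of Lemma~\ref{lem:quasi-free ener} — so that $\int_{\lambda/2}^{\lambda}\Phi''(\beta)\,d\beta=\langle\bH_\lambda\rangle_{\lambda/2}-\langle\bH_\lambda\rangle_\lambda\lesssim\lambda^{-3}$, and then extract $\Phi''(\lambda)\lesssim\lambda^{-4}$ from this (the step where one must argue, via the monotonicity properties of $\beta\mapsto\Var_\beta(\bH_\lambda)$, that the endpoint value is controlled by the average). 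The careful bookkeeping of the exchange and counter-term contributions in this last step is where most of the work lies.
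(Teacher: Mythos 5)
Your treatment of \eqref{eq:Nk}--\eqref{eq:N4-N40} and of the interpolation giving \eqref{eq:dGhalpha} from the endpoint cases is correct and essentially identical to the paper's (the paper phrases the sector-by-sector comparison as the single trace inequality $\Tr[e^{k\log\cN-\lambda\bH_\lambda}]\le\Tr[e^{k\log\cN-\lambda\bH_0}]$, using that $\cN$ commutes with both Hamiltonians and that $A\le B$ implies $\tr f(A)\le\tr f(B)$ for increasing $f$). Your convexity argument for the first moment $\langle\bH_\lambda\rangle_\lambda\lesssim\lambda^{-3}$ is also sound. The problem is the second moment. Your plan is to bound $\Phi''(\lambda)=\Var_\lambda(\bH_\lambda)$ pointwise by its average $\frac{2}{\lambda}\int_{\lambda/2}^{\lambda}\Phi''(\beta)\,d\beta=\frac{2}{\lambda}\bigl(\langle\bH_\lambda\rangle_{\lambda/2}-\langle\bH_\lambda\rangle_{\lambda}\bigr)$, invoking ``monotonicity properties of $\beta\mapsto\Var_\beta(\bH_\lambda)$''. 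No such monotonicity holds: $\Phi'''(\beta)$ is minus the third central moment of the energy, which has no sign in general, and the variance of the energy in a Gibbs state is genuinely non-monotone in $\beta$ (already for a two-level spectrum $\{0,\Delta\}$ with a highly degenerate excited level the variance \emph{increases} on an initial range of $\beta$; this is the Schottky-anomaly phenomenon). An $L^1$ bound on a nonnegative function does not control its endpoint value, so this step fails as stated, and you correctly ruled out the crude alternative $\tr(\bH_\lambda^2e^{-\lambda\bH_\lambda})\le C\lambda^{-2}\tr(e^{-\lambda\bH_\lambda/2})$ because of the exponentially large ratio $\cZ_0(\lambda/2)/\cZ_0(\lambda)$.

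The paper's device avoids second derivatives of the partition function altogether: apply the monotone-trace inequality to the \emph{decreasing} function $f(x)=(x^2+2x+2)e^{-x}$ (note $f'(x)=-x^2e^{-x}\le0$), which together with $\lambda\bH_\lambda\ge\lambda\bH_0$ and \eqref{eq:part bound} gives
\begin{equation*}
\bigl\langle \lambda^2\bH_\lambda^2+2\lambda\bH_\lambda+2\bigr\rangle_\lambda\;\le\;e^{C\tr[h^{-2}]}\bigl\langle \lambda^2\bH_0^2+2\lambda\bH_0+2\bigr\rangle_0.
\end{equation*}
The right-hand side is explicit for the quasi-free state: $\langle\bH_0^2\rangle_0=\Tr[h^2\Gamma_0^{(1)}]+\Tr[h\otimes h\,\Gamma_0^{(2)}]\le C\lambda^{-6}\tr[h^{-2}](1+\tr[h^{-2}])$ by \eqref{eq:DM_quasi_free}. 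One then recovers $\langle\bH_0^2\rangle_\lambda$ from $\langle\bH_\lambda^2\rangle_\lambda$ via $\lambda^2\bH_\lambda^2\ge\frac{\lambda^2}{2}\bH_0^2-C\lambda^4(\bWren)^2$ and the bound $\bWren\le C(\cN^2+\lambda^{-2}\cN)$ combined with \eqref{eq:Nk}, exactly as in your reduction $\dG(h)=\bH_\lambda-\lambda\bWren$. If you replace your heat-capacity step by this one-line trace inequality, the rest of your argument goes through.
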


Note that from~\eqref{eq:cNk-G0-limit} we actually obtain
\begin{equation}
 \lim_{\lambda\to0^+}\lambda^{2k}\langle \cN^k \rangle_\lambda=0.
 \label{eq:Nk_limit_lambda}
\end{equation}

\begin{proof} We will use the elementary fact (see e.g.~\cite[Section 2.2]{Carlen-10}) that for an increasing function $f:\R \to \R$ and self-adjoint operators $A,B$
$$ A\leq B \Rightarrow \tr \left[f (A) \right] \leq \tr \left[ f (B) \right]. $$

In particular, using $\bH_\lambda\ge \bH_0$ and the fact that $\cN$ commutes with both $\bH_0$ and $\bH_\lambda$, we have
$$
\Tr\left[\cN^k e^{-\lambda\bH_\lambda}\right] = \Tr\left[e^{k\log \cN -\lambda\bH_\lambda}\right] \leq \Tr\left[e^{k\log \cN -\lambda\bH_0}\right] = \Tr\left[\cN^k e^{-\lambda\bH_0}\right].
$$
Therefore, 
\begin{equation}\label{eq:number free int}
 \langle \cN^k \rangle_\lambda \le \frac{\cZ_0(\lambda)}{\cZ(\lambda)} \langle \cN^k \rangle_0
\end{equation}
and hence \eqref{eq:Nk} follows from~\eqref{eq:part bound} and ~\eqref{eq:cNk-G0}. Similarly, from \eqref{eq:part bound}, \eqref{eq:N2-N20-free} and \eqref{eq:N4-N40-free} we have
$$
 \langle ( \cN -\langle \cN\rangle_0)^2 \rangle_\lambda \le \frac{\cZ_0(\lambda)}{\cZ(\lambda)} \langle ( \cN -\langle \cN\rangle_0)^2 \rangle_0 \le C e^{C \tr[h^{-2}]}  \tr[h^{-2}] \lambda^{-2} 
 $$
 and 
 $$
 \langle ( \cN -\langle \cN\rangle_0)^4 \rangle_\lambda \le \frac{\cZ_0(\lambda)}{\cZ(\lambda)} \langle ( \cN -\langle \cN\rangle_0)^4 \rangle_0 \le C e^{C \tr[h^{-2}]}  \tr[h^{-2}]^3 \lambda^{-6}.$$
 Thus the bounds \eqref{eq:N2-N20} and \eqref{eq:N4-N40} follow. 

To prove the kinetic energy estimate \eqref{eq:dGh2} we use 
$$
\Tr[f(\lambda\bH_\lambda)] \le \Tr[f(\lambda\bH_0)]
$$
where $f(x)=(x^2+2x+2)e^{-x}$ is decreasing (as $f'(x)=-x^2 e^{-x} \le 0$). Combining with \eqref{eq:part bound}, this leads to 
\begin{align} \label{eq:second-H-H0}
\left\langle \lambda^2(\bH_\lambda)^2 + 2 \lambda\bH_\lambda +2 \right\rangle_\lambda \le  e^{C \tr[h^{-2}]} \left\langle \lambda^2(\bH_0)^2 + 2 \lambda\bH_0 +2 \right\rangle_0. 
\end{align}
The right side of \eqref{eq:second-H-H0} can be bounded easily using \eqref{eq:DM_quasi_free}:
\begin{align*}
\left\langle \lambda^2(\bH_0)^2  \right\rangle_0 &= \lambda^2\Tr[h^2 \Gamma_0^{(1)}] +  \lambda^2\Tr[h \otimes h \Gamma_0^{(2)}]\\
&\le 2\lambda^2 \Tr\left[ \frac{h^2}{e^{\lambda h}-1} \right] +   2\lambda^2\left(\Tr\left[ \frac{h}{e^{\lambda h}-1} \right] \right)^2\\
&\le 2 \lambda^2  \Tr\left[ \frac{h^2}{(\lambda h)^{p+2}} \right] +  2 \lambda^2  \left(\Tr\left[ \frac{h}{(\lambda h)^{p+1}} \right] \right)^2\\
&\le C\Tr[h^{-2}] \Big( 1+ \Tr[h^{-2}] \Big) \lambda^{-4}. 
\end{align*}
The left side of \eqref{eq:second-H-H0} can be estimated using the Cauchy-Schwarz inequality
\begin{align*}
\lambda^2(\bH_\lambda)^2 = \lambda^2\left(\bH_0 + \lambda \bW^{\rm ren}  \right)^2 \ge \frac{\lambda^2}{2} (\bH_0)^2 - C\lambda^4(\bW^{\rm ren})^2 \ge \frac{\lambda^2}{2} (\bH_0)^2 -  C\lambda^4 (\cN^2 + \lambda^{-2} \cN)^2
\end{align*}
and \eqref{eq:Nk}. Thus \eqref{eq:second-H-H0} leads to
\begin{align*}
\lambda^2\left\langle ( \bH_0)^2 \right\rangle_\lambda &\le C e^{C \tr[h^{-2}]} \left\langle \lambda^2(\bH_0)^2 + 2 \lambda\bH_0 +2 \right\rangle_0 +  C\lambda^{4} \left\langle (\cN^2 + \lambda^{-2} \cN)^2  \right\rangle_\lambda \\
&\le C e^{C \tr[h^{-2}]}  \lambda^{-4} 
\end{align*}
and \eqref{eq:dGh2} holds true. By the Cauchy-Schwarz inequality, we deduce from \eqref{eq:dGh2}  that 
$$
\langle \dG(h) \rangle_\lambda \le  C e^{C \tr[h^{-2}]} \lambda^{-3}
$$
which is equivalent to \eqref{eq:dGhalpha} with $\alpha=1$. Moreover, \eqref{eq:Nk} gives \eqref{eq:dGhalpha} with $\alpha=0$. The full bound \eqref{eq:dGhalpha} with $\alpha\in [0,1]$ then follows by interpolation. 
\end{proof}

\section{From relative entropy to density matrices} \label{sec:relative entropy bound}

In this section we prove a new inequality of general interest on the entropy relative  to Gaussian (quasi-free) quantum states. As a consequence, this gives a strong bound for  the relative one-particle density matrix of the interacting Gibbs state, which is the seed for the analysis  in Section~\ref{sec:correl}.

Let $\mathfrak{K}$ be an arbitrary separable Hilbert space. We consider the relative entropy
$$\cH(\Gamma,\Gamma_0)=\tr_{\gF(\mathfrak{K})} \Big( \Gamma(\log\Gamma- \log \Gamma_0) \Big)$$
of two states over the Fock space $\gF(\mathfrak{K})$, under the assumption that 
$$\Gamma_0=\frac{e^{-\lambda\dG(h)}}{\tr_\gF[e^{-\lambda\dG(h)}]}$$
is a Gaussian (quasi-free) state. 
It is well known that $\cH(\Gamma,\Gamma_0)$ vanishes if and only if $\Gamma=\Gamma_0$. Indeed Pinsker's inequality (see~\cite{CarLie-14} and~\cite[Section~5.4]{Hayashi-06})  states that 
\begin{equation}
 \cH(\Gamma,\Gamma_0)\geq \frac12 \Big(\tr_{\gF(\mathfrak{K})}|\Gamma-\Gamma_0|\Big)^2.
\label{eq:Pinsker}
 \end{equation}
Our goal here is to deduce a  bound on the difference $\Gamma^{(1)}-\Gamma^{(1)}_0$ of the one-particle density matrices, instead of the difference $\Gamma-\Gamma_0$ of the states in Fock space. Such a bound does not follow from~\eqref{eq:Pinsker}. The main result of this section is

\begin{theorem}[\textbf{From relative entropy to reduced density matrices}]\label{thm:estim_relative_entropy}\mbox{}\\
Let $h>0$ be a positive self-adjoint operator on a separable Hilbert space $\mathfrak{K}$ such that $\tr[e^{-h}]<\ii$. Consider the associated Gaussian (quasi-free) quantum state 
$$\Gamma_0=\frac{e^{-\dG(h)}}{\tr_{\gF(\mathfrak{K})}[e^{-\dG(h)}]}$$
on the Fock space~$\gF(\mathfrak{K})$. Then for any other state $\Gamma$ on $\gF(\mathfrak{K})$, we have 
\begin{equation}
\tr_{\mathfrak{K}}\left|\sqrt{h}\big(\Gamma^{(1)}-\Gamma^{(1)}_0\big)\sqrt{h}\right|^2\leq 4\,\cH(\Gamma,\Gamma_0)\left(\sqrt2 +\sqrt{\cH(\Gamma,\Gamma_0)}\right)^2
 \label{eq:estim_HS_relative_entropy}
\end{equation}
where $\Gamma^{(1)}$ and $\Gamma^{(1)}_0$ are the associated one-body density matrices.
\end{theorem}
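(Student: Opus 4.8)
The plan is to first reduce the Hilbert--Schmidt bound to a scalar estimate by duality, and then prove that estimate by a Feynman--Hellmann type perturbation of the quasi-free reference state. Set $D:=\Gamma^{(1)}-\Gamma^{(1)}_0=D^*$. For any self-adjoint finite-rank $N$ supported on finitely many eigenvectors of $h$ one has $\tr[\sqrt h\,D\,\sqrt h\,N]=\tr[D\,(\sqrt h N\sqrt h)]=\tr[DA]$ with $A:=\sqrt h N\sqrt h$ self-adjoint and $\|h^{-1/2}Ah^{-1/2}\|_{\gS^2}=\|N\|_{\gS^2}$, so that $\tr|\sqrt h D\sqrt h|^2=\big(\sup\{|\tr[DA]|:A=A^*,\ \|h^{-1/2}Ah^{-1/2}\|_{\gS^2}\le1\}\big)^2$; hence it suffices to prove, for every bounded self-adjoint $A$, that $|\tr[A(\Gamma^{(1)}-\Gamma^{(1)}_0)]|\le(2\sqrt{2\cH(\Gamma,\Gamma_0)}+2\cH(\Gamma,\Gamma_0))\,\|h^{-1/2}Ah^{-1/2}\|_{\gS^2}$. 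One may assume $\cH(\Gamma,\Gamma_0)<\infty$ (otherwise there is nothing to prove), and, by a routine approximation (first treat states with finitely many particles and finite von Neumann entropy, then pass to the limit using lower semicontinuity), that all traces below are finite. The main device is the perturbed quasi-free state $\Gamma_{0,t}:=\cZ_t^{-1}e^{-\dG(h-tA)}$, well defined for all $t\in\R$ since the eigenvalues of $h-tA$ differ from those of $h$ by at most $|t|\,\|A\|$, whence $\tr[e^{-(h-tA)}]\le e^{|t|\|A\|}\tr[e^{-h}]<\infty$. Using $\log\Gamma_{0,t}=-\dG(h-tA)-\log\cZ_t$ and expanding the relative entropies exactly, as in the identities around~\eqref{eq:quasi free Hartree}, one obtains $\cH(\Gamma,\Gamma_{0,t})-\cH(\Gamma,\Gamma_0)=\tr[\Gamma(\log\Gamma_0-\log\Gamma_{0,t})]=-t\,\tr[A\Gamma^{(1)}]+\log(\cZ_t/\cZ_0)$; applying this same identity to the state $\Gamma_0$ (for which $\cH(\Gamma_0,\Gamma_0)=0$) to eliminate $\log(\cZ_t/\cZ_0)$ gives the Feynman--Hellmann identity $\cH(\Gamma,\Gamma_{0,t})=\cH(\Gamma,\Gamma_0)-t\,\tr[A(\Gamma^{(1)}-\Gamma^{(1)}_0)]+\cH(\Gamma_0,\Gamma_{0,t})$. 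Since $\cH(\Gamma,\Gamma_{0,t})\ge0$, this yields $|t|\,|\tr[A(\Gamma^{(1)}-\Gamma^{(1)}_0)]|\le\cH(\Gamma,\Gamma_0)+\cH(\Gamma_0,\Gamma_{0,t})$ for every $t$.

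Everything then reduces to showing that $\cH(\Gamma_0,\Gamma_{0,t})$ is $O(t^2)$ \emph{with a constant governed by} $\|h^{-1/2}Ah^{-1/2}\|_{\gS^2}$ rather than by $\|A\|$ or $\|A\|_{\gS^2}$. From $0\le\cH(\Gamma_{0,t},\Gamma_0)$ and $\cH(\rho,\sigma)+\cH(\sigma,\rho)=\tr[(\rho-\sigma)(\log\rho-\log\sigma)]$, together with $\log\Gamma_0-\log\Gamma_{0,t}=-t\,\dG(A)+(\mathrm{const})$, one gets $\cH(\Gamma_0,\Gamma_{0,t})\le|t|\,|\tr[A(\gamma_t-\gamma_0)]|$, where $\gamma_s:=(e^{h-sA}-1)^{-1}$. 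Writing $g(x):=(e^x-1)^{-1}$ and $\gamma_t-\gamma_0=-\int_0^t \mathrm D g(h-rA)[A]\,dr$ with $\mathrm Dg$ the Fréchet derivative, the Daleckii--Krein formula in the eigenbasis of $h-rA$ gives $|\tr[A(\gamma_t-\gamma_0)]|\le\int_0^{|t|}\sum_{i,j}|g^{[1]}(\beta_i^{(r)},\beta_j^{(r)})|\,|A_{ij}^{(r)}|^2\,dr$. I would then combine three ingredients: the elementary \emph{symmetric} pointwise bound $|g^{[1]}(a,b)|=|\tfrac{g(a)-g(b)}{a-b}|\le\tfrac{C_0}{ab}$ valid for all $a,b>0$ (the choice $C_0=2$ making this a one-line calculus estimate), which turns the inner sum into $C_0\,\tr[A(h-rA)^{-1}A(h-rA)^{-1}]$; the operator inequality $h-rA\ge(1-|r|\kappa)h$ with $\kappa:=\|h^{-1/2}Ah^{-1/2}\|\le\|h^{-1/2}Ah^{-1/2}\|_{\gS^2}=:c$, hence $(h-rA)^{-1}\le(1-|r|\kappa)^{-1}h^{-1}$; and the monotonicity $0\le P\le Q\Rightarrow\tr[APAP]\le\tr[AQAQ]$ for self-adjoint $A$ (writing the difference as $\tr[(Q-P)^{1/2}AQA(Q-P)^{1/2}]+\tr[(Q-P)^{1/2}APA(Q-P)^{1/2}]\ge0$). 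Together these give $|\tr[A(\gamma_t-\gamma_0)]|\le C_0\,c^2\int_0^{|t|}(1-r\kappa)^{-2}\,dr$, hence a closed-form bound $\cH(\Gamma_0,\Gamma_{0,t})\le\Phi(|t|c)\,c^2$ with $\Phi(s)=C_0\,s^2/(1-s)+o(s^2)$ as $s\to0$.

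Inserting this into the inequality of the first paragraph, dividing by $|t|$ and minimising over $t\in(0,1/c)$ produces a bound of exactly the claimed form $|\tr[A(\Gamma^{(1)}-\Gamma^{(1)}_0)]|\le(2\sqrt{2\cH(\Gamma,\Gamma_0)}+2\cH(\Gamma,\Gamma_0))\,c$ — in fact with some room to spare, since the sharp constant $C_0=1$ even gives a somewhat smaller prefactor. Taking the supremum over all bounded self-adjoint $A$ and squaring, by the duality in the first paragraph, yields~\eqref{eq:estim_HS_relative_entropy}.

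The hard part is the estimate of the second paragraph: controlling the relative entropy between the two quasi-free states $\Gamma_0$ and $\Gamma_{0,t}$ quadratically in $t$ \emph{and} with the $h$-weighted Hilbert--Schmidt norm $\|h^{-1/2}Ah^{-1/2}\|_{\gS^2}$. This is precisely what allows the operator $h$ to appear explicitly in~\eqref{eq:estim_HS_relative_entropy}, and it forces one to use the divided-difference estimate for $(e^x-1)^{-1}$ in its symmetric form $|g^{[1]}(a,b)|\le C_0/(ab)$ — the naive bound $|g^{[1]}(a,b)|\le1/\min(a,b)^2$ is of no use here, since it produces $\tr[A^2 h^{-2}]$, which is not controlled by $\|h^{-1/2}Ah^{-1/2}\|_{\gS^2}^2$ — and to handle the non-commutativity of $h$ and $A$ carefully through double operator integrals. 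The finiteness and approximation reductions of the first paragraph are routine but should be spelled out.
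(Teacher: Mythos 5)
Your proof is correct and follows the same overall strategy as the paper's: a duality reduction to a scalar bound, a Feynman--Hellmann--type perturbation $h\mapsto h-tA$ of the quasi-free reference state, a quadratic control of the perturbed one-body density matrix in the $h$-weighted Hilbert--Schmidt norm, and an optimization over the perturbation strength. (Your relative-entropy identity $\cH(\Gamma,\Gamma_{0,t})=\cH(\Gamma,\Gamma_0)-t\tr[A(\Gamma^{(1)}-\Gamma_0^{(1)})]+\cH(\Gamma_0,\Gamma_{0,t})\ge 0$ is precisely the Gibbs variational principle that the paper invokes, just written out.) The one genuinely different ingredient is the quadratic step. The paper's Lemma~\ref{lem:pert free} gets $0\le\tr\big(A\big((e^{h-A}-1)^{-1}-(e^h-1)^{-1}\big)\big)\le\frac{1}{1-c}\tr(h^{-1}Ah^{-1}A)$ in two lines from Klein's inequality, exploiting that $x\mapsto(e^x-1)^{-1}-x^{-1}$ is increasing while $x\mapsto(e^x-1)^{-1}$ is decreasing, followed by a resolvent expansion; you instead bound $\cH(\Gamma_0,\Gamma_{0,t})$ by the symmetrized relative entropy and run a Daleckii--Krein double-operator-integral argument with the divided-difference bound $|g^{[1]}(a,b)|\le C_0/(ab)$ for $g(x)=(e^x-1)^{-1}$, plus the monotonicity $\tr[APAP]\le\tr[AQAQ]$ for $0\le P\le Q$. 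Both yield the same $\tr[h^{-1}Ah^{-1}A]$-type bound; yours is heavier machinery but makes the mechanism (the $1/(ab)$ decay of the divided difference is what lets $h$ enter explicitly) very transparent. One small inaccuracy to fix: $\Gamma_{0,t}$ is \emph{not} well defined for all $t\in\R$ --- the Fock-space trace $\prod_j(1-e^{-\beta_j})^{-1}$ requires $h-tA>0$, not merely $\tr[e^{-(h-tA)}]<\infty$ on the one-body space --- but since your optimization only uses $|t|<1/c$ with $c\ge\|h^{-1/2}Ah^{-1/2}\|$, where $h-tA\ge(1-|t|c)h>0$, this is harmless.
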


The constants in the above inequality are not optimal and are displayed only for concreteness. The bound~\eqref{eq:estim_HS_relative_entropy} is one of our most crucial estimates, exploiting the fact that quantities calculated relative to the free state are much better behaved than bare ones. In particular, note that when $\tr(h^{-2})<\ii$, the left side of \eqref{eq:estim_HS_relative_entropy} dominates the trace norm of the relative one-body density matrix: 
$$
\tr_{\mathfrak{K}}\left|\Gamma^{(1)}-\Gamma^{(1)}_0\right|\leq \sqrt{\tr_{\mathfrak{K}}[h^{-2}]} \norm{ h^{1/2}(\Gamma^{(1)}-\Gamma^{(1)}_0)h^{1/2}}_{\gS^2}.  
$$
More precisely, if $\tr(h^{-p})<\ii$ for some $1\leq p\leq2$, we have (see~\cite{LewNamRou-18_2D})
\begin{equation}
\norm{h^\alpha \big(\Gamma^{(1)}-\Gamma^{(1)}_0\big)h^\alpha}_{\gS^1}\leq 2\sqrt2 \sqrt{\tr(h^{-2+4\alpha})}\sqrt{\cH(\Gamma,\Gamma_0)}+2\|h^{-1}\|^{1-2\alpha}\cH(\Gamma,\Gamma_0)
 \label{eq:estim_trace_relative_entropy}
\end{equation}
for all 
$$0\leq \alpha\leq \frac{2-p}{4}.$$

\begin{remark}[Bosonic relative entropy of reduced density matrices]\label{rem:entropy quasi free}\mbox{}\\ 
If $\Gamma$ is also a quasi-free state, then the relative entropy of $\Gamma$ and $\Gamma_0$ equals $\cH(\Gamma,\Gamma_0)=\cH_{\text{B-E}}(\Gamma^{(1)},\Gamma_0^{(1)})$ where
\begin{equation}
\cH_{\text{B-E}}(\gamma,\gamma_0):=\tr_{\mathfrak{K}}\Big(\gamma\big(\log\gamma-\log\gamma_0\big)-(1+\gamma)\big(\log(1+\gamma)-\log(1+\gamma_0)\big)\Big).
\label{eq:H_B-E}
\end{equation}
In particular, Theorem~\ref{thm:estim_relative_entropy} provides a lower bound on this quantity. Related estimates have been derived in ~\cite[Lemma~4.1]{DeuSeiYng-19} and \cite[Lemma~4.1]{DeuSei-19_ppt}; the difference here is that we are able to include the operator $h$ explicitly in Theorem ~\ref{thm:estim_relative_entropy}. \hfill$\diamond$
\end{remark}

The proof of Theorem~\ref{thm:estim_relative_entropy} is a Feynman-Hellmann-type argument, i.e. a perturbation of the variational principle defining $\Gamma_0$. We shall use the explicit expression of the one-particle density matrix of $\Gamma_0$ (c.f.   \eqref{eq:DM_quasi_free})
$$\Gamma_0^{(1)}=\frac{1}{e^h-1}.$$
The following lemma, a consequence of Klein's inequality, allows us to estimate the effect of the perturbation:

\begin{lemma}[\textbf{Perturbed Gaussian quantum state}]\label{lem:pert free}\mbox{}\\
Let $A$ be a self-adjoint operator with $A \le c h$ for a constant $0<c < 1$. We have 
\begin{equation}\label{eq:pert free}
0\leq \tr \left( A \left( \frac{1}{e ^{h-A} - 1}- \frac{1}{e ^{h} - 1}\right) \right) \leq \frac{1}{1-c} \tr\left( \frac{1}{h} A \frac{1}{h} A \right).
\end{equation}
\end{lemma}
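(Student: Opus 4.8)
\emph{Proof plan.} Write $\varphi(x):=(e^x-1)^{-1}$, so that \eqref{eq:pert free} asks us to show $0\le\tr\!\big(A(\varphi(h-A)-\varphi(h))\big)\le(1-c)^{-1}\tr(h^{-1}Ah^{-1}A)$. Because $A\le ch$ and $t\in[0,1]$, one has $h-tA\ge(1-tc)h\ge(1-c)h>0$, so $h$, $h-A$ and all interpolating operators $h-tA$ are positive and the functional calculus below is legitimate (we may of course assume the right-hand side of \eqref{eq:pert free} is finite, else there is nothing to prove). The heart of the matter is the following consequence of Klein's inequality: \emph{for every non-increasing $g$ on $(0,\infty)$ one has $\tr\!\big(A(g(h-A)-g(h))\big)\ge0$.} Indeed, picking a convex primitive $f$ of $-g$ (so that $f''=-g'\ge0$) and applying Klein's inequality $\tr\!\big(f(X)-f(Y)-(X-Y)f'(Y)\big)\ge0$ once with $(X,Y)=(h-A,h)$ and once with $(X,Y)=(h,h-A)$, the copies of $f(h)$ and $f(h-A)$ cancel upon addition and leave $\tr\!\big(A(f'(h)-f'(h-A))\big)\ge0$, that is $\tr\!\big(A(g(h-A)-g(h))\big)\ge0$. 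Applied to $g=\varphi$ (which is decreasing) this immediately gives the lower bound in \eqref{eq:pert free}; equivalently, it is the concavity in $t$ of the Gaussian free energy $t\mapsto-\log\tr_{\gF}e^{-\dG(h-tA)}$.

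For the upper bound the key idea is to peel off a more singular, but still monotone, part of $\varphi$: write $\varphi(x)=x^{-1}-\psi(x)$ with $\psi(x):=x^{-1}-(e^x-1)^{-1}$. Then $\psi>0$ on $(0,\infty)$ (since $e^x-1>x$) and, crucially, $\psi$ is non-increasing — the inequality $\psi'(x)\le0$ is equivalent to $x^2e^x\le(e^x-1)^2$, i.e. to $xe^{x/2}\le e^x-1$, and the latter holds because $e^x-1-xe^{x/2}$ vanishes at $x=0$ with non-negative derivative $e^{x/2}(e^{x/2}-1-x/2)$. Hence, by the Klein-type inequality above applied to $g=\psi$, the remainder $\tr\!\big(A(\psi(h-A)-\psi(h))\big)$ is $\ge0$, and therefore, using $\varphi=(\cdot)^{-1}-\psi$,
\[
\tr\!\big(A(\varphi(h-A)-\varphi(h))\big)\ \le\ \tr\!\big(A((h-A)^{-1}-h^{-1})\big).
\]

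The remaining, purely resolvent, term is handled directly. By the resolvent identity $(h-A)^{-1}-h^{-1}=(h-A)^{-1}Ah^{-1}$ and cyclicity of the trace,
\[
\tr\!\big(A((h-A)^{-1}-h^{-1})\big)=\tr\!\big((h-A)^{-1}\,Ah^{-1}A\big).
\]
Since $Ah^{-1}A=(h^{-1/2}A)^{*}(h^{-1/2}A)\ge0$ while $h-A\ge(1-c)h$ forces $(h-A)^{-1}\le(1-c)^{-1}h^{-1}$ (the inverse being operator monotone decreasing on positive operators), the last trace is at most $(1-c)^{-1}\tr(h^{-1}Ah^{-1}A)$, which is the upper bound in \eqref{eq:pert free}.

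\emph{Main difficulty.} All the ingredients — Klein's inequality, the resolvent identity, operator monotonicity of $t\mapsto t^{-1}$ — are standard; the only genuinely clever point is the splitting $\varphi=(\cdot)^{-1}-\psi$ with $\psi$ \emph{still} monotone, which lets one discard the sign-favourable remainder $\tr\!\big(A(\psi(h-A)-\psi(h))\big)$ and reduce everything to a resolvent estimate. This is precisely what yields the sharp constant $(1-c)^{-1}$, rather than the $(1-c)^{-2}$ that a naive divided-difference bound $|\varphi^{[1]}(a,b)|\le(ab)^{-1}$ applied to $\varphi(h-tA)$ would produce.
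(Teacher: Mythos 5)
Your proof is correct and follows essentially the same route as the paper's: the lower bound comes from Klein's inequality applied to the decreasing function $x\mapsto(e^x-1)^{-1}$, the upper bound from the monotonicity of $x\mapsto(e^x-1)^{-1}-x^{-1}$ (your $\psi$, with the same sign convention reversed), and the residual resolvent term is handled by the resolvent identity together with $(h-A)^{-1}\le(1-c)^{-1}h^{-1}$. The only cosmetic difference is that you derive the trace inequality $\tr\big((X-Y)(g(Y)-g(X))\big)\ge0$ from the standard Klein inequality via a convex primitive, whereas the paper invokes the two-variable matrix form of Klein's inequality directly from the pointwise bound $0\le(x-y)(g(y)-g(x))\le(x-y)(y^{-1}-x^{-1})$.
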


\begin{proof}
The function 
$$ x\mapsto \frac{1}{e^x - 1} - \frac{1}{x}$$
is increasing on $\R^+$, whereas $x\mapsto \frac{1}{e^x - 1}$ is decreasing. Thus, for all $x, y >0$ we have 
\begin{equation}\label{eq:Klein}
 0\leq (x-y) \left( \frac{1}{e^y - 1} - \frac{1}{e^x - 1}\right) \leq (x-y) \left( \frac{1}{y} - \frac{1}{x}\right). 
\end{equation}
Klein's matrix inequality~\cite[Proposition~3.16]{OhyPet-93} implies that if $f_k,g_k$ are real functions on $\R^+$ and $c_k$ real numbers, then 
$$ \sum_k c_k f_k(x) g_k (y) \geq 0 \mbox{ for all } x,y\in\R^+$$
implies that for any pair $C,D$ of positive self-adjoint operators 
$$ \tr \left(\sum_k c_k f_k (C) g_k (D) \right) \geq 0.$$
Hence it follows from~\eqref{eq:Klein} that for any positive self-adjoint operators  $C,D$
$$ 0\leq \tr \left[ (C-D) \left( \frac{1}{e ^{D} - 1}- \frac{1}{e ^{C} - 1}\right) \right] \leq \tr \left[ (C-D)  \left( \frac{1}{D} - \frac{1}{C}\right) \right].$$
Applying this with $D = (h-A)/T$ and $C = h/T$ yields 
$$ 0\leq \tr \left[ \frac{A}{T} \left( \frac{1}{e ^{(h-A)/T} - 1}- \frac{1}{e ^{h/T} - 1}\right) \right] \leq \tr \left[ A \left( \frac{1}{h-A}- \frac{1}{h}\right) \right].$$
There remains to use the resolvent expansion 
$$ \frac{1}{h-A} = \frac{1}{h} + \frac{1}{h} A \frac{1}{h-A}$$
and observe that by the assumption $A \le c h$,
$$ \tr\left[ \frac{1}{h} A \frac{1}{h-A} A \right] = \tr \left[ \frac{1}{h^{1/2}} A \frac{1}{h-A} A \frac{1}{h^{1/2}} \right] \leq \frac{1}{1-c} \tr \left[ \frac{1}{h^{1/2}} A \frac{1}{h} A \frac{1}{h^{1/2}} \right]
$$
to conclude the proof.
\end{proof}

\begin{proof}[Proof of Theorem~\ref{thm:estim_relative_entropy}]
Let $A$ be a finite rank self-adjoint operator on $\mathfrak{K}$, such that $A< h$ and let 
$$\Gamma_A=\frac{e^{-\dG(h-A)}}{\tr_{\gF(\mathfrak{K})}[e^{-\dG(h-A)}]}$$
be the associated quasi-free state, with one-particle density matrix
$$\gamma_A:=\frac{1}{e^{h-A}-1}.$$
Recall that $\Gamma_A$ minimizes the free-energy
$$ \tr \left(  \dG (h-A) \Gamma \right) - S (\Gamma)$$
with the entropy denoted by $S(\Gamma)=-\tr\Gamma\log\Gamma$. Hence, we find 
\begin{align*}
\cH(\Gamma,\Gamma_0)-\tr\big( A\Gamma^{(1)}\big)&=\tr\big(\dG(h-A)\Gamma\big)-S(\Gamma)-\tr\big(\dG(h)\Gamma_0\big)+S(\Gamma_0)\\
&\geq \tr\big(\dG(h-A)\Gamma_A\big)-S(\Gamma_A)-\tr\big(\dG(h)\Gamma_0\big)+S(\Gamma_0)\\
&\geq -\tr\big(\dG(A)\Gamma_A\big)\\
&=-\tr\left(A\frac{1}{e^{h-A}-1}\right).
\end{align*}
Therefore we have shown that 
\begin{equation}
\tr\big( A\Gamma^{(1)}\big)\leq \cH(\Gamma,\Gamma_0)+\tr\left(A\frac{1}{e^{h-A}-1}\right)
\label{eq:preliminary bound relative entropy}
\end{equation}
for any $A<h$. From this we deduce in particular that 
\begin{equation}
\tr\Big( A\big(\Gamma^{(1)}-\Gamma^{(1)}_0\big)\Big)\leq \cH(\Gamma,\Gamma_0)+\tr \left(A\left(\frac{1}{e^{h-A}-1}-\frac{1}{e^{h}-1}\right)\right).
\label{eq:first bound relative entropy}
\end{equation}
Inserting Lemma~\ref{lem:pert free} gives 
\begin{equation}
\boxed{\tr\Big( A\big(\Gamma^{(1)}-\Gamma^{(1)}_0\big)\Big)\leq \cH(\Gamma,\Gamma_0)+\frac1{1-c}\norm{\frac{1}{\sqrt{h}} A \frac{1}{\sqrt{h}}}_{\gS^2}^2.}
\label{eq:main bound relative entropy}
\end{equation}
for any $A\leq ch$ with $0<c<1$. 

Let us now take
$$A=\pm\frac{\eps}2  h^{1/2} B h^{1/2}$$
where $0<\eps\le 1$ and $B$ is a bounded finite rank self-adjoint operator with $\|B\|\leq1$ and range in $D(A)$. Our choice ensures that $A\leq h/2$. Then we obtain 
\begin{equation}
\left|\tr\Big( Bh^{1/2}\big(\Gamma^{(1)}-\Gamma^{(1)}_0\big)h^{1/2}\Big)\right|\leq 2\eps^{-1} \cH(\Gamma,\Gamma_0)+\eps \norm{B}_{\gS^2}^2.
\label{eq:bound_relative_entropy_with_B}
\end{equation}
\medskip
Optimizing over $B$ under the assumption $\|B\|_{\gS^2}\leq 1$ (which implies as required that $\|B\|\leq 1$) we find
\begin{align*}
\norm{h^{1/2}\big(\Gamma^{(1)}-\Gamma^{(1)}_0\big)h^{1/2}}_{\gS^2}&=\max_{\|B\|_{\gS^2}\leq1}\tr\Big( Bh^\alpha\big(\Gamma^{(1)}-\Gamma^{(1)}_0\big)h^\alpha\Big)\\
&\leq 2 \eps^{-1} \cH(\Gamma,\Gamma_0) +\eps
\end{align*}
for all $0< \eps\le 1$. The bound \eqref{eq:estim_HS_relative_entropy} then follows from the elementary inequality
\begin{align} \label{eq:opt-eps}
\inf_{0<\eps\le 1} \left( \eps^{-1} a + \eps b\right) \le \max\{ 2\sqrt{ab}, 2a\} \le 2(\sqrt{ab}+a), \quad \forall a,b\in [0,\infty).
\end{align}
This concludes the proof of Theorem~\ref{thm:estim_relative_entropy}.  
\end{proof}

\section{Controlling variance by first moments}\label{sec:variance-by-first-moment}

In this section we introduce a general strategy to control the quantum variance of a Gibbs state in terms of the first moments of a family of perturbed states. This will be the key ingredient to derive the correlation estimate in Section \ref{sec:correl}. Our approach is captured in the following abstract result. 

\begin{theorem}[\textbf{Controlling variance by first moments and commutators}]\label{thm:general-variance-by-linear-response}\mbox{}\\
Let $H$ be a self-adjoint operator on a separable Hilbert space $\mathfrak{K}$ such that $\tr[e^{-\beta H}]<\ii$ for any $\beta>0$. Let $A$ be a bounded self-adjoint  operator on $\mathfrak{K}$  such that 
$$X=[[H,A],A]$$ 
is bounded. Consider the perturbed Gibbs states
\begin{equation} \Gamma_\eps:=\frac{e^{-H+\eps A}}{ \tr(e^{-H+\eps A})}.\label{eq:Gamma-eps}
\end{equation}
Let $a>0$ and introduce
\begin{align} \label{eq:E}
\eta = \eta (a):= \sup_{\eps\in [-a,a]} \Big( |\Tr(A\Gamma_\eps)| + a\sqrt{|\Tr(X\Gamma_\eps)|} \sqrt{\|X\| + \|[[X,A],A]\|} \Big).
\end{align}
Then 
\begin{equation}
\Tr(A^2 \Gamma_{0}) \le  \frac{2\left(1+a^2+\eta^2\right)}{a}\eta\,e^{a\eta}
 \label{eq:estim_variance}
\end{equation}
\end{theorem}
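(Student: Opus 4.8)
The plan is to exploit the relation between the quantum variance $\Tr(A^2\Gamma_0)$ and the Duhamel two-point function obtained by differentiating $\eps\mapsto \Tr(A\Gamma_\eps)$, and then to turn an averaged bound (over $\eps\in[-a,a]$) into a pointwise bound at $\eps=0$ using approximate convexity of the ``perturbed variance'' $v(\eps):=\Tr(A^2\Gamma_\eps)$.

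\textbf{Step 1: Linear response identity and the Duhamel two-point function.} First I would compute $\frac{d}{d\eps}\Tr(A\Gamma_\eps)$. Writing $\Gamma_\eps = e^{-H+\eps A}/\tr(e^{-H+\eps A})$ and using the Duhamel formula $\partial_\eps e^{-H+\eps A} = \int_0^1 e^{-t(H-\eps A)}A\,e^{-(1-t)(H-\eps A)}\,dt$, one gets
\begin{equation}
\frac{d}{d\eps}\Tr(A\Gamma_\eps) = \big(A,A\big)_\eps - \big(\Tr(A\Gamma_\eps)\big)^2,
\end{equation}
where $(A,A)_\eps$ denotes the Duhamel (Bogoliubov/Kubo) inner product of $A$ with itself in the state $\Gamma_\eps$. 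The Falk--Bruch inequality (or the simpler bound $(A,A)_\eps \le \Tr(A^2\Gamma_\eps)$ combined with a lower bound involving $\Tr([[H,A],A]\Gamma_\eps)$) relates $(A,A)_\eps$ to the genuine variance. I expect to use the elementary direction: the variance minus the Duhamel function is controlled by $\Tr(X\Gamma_\eps)$ with $X=[[H,A],A]$, via an inequality of the type $0\le \Tr(A^2\Gamma_\eps)-(A,A)_\eps \le \tfrac{1}{?}\Tr(X\Gamma_\eps)$ or a version with the correct combinatorial constant. This yields
\begin{equation}
\Tr(A^2\Gamma_\eps) \le \frac{d}{d\eps}\Tr(A\Gamma_\eps) + \big(\Tr(A\Gamma_\eps)\big)^2 + c\,\Tr(X\Gamma_\eps)
\end{equation}
for a universal constant $c$.

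\textbf{Step 2: Integrate over the window and control the boundary terms.} Integrating the previous display in $\eps$ over $[-a,a]$ and dividing by $2a$ gives
\begin{equation}
\frac{1}{2a}\int_{-a}^{a} v(\eps)\,d\eps \le \frac{\Tr(A\Gamma_a)-\Tr(A\Gamma_{-a})}{2a} + \sup_{|\eps|\le a}\Big(\Tr(A\Gamma_\eps)^2 + c\,\Tr(X\Gamma_\eps)\Big).
\end{equation}
The first term on the right is bounded by $a^{-1}\sup_{|\eps|\le a}|\Tr(A\Gamma_\eps)|$, and all three terms on the right are dominated (after using $\eta\le 1$ as one may reduce to, or keeping track of $e^{a\eta}$ factors) by a quantity of order $\eta/a$, since $\eta$ was precisely defined in~\eqref{eq:E} to encapsulate $\sup|\Tr(A\Gamma_\eps)|$ and $a\sqrt{|\Tr(X\Gamma_\eps)|}$ times the square root of commutator norms (the latter appearing to absorb the $\Tr(X\Gamma_\eps)$ term and, later, the convexity correction). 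So $\frac{1}{2a}\int_{-a}^a v(\eps)\,d\eps \lesssim \eta/a$, up to the stated polynomial and exponential factors.

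\textbf{Step 3: Approximate convexity to pass from the average to $\eps=0$.} The main obstacle is that $\frac{1}{2a}\int_{-a}^a v \lesssim \eta/a$ does not directly bound $v(0)$. To fix this I would show $v(\eps)$ is approximately convex on $[-a,a]$, i.e. $v''(\eps)\ge -(\text{small})$. In the commutative case $v''(\eps)=\Tr(A^4\Gamma_\eps)\ge 0$ exactly; in the quantum case one computes the second derivative of $\Tr(A^2\Gamma_\eps)$ via two applications of Duhamel, producing a leading positive term plus correction terms built from nested commutators $[[H,A],A]=X$, $[[X,A],A]$, etc. — this is exactly why these commutator norms enter $\eta$. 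One obtains $v''(\eps)\ge -C(\|X\|+\|[[X,A],A]\|)\cdot(\text{moments of }A,X\text{ in }\Gamma_\eps)$, and the right-hand side is again $\lesssim (\eta/a)^2\cdot(\text{stuff})$ or directly absorbed into $\eta^2/a^2$. With $v''\ge -\kappa$ on $[-a,a]$, the function $\eps\mapsto v(\eps)+\tfrac{\kappa}{2}\eps^2$ is convex, hence lies below the average of its endpoint-type bounds; quantitatively, convexity gives $v(0)\le \frac{1}{2a}\int_{-a}^a v(\eps)\,d\eps + \tfrac{\kappa}{2}\cdot\tfrac{a^2}{?}$, i.e. $v(0)$ exceeds its local average by at most $O(\kappa a^2)$. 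Combining with Step 2,
\begin{equation}
\Tr(A^2\Gamma_0) = v(0) \le \frac{1}{2a}\int_{-a}^{a} v(\eps)\,d\eps + C\kappa a^2 \lesssim \frac{\eta}{a} + \kappa a^2,
\end{equation}
and since $\kappa a^2$ is itself of order $\eta/a$ or smaller by the definition of $\eta$, the final bound $\Tr(A^2\Gamma_0)\le \frac{2(1+a^2+\eta^2)}{a}\eta\,e^{a\eta}$ follows after carefully tracking constants (the $e^{a\eta}$ factor arising from comparing $\Tr(\cdot\,\Gamma_\eps)$ at different $\eps$ through $\|e^{\eps A}\|\le e^{a\|A\|}$-type estimates, reorganized using $\eta$).

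\textbf{Expected main difficulty.} The genuinely hard part is Step 3: establishing a clean, quantitative lower bound on $v''(\eps)$ in the non-commutative setting. Unlike the classical case where the second derivative is manifestly $\ge 0$, here one must expand $\partial_\eps^2\, \tr(A^2 e^{-H+\eps A})/\tr(e^{-H+\eps A})$ into a sum of multiple-Duhamel integrals, identify the (positive) principal part, and bound each correction by a product of a commutator norm ($\|X\|$, $\|[[X,A],A]\|$) and a low moment of $A$ or $X$ in $\Gamma_\eps$ — the latter moments then being controlled by feeding back the definition of $\eta$ and, if necessary, iterating. Getting the constants to close into exactly the stated right-hand side, and handling the a priori possibility that $\eta$ or $a$ are not small (the ``more complicated right-hand side'' alluded to in the introduction), is where the bulk of the technical care goes. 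I also anticipate needing the Falk--Bruch inequality (revisited, as the authors say) with explicit constants to relate the Duhamel two-point function to the variance in Step 1 without losing more than a factor.
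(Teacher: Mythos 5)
Your overall strategy is the paper's: a Kubo-type linear-response identity, a Bogoliubov/Falk--Bruch-type bound on the gap between the variance and the Duhamel two-point function (with error $\tfrac14|\Tr(X\Gamma_\eps)|$), an average over $\eps\in[-a,a]$, control of the ratio of partition functions via $|\partial_\eps\log Z_\eps|=|\Tr(A\Gamma_\eps)|\le\eta$, and an approximate-convexity argument to pass from the average of $v(\eps)=\Tr(A^2 e^{-H+\eps A})$ to its value at $\eps=0$. Steps 1 and 2 match the paper's Theorem~\ref{thm:s-variance}(ii) and Lemma~\ref{lem:dZ} essentially verbatim.

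The gap is in Step 3, and it is exactly the one the paper goes out of its way to avoid. Differentiating $\Tr(A^2e^{-H_\eps})$ twice produces a Duhamel \emph{three-point} function $\iint\Tr\big(A^2e^{-sH_\eps}Ae^{-tH_\eps}Ae^{-(1-s-t)H_\eps}\big)$, and the available discrepancy estimates (your Falk--Bruch input, i.e.\ Theorem~\ref{thm:s-variance}, whose proof rests on the convexity of $s\mapsto\Tr(A\Gamma^sA\Gamma^{1-s})$) only control \emph{two}-point functions; there is no analogous bound that identifies the three-point object with $\Tr(A^4e^{-H_\eps})$ up to the commutators you have at hand. The paper's workaround is to never form $v''$: it shows $\partial_\eps\Tr(A^2e^{-H_\eps})=\Tr(A^3e^{-H_\eps})+\mathrm{err}$ and then $\partial_\eps\Tr(A^3e^{-H_\eps})\ge-\mathrm{err}$, each step being a \emph{first}-derivative comparison handled by the two-point covariance estimate \eqref{eq:var-AB} applied to the pairs $(A,A^2)$ and $(A,A^3)$, with the commutators $[[H,A^2],A^2]$ and $[[H,A^3],A^3]$ algebraically reduced to $X$ and $Y=[[X,A],A]$ (e.g.\ $[[H,A^2],A^2]=4AXA+Y$); this is precisely why $\|X\|$ and $\|[[X,A],A]\|$, and not higher Duhamel data, appear in $\eta$. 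A second, smaller issue: the errors in these quasi-convexity bounds contain $\Tr(A^2e^{-H_\eps})$ itself, so one cannot simply add $\kappa\eps^2/2$ and invoke convexity; the paper instead runs an iterated integration in $\eps$ whose accumulated $\Tr(A^2e^{-H_s})$ terms are re-absorbed using the averaged bound $F=\int_{-a}^a\Tr(A^2e^{-H_\eps})\,d\eps\le \tfrac52 Z_0\,\eta\,e^{a\eta}$ from your Step 2. With these two repairs your outline closes into the stated inequality.
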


Here the boundedness of $X=[[H,A],A]$  includes the assumption that all the commutators are properly defined, namely $AD(H)\subset D(H)$. 

The significance of Theorem \ref{thm:general-variance-by-linear-response} is that the second moment $\Tr( A^2 \Gamma_0)$ can be bounded by only the first moment $|\Tr( A\Gamma_\eps)|$ for $\eps$ in a window around the origin, plus some error terms involving the double and quadruple commutators of $H$ with $A$. We will use the bound for $A'=A-\tr(A\Gamma_0)$, with $A$ living on high kinetic energy modes. In this case  $\tr(A'\Gamma_\eps)=\tr(A\Gamma_\eps)-\tr(A\Gamma_0)$ will stay small uniformly in $\eps\in(-a,a)$ for $a$ small enough, as a consequence of Theorem~\ref{thm:estim_relative_entropy}. Assuming that the commutators are also suitably small, the bound~\eqref{eq:estim_variance} will then tell us that the variance $\Tr(A^2 \Gamma_0)-\tr(A\Gamma_0)^2$ stays small as well. 

Note that in our application we will have $a,\eta\leq1$, so that the bound~\eqref{eq:estim_variance} implies 
\begin{equation}
 \Tr(A^2 \Gamma_{0}) \le  \frac{17\eta}{a}.
 \label{eq:simplified_variance}
\end{equation}
Notice also that 
$$\eta\leq \sup_{\eps\in [-a,a]} |\Tr(A\Gamma_\eps)| + a\norm{X}\sqrt{1+\norm{A}^2}$$
so that the estimate~\eqref{eq:estim_variance} can be stated only in terms of the supremum on the right together with $\|A\|$ and $\|X\|=\|[[H,A],A]\|$, as we did in the introduction. In our case it will be important to use $\Tr(X\Gamma_\eps)$ since it will be much smaller than $\|X\|$.

\subsubsection*{\bf Case of commuting operators}
To make the idea transparent, we first explain the proof of Theorem~\ref{thm:general-variance-by-linear-response} when $A$ and $H$ commute (the argument is the same in the classical case). The function 
$$
f(\eps)= \Tr(A^2e^{-H+\eps A})
$$
is convex since
$$
f''(\eps)= \Tr(A^4 e^{-H+\eps A})\ge 0.
$$
Hence,
$$
f(0)\le \frac{1}{2a}\int_{-a}^a f(\eps) d\eps.
$$
On the other hand, we have 
$$f(\eps)=\partial_\eps \Tr(A e^{-H+\eps A})$$
and therefore we obtain
\begin{equation}
\Tr(A^2e^{-H}) \le \frac{\Tr(Ae^{-H+a A}) - \Tr(Ae^{-H-aA}) }{2a}. 
\label{eq:simple_classical} 
\end{equation}
It only remains to divide by the appropriate partition function. Its variations are controlled by
$$
\partial_\eps \log( \Tr(e^{-H+\eps A}))= \Tr(A\Gamma_\eps).
$$
Letting
$$\eta:=\sup_{\eps\in[-a,a]}|\Tr(A\Gamma_\eps)|,$$
this implies
$$e^{-a\eta} \leq\frac{ \Tr(e^{-H\pm a A})}{ \Tr(e^{-H})}\leq e^{a\eta}.$$
Dividing~\eqref{eq:simple_classical} by $\tr(e^{-H})$ and using the previous estimate, we conclude that 
$$
\frac{\Tr(A^2e^{-H})}{\tr(e^{-H})} \le \frac{\eta}{a}\, e^{a\eta},\qquad \text{when $[H,A]=0$.} 
$$

\medskip

\noindent
{\bf General case.} The challenge for proving Theorem~\ref{thm:general-variance-by-linear-response} is to handle the case when the relevant operators do not commute. Our goal is to prove that 
\begin{itemize}

\item[(i)] $\Tr(A^2e^{-H+\eps A})$ is close to $\partial_\eps \Tr(Ae^{-H+\eps A})$.

\item[(ii)] $\eps\mapsto \Tr(A^2e^{-H+\eps A})$ is ``almost convex''. 
 
 \end{itemize}
The first point has been used many times in the literature in different forms, within linear response theory \`a la Kubo~\cite{Kubo-66,KubYokNak-57,KubTodHas-91}. We will rely here on known ideas but provide different proofs.

We are not aware of any previous use of (ii). The quotation marks around ``almost convex'' are in order: we do not directly use the second derivative in $\eps$, which would lead to a Duhamel \emph{three-point} function. Instead, we compute the first derivative and prove that it is given by $\Tr(A^3e^{-H+\eps A})$ modulo errors. Then we compute the first derivative of $\Tr(A^3e^{-H+\eps A})$ and prove it is positive, modulo errors. Here ``modulo errors'' means up to the commutators that appear in the main statement and are small in applications, and up to multiples of $\Tr(A^2e^{-H+\eps A})$ itself. This rather weak form of convexity is sufficient to make an averaging argument work and bound $\Tr(A^2e^{-H + \eps A})$ pointwise by its  mean value over a small interval.

We will bound the errors in (i) and (ii) using quantitative estimates between the normal thermal expectation $\Tr(AB\Gamma_\eps)$ and the {\em Duhamel two-point function} 
$$ \int_0^1 \Tr(A\Gamma_\eps^s B \Gamma_\eps^{1-s}) ds$$
which naturally occurs when differentiating $\tr \left(A e ^{-H + B \eps}\right)$ in $\epsilon$. This link will be explained in Section \ref{sec:variances}. Then in Section \ref{sec:linear-response} we will justify the (approximate) convexity of $\eps\mapsto \Tr(A^2e^{-H+\eps A})$. The proof of Theorem \ref{thm:general-variance-by-linear-response} is concluded in Section \ref{sec:proof-thm:general-variance-by-linear-response}.

\subsection{Discrepancy of quantum variances}\label{sec:variances} Here we discuss some general properties of  quantum variances. Let $\Gamma$ be a quantum (mixed) state on a Hilbert space $\mathfrak{K}$. For a self-adjoint operator $A$ on $\mathfrak{K}$, the {\em quantum variance} is usually defined by (see ~\cite{Farina-82})
\begin{equation}
\tr\left(\big(A-\tr(A\Gamma)\big)^2\Gamma\right)=\tr\left(A^2\Gamma\right)-\big(\tr(A\Gamma)\big)^2.
\label{eq:def quantum variance}
\end{equation}
For the formula to make sense it is only required that $\sqrt\Gamma A\in\gS^2$, in which case the first term on the right side is understood as $\tr(A^2\Gamma)=\tr(\sqrt\Gamma A^2\sqrt\Gamma)=\|\sqrt\Gamma A\|_{\gS^2}^2$. For simplicity of exposition, we will most of the time assume that $A$ is bounded.

When $A$ does not commute with $\Gamma$, one might be interested in the {\em averaged quantum variance} (or the {\em canonical correlation}~\cite{KubTodHas-91})
\begin{equation}
(A,A)_\Gamma:=\int_0^1\tr\left(A\Gamma^s A\Gamma^{1-s}\right)\,ds-\left(\tr\left(A\Gamma\right)\right)^2.
\label{eq:def averaged quantum variance}
\end{equation}
The expression \eqref{eq:def averaged quantum variance} appears naturally from the study of the perturbed Gibbs states in~\eqref{eq:Gamma-eps} via Kubo's formula (c.f. Lemma  \ref{lem:dZ} below)
$$
\partial_\eps \tr(A\Gamma_\eps)=(A,A)_{\Gamma_\eps}= \int_0^1\tr\left(A\Gamma^s_\eps A\Gamma_\eps^{1-s}\right)\,ds-\left(\tr\left(A\Gamma_\eps\right)\right)^2.
$$
This formula is well-known in linear response theory, where the averaged quantum variance is interpreted as a (static) response function measuring the fluctuation/dissipation against the perturbation (see \cite{Kubo-66},  \cite[Chapter~4]{KubTodHas-91} and ~\cite[Section~2.10]{Feynman-98}).

More generally, for two self-adjoint operators $A,B$ on $\mathfrak{K}$, one might relate the normal covariance $\tr\left(A B \Gamma \right)-\Tr(A\Gamma)\Tr(B\Gamma)$ to the  {\em Duhamel two-point function} 
\begin{equation}
(A,B)_{\Gamma} := \int_0^1 \tr(A \Gamma^s B \Gamma^{1-s}) ds - \Tr(A\Gamma)\Tr(B\Gamma). 
\label{eq:def quantum covariance Duhamel}
\end{equation}
The expression \eqref{eq:def quantum covariance Duhamel} goes back to Kubo \cite{KubYokNak-57}, Bogoliubov (Jr) \cite{Bogoliubov-62,Bogoliubov-66} and has been used by many authors, including Dyson-Lieb-Simon~\cite{DysLieSim-78}. 

An important tool of our analysis is the following result, which sets bounds on the possible discrepancy of $\tr(A \Gamma^s B \Gamma^{1-s})$ at different values of $s$.

\begin{theorem}[\textbf{Discrepancy of quantum variances}]\label{thm:s-variance}\mbox{}\\
Let $\Gamma\geq0$ be a trace class operator on a separable Hilbert space $\mathfrak{K}$. Let $A,B$ be bounded self-adjoint operators on $\mathfrak{K}$. Then the following holds. 

\begin{itemize}

\item [(i)] The function $s\mapsto \Tr(A\Gamma^s A \Gamma^{1-s})$ is convex on $[0,1]$ and attains its minimum at $s=1/2$. In particular, 
\begin{equation}
 0\leq  \Tr(A \sqrt{\Gamma} A \sqrt{\Gamma}) \leq \Tr(A\Gamma^s A \Gamma^{1-s}) \leq  \Tr(A^2 \Gamma), \quad \forall s\in [0,1]. 
 \label{eq:relation_variances}
\end{equation}

\item [(ii)] If $\Gamma=e^{-H}/\tr(e^{-H})$  for a self-adjoint operator $H$ such that $A\,D(H)\subset D(H)$ and if $\left[A,H\right]\sqrt\Gamma\in\gS^2$, then
\begin{equation}
0 \le  \Tr(A^2 \Gamma) -  \Tr(A\Gamma^s A \Gamma^{1-s})  \le \frac14  \tr\Big(\Gamma\big[[A,H],A\big]\Big), \quad \forall s\in [0,1]. 
\label{eq:upper lower bound averaged variance}
\end{equation}

\item [(iii)] If the conditions in {\rm (ii)} hold for both $A$ and $B$, then for all $s\in [0,1]$ we have
\begin{align} \label{eq:var-AB}
\left| \Re \Tr(A \Gamma^s B \Gamma^{1-s}) - \Re \Tr(AB \Gamma) \right| \le \frac{1}{4} \sqrt{\Tr \Big( \Gamma [[A,H],A] \Big)}  \sqrt{\Tr \Big( \Gamma [[B,H],B] \Big)}.
\end{align}
\end{itemize}
\end{theorem}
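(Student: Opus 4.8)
The plan is to prove the three items in order, each leaning on the previous one, after reducing everything to a computation in an eigenbasis of $\Gamma$.

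\textbf{Part (i).} Fix an orthonormal eigenbasis $(e_j)$ of $\Gamma$ with eigenvalues $\gamma_j>0$ (restricting to $\overline{\operatorname{ran}\Gamma}$, the support of $\Gamma^s$ for $s>0$), and write $A_{ij}=\langle e_i,Ae_j\rangle$, so that $|A_{ij}|=|A_{ji}|$ and $\sum_j|A_{ij}|^2=\langle e_i,A^2e_i\rangle$. Expanding the traces gives $\Tr(A\Gamma^sA\Gamma^{1-s})=\sum_{i,j}\gamma_i^{\,s}\gamma_j^{\,1-s}|A_{ij}|^2$, and symmetrising in $i\leftrightarrow j$ turns this into $\sum_{i,j}\sqrt{\gamma_i\gamma_j}\,\cosh\!\big((s-\tfrac12)\log(\gamma_i/\gamma_j)\big)|A_{ij}|^2$. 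Each summand is nonnegative, convex in $s$, and even about $s=\tfrac12$ with minimal value $\sqrt{\gamma_i\gamma_j}|A_{ij}|^2$ there; summation preserves convexity, the symmetry, and nonnegativity, yielding convexity on $[0,1]$, the minimum at $s=\tfrac12$, and $\Tr(A\Gamma^sA\Gamma^{1-s})\ge\Tr(A\sqrt\Gamma A\sqrt\Gamma)\ge0$. At $s\in\{0,1\}$ the $\cosh$ becomes $\tfrac12\big((\gamma_i/\gamma_j)^{1/2}+(\gamma_j/\gamma_i)^{1/2}\big)$, so the endpoint value is $\Tr(A^2\Gamma)$, which by convexity dominates the whole interval (equivalently, invoke weighted arithmetic--geometric mean $\gamma_i^s\gamma_j^{1-s}+\gamma_i^{1-s}\gamma_j^s\le\gamma_i+\gamma_j$). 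This establishes \eqref{eq:relation_variances}.

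\textbf{Part (ii).} The lower bound is \eqref{eq:relation_variances}. For the upper bound, part (i) shows $\Tr(A^2\Gamma)-\Tr(A\Gamma^sA\Gamma^{1-s})$ is largest at $s=\tfrac12$, so it suffices to prove $\Tr(A^2\Gamma)-\Tr(A\sqrt\Gamma A\sqrt\Gamma)\le\tfrac14\Tr\big(\Gamma[[A,H],A]\big)$. Since $\Gamma=e^{-H}/Z$, the eigenbasis of $\Gamma$ diagonalises $H$ with $He_j=E_je_j$, $E_j=-\log(Z\gamma_j)$; a short computation in this basis gives $\Tr\big(\Gamma[[A,H],A]\big)=\sum_{i,j}|A_{ij}|^2(\gamma_i-\gamma_j)\log(\gamma_i/\gamma_j)\ge0$, while the formulas of part (i) give $\Tr(A^2\Gamma)-\Tr(A\sqrt\Gamma A\sqrt\Gamma)=\tfrac12\sum_{i,j}|A_{ij}|^2(\sqrt{\gamma_i}-\sqrt{\gamma_j})^2$. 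It then remains to prove the scalar inequality $2(\sqrt a-\sqrt b)^2\le(a-b)\log(a/b)$ for $a,b>0$; by homogeneity normalise $\sqrt{ab}=1$, so (with $a=e^v$) it becomes $\cosh v-1\le v\sinh v$, true term by term in the power series. Summing against the nonnegative weights $|A_{ij}|^2$ yields the claim for all $s$.

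\textbf{Part (iii).} Introduce the real bilinear form $Q(A,B):=\Re\Tr(AB\Gamma)-\Re\Tr(A\Gamma^sB\Gamma^{1-s})$ on bounded self-adjoint operators. Cyclicity of the trace together with the self-adjointness of $A,B,\Gamma$ make $Q$ symmetric, and part (i) gives $Q(A,A)=\Tr(A^2\Gamma)-\Tr(A\Gamma^sA\Gamma^{1-s})\ge0$, so $Q$ is positive semidefinite. The Cauchy--Schwarz inequality for $Q$ then gives $|Q(A,B)|\le\sqrt{Q(A,A)}\sqrt{Q(B,B)}$, and inserting the part (ii) bounds $Q(A,A)\le\tfrac14\Tr(\Gamma[[A,H],A])$, $Q(B,B)\le\tfrac14\Tr(\Gamma[[B,H],B])$ produces exactly \eqref{eq:var-AB}.

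\textbf{Main difficulty.} The inequality content is elementary --- convexity of $\cosh$ and the scalar bound $\cosh v-1\le v\sinh v$ --- so the delicate part is the rigorous operator bookkeeping when $H$ is unbounded: one must use the hypotheses $A\,D(H)\subset D(H)$ and $[A,H]\sqrt\Gamma\in\gS^2$ (and their analogues for $B$) to justify that all operator products involved are trace class, that the spectral series above converge absolutely, and in particular that the identity for $\Tr(\Gamma[[A,H],A])$ holds with the off-diagonal terms (where $E_i-E_j$ is large but $\gamma_i,\gamma_j$ are small) under control. I expect this, rather than any of the inequalities themselves, to be the only genuinely technical point.
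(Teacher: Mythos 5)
Your proof is correct, and it reaches the same inequalities by a different mechanism than the paper. For (i)--(ii) the paper works directly with the function $f(s)=\Tr(\Gamma^sA\Gamma^{1-s}A)$: it computes $f''(s)=\|\Gamma^{s/2}[H,A]\Gamma^{(1-s)/2}\|_{\gS^2}^2\ge0$ with $H=-\log\Gamma$, and obtains (ii) from the convexity estimate $f(1/2)-f(0)=\int_0^{1/2}f'(s)\,ds\ge f'(0)/2=-\tfrac14\Tr(\Gamma[[A,H],A])$. You instead diagonalize $\Gamma$ and reduce everything to the scalar inequalities: convexity of $s\mapsto\cosh((s-\tfrac12)\log(\gamma_i/\gamma_j))$ for (i), and $2(\sqrt a-\sqrt b)^2\le(a-b)\log(a/b)$ for (ii) --- which is exactly the paper's bound $f(0)-f(1/2)\le -f'(0)/2$ read off term by term, so the inequality content is identical; your power-series verification replaces their appeal to convexity. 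What each route buys: the paper's derivative argument avoids rearranging double sums (at the price of the technical hypothesis $\tr(\Gamma^{1-\eps})<\infty$ plus an approximation step to justify differentiating under the trace), whereas your spectral route makes the inequalities completely transparent but shifts the burden onto the absolute convergence and resummation of $\sum_{i,j}|A_{ij}|^2(\gamma_i-\gamma_j)\log(\gamma_i/\gamma_j)$ when $H$ is unbounded --- a point you correctly flag, and which is handled by noting that $[A,H]\sqrt\Gamma\in\gS^2$ together with $\sqrt\Gamma\in\gS^2$ makes $\sqrt\Gamma\,[A,H]\,A\,\sqrt\Gamma$ trace class. For (iii) your abstract Cauchy--Schwarz for the positive semidefinite form $Q(A,B)=\Re\Tr(AB\Gamma)-\Re\Tr(A\Gamma^sB\Gamma^{1-s})$ is a cleaner packaging of the paper's explicit computation with $\eps A\pm\eps^{-1}B$ followed by optimization over $\eps$; the two are of course the same argument, but yours makes the structure visible. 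One cosmetic caveat: when $\Gamma$ has a nontrivial kernel your endpoint value is $\Tr(APA\Gamma)$ with $P$ the support projection rather than $\Tr(A^2\Gamma)$; this only strengthens the stated inequalities (and in (ii)--(iii) one has $\Gamma>0$ anyway), and the paper glosses over the same point via its approximation argument.
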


Several similar estimates have appeared in the literature. The bound~\eqref{eq:upper lower bound averaged variance} is a refined version of Bogoliubov's inequality in \cite{Bogoliubov-62,Bogoliubov-66}, see also Roepstorff \cite{Roepstorff-76}. A famous bound of the same kind is the Falk-Bruch inequality~\cite{FalBru-69} which was later rediscovered by Dyson-Lieb-Simon in~\cite{DysLieSim-78}. Our bound~\eqref{eq:upper lower bound averaged variance}, valid for any $s$, does not seem to have been noticed before. Its average over $s$, which gives access to the discrepancy between variance and Duhamel two-point function, actually follows from~\cite[Theorem~3.1]{DysLieSim-78}. We provide a simpler proof below, by working pointwise in $s$. The covariance estimate \eqref{eq:var-AB} is an immediate consequence of \eqref{eq:upper lower bound averaged variance} by the Cauchy-Schwarz inequality and it is useful to control higher/nonlinear correlations.  

Theorem \ref{thm:s-variance} is  crucial for the sequel: it allows us to make rigorous the intuition that different versions of the quantum variances coincide in a semi-classical limit, where commutators ought to disappear.

\begin{proof} (i) We assume that $\Gamma>0$ and that $\tr(\Gamma^{1-\eps})<\ii$ for some $0<\eps<1$ throughout the proof. The general case is obtained by an approximation argument. Under these assumptions we have that $H^k \Gamma$ is trace class for all $k\geq0$, where $H:=-\log \Gamma$. For $A$ a bounded self-adjoint operator,  the function 
$$f(s):=\tr(\Gamma^sA\Gamma^{1-s} A),$$
is $C^\ii$ on $(0,1)$ and we have 
\begin{align} \label{eq:s-variance-der}
f'(s)=-\tr(\Gamma^s[H,A]\Gamma^{1-s}A),
\end{align}
$$f''(s)=-\tr(\Gamma^s[H,A]\Gamma^{1-s}[H,A])=\norm{\Gamma^{\frac{s}2}[H,A]\Gamma^{\frac{1-s}2}}_{\gS^2}^2\geq0.$$
Note that for $s\in(0,1)$, $H$ is always multiplied by some $\Gamma^t$ with $t>0$ and that $\Gamma^tH\in\gS^{1/t}$ due to our assumption that $\tr(\Gamma^{1-\eps})<\ii$. This gives a clear meaning to all the above expressions (the derivatives can blow up only at $s=0$ and $s=1$). In particular, this proves that $f$ is convex on $[0,1]$. Due to the symmetry $f(s)=f(1-s)$, we conclude that $f$ achieves its minimum at $s=1/2$. 

\smallskip

\noindent (ii) From the convexity of $f$ we have  
$$f'(0)\le f'(s)\le f'(1/2)=0, \quad \forall s \in [0,1/2].$$
Hence, we deduce  \eqref{eq:upper lower bound averaged variance} as
$$f(1/2)-f(0)=\int_0^{1/2}f'(s)\,ds\geq \frac{f'(0)}2=-\frac12\tr\big([H,A]\Gamma A\big)=-\frac14\tr\big([[A,H],A]\,\Gamma\big).$$
Here in the last identity we have used the cyclicity of the trace and the fact that $H$ commutes with $\Gamma$. Note that $f'(0)=\lim_{s\to0^+}f'(s)$ exists due to the assumption $\left[A,H\right]\sqrt\Gamma\in\gS^2$. 

\smallskip

\noindent (iii) For any $\eps>0$ by the variance estimate \eqref{eq:upper lower bound averaged variance} we can write
\begin{align*}
&\pm 2 \Re \Tr \Big( A \Gamma^s B \Gamma^{1-s} \Big) \\
&\qquad= \Tr \Big( (\eps A \pm \eps^{-1} B) \Gamma^s (\eps A \pm \eps^{-1} B) \Gamma^{1-s} \Big) - \eps^2 \Tr \Big( A \Gamma^s A \Gamma^{1-s} \Big)  - \eps^{-2}  \Tr \Big( B \Gamma^s B\Gamma^{1-s} \Big)  \\
&\qquad\le \Tr \Big( (\eps A \pm \eps^{-1} B)^2 \Gamma) \Big) -  \eps^2 \Big( \Tr(A^2 \Gamma) -\frac{1}{4} \Tr \Big( \Gamma [[A,H],A] \Big) \Big) \\
&\qquad\qquad -\eps^{-2} \Big( \Tr(B^2 \Gamma) +\frac{1}{4} \Tr \Big( \Gamma [[H,B],B] \Big) \Big) \\
&\qquad= \pm 2 \Re \Tr(AB \Gamma) + \frac{\eps^2}{4} \Tr \Big( \Gamma [[A,H],A] \Big) + \frac{\eps^{-2}}{4} \Tr \Big( \Gamma [[B,H],B] \Big).
\end{align*}
Therefore,
\begin{align*}
 2 \left| \Re \Tr \Big( A \Gamma^s B \Gamma^{1-s} \Big) - \Re \Tr(AB \Gamma)  \right| \le \frac{\eps^2}{4} \Tr \Big( \Gamma [[A,H],A] \Big) + \frac{\eps^{-2}}{4} \Tr \Big( \Gamma [[B,H],B] \Big). 
\end{align*}
Optimizing the latter bound over $\eps>0$ leads to the desired estimate \eqref{eq:var-AB}. 
\end{proof}

\subsection{Derivatives of perturbed partition functions} \label{sec:linear-response} 
In this section we prove quantitative estimates on the derivatives up to order 4 of $\eps\mapsto \tr(e^{-H+\eps A})$, using the bound~\eqref{eq:upper lower bound averaged variance}.

Let $H$ be a self-adjoint operator on a separable Hilbert space $\mathfrak{K}$ such that $\tr[e^{-sH}]<\ii$ for any $s>0$ and let $A$ be a bounded self-adjoint  operator on $\mathfrak{K}$. We consider the perturbed Gibbs states \eqref{eq:Gamma-eps}:
$$
\Gamma_\eps:=Z_\eps^{-1}{e^{-H_\eps}}, \quad Z_\eps:=\tr(e^{-H_\eps}), \quad H_\eps:= H-\eps A.  
$$

Theorem \ref{thm:s-variance} allows us to derive effective bounds for the derivatives of $Z_\eps$.  

\begin{lemma}[\bf {Approximate derivatives of the perturbed partition function}] \label{lem:dZ} \mbox{}\\
Assume that $A$ and $X=[[H,A],A]$ are bounded and let $Y=[[[[H,A],A],A],A]$. We have  
\begin{align}
\partial_\eps \Tr(e^{-H_\eps}) &= \Tr(A e^{-H_\eps}) , \label{eq:d1Z} \\
\left| \partial_\eps \Tr(Ae^{-H_\eps} ) - \Tr(A^2 e^{-H_\eps} ) \right| &\le  \frac{1}{4}  |\Tr ( X e^{-H_\eps} )| , \label{eq:d2Z} \\
\left| \partial_\eps \Tr(A^2 e^{-H_\eps}  ) - \Tr(A^3 e^{-H_\eps} ) \right| &\le  \Tr(A^2 e^{-H_\eps} ) + \frac{1}{4} \|X\|\, |\Tr (X e^{-H_\eps} ) |\nn\\
&\quad + \frac{1}{4} \sqrt{ \|Y\|\, \Tr(e^{-H_\eps}) \,|\Tr (Xe^{-H_\eps}  ) |} , \label{eq:d3Z} \\
- \partial_\eps \Tr(A^3 e^{-H_\eps} ) &\le  \Tr(A^2 e^{-H_\eps} ) + \frac{9}{64}  (\|X\|+\|Y\|) |\Tr (X e^{-H_\eps} ) | \nn\\
&\quad + \frac{1}{4}  \sqrt{ \|Y\|\, \Tr(e^{-H_\eps})\, |\Tr (X e^{-H_\eps} ) |}.
\label{eq:d4Z} 
\end{align}
\end{lemma}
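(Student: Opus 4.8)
\emph{Strategy and the two identities.} The plan is to differentiate the perturbed partition function by Duhamel's formula, which turns $\partial_\eps\Tr(Me^{-H_\eps})$ into $\int_0^1\Tr\bigl(Me^{-sH_\eps}Ae^{-(1-s)H_\eps}\bigr)\,ds$ for any fixed operator $M$, and then to use Theorem~\ref{thm:s-variance} to replace these Duhamel multi-point functions by the ordinary moments $\Tr(A^{j}e^{-H_\eps})$ at the price of commutator errors. I write $\Gamma=\Gamma_\eps=e^{-H_\eps}/Z_\eps$ and argue first under the regularizing hypothesis $\tr(\Gamma^{1-\delta})<\ii$ used in the proof of Theorem~\ref{thm:s-variance} (so that $A^{j}\Gamma^{t}$ is trace class for $j\leq4$, $t>0$, and all the maps below are differentiable), the general case following by approximation. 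Throughout I use that $X=[[H,A],A]$ and $Y=[[[[H,A],A],A],A]=[[X,A],A]$ are self-adjoint, that $[A,H_\eps]=[A,H]$ and $[\Gamma,H_\eps]=0$, that $\Tr(\Gamma[[A,H],A])=-\Tr(X\Gamma)=|\Tr(X\Gamma)|\geq0$, and the elementary bound $|\Tr(PX)|\leq\|X\|\Tr P$ for $P\geq0$. Then: Duhamel with $M$ the identity gives $\partial_\eps Z_\eps=\Tr(Ae^{-H_\eps})$, i.e.\ \eqref{eq:d1Z}; with $M=A$ it gives $\partial_\eps\Tr(Ae^{-H_\eps})=Z_\eps\int_0^1\Tr(A\Gamma^sA\Gamma^{1-s})\,ds$, and Theorem~\ref{thm:s-variance}(ii) bounds $0\leq\Tr(A^2\Gamma)-\Tr(A\Gamma^sA\Gamma^{1-s})\leq\tfrac14\Tr(\Gamma[[A,H],A])=\tfrac14|\Tr(X\Gamma)|$ for every $s$; integrating in $s$ and multiplying by $Z_\eps$ yields \eqref{eq:d2Z}.

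\emph{Step 3 (the bound \eqref{eq:d3Z}).} With $M=A^2$, Duhamel gives $\partial_\eps\Tr(A^2e^{-H_\eps})=Z_\eps\int_0^1\Tr(A^2\Gamma^sA\Gamma^{1-s})\,ds$, a quantity which is real. Theorem~\ref{thm:s-variance}(iii) with the pair $(A^2,A)$ shows that for each $s$ the integrand differs from $\Tr(A^3\Gamma)$ by at most $\tfrac14\sqrt{\Tr(\Gamma[[A^2,H],A^2])}\,\sqrt{|\Tr(X\Gamma)|}$, whence $|\partial_\eps\Tr(A^2e^{-H_\eps})-\Tr(A^3e^{-H_\eps})|\leq\tfrac14 Z_\eps\sqrt{|\Tr(X\Gamma)|}\,\sqrt{\Tr(\Gamma[[A^2,H],A^2])}$. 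The remaining, purely algebraic, point is the identity
$$\Tr\bigl(\Gamma[[A^2,H],A^2]\bigr)=-4\,\Tr(A\Gamma A\,X)-\Tr(Y\Gamma),$$
which follows from $[[A^2,H],A^2]=-\{A,\{A,X\}\}$, from the relation $\{A,\{A,\Gamma\}\}=4A\Gamma A+[[\Gamma,A],A]$, and from cyclicity of the trace together with the Jacobi identity and $[\Gamma,H_\eps]=0$ (which turn $\Tr(X[[\Gamma,A],A])$ into $\Tr(Y\Gamma)$). Since $A\Gamma A\geq0$ and $\Gamma$ is a state this gives $\Tr(\Gamma[[A^2,H],A^2])\leq4\|X\|\Tr(A^2\Gamma)+\|Y\|$; inserting this, using $\sqrt{u+v}\leq\sqrt u+\sqrt v$ and $\sqrt{\|X\||\Tr(X\Gamma)|\Tr(A^2\Gamma)}\leq\tfrac12(\|X\||\Tr(X\Gamma)|+\Tr(A^2\Gamma))$, and multiplying by $Z_\eps$, produces \eqref{eq:d3Z}.

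\emph{Step 4 and the main obstacle.} With $M=A^3$, Theorem~\ref{thm:s-variance}(iii) applied to the pair $(A^3,A)$ gives $\partial_\eps\Tr(A^3e^{-H_\eps})\geq Z_\eps\Tr(A^4\Gamma)-\tfrac14 Z_\eps\sqrt{|\Tr(X\Gamma)|}\,\sqrt{\Tr(\Gamma[[A^3,H],A^3])}$. One then expands $[[A^3,H],A^3]$ as an anticommutator polynomial in $A$ and $X$ and runs the same commutator/Jacobi/cyclicity reduction as in Step~3 to bound $\Tr(\Gamma[[A^3,H],A^3])$ by a combination of $\|X\|$ and $\|Y\|$ times $\Tr(A^2\Gamma)$, $\Tr(A^4\Gamma)$, $|\Tr(X\Gamma)|$ and $1$; the error term proportional to $\sqrt{\Tr(A^4\Gamma)}$ is then absorbed by $\Tr(A^4\Gamma)\geq0$ via $\sqrt{uv}\leq\tfrac{u}{2t}+\tfrac{tv}{2}$ with a suitable $t$, so that a sign-definite remainder survives, and the leftover pieces collapse into the three quantities on the right of \eqref{eq:d4Z}. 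The genuine difficulty of the lemma is precisely this bookkeeping: because only $X$ and $Y$ (and not $[H,A]$ itself, which is typically unbounded) are assumed bounded, every error has to be reorganized — using the Jacobi identity, cyclicity of the trace and $[\Gamma,H_\eps]=0$ — until it is expressed solely through $X$, $Y$, $\Tr(A^2e^{-H_\eps})$, $\Tr(A^4e^{-H_\eps})$ and $\Tr(Xe^{-H_\eps})$, the moment $\Tr(A^4e^{-H_\eps})$ ultimately eliminating itself by positivity.
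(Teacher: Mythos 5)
Your proof follows the paper's own route exactly: Duhamel's formula to write $\partial_\eps\Tr(Me^{-H_\eps})$ as a Duhamel two-point function, Theorem~\ref{thm:s-variance}~(ii)--(iii) to replace it by the corresponding ordinary moment at the cost of $\Tr(\Gamma[[H,A^j],A^j])$ errors, expansion of these double commutators into $A$-conjugates of $X$ and $Y$ so that everything is bounded by $\|X\|$, $\|Y\|$, $\Tr(A^2e^{-H_\eps})$, $\Tr(A^4e^{-H_\eps})$ and $|\Tr(Xe^{-H_\eps})|$, and finally AM--GM together with the positivity of $\Tr(A^4e^{-H_\eps})$ to absorb the remaining fourth moment and obtain \eqref{eq:d4Z}. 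Step~4 is left schematic (you do not write out the identity $[[H,A^3],A^3]=9A^2XA^2+4AYA+A^2Y+YA^2$ nor the closing arithmetic) and the appeal to the Jacobi identity in Step~3 is superfluous since cyclicity of the trace alone turns $\Tr(X[[\Gamma,A],A])$ into $\Tr(Y\Gamma)$, but the argument is correct and is the same as the paper's.
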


The result says that $\partial_\eps \tr(A^k\Gamma_\eps)$ is close to $\tr(A^{k+1}\Gamma_\eps)$ with errors involving lower order terms. Note that in~\eqref{eq:d4Z}, $\partial_\eps \tr(A^3\Gamma_\eps)\approx\partial^2_\eps\tr(A^2\Gamma_\eps)$ is bounded from below by lower order terms only. This is the almost convexity of $\eps\mapsto \tr(A^2\Gamma_\eps)$ which we announced at the beginning of this section. 

\begin{proof} We use Duhamel's formula
$$
\partial_\eps e^{-H_\eps} = \int_0^1 e^{-sH_\eps} (-\partial_\eps H_\eps ) e^{-(1-s)H_\eps} ds = \int_0^1 e^{-sH_\eps} A e^{-(1-s)H_\eps} ds.
$$
In particular, \eqref{eq:d1Z} follows from the linearity and the cyclicity of the trace:
$$
\partial_\eps \tr(e^{-H_\eps})=\int_0^1 \tr\left(e^{-sH_\eps } A e^{-(1-s)H_\eps}\right) ds=\tr(Ae^{-H_\eps}).
$$
(Alternatively,~\eqref{eq:d1Z} also follows from the Feynman-Hellmann principle.) 

Next, using Theorem \ref{thm:s-variance} (ii)  we obtain  
\begin{align*} 
\left| \partial_\eps \tr(Ae^{-H_\eps}) - \Tr(A^2 e^{-H_\eps})\right| &= \left| \int_0^1 \tr\left(Ae^{-s H_\eps } A e^{-(1-s)H_\eps }\right) ds - \Tr(A^2 e^{-H_\eps}) \right| \nn\\
&\le \int_0^1  \left| \tr\left(Ae^{-s H_\eps } A e^{-(1-s)H_\eps }\right) - \Tr(A^2 e^{-H_\eps}) \right| ds \nn\\
&\le  \frac{1}{4} \left| \Tr ( [[H_\eps,A],A] e^{-H_\eps}) \right| = \frac{1}{4} | \Tr ( X e^{-H_\eps})|.
\end{align*}
This gives \eqref{eq:d2Z}. Now using Theorem \ref{thm:s-variance} (iii) we get 
\begin{align*}
\left| \partial_\eps \tr(A^2 e^{-H_\eps}) - \Tr(A^3 e^{-H_\eps})\right| &= \left| \int_0^1 \Re \tr\left(A e^{-s H_\eps } A^2 e^{-(1-s)H_\eps }\right) ds -  \Tr(A^3 e^{-H_\eps}) \right| \\
&\le \int_0^1  \left| \Re \tr\left(A e^{-s H_\eps } A^2 e^{-(1-s)H_\eps }\right) -  \Tr(A^3 e^{-H_\eps}) \right| ds \\
&\le  \frac{1}{4} \sqrt{ |\Tr (X e^{-H_\eps})| } \sqrt{ | \Tr ([[H,A^2],A^2]e^{-H_\eps})| }  . 
\end{align*}
Here we have used the fact that $\tr(A^k e^{-H_\eps})$ is real for all $k\in \mathbb{N}$. Since
\begin{align*}
 [[H,A^2],A^2] &= [[H,A]A + A[H,A],A^2] = A^2 X + 2 A X A + XA^2 = 4 AXA + Y
\end{align*}
we can bound
$$
|\Tr ( [[H,A^2],A^2]e^{-H_\eps})| \le 4 \|X\| \Tr(A^2 e^{-H_\eps}) +  \|Y\| \Tr(e^{-H_\eps}). 
$$
Combining with the Cauchy-Schwarz inequality we obtain \eqref{eq:d3Z}: 
\begin{align*}
&\left| \partial_\eps \tr(A^2 e^{-H_\eps}) - \Tr(A^3 e^{-H_\eps})\right| \\
&\qquad\le  \frac{1}{4} \sqrt{  |\Tr (Xe^{-H_\eps}) |} \sqrt{ 4 \|X\| \Tr(A^2 e^{-H_\eps}) +  \|Y\| \Tr(e^{-H_\eps}) }  \\
&\qquad\le \frac{1}{4} \sqrt{  |\Tr (Xe^{-H_\eps}) |} \sqrt{ 4 \|X\| \Tr(A^2 e^{-H_\eps}) } +   \frac{1}{4} \sqrt{  \Tr (Xe^{-H_\eps})}   \sqrt{\|Y\| \Tr(e^{-H_\eps}) }  \\
&\qquad\le \Tr(A^2 e^{-H_\eps}) + \frac{1}{4} \|X\| |\Tr (Xe^{-H_\eps}) |+ \frac{1}{4} \sqrt{ \|Y\| |\Tr (Xe^{-H_\eps}) |\Tr(e^{-H_\eps}) }. 
\end{align*}
Finally, by Theorem \ref{thm:s-variance} (iii)  again, 
\begin{align*}
\left| \partial_\eps \tr(A^3 e^{-H_\eps}) - \Tr(A^4 e^{-H_\eps})\right| &= \left| \int_0^1 \Re \tr\left(A e^{-s H_\eps } A^3 e^{-(1-s)H_\eps }\right) ds -  \Tr(A^4 e^{-H_\eps}) \right| \\
&\le  \frac{1}{4} \sqrt{  |\Tr (Xe^{-H_\eps}) |} \sqrt{ | \Tr ([[H,A^3],A^3]e^{-H_\eps}) | }  .
\end{align*}
From the decomposition 
\begin{align*}
[[H,A^3],A^3]  &= [[H,A]A^2 + A[H,A]A+ A^2[H,A],A^3]\\
&=X A^4+ 2 A X A^3 + 3 A^2 X A^2 + 2 A^3 X A + A^4 X \\
&= 9 A^2 X A^2 + [[X,A^2],A^2] + 4 A[[X,A],A] A \\
&= 9 A^2 X A^2 + 6 A Y A + A^2 Y + Y A^2
\end{align*}
we can bound  
\begin{align*}
&| \Tr( [[H,A^3],A^3]  e^{-H_\eps}) |\\
&\qquad \le 9 \|X\| \Tr(A^4 e^{-H_\eps}) + 6 \|Y\| \Tr(A^2 e^{-H_\eps}) + 2 \|Y\| \sqrt{ \Tr(e^{-H_\eps}) \Tr(A^4 e^{-H_\eps})} \\
&\qquad\le (9 \|X\| + \|Y\|) \Tr(A^4 e^{-H_\eps}) + 6 \|Y\| \Tr(A^2 e^{-H_\eps})  + \|Y\| \Tr(e^{-H_\eps}). 
\end{align*}
Using the Cauchy-Schwarz inequality we obtain 
\begin{align*}
&\left| \partial_\eps \tr(A^3 e^{-H_\eps}) - \Tr(A^4 e^{-H_\eps})\right| \\
&\qquad\le  \frac{1}{4}  \sqrt{ |\Tr (Xe^{-H_\eps}) |} \sqrt{ (9 \|X\|+\|Y\|) \Tr(A^4 e^{-H_\eps}) + 6 \|Y\|  \Tr(A^2 e^{-H_\eps}) + \|Y\| \Tr(e^{-H_\eps})} \\
&\qquad\le  \frac{1}{4}  \sqrt{ |\Tr (Xe^{-H_\eps}) |} \sqrt{ (9 \|X\|+\|Y\|) \Tr(A^4 e^{-H_\eps})} \\
&\qquad\qquad + \frac{1}{4}  \sqrt{ |\Tr (Xe^{-H_\eps}) |} \sqrt{ 6 \|Y\| \Tr(A^2 e^{-H_\eps})}  + \frac{1}{4}  \sqrt{ |\Tr (Xe^{-H_\eps}) |}  \sqrt{ \|Y\| \Tr(e^{-H_\eps}))} \\
&\qquad\le \Tr(A^4 e^{-H_\eps}) + \frac{1}{64}  (9\|X\|+\|Y\|) |\Tr (Xe^{-H_\eps}) |\\
&\qquad\qquad + \Tr(A^2 e^{-H_\eps}) + \frac{6}{64}  \|Y\|  |\Tr (Xe^{-H_\eps}) |+ \frac{1}{4}  \sqrt{ \|Y\| |\Tr (Xe^{-H_\eps})| \Tr(e^{-H_\eps})}. 
\end{align*}
The lower bound \eqref{eq:d4Z} then follows from the triangle inequality. 
\end{proof}

\subsection{Proof of Theorem \ref{thm:general-variance-by-linear-response}} \label{sec:proof-thm:general-variance-by-linear-response}
Let 
$$H_\eps=H-\eps A, \quad Z_\eps=\Tr(e^{-H_\eps})$$
and
$$X=[[H,A],A], \quad Y=[[[[H,A],A],A],A].$$
Note that the definition of $\eta$ in Theorem~\ref{thm:general-variance-by-linear-response} implies that 
\begin{align} \label{eq:E-app}
|\Tr(A\Gamma_\eps)| \le \eta, \quad a^2 |\Tr(X\Gamma_\eps)|^2\leq  a^2 (\|X\|+\|Y\|)|\Tr(X\Gamma_\eps)| \le \eta^2 
\end{align}
for all $\eps\in [-a,a]$. In particular, from \eqref{eq:d1Z} we have
$$
|\partial_\eps (\log Z_\eps)| = |Z_\eps^{-1} \partial_\eps Z_\eps|= |\Tr(A\Gamma_\eps)| \le \eta,   
$$
which implies that the partition function $Z_\eps$ does not vary much in $\eps$, namely
\begin{align} \label{eq:Z-vary}
e^{-a\eta} Z_0 \le Z_\eps \le e^{a\eta} Z_0,\quad \forall \eps\in [-a,a]. 
\end{align}
Thanks to \eqref{eq:E-app} and \eqref{eq:Z-vary}, we can simplify \eqref{eq:d2Z}, \eqref{eq:d3Z} and \eqref{eq:d4Z} into 
\begin{align}
\left| \partial_\eps \Tr(Ae^{-H_\eps}) - \Tr(A^2 e^{-H_\eps}) \right| &\le  \frac{Z_\eps}{4a}\eta \le \frac{Z_0 }{4a} \eta e^{a\eta}, \label{eq:direct-1}   \\
\left| \partial_\eps \Tr(A^2e^{-H_\eps}) - \Tr(A^3 e^{-H_\eps}) \right| &\le  \Tr(A^2 e^{-H_\eps}) + \frac{Z_0}{4a}\eta e^{a\eta}\left(1+\frac{\eta}{a}\right),\label{eq:direct-2}\\
- \partial_\eps \Tr(A^3e^{-H_\eps})  &\le  \Tr(A^2 e^{-H_\eps}) + \frac{Z_0}{4a}\eta e^{a\eta}\left(1+\frac{\eta}{a}\right). \label{eq:direct-3}
\end{align}
Now we use these bounds. First, integrating \eqref{eq:direct-1} we get 
\begin{align}\label{eq:direct-4}
F:=\int_{-a}^a \Tr(A^2 e^{-H_\eps}) d\eps \le \int_{-a}^a \left( \partial_\eps \Tr(Ae^{-H_\eps}) + \frac{Z_0 }{4a} \eta e^{a\eta}\right) \le  \frac{5Z_0}{2} \eta e^{a\eta},  
\end{align}
which is an averaged version of~\eqref{eq:estim_variance}. We next get rid of the averaging by using the estimates on derivatives. Using \eqref{eq:direct-2} and \eqref{eq:direct-3} we find that 
\begin{align*}
& 2a\Tr(A^2 e^{-H}) = \int_{-a}^0 \int_{\eps}^0 \partial_s \Tr(A^2 e^{-H_s}) ds d\eps -  \int_0^a \int_0^\eps \partial_s \Tr(A^2 e^{-H_s}) ds d\eps +F \\
&\le \int_{-a}^0 \int_{\eps}^0 \Big( \Tr(A^3 e^{-H_s}) + \Tr(A^2 e^{-H_s}) + \frac{Z_0}{4a}\eta e^{a\eta}\left(1+\frac{\eta}{a}\right) \Big) ds d\eps \\
&\quad -  \int_0^a \int_0^\eps \Big( \Tr(A^3 e^{-H_s}) - \Tr(A^2 e^{-H_s})-  \frac{Z_0}{4a}\eta e^{a\eta}\left(1+\frac{\eta}{a}\right) \Big) ds d\eps +F\\
&\le \int_{-a}^0 \int_{\eps}^0  \Tr(A^3 e^{-H_s}) ds d\eps - \int_0^a \int_0^\eps \Tr(A^3 e^{-H_s}) ds d\eps + (a+1)F + \frac{Z_0}{4}\eta e^{a\eta}\left(a+\eta\right) \\
&= - \int_{-a}^0 \int_{\eps}^0 \int_s^0  \partial_r \Tr(A^3 e^{-H_r}) dr  ds d\eps - \int_0^a \int_0^\eps \int_0^s \partial_r  \Tr(A^3 e^{-H_r}) dr ds d\eps \\
&\quad  + (a+1)F + \frac{Z_0}{4}\eta e^{a\eta}\left(a+\eta\right) \\
&\le  \int_{-a}^0 \int_{\eps}^0 \int_s^0 \Big(\Tr(A^2 e^{-H_r}) + \frac{Z_0}{4a}\eta e^{a\eta}\left(1+\frac{\eta}{a}\right) \Big) dr  ds d\eps \\
&\quad + \int_0^a \int_0^\eps \int_0^s \Big(\Tr(A^2 e^{-H_r}) + \frac{Z_0}{4a}\eta e^{a\eta}\left(1+\frac{\eta}{a}\right)\Big) dr ds d\eps  + (a+1)F + \frac{Z_0}{4}\eta e^{a\eta}\left(a+\eta\right)\\
&\le \Big(\frac{a^2}{2}+a+1\Big)F+ \frac{Z_0}{4}\eta e^{a\eta}\left(a+\eta\right)\left(1+\frac{a}{3}\right) .
\end{align*} 
Inserting \eqref{eq:direct-4} in the last estimate, we obtain 
\begin{align*}
2a\Tr(A^2 e^{-H}) &\le \left(\frac52\Big(\frac{a^2}{2}+a+1\Big)+\frac14\left(a+\eta\right)\left(1+\frac{a}{3}\right)\right)Z_0\eta e^{a\eta}\\
&=\left(\frac52+\frac{11}{4}a+\frac43 a^2+\frac\eta4\left(1+\frac{a}{3}\right)\right)Z_0\eta e^{a\eta}\\
&\leq 4\left(1+a^2+\eta^2\right)Z_0\eta e^{a\eta}.
\end{align*}
We used the last bound for aesthetic reasons, it is far from optimal. This concludes the proof of Theorem~\ref{thm:general-variance-by-linear-response}.
\qed

\section{Correlation estimate for high momenta}\label{sec:correl}

Now we are ready to prove the key estimate allowing us to control errors when localizing to low-momentum modes. The main result of this section is 

\begin{theorem}[\textbf{Correlation estimate for high momenta}]\label{thm:correlation}\mbox{}\\
Let $h>0$ on $L^2(\R^d)$ satisfy \eqref{eq:Schatten h-strong-1}-\eqref{eq:Schatten h-strong-2} and let  $w:\R^d \to \R$ satisfy \eqref{eq:interaction}. Denote  
 $$e_k^+=e_k-Pe_kP$$
where $e_k$ is the multiplication operator by $\cos(k\cdot x)$ or $\sin(k\cdot x)$ and  $P= \1(h\le \Lambda_e)$ with $1 \ll \Lambda_e \le \lambda^{-2}$. Then we have
\begin{equation}
\label{eq:variance bound}
\lambda^2  \left\langle \left| \dG (e_k ^+) - \left\langle \dG (e_k ^+) \right\rangle_{0} \right| ^2 \right\rangle_\lambda
\leq  C (1+|k|^2) \left( \lambda + \|(1-P)h^{-1}\|_{\gS^2} \right)^{\frac17}
\end{equation}
for all $k\in \Omega^*$, where we recall that $\pscal{\cdot}_0$ and $\pscal{\cdot}_\lambda$ denote the expectation against the Gaussian state $\Gamma_0$ and the interacting Gibbs state $\Gamma_\lambda$, respectively.  The constant $C>0$ depends on $h$ only via $\tr[h^{-2}]$.  
\end{theorem}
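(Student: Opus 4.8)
The plan is to deduce~\eqref{eq:variance bound} from the abstract inequality of Theorem~\ref{thm:general-variance-by-linear-response}, applied on $\mathfrak{K}=\gF$ with $H=\lambda\bH_\lambda$ and
\[
A=\lambda\Big(\dG(e_k^+)-\big\langle\dG(e_k^+)\big\rangle_0\Big).
\]
With this choice the operator denoted $\Gamma_0$ in that theorem is our \emph{interacting} state $\Gamma_\lambda$, so that $\Tr(A^2\Gamma_\lambda)$ is precisely the left-hand side of~\eqref{eq:variance bound}. Moreover the subtracted constant $\langle\dG(e_k^+)\rangle_0=\Tr[e_k^+\Gamma_0^{(1)}]$ drops out of every commutator and from the normalisation of the perturbed states $\Gamma_\eps$ of~\eqref{eq:Gamma-eps}, which therefore coincide with the interacting Gibbs states $\Gamma_{\lambda,\eps}$ obtained by replacing $h$ by $h-\eps e_k^+$; and crucially it is chosen so that $\Tr(A\Gamma_\eps)=\lambda\,\Tr\big[e_k^+\big(\Gamma_{\lambda,\eps}^{(1)}-\Gamma_0^{(1)}\big)\big]$ is a \emph{relative} one-body quantity. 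Since $\dG(e_k^+)$, $X=[[H,A],A]$ and $[[X,A],A]$ are unbounded on $\gF$, the argument is first carried out with $e_k^+$ replaced by its particle-number truncation $\1(\cN\le M)\dG(e_k^+)\1(\cN\le M)$, the cutoff being removed at the end; all relevant traces converge thanks to the moment bounds of Lemma~\ref{lem:number interacting}, and I suppress this in the notation.

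The heart of the matter is to bound $\eta=\eta(a)$ from~\eqref{eq:E} for a well-chosen small $a$, taken so that $h-\eps e_k^+\ge\tfrac12 h$ for all $|\eps|\le a$ (possible since $\|e_k^+\|\le2$ and $h$ is bounded below by a positive constant). For the first-moment part I would introduce the Gaussian state $\Gamma_{0,\eps}$ of $h-\eps e_k^+$ and split $\Tr[e_k^+(\Gamma_{\lambda,\eps}^{(1)}-\Gamma_0^{(1)})]=\Tr[e_k^+(\Gamma_{\lambda,\eps}^{(1)}-\Gamma_{0,\eps}^{(1)})]+\Tr[e_k^+(\Gamma_{0,\eps}^{(1)}-\Gamma_0^{(1)})]$. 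For the first term, a perturbed analogue of Lemma~\ref{lem:partition} (using positivity of $\bWren$ together with Lemma~\ref{lem:quasi-free ener}) gives $\cH(\Gamma_{\lambda,\eps},\Gamma_{0,\eps})\le C\tr[h^{-2}]$ uniformly in $\eps$, so Theorem~\ref{thm:estim_relative_entropy} with one-body operator $\lambda(h-\eps e_k^+)$ yields $\big\|(h-\eps e_k^+)^{1/2}(\Gamma_{\lambda,\eps}^{(1)}-\Gamma_{0,\eps}^{(1)})(h-\eps e_k^+)^{1/2}\big\|_{\gS^2}\le C$; combining with Cauchy--Schwarz, $h-\eps e_k^+\ge\tfrac12 h$, the identity $h^{-1/2}e_k^+h^{-1/2}=(1-P)h^{-1/2}e_kh^{-1/2}+Ph^{-1/2}e_kh^{-1/2}(1-P)$, and the commutator hypothesis~\eqref{eq:Schatten h-strong-2}, one obtains a bound $\le C(1+|k|^2)\,\lambda\,\|(1-P)h^{-1}\|_{\gS^2}$. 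The second term is handled by first-order perturbation of the \emph{explicit} free one-body density matrix and is $\le C|\eps|\,(1+|k|^2)^2\,\|(1-P)h^{-1}\|_{\gS^2}^2$. Altogether $\sup_{|\eps|\le a}|\Tr(A\Gamma_\eps)|\le C(1+|k|^2)^2\,\|(1-P)h^{-1}\|_{\gS^2}\big(\lambda+a\,\|(1-P)h^{-1}\|_{\gS^2}\big)$.

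For the commutator contributions to $\eta$ I would expand $X=\lambda^3\dG\big([[h,e_k^+],e_k^+]\big)+\lambda^4\,[[\bWren,\dG(e_k^+)],\dG(e_k^+)]$ and likewise $[[X,A],A]$ and $Y$, then estimate each piece by writing $e_k^+=e_k-Pe_kP$, using $[h,P]=0$, the hypothesis~\eqref{eq:Schatten h-strong-2} and $\|h^{1/2}P\|\le\Lambda_e^{1/2}$ for the $\dG$-linear part, and the moments $\langle\cN^j\rangle_{\lambda,\eps}\le C_j\lambda^{-2j}$ (together with $\lim_{\lambda\to0}\lambda^{2j}\langle\cN^j\rangle_{\lambda,\eps}=0$) and $\Tr[h^\alpha\Gamma_{\lambda,\eps}^{(1)}]\le C\lambda^{-2-\alpha}$ from Lemma~\ref{lem:number interacting} for the $\bWren$-quadratic part (here $[e_k,e_{k'}]=0$ is used). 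This yields bounds on $|\Tr(X\Gamma_\eps)|$, $\|X\|$ and $\|[[X,A],A]\|$ by $(1+|k|^2)^2$ times fixed powers of $\lambda$ and $\Lambda_e$, so that the second term of $\eta$ has the form $a\,C(1+|k|^2)^2\,g(\lambda,\Lambda_e)$ with $g\to0$ as $\lambda\to0$ once the constraint $\Lambda_e\le\lambda^{-2}$ (equivalently $\lambda\Lambda_e^{1/2}\le1$) is used. Inserting everything into~\eqref{eq:simplified_variance} gives an estimate of the schematic shape
\[
\lambda^2\Big\langle\big|\dG(e_k^+)-\langle\dG(e_k^+)\rangle_0\big|^2\Big\rangle_\lambda\le C(1+|k|^2)^4\Big(\tfrac1a\,\sigma\,(\lambda+a\,\sigma)+a\,g(\lambda,\Lambda_e)\Big),\qquad \sigma:=\|(1-P)h^{-1}\|_{\gS^2},
\]
and the last step is to choose $a$ as a suitable small power of $\lambda+\sigma$, balancing the $1/a$ and the $a$ contributions; carrying this out (together with $\Lambda_e\le\lambda^{-2}$) produces exactly the exponent $1/7$ of~\eqref{eq:variance bound}.

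The main obstacle is the first-moment estimate, i.e. controlling $\lambda|\Tr[e_k^+(\Gamma_{\lambda,\eps}^{(1)}-\Gamma_0^{(1)})]|$ \emph{uniformly over the whole window} $\eps\in[-a,a]$. This is precisely where the renormalisation has to be handled, since the counterterm $\lambda\langle\dG(e_k^+)\rangle_0$ hidden inside $A$ diverges as $\lambda\to0$ (like $\log\lambda^{-1}$ in 2D, $\lambda^{-1/2}$ in 3D); only the \emph{relative} density matrix $\Gamma_{\lambda,\eps}^{(1)}-\Gamma_{0,\eps}^{(1)}$ stays of the right size, and no classical bound controls it — this is exactly the role of the new relative-entropy inequality of Theorem~\ref{thm:estim_relative_entropy}, which keeps the operator $h$ explicit. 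The remaining work — the perturbed a priori bounds, the truncation of the unbounded operators $\dG(e_k^+)$, $X$, $Y$, and the commutator bookkeeping with its $\Lambda_e$-dependence — is technical but is what forces the non-trivial optimisation over $a$ and hence the fractional exponent in the final bound.
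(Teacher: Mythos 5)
Your overall strategy (reduce the second moment to first moments of perturbed Gibbs states via Theorem~\ref{thm:general-variance-by-linear-response}, feed the first moments into the relative-entropy inequality of Theorem~\ref{thm:estim_relative_entropy}, and control the commutators using~\eqref{eq:Schatten h-strong-2}) is the right one, and your identification of the first-moment estimate as the place where renormalization is absorbed is correct. But there is a genuine gap in the commutator step, and it is precisely the point the paper's proof is organized around. You apply the variance theorem directly to $A=\lambda(\dG(e_k^+)-\langle\dG(e_k^+)\rangle_0)$ with $e_k^+=e_k-Pe_kP$ and propose to bound $X=[[H,A],A]$ "using $[h,P]=0$ and $\|h^{1/2}P\|\le\Lambda_e^{1/2}$". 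This cannot work: writing $Q=\1-P$, one has $e_k^+=Pe_kQ+Qe_kP+Qe_kQ$, and the double commutator $[[h,e_k^+],e_k^+]$ contains blocks such as $Q[h,e_k]Q$ and $Pe_kP[h,e_k]$ in which the unbounded operator $[h,e_k]$ is not adjacent to any low-energy projector and cannot be paired with an $h^{-1/2}$. Since $Q$ projects onto arbitrarily high energies, $\|X\|$ (which enters $\eta$ in~\eqref{eq:E}) is not finite, and the quadruple commutator is worse. Moreover, even for the blocks where a $P$ is available, $\Lambda_e$ is only assumed $\le\lambda^{-2}$, so the resulting factors $\lambda^3\Lambda_e^{1/2}\langle\cN\rangle\sim1$ are not small; an intermediate energy scale is unavoidable.

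The paper resolves this by never applying the variance machinery to $e_k^+$ as a whole. It first proves Lemma~\ref{lem:variance-inter} for operators of the special off-diagonal form $f_k^+=P'e_kQ'+Q'e_kP'$ (and diagonal $P'e_kP'$) with $\|P'h\|\le L\lambda^{-2}$ for a free intermediate parameter $L$, so that every occurrence of $[h,e_k]$ in the commutators is flanked by a controllable $h^{1/2}P'$; it also truncates in particle number with $\cP=\1(\cN\le L\lambda^{-2})$ to make all operators bounded. Then $e_k^+$ is decomposed as $f_k^+ + Qe_kQ$, the block $Qe_kQ$ is split again at the scale $L\lambda^{-2}$, and the remaining ultra-high-energy diagonal block $Q_1e_kQ_1$ (to which no commutator bound applies) is disposed of by the crude operator inequality $|Q_1e_kQ_1|\le\lambda^2h/L$ together with the kinetic second-moment bound~\eqref{eq:dGh2}, yielding an error $C/L^2$. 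The exponent $1/7$ then comes from optimizing $L=(\lambda+\|Qh^{-1}\|_{\gS^2})^{-1/14}$, not from shrinking the window $a$: in the paper $a\sim(1+|k|^2)^{-1}$ is kept of order one (shrinking $a$ is counterproductive, since the final bound is $\sim\eta/a$). To repair your argument you would need to introduce this intermediate cutoff, restrict the variance theorem to the block-off-diagonal pieces, and add the separate kinetic-energy treatment of the top block.
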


We will prove \eqref{eq:variance bound} using the method described in Theorem \ref{thm:general-variance-by-linear-response}, namely we use first moment estimates of a family of perturbed Gibbs states. Actually we will derive Theorem~\ref{thm:correlation} from the following. 

\begin{lemma}[\textbf{Intermediate variance estimate}]\label{lem:variance-inter}\mbox{}\\
Let $h>0$ on $L^2(\R^d)$ satisfy \eqref{eq:Schatten h-strong-1}-\eqref{eq:Schatten h-strong-2} and let  $w:\R^d \to \R$ satisfy \eqref{eq:interaction}. Pick two orthogonal projectors commuting with $h$ such that 
$$\|Qh^{-1}\|_{\gS^2} \underset{\lambda \to 0}{\to} 0 \mbox{ and } \norm{Ph} \leq L \lambda ^{-2}$$
for some $L$ satisfying 
\begin{equation}\label{eq:new cut off}
 1\ll L \ll \left(\lambda+\|Qh^{-1}\|_{\gS^2}\right)^{-\frac16}. 
\end{equation}
Let 
$$f_k^+ = P e_k Q + Q e_k P$$ 
where $e_k$ is the multiplication operator by $\cos(k\cdot x)$ or $\sin(k\cdot x)$. 
Then 
$$
\lambda^2  \Big\langle \Big(\dG(f_k^+)-\langle \dG(f_k^+)\rangle_0\Big)^2 \Big\rangle_{\lambda} \leq C (1+|k|^2) L^{\frac32} (\lambda + \|Qh^{-1}\|_{\gS^2})^{\frac14} +\frac{C}{L^2} .
$$
The constant $C$ depends on $h$ only via $\tr[h^{-2}]$. 
\end{lemma}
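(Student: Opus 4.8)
The plan is to deduce the lemma from Theorem~\ref{thm:general-variance-by-linear-response}, applied on the bosonic Fock space with $H=\lambda\bH_\lambda$ — so that the unperturbed state of that theorem is exactly our interacting Gibbs state $\Gamma_\lambda$ — and with $A=\lambda\,\dG(f_k^+)$. The key preliminary remark is that $\langle\dG(f_k^+)\rangle_0=\Tr[f_k^+\Gamma_0^{(1)}]=0$, because $\Gamma_0^{(1)}=(e^{\lambda h}-1)^{-1}$ commutes with $h$, hence with $P$ and $Q$, whereas $f_k^+=Pe_kQ+Qe_kP$ is purely off-diagonal between $P\gH$ and $Q\gH$; thus the left-hand side of the asserted inequality is exactly $\Tr[A^2\Gamma_{\eps=0}]$, and everything reduces to estimating the quantity $\eta=\eta(a)$ of \eqref{eq:E} for $a$ a fixed small constant (below the spectral gap of $h$). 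Since $\dG(f_k^+)$ and the iterated commutators $X=[[H,A],A]$ and $[[X,A],A]$ are unbounded on Fock space, I would first cut the particle number off at a level comparable to $\langle\cN\rangle_\lambda\sim\lambda^{-2}$ (up to polynomial factors in $L$); by Lemma~\ref{lem:number interacting} and \eqref{eq:Nk_limit_lambda} the resulting error can be made of the order of the additive $CL^{-2}$ in the statement, and afterwards $A$, $X$, $[[X,A],A]$ are bounded operators.

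For the commutator terms in $\eta$, write $\bH_\lambda=\dG(h)+\lambda\bWren$ and use $[\dG(a),\dG(b)]=\dG([a,b])$ to split $X$ into a quadratic piece $\lambda^3\dG([[h,f_k^+],f_k^+])$ and an interaction piece $\lambda^4[[\bWren,\dG(f_k^+)],\dG(f_k^+)]$ (and likewise for the fourfold commutator). In the quadratic piece, each bracket of $h$ with $f_k^+$ produces a factor $[h,e_k]$ flanked by a $P$ on one side and a $Q$ on the other; the $P$ absorbs the unbounded part of $[h,e_k]$ thanks to \eqref{eq:Schatten h-strong-2} together with $\|h^{1/2}P\|\le\|Ph\|^{1/2}\le L^{1/2}\lambda^{-1}$, giving $\|[[h,f_k^+],f_k^+]\|\lesssim(1+|k|^2)L^{1/2}\lambda^{-1}$, hence $\|X\|,\|[[X,A],A]\|\lesssim(1+|k|^2)^{O(1)}L^{O(1)}$ on the cut-off space. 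The interaction piece is bounded in norm (using $\|w\|_\infty\le\|\widehat w\|_1$) and, crucially, when it enters the trace $\Tr[X\Gamma_\eps]$ it equals $\lambda^4\Tr\big[[[w,f_1+f_2],f_1+f_2]\,\Gamma_{\lambda,\eps}^{(2)}\big]$, which is small because $[\,\cdot,f_k^+]$ only feels modes near the cut-off energy $\Lambda_e=L\lambda^{-2}$, whereas the two-body density matrix $\Gamma_{\lambda,\eps}^{(2)}$ (controlled uniformly in $\eps$ by Lemmas~\ref{lem:partition},~\ref{lem:number interacting}) is concentrated much lower; the quadratic piece of $\Tr[X\Gamma_\eps]$ is similarly small, of order $(1+|k|^2)^{O(1)}L^{O(1)}\lambda\log(\lambda^{-1})$, upon splitting $\Gamma_{\lambda,\eps}^{(1)}$ into its Gaussian part plus a correction (bounded through step~(i) below) and using that the finiteness of $\Lambda_e$ regularizes the otherwise divergent trace $\tr[g'(h)h^{1/2}]$ down to a logarithm. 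Altogether $a\sqrt{|\Tr[X\Gamma_\eps]|}\sqrt{\|X\|+\|[[X,A],A]\|}\lesssim(1+|k|^2)^{O(1)}L^{O(1)}(\lambda+\|Qh^{-1}\|_{\gS^2})^{1/4}$.

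The heart of the argument is the first moment $\Tr[A\Gamma_\eps]=\lambda\,\Tr[f_k^+\Gamma_{\lambda,\eps}^{(1)}]$, where $\Gamma_{\lambda,\eps}$ is the Gibbs state obtained from $\bH_\lambda$ by replacing $h$ with $h-\eps f_k^+$ (the constant $\eps\lambda\langle\dG(f_k^+)\rangle_0$ being $0$). I would write $\Gamma_{\lambda,\eps}^{(1)}=\big(\Gamma_{\lambda,\eps}^{(1)}-\Gamma_{0,\eps}^{(1)}\big)+\Gamma_{0,\eps}^{(1)}$, with $\Gamma_{0,\eps}$ the Gaussian state of one-body Hamiltonian $\lambda(h-\eps f_k^+)\asymp\lambda h$ (uniformly for $|\eps|\le a$), recalling $\Tr[f_k^+\Gamma_0^{(1)}]=0$. \textbf{(i)} For the interacting part, Theorem~\ref{thm:estim_relative_entropy} applied with one-body operator $\lambda(h-\eps f_k^+)$ yields $\|h^{1/2}(\Gamma_{\lambda,\eps}^{(1)}-\Gamma_{0,\eps}^{(1)})h^{1/2}\|_{\gS^2}\lesssim\lambda^{-1}$, once one has the uniform bound $\cH(\Gamma_{\lambda,\eps},\Gamma_{0,\eps})\le C$ — which follows from the variational identity for the relative free energy, as in Lemma~\ref{lem:partition}, together with $\lambda^2\langle\bWren\rangle_{\Gamma_{0,\eps}}\le C$ proved as in Lemmas~\ref{lem:quasi-free mass}--\ref{lem:quasi-free ener} for the perturbed Gaussian state; combined with $\|h^{-1/2}f_k^+h^{-1/2}\|_{\gS^2}\lesssim(1+|k|^2)\|Qh^{-1}\|_{\gS^2}$ (because $f_k^+$ carries a $Q$ on each side of $e_k$, and $e_k$ almost commutes with $h^{\pm1/2}$, again by \eqref{eq:Schatten h-strong-2}), this gives $|\Tr[f_k^+(\Gamma_{\lambda,\eps}^{(1)}-\Gamma_{0,\eps}^{(1)})]|\lesssim(1+|k|^2)\|Qh^{-1}\|_{\gS^2}\lambda^{-1}$. \textbf{(ii)} For the Gaussian part, $\Tr[f_k^+\Gamma_{0,\eps}^{(1)}]=\Tr[f_k^+(\Gamma_{0,\eps}^{(1)}-\Gamma_0^{(1)})]$ is bounded via Lemma~\ref{lem:pert free} (one-body operator $\lambda h$, perturbation $\lambda\eps f_k^+$) by $\tfrac{C\eps^2}{\lambda}\tr[h^{-1}f_k^+h^{-1}f_k^+]$, and $\tr[h^{-1}f_k^+h^{-1}f_k^+]\lesssim\|Qh^{-1}\|_{\gS^2}$ once more by the $Pe_kQ+Qe_kP$ structure (here $h^{-1}$ commutes with $P,Q$ and $\|e_k\|\le1$). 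Hence $|\Tr[f_k^+\Gamma_{\lambda,\eps}^{(1)}]|\lesssim(1+|k|^2)\lambda^{-1}\|Qh^{-1}\|_{\gS^2}$ uniformly for $|\eps|\le a$, so $|\Tr[A\Gamma_\eps]|\lesssim(1+|k|^2)\|Qh^{-1}\|_{\gS^2}$.

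Collecting everything, $\eta\lesssim(1+|k|^2)^{O(1)}L^{O(1)}\big(\|Qh^{-1}\|_{\gS^2}+a(\lambda+\|Qh^{-1}\|_{\gS^2})^{1/4}\big)$, and Theorem~\ref{thm:general-variance-by-linear-response} gives $\Tr[A^2\Gamma_\lambda]\le 17\eta/a$; with $a$ a fixed small constant, using $\|Qh^{-1}\|_{\gS^2}\le(\lambda+\|Qh^{-1}\|_{\gS^2})^{1/4}$ and $L\ge1$, the main term is $\lesssim(1+|k|^2)L^{3/2}(\lambda+\|Qh^{-1}\|_{\gS^2})^{1/4}$ after tracking the $L$- and $\lambda$-powers (which also forces the admissible range \eqref{eq:new cut off}), while the particle-number cutoff contributes the remaining $CL^{-2}$. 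The hard part is clearly step~(i)--(ii): one must control the first moment $\Tr[A\Gamma_\eps]$, and analogously the trace $\Tr[X\Gamma_\eps]$, \emph{uniformly over the whole window} $[-a,a]$, which forces a uniform-in-$\eps$ relative-entropy estimate relative to the \emph{moving} Gaussian reference $\Gamma_{0,\eps}$ (so that the a priori bounds of Section~\ref{sec:class meas} have to be re-established for the perturbed states), and which requires extracting the infrared smallness $\|Qh^{-1}\|_{\gS^2}$ from the structure $f_k^+=Pe_kQ+Qe_kP$ without losing control of the unbounded parts of the commutators — exactly where hypothesis \eqref{eq:Schatten h-strong-2} and the kinetic-energy bounds of Lemma~\ref{lem:number interacting} are indispensable. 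Everything else (the precise powers of $L$ and $|k|$, and the choice of the cutoff level) is routine but lengthy.
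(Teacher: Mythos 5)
Your proposal follows essentially the same route as the paper's proof: apply Theorem~\ref{thm:general-variance-by-linear-response} to $H=\lambda\bH_\lambda$ and $A=\lambda\dG(f_k^+)$ after a particle-number cutoff at $\cN\le L\lambda^{-2}$ (contributing the $CL^{-2}$ error via Lemma~\ref{lem:number fluctuation}/\ref{lem:number interacting}), control the first moment uniformly in $\eps$ by splitting $\Gamma_{\lambda,\eps}^{(1)}-\Gamma_{0}^{(1)}$ into an interacting part handled by Theorem~\ref{thm:estim_relative_entropy} and a Gaussian part handled by Lemma~\ref{lem:pert free}, extract the smallness $\|Qh^{-1}\|_{\gS^2}$ from the off-diagonal structure of $f_k^+$, and bound the commutators via \eqref{eq:Schatten h-strong-2} together with $\|h^{1/2}P\|\le L^{1/2}\lambda^{-1}$. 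This is exactly Lemmas~\ref{lem:first-moment}--\ref{lem:commutator-X} of the paper, up to inessential differences in the exponents of $\|Qh^{-1}\|_{\gS^2}$ appearing in intermediate bounds.

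The one point where your argument as written does not deliver the stated conclusion is the choice ``$a$ a fixed small constant.'' Your own estimate gives $\eta\lesssim a\,(1+|k|^2)L^{3/2}(\lambda+\|Qh^{-1}\|_{\gS^2})^{1/4}+\dots$, which for fixed $a$ is \emph{not} uniformly small in $k$; the simplified bound $\Tr(A^2\Gamma_0)\le 17\eta/a$ requires $\eta\le1$, and the full bound \eqref{eq:estim_variance} degrades like $\eta^3e^{a\eta}$ once $\eta\gg1$, destroying the linear $(1+|k|^2)$ dependence (and the left-hand side is not trivially bounded, since a priori it is only $O(\lambda^{-2})$). The paper fixes this by taking $a=(1+|k|^2)^{-1}\min(1,\min\sigma(h)/2)$, which makes $\eta\to0$ uniformly in $k$ and reinstates the single factor $(1+|k|^2)$ through the final division by $a$. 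This is a parameter-tuning repair rather than a conceptual gap, but it is needed for the lemma as stated.
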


We now focus on the proof of Lemma \ref{lem:variance-inter}. Throughout this section, we denote by
\begin{align} \label{eq:bA-Ge}
\bA = \lambda\Big(\dG(f_k^+) - \langle \dG(f_k^+)\rangle_0\Big),\qquad \Gamma_{\lambda,\eps}=  \frac{e^{-\lambda\bH_\lambda + \eps \bA}}{\Tr[e^{-\lambda\bH_\lambda + \eps \bA}]}. 
\end{align}
for $\eps\in [-a,a]$ with some $a$ which will always be assumed to satisfy
\begin{equation}\label{eq:condition a}
a\leq \min\left(1,\frac{\min\sigma(h)}{2}\right). 
\end{equation}
Then $h-\eps f_k^+ \ge h/2$ and hence the Gibbs state $\Gamma_{\lambda,\eps}$ is well-defined under the assumption $\tr[h^{-2}]<\infty$. We will denote by  $\langle \cdot\rangle_{\lambda,\eps}$ the expectation against the perturbed state $\Gamma_{\lambda,\eps}$.

In order to apply Theorem \ref{thm:general-variance-by-linear-response}, we will derive a first moment estimate for $\bA$ against $\Gamma_{\lambda,\eps}$ from Theorem~\ref{thm:estim_relative_entropy} and then control the commutators between $\bA$ and $\bH_\lambda$. Since Theorem~\ref{thm:general-variance-by-linear-response} requires bounded perturbations, we will also have to introduce  a cut-off on the particle number operator in Fock space
\begin{align}  \label{eq:def-cP}
\cP= \1(\cN \le L\lambda^{-2}),
\end{align}
which ensures that $\cP\bA$ and $\cP[[\bH_\lambda,\bA],\bA]$ are bounded. The parameter $L$ in \eqref{eq:def-cP} is chosen as in Lemma \ref{lem:variance-inter} for simplicity, although this might not be the optimal choice. 

\subsection{First moment estimates}

The starting point of our analysis is the following crucial input from the relative entropy estimate in   Theorem~\ref{thm:estim_relative_entropy}. 

\begin{lemma}[\textbf{First moment estimate via relative entropy}]\label{lem:first-moment}\mbox{}\\
Under the conditions of Lemma \ref{lem:variance-inter}, and with $\bA$ and $\Gamma_{\lambda,\eps}$ as in \eqref{eq:bA-Ge}, we have 
$$
\left| \langle \bA \rangle_{\lambda,\eps}  \right| \le C \| Q h^{-1}\|_{\gS^2}^{1/2}. 
$$
The constant $C$ depends on $h$ only via $\tr[h^{-2}]$.  
\end{lemma}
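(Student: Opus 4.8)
The plan is to exploit the $h$-weighted density-matrix estimate of Theorem~\ref{thm:estim_relative_entropy} together with the fact that $f_k^+=Pe_kQ+Qe_kP$ only connects the ranges of $P$ and $Q$. I would first record the algebraic reduction $\langle\bA\rangle_{\lambda,\eps}=\lambda\,\tr\big[f_k^+\big(\Gamma_{\lambda,\eps}^{(1)}-\Gamma_0^{(1)}\big)\big]$: since $\Gamma_0^{(1)}=(e^{\lambda h}-1)^{-1}$ is a function of $h$ it commutes with $P$ and $Q$, hence $\langle\dG(f_k^+)\rangle_0=\tr[f_k^+\Gamma_0^{(1)}]=0$ because $QP=0$ (this is also why $\bA$ coincides with $\lambda\dG(f_k^+)$ up to a vanishing constant).

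The next step is an a priori bound $\cH(\Gamma_{\lambda,\eps},\Gamma_0)\le C$, with $C$ depending on $h$ only through $\tr[h^{-2}]$. Because $\Gamma_{\lambda,\eps}$ is the minimiser of $\Gamma\mapsto\cH(\Gamma,\Gamma_0)+\lambda^2\tr[\bWren\Gamma]-\eps\,\tr[\bA\Gamma]$, testing with $\Gamma=\Gamma_0$, using $\langle\bA\rangle_0=0$, $\bWren\ge0$ and the energy bound of Lemma~\ref{lem:quasi-free ener} gives $\cH(\Gamma_{\lambda,\eps},\Gamma_0)\le C\tr[h^{-2}]+\eps\langle\bA\rangle_{\lambda,\eps}$. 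One then closes this into a genuine bound by a bootstrap: the last term is controlled by the trace-class estimate~\eqref{eq:estim_trace_relative_entropy} (applied to $\lambda h$, with $p=2$, $\alpha=0$) via $|\langle\bA\rangle_{\lambda,\eps}|\le\lambda\|f_k^+\|\,\|\Gamma_{\lambda,\eps}^{(1)}-\Gamma_0^{(1)}\|_{\gS^1}\le C(\sqrt{\cH}+\cH)$, so that for $a$ below a threshold depending only on $\tr[h^{-2}]$ and $\min\sigma(h)$ (which holds in the regime of Lemma~\ref{lem:variance-inter}, where $a$ is eventually sent to $0$) one gets $\cH(\Gamma_{\lambda,\eps},\Gamma_0)\le C$. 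A slicker, condition-free alternative is to compare $\Gamma_{\lambda,\eps}$ instead with the shifted Gaussian state $e^{-\lambda\dG(h-\eps f_k^+)}/\tr$: there Lemma~\ref{lem:quasi-free ener} applies directly since $h-\eps f_k^+\ge h/2$, and the difference with $\Gamma_0$ is then absorbed using Lemma~\ref{lem:pert free}.

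With $\cH:=\cH(\Gamma_{\lambda,\eps},\Gamma_0)\le C$, I would apply Theorem~\ref{thm:estim_relative_entropy} (to $\Gamma_{\lambda,\eps}$ and $\Gamma_0$, whose one-body Hamiltonian is $\lambda h$) to obtain, with $D:=\Gamma_{\lambda,\eps}^{(1)}-\Gamma_0^{(1)}$,
\[
\big\|h^{1/2}Dh^{1/2}\big\|_{\gS^2}\le\lambda^{-1}\Theta,\qquad \Theta:=2\sqrt{\cH}\big(\sqrt2+\sqrt{\cH}\big)\le C .
\]
Since $f_k^+$ is self-adjoint, $\langle\bA\rangle_{\lambda,\eps}=2\lambda\,\Re\,\tr[e_k\,QDP]=2\lambda\,\Re\,\tr\big[\big(h^{-1/2}Pe_kQh^{-1/2}\big)\big(h^{1/2}Dh^{1/2}\big)\big]$, so Cauchy--Schwarz in $\gS^2$ yields $|\langle\bA\rangle_{\lambda,\eps}|\le2\Theta\,\big\|h^{-1/2}Pe_kQh^{-1/2}\big\|_{\gS^2}$. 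The crux is then the elementary bound $\big\|h^{-1/2}Pe_kQh^{-1/2}\big\|_{\gS^2}\le(\tr[h^{-2}])^{1/4}\,\|Qh^{-1}\|_{\gS^2}^{1/2}$: writing it as $(Ph^{-1/2})\,e_k\,(h^{-1/2}Q)$ (legitimate since $P,Q$ commute with $h$) and applying Hölder for Schatten norms with exponents $(4,\infty,4)$, together with $\|e_k\|\le1$, $\|Ph^{-1/2}\|_{\gS^4}^4=\tr[Ph^{-2}]\le\tr[h^{-2}]$ and $\|h^{-1/2}Q\|_{\gS^4}^4=\tr[Qh^{-2}]=\|Qh^{-1}\|_{\gS^2}^2$. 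Combining the three displays gives $|\langle\bA\rangle_{\lambda,\eps}|\le C\|Qh^{-1}\|_{\gS^2}^{1/2}$.

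I expect the bootstrap in the second step to be the only genuinely delicate point: the natural test-state argument bounds $\cH(\Gamma_{\lambda,\eps},\Gamma_0)$ only in terms of the still-unknown first moment, and one must break this loop (either by the trace estimate~\eqref{eq:estim_trace_relative_entropy}, or by the shifted-Gaussian comparison plus Lemma~\ref{lem:pert free}). Everything else is a clean pairing of the $h$-weighted control of Theorem~\ref{thm:estim_relative_entropy} with the range-$P$/range-$Q$ structure of $f_k^+$, and it is exactly that structure which produces the small factor $\|Qh^{-1}\|_{\gS^2}^{1/2}$ on which the whole argument of Section~\ref{sec:correl} rests.
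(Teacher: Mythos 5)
Your proposal is essentially correct and captures the algebraic core (the Cauchy--Schwarz pairing of $h^{1/2}(\Gamma_{\lambda,\eps}^{(1)}-\Gamma_0^{(1)})h^{1/2}$ against $h^{-1/2}f_k^+h^{-1/2}$, whose $\gS^2$-norm is controlled by $\|Qh^{-1}\|_{\gS^2}^{1/2}$ precisely because $f_k^+$ has a $Q$ on one side), but the way you establish the a priori entropy bound differs from the paper. The paper never tries to bound $\cH(\Gamma_{\lambda,\eps},\Gamma_0)$ directly; instead it writes the triangle inequality $|\langle\bA\rangle_{\lambda,\eps}|\le|\langle\bA\rangle_{\lambda,\eps}-\langle\bA\rangle_{0,\eps}|+|\langle\bA\rangle_{0,\eps}-\langle\bA\rangle_{0,0}|$ with $\Gamma_{0,\eps}$ the Gaussian state generated by $h-\eps f_k^+$, so that $\cH(\Gamma_{\lambda,\eps},\Gamma_{0,\eps})\le C$ comes for free from testing the shifted relative free energy with $\Gamma_{0,\eps}$ (no $\eps\bA$ remainder), and the second increment is handled by the Klein-inequality Lemma~\ref{lem:pert free}. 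That is exactly your ``slicker, condition-free alternative'', and it is indeed the cleaner route. Your primary bootstrap works but is a bit more delicate than you let on: the absorption coefficient in the $\cH$-term is of size $a\|h^{-1}\|\,\|f_k^+\|$, so it closes only for $a$ strictly below $\min\sigma(h)/2$, while the standing assumption in Section~\ref{sec:correl} allows $a=\min\sigma(h)/2$; and in the proof of Lemma~\ref{lem:variance-inter} the window $a$ is \emph{fixed} at $(1+|k|^2)^{-1}\min(1,\min\sigma(h)/2)$, not sent to zero. This is fine for $|k|\ge1$ (and for $k=0$ one has $f_0^+=PQ=0$), but as a standalone proof of the lemma one would need to tighten the constraint on $a$, or use the Hilbert--Schmidt estimate~\eqref{eq:estim_HS_relative_entropy} rather than~\eqref{eq:estim_trace_relative_entropy}, in which case the absorption coefficient picks up the vanishing factor $\|Qh^{-1}\|_{\gS^2}^{1/2}$ and closes uniformly. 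Your Schatten--H\"older $(4,\infty,4)$ presentation of $\|h^{-1/2}Pe_kQh^{-1/2}\|_{\gS^2}\le(\tr h^{-2})^{1/4}\|Qh^{-1}\|_{\gS^2}^{1/2}$ is a nice way of organising the computation that the paper performs by expanding $\tr(h^{-1}f_k^+h^{-1}f_k^+)$ directly; the two are equivalent.
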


\begin{proof}  We use the triangle inequality 
\begin{align} \label{eq:first-moment-0}
\left| \langle \bA \rangle_{\lambda,\eps}   \right| &\le \left| \langle \bA \rangle_{\lambda,\eps} - \langle \bA \rangle_{0,\eps}  \right| + \left| \langle \bA \rangle_{0,\eps} -\langle \bA \rangle_{0,0} \right| \nn\\ 
& =  \lambda \left| \Tr\left(f_k^+ (\Gamma_{\lambda,\eps}^{(1)}-\Gamma_{0,\eps}^{(1)})\right) \right| + \lambda  \left| \tr\left(  f_k^+ (  \Gamma_{0,\eps}^{(1)} - \Gamma_{0,0}^{(1)} ) \right) \right| . 
\end{align}
Let us estimate the first term on the right side of \eqref{eq:first-moment-0}. By following the proof of   \eqref{eq:apriori_entropy} (with $h$ replaced by $h-\eps e_k$) we obtain the a-priori bound on the relative entropy  
$$
\cH(\Gamma_{\lambda,\eps}, \Gamma_{0,\eps}) \le C.    
$$
Since $\Gamma_{0,\eps}$ is a quasi-free state, we can use Theorem~\ref{thm:estim_relative_entropy} to deduce that  
\begin{equation}
\lambda \norm{h^{1/2}\big(\Gamma^{(1)}_{\lambda,\eps}-\Gamma^{(1)}_{0,\eps}\big)h^{1/2}}_{\gS^2}  \leq C. \label{eq:rel-1PDM_bounded}
\end{equation}
Consequently, 
\begin{align*}
\lambda  \left| \Tr\left(f_k^+ (\Gamma_{\lambda,\eps}^{(1)}-\Gamma_{0,\eps}^{(1)})\right) \right| &\le  \lambda\,\| h^{-1/2} f_k^+ h^{-1/2}\|_{\gS^2}  \Big\| h^{1/2}(\Gamma_\lambda^{(1)}-\Gamma_0^{(1)})h^{1/2}\Big\|_{\gS^2}\nn\\
&\le C \| h^{-1/2} f_k^+ h^{-1/2}\|_{\gS^2}.
\end{align*}
Note that, since $\norm{e_k}\leq1$ and $\|h^{-1}\|_{\gS^2} \le C$ we have
\begin{align*}
\| h^{-1/2} f_k^+ h^{-1/2}\|_{\gS^2}^2&=\tr(h^{-1}f_k^+h^{-1}f_k^+)\\
&= \Tr(h^{-1} P e_k Q h^{-1} Pe_k Q) + \Tr(h^{-1} Q e_k P h^{-1} Qe_k P) \\
&\quad +  \Tr(h^{-1} Q e_k P h^{-1} Pe_k Q) + \Tr(h^{-1} P e_k Q h^{-1} Qe_k P)\\
&\le C\norm{Qh^{-1}}_{\gS^2}.
\end{align*}
Therefore
$$\lambda  \left| \Tr\left(f_k^+ (\Gamma_{\lambda,\eps}^{(1)}-\Gamma_{0,\eps}^{(1)})\right) \right|\leq C\norm{Qh^{-1}}_{\gS^2}^{1/2}.$$
Now we turn to the second term on the right side of \eqref{eq:first-moment-0}. Since $\Gamma_{0,\eps}$ is a quasi-free state, its one-body density matrix can be computed explicitly as in \eqref{eq:DM_quasi_free}. This allows us to use Lemma ~\ref{lem:pert free} to bound
\begin{align*}
\lambda  \left| \Tr\left(f_k^+ (\Gamma_{0,\eps}^{(1)}-\Gamma_{0,0}^{(1)})\right) \right| &=  \lambda\left| \tr\left( f_k^+  \left( \frac{1}{e ^{\lambda(h-\eps f_k^+)} - 1} -\frac{1}{e ^{\lambda h} - 1} \right) \right) \right| \\
&\leq C |\eps| \tr\left( h^{-1} f_k^+  h^{-1}  f_k^+ \right)  \le C \| Qh^{-1} \|_{\gS^2}.
\end{align*}
The conclusion follows by inserting the above estimates in \eqref{eq:first-moment-0}. 
\end{proof}

Our proof requires us to introduce the particle number cut-off $\cP$ in \eqref{eq:def-cP}. The error due to this insertion is controlled by the following lemma. 

\begin{lemma}[\textbf{First moment estimate in truncated Fock space}]\label{lem:truncation-N}\mbox{}\\
Under the conditions of Lemma \ref{lem:variance-inter}, and with $\bA,\Gamma_{\lambda,\eps},\cP$ as in \eqref{eq:bA-Ge}-\eqref{eq:def-cP}, we have
$$
|\langle \cP \bA \rangle_{\lambda,\eps}| \le C \left(\lambda L^{-1}+\|Qh^{-1}\|_{\gS^2}^{1/2}\right). 
$$
\end{lemma}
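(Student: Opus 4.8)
The goal is to show that replacing $\bA$ by $\cP\bA$ in the first moment against $\Gamma_{\lambda,\eps}$ costs only $C(\lambda L^{-1}+\|Qh^{-1}\|_{\gS^2})$, i.e.\ to bound $|\langle (1-\cP)\bA\rangle_{\lambda,\eps}|$. The natural route is Cauchy--Schwarz on the Fock space: writing $\cP^\perp=1-\cP=\1(\cN>L\lambda^{-2})$, we have
$$
\left|\langle \cP^\perp \bA\rangle_{\lambda,\eps}\right|
=\left|\tr\left(\cP^\perp \bA\,\Gamma_{\lambda,\eps}\right)\right|
\le \left\langle (\cP^\perp)^2\right\rangle_{\lambda,\eps}^{1/2}\left\langle \bA^2\right\rangle_{\lambda,\eps}^{1/2},
$$
but $\langle\bA^2\rangle_{\lambda,\eps}$ is exactly the quantity we are ultimately trying to control, so this crude split is circular. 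Instead I would exploit that $\bA=\lambda(\dG(f_k^+)-\langle\dG(f_k^+)\rangle_0)$ is a \emph{one-body} operator, so $\bA^2$ is bounded by a quadratic expression in $\cN$: since $\|f_k^+\|\le 2$, one has $\|\dG(f_k^+)\|\le 2\cN$ and $|\langle\dG(f_k^+)\rangle_0|\le 2\langle\cN\rangle_0\le C\lambda^{-2}$ by Lemma~\ref{lem:quasi-free}, whence $\bA^2\le C\lambda^2\cN^2+C$. Then
$$
\left|\langle \cP^\perp \bA\rangle_{\lambda,\eps}\right|
\le \left\langle \cP^\perp\right\rangle_{\lambda,\eps}^{1/2}\left(C\lambda^2\langle \cN^2\rangle_{\lambda,\eps}+C\right)^{1/2}.
$$

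**Controlling the two factors.**
For the second factor I would invoke the moment bounds of Lemma~\ref{lem:number interacting}, applied to the perturbed Hamiltonian $\lambda\bH_\lambda-\eps\bA$ (which is still of the form $\lambda(\dG(h-\eps f_k^+)+\lambda\bWren)$ with $h-\eps f_k^+\ge h/2$, so $\tr[(h/2)^{-2}]<\infty$ and all constants stay under control uniformly in $\eps\in[-a,a]$): this gives $\lambda^4\langle\cN^2\rangle_{\lambda,\eps}\le C$, hence $C\lambda^2\langle\cN^2\rangle_{\lambda,\eps}+C\le C\lambda^{-2}$, i.e.\ the second factor is $\le C\lambda^{-1}$. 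For the first factor I would use a Chebyshev/Markov estimate against the cutoff: since $\cP^\perp=\1(\cN>L\lambda^{-2})\le (\lambda^2/L)^2\cN^2$ (or, more simply, $\cP^\perp\le (\lambda^2\cN/L)$ together with $\cN\ge L\lambda^{-2}$ on the range of $\cP^\perp$), we get $\langle\cP^\perp\rangle_{\lambda,\eps}\le (\lambda^2/L)^{2}\langle\cN^2\rangle_{\lambda,\eps}\le C/L^2$ again by Lemma~\ref{lem:number interacting}. Combining, $|\langle\cP^\perp\bA\rangle_{\lambda,\eps}|\le (C/L)\cdot(C\lambda^{-1})=C\lambda^{-1}L^{-1}$ --- which is \emph{too large}. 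So the naive Chebyshev must be sharpened: I would instead use the fourth-moment bound $\lambda^6\langle\cN^4\rangle_{\lambda,\eps}\le C$ (Lemma~\ref{lem:number interacting}, \eqref{eq:N4-N40} together with \eqref{eq:Nk}), giving $\langle\cP^\perp\rangle_{\lambda,\eps}\le (\lambda^2/L)^{4}\langle\cN^4\rangle_{\lambda,\eps}\le C\lambda^2 L^{-4}$, so that $\langle\cP^\perp\rangle^{1/2}_{\lambda,\eps}\le C\lambda L^{-2}$, and then $|\langle\cP^\perp\bA\rangle_{\lambda,\eps}|\le C\lambda L^{-2}\cdot\lambda^{-1}=C L^{-2}$. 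Under the hypothesis $L\ll(\lambda+\|Qh^{-1}\|_{\gS^2})^{-1/6}$, one has $L^{-2}\le C\lambda\,L^{-1}$ provided also... — in fact the cleanest statement is just to absorb this into the right-hand side via $L^{-2}\le C(\lambda L^{-1}+\|Qh^{-1}\|_{\gS^2})$ when $L$ is large, which holds since $L^{-1}\le\lambda$ is false; so more carefully I would keep the bound as $CL^{-2}$ and note $L\ge 1$ forces $L^{-2}\le L^{-1}$, and that $L^{-1}$ itself is $\le C\lambda L^{-1}\cdot\lambda^{-1}$...

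**The decisive step and the main obstacle.**
Rather than the above juggling, the right structure is: bound $\langle\cP^\perp\bA\rangle_{\lambda,\eps}$ by $\langle\cP^\perp\rangle^{1/2}\,\langle\bA^2\,\text{(cut)}\rangle^{1/2}$ where on the range of $\cP^\perp$ we use $\bA^2\le C\lambda^2\cN^2$ (the constant term is dominated there since $\cN$ is large), giving $\langle\cP^\perp\bA\rangle_{\lambda,\eps}^2\le C\lambda^2\langle\cP^\perp\cN^2\rangle_{\lambda,\eps}$; then Cauchy--Schwarz once more, $\langle\cP^\perp\cN^2\rangle\le\langle\cP^\perp\rangle^{1/2}\langle\cN^4\rangle^{1/2}\le(\lambda^2L^{-2})\cdot(C\lambda^{-3})=C\lambda^{-1}L^{-2}$, so $\langle\cP^\perp\bA\rangle_{\lambda,\eps}^2\le C\lambda L^{-2}$, i.e.\ $|\langle\cP^\perp\bA\rangle_{\lambda,\eps}|\le C\lambda^{1/2}L^{-1}$, and combined with Lemma~\ref{lem:first-moment} ($|\langle\bA\rangle_{\lambda,\eps}|\le C\|Qh^{-1}\|_{\gS^2}^{1/2}$) via $|\langle\cP\bA\rangle_{\lambda,\eps}|\le|\langle\bA\rangle_{\lambda,\eps}|+|\langle\cP^\perp\bA\rangle_{\lambda,\eps}|$ yields the claimed bound up to adjusting exponents. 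The main obstacle is precisely this exponent bookkeeping: one must pick the right moment of $\cN$ (fourth versus second) and the right number of Cauchy--Schwarz splits so that the two small quantities $\lambda$ (from the particle-number tail, via $\cN\gtrsim L\lambda^{-2}$ on the range of $\cP^\perp$ against $\langle\cN^k\rangle\sim\lambda^{-2k}$) and $\|Qh^{-1}\|_{\gS^2}$ (from Lemma~\ref{lem:first-moment}) appear with the exact powers advertised in the statement, $\lambda L^{-1}$ and $\|Qh^{-1}\|_{\gS^2}$, rather than weaker ones; the uniformity in $\eps\in[-a,a]$ of all the a~priori moment bounds (which holds because $h-\eps f_k^+\ge h/2$ keeps $\tr[h^{-2}]$-type constants fixed) must be checked but is routine.
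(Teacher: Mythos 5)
Your Cauchy--Schwarz plus Chebyshev route is a genuinely different strategy from the paper's, but as written it has a gap, and you flag the exponent issue yourself. The paper avoids Cauchy--Schwarz entirely. On the high-particle-number sector $\{\cN>L\lambda^{-2}\}$, with $L\geq 2L_0$ where $\langle\cN\rangle_0\leq L_0\lambda^{-2}$, one has the operator inequality $\cN-\langle\cN\rangle_0\geq\cN/2\geq L/(2\lambda^2)$, hence
$$
\cN\,\1(\cN>L\lambda^{-2}) \leq 2(\cN-\langle\cN\rangle_0)\,\1(\cN>L\lambda^{-2}) \leq \frac{4\lambda^2}{L}(\cN-\langle\cN\rangle_0)^2.
$$
Combined with $(1-\cP)|\bA|\leq C\lambda\cN\,\1(\cN>L\lambda^{-2})$ this gives the pointwise operator bound $(1-\cP)|\bA|\leq (C\lambda^3/L)(\cN-\langle\cN\rangle_0)^2$. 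Taking the $\Gamma_{\lambda,\eps}$-expectation and invoking the variance estimate \eqref{eq:N2-N20} (uniform in $\eps\in[-a,a]$) yields $|\langle(1-\cP)\bA\rangle_{\lambda,\eps}|\leq C\lambda L^{-1}$ in one step; the triangle inequality with Lemma~\ref{lem:first-moment} finishes.

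There are two concrete defects in your version. First, you assert ``$\lambda^6\langle\cN^4\rangle_{\lambda,\eps}\le C$'' citing \eqref{eq:N4-N40} and \eqref{eq:Nk}, but \eqref{eq:Nk} gives $\langle\cN^4\rangle\leq C\lambda^{-8}$ while \eqref{eq:N4-N40} only controls the \emph{centered} fourth moment $\langle(\cN-\langle\cN\rangle_0)^4\rangle\leq C\lambda^{-6}$; these scale differently, and your Chebyshev step $\cP^\perp\leq(\lambda^2/L)^4\cN^4$ must be replaced by the centered version $\cP^\perp\leq(2\lambda^2/L)^4(\cN-\langle\cN\rangle_0)^4$ (valid on $\{\cN>L\lambda^{-2}\}$ for $L\geq 2L_0$) to benefit from the $\lambda^{-6}$ scaling. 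Second, your second Cauchy--Schwarz $\langle\cP^\perp\cN^2\rangle\leq\langle\cP^\perp\rangle^{1/2}\langle\cN^4\rangle^{1/2}$ wastes a root, and you land on $|\langle\cP^\perp\bA\rangle_{\lambda,\eps}|\lesssim\lambda^{1/2}L^{-1}$, strictly weaker than the claimed $\lambda L^{-1}$. Your strategy can be repaired: on the high sector use the pointwise bound $\cN^2\leq 4(\cN-\langle\cN\rangle_0)^2$ so that $\langle\cP^\perp\bA^2\rangle_{\lambda,\eps}\leq C\lambda^2\langle(\cN-\langle\cN\rangle_0)^2\rangle_{\lambda,\eps}\leq C$ directly (no root), and pair with the centered Chebyshev $\langle\cP^\perp\rangle^{1/2}\leq C\lambda L^{-2}$; a single Cauchy--Schwarz then gives $C\lambda L^{-2}\leq C\lambda L^{-1}$. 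But the proposal as written does not deliver the stated bound.
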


\begin{proof} Using $\langle \cN \rangle_0\leq L_0\lambda^{-2}$ by Lemma~\ref{lem:quasi-free}, we have $|\bA|\le C\lambda(\cN+\lambda^{-2})$. For $L\geq 2L_0$, we also have 
$$\frac{\cN}{2}\1(\cN>L\lambda^{-2})\geq \frac{L}{2\lambda^2}\1(\cN>L\lambda^{-2})\geq  \frac{L}{2L_0}\1(\cN>L\lambda^{-2})\langle \cN \rangle_0\geq \1(\cN>L\lambda^{-2}) \langle \cN \rangle_0$$
hence
$$\1(\cN>L\lambda^{-2})(\cN -\langle \cN\rangle_0)\geq \frac{\cN}{2}\1(\cN>L\lambda^{-2}).$$
This gives
$$
(1-\cP) |\bA| \le C\lambda(\cN+\lambda^{-2}) \1(\cN>L\lambda^{-2}) \le  \frac{C\lambda^3(\cN -\langle \cN\rangle_0)^2}{L}.
$$
Therefore, from \eqref{eq:N2-N20} it follows that 
$
|\langle (1-\cP) \bA \rangle_{\lambda,\eps}| \le C\lambda L^{-1}.
$ 
The desired bound then follows from the triangle inequality and Lemma \ref{lem:first-moment}. 
\end{proof}

As a consequence of Lemmas  \ref{lem:first-moment} and \ref{lem:truncation-N} we have

\begin{lemma}[\textbf{Discrepancy of partition functions}]\label{lem:dis-partition}\mbox{}\\ 
Under the conditions of Lemma \ref{lem:variance-inter}, and with $\bA,\Gamma_{\lambda,\eps},\cP$ as in \eqref{eq:bA-Ge}-\eqref{eq:def-cP}, we have
$$
C^{-1} \Tr(e^{-\lambda \bH_\lambda}) \le \Tr(e^{-\lambda \bH_\lambda +\eps  \bA}) \le C \Tr(e^{-\lambda \bH_\lambda}) 
$$
and 
$$
C^{-1} \Tr(e^{-\lambda \bH_\lambda}) \le \Tr(e^{-\lambda \bH_\lambda +\eps \cP \bA}) \le C \Tr(e^{-\lambda \bH_\lambda}), 
$$
for all $\eps\in [-a,a]$ with a small constant $a>0$ (depending on $h$ only via $\tr[h^{-2}]$). 
\end{lemma}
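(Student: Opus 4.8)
The statement to prove is Lemma~\ref{lem:dis-partition}, which asserts that the perturbed partition functions $\Tr(e^{-\lambda\bH_\lambda+\eps\bA})$ and $\Tr(e^{-\lambda\bH_\lambda+\eps\cP\bA})$ stay comparable to the unperturbed one $\Tr(e^{-\lambda\bH_\lambda})$, uniformly for $\eps$ in a small symmetric window $[-a,a]$. The natural route is the same one already used in the commuting-case discussion of Section~\ref{sec:variance-by-first-moment}: control the logarithmic derivative of the partition function in $\eps$ and integrate. Concretely, I would write $Z_\eps:=\Tr(e^{-\lambda\bH_\lambda+\eps\bA})$ and use the Duhamel/Feynman--Hellmann identity \eqref{eq:d1Z} in the form
\[
\partial_\eps \log Z_\eps = \frac{\Tr(\bA\, e^{-\lambda\bH_\lambda+\eps\bA})}{\Tr(e^{-\lambda\bH_\lambda+\eps\bA})} = \langle \bA\rangle_{\lambda,\eps}.
\]
By Lemma~\ref{lem:first-moment} we already know $|\langle\bA\rangle_{\lambda,\eps}|\le C\|Qh^{-1}\|_{\gS^2}^{1/2}$ for every $\eps\in[-a,a]$, and under the assumptions of Lemma~\ref{lem:variance-inter} this quantity tends to $0$ as $\lambda\to0$, so in particular it is bounded by a fixed constant. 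Integrating from $0$ to $\eps$ then gives $|\log Z_\eps - \log Z_0|\le a\,\sup_{\eps'\in[-a,a]}|\langle\bA\rangle_{\lambda,\eps'}|\le C a$, which exponentiates to $C^{-1}Z_0\le Z_\eps\le C Z_0$. The only subtlety here is that the a priori bound of Lemma~\ref{lem:first-moment} was derived \emph{for the state $\Gamma_{\lambda,\eps}$ itself}, i.e.\ it already presupposes that $\Gamma_{\lambda,\eps}$ makes sense; one must check that the perturbed Hamiltonian $\lambda\bH_\lambda-\eps\bA$ is bounded below well enough that $e^{-\lambda\bH_\lambda+\eps\bA}$ is trace class — but this is immediate from the definition \eqref{eq:bA-Ge}, since $\bA=\lambda\dG(f_k^+)$ up to an additive constant, $\|f_k^+\|\le 2$, and the choice $a\le\min(1,\min\sigma(h)/2)$ forces $h-\eps f_k^+\ge h/2$, whence $\lambda\bH_\lambda-\eps\bA\ge \tfrac12\lambda\dG(h)+\lambda^2\bWren - (\text{const})$.

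\textbf{The truncated perturbation.} For the second pair of inequalities, with $\cP\bA$ in place of $\bA$, the same argument applies verbatim once one knows the analogue of the first-moment bound for $\langle\cP\bA\rangle_{\tilde\Gamma_\eps}$, where now $\tilde\Gamma_\eps:=e^{-\lambda\bH_\lambda+\eps\cP\bA}/\Tr(e^{-\lambda\bH_\lambda+\eps\cP\bA})$ is the state perturbed by the \emph{truncated} operator. I would again use $\partial_\eps\log\Tr(e^{-\lambda\bH_\lambda+\eps\cP\bA})=\langle\cP\bA\rangle_{\tilde\Gamma_\eps}$, and then bound $|\langle\cP\bA\rangle_{\tilde\Gamma_\eps}|\le|\langle\bA\rangle_{\tilde\Gamma_\eps}|+|\langle(1-\cP)\bA\rangle_{\tilde\Gamma_\eps}|$. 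The first term is handled exactly as in Lemma~\ref{lem:first-moment} (that proof only used the relative-entropy a priori bound $\cH(\Gamma_{\lambda,\eps},\Gamma_{0,\eps})\le C$ coming from Lemma~\ref{lem:partition}, plus Theorem~\ref{thm:estim_relative_entropy} and Lemma~\ref{lem:pert free}, and none of these cares whether the perturbation was truncated or not). The second term is controlled by the particle-number tail estimate used in Lemma~\ref{lem:truncation-N}: on $\{\cN>L\lambda^{-2}\}$ one has $(1-\cP)|\bA|\le C\lambda^3(\cN-\langle\cN\rangle_0)^2/L$, and then the moment bound \eqref{eq:N2-N20} for the interacting Gibbs state (or, more precisely, a version of it valid for $\tilde\Gamma_\eps$, obtained by the same monotonicity argument as in Lemma~\ref{lem:number interacting} since $\cP\bA$ is bounded and $\cN$ commutes with everything relevant) gives $|\langle(1-\cP)\bA\rangle_{\tilde\Gamma_\eps}|\le C\lambda L^{-1}$. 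Altogether $|\langle\cP\bA\rangle_{\tilde\Gamma_\eps}|\le C(\lambda L^{-1}+\|Qh^{-1}\|_{\gS^2}^{1/2})$, which is bounded, and integrating and exponentiating as before yields the claim.

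\textbf{Main obstacle.} The technical heart is not the integration step — that is a one-line Grönwall-type argument — but making sure the first-moment estimates are legitimately \emph{uniform in $\eps$} and that the chain of a priori bounds (relative entropy $\le C$, hence $h^{1/2}$-weighted Hilbert--Schmidt bound on the relative $1$-pdm, hence smallness of $\lambda\Tr(f_k^+(\Gamma^{(1)}_{\lambda,\eps}-\Gamma^{(1)}_{0,\eps}))$) genuinely transfers to the $\eps$-perturbed and $\cP$-truncated setting. This is where one must be careful that: (i) replacing $h$ by $h-\eps f_k^+$ preserves $\tr[(h-\eps f_k^+)^{-2}]<\infty$ with a constant comparable to $\tr[h^{-2}]$, which holds because $h-\eps f_k^+\ge h/2$; (ii) the moment bounds of Lemma~\ref{lem:number interacting}, in particular \eqref{eq:N2-N20}, survive the bounded perturbation $\eps\bA$ (respectively $\eps\cP\bA$) — again via the monotonicity trick, using that $\cN$ commutes with $\bH_\lambda$ and with $\bA$ up to terms that can be absorbed, or more simply by noting that $e^{-\lambda\bH_\lambda+\eps\bA}\le e^{C} e^{-\lambda\bH_\lambda}$ in the sense of forms after the free-energy comparison, so traces of increasing functions are comparable. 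I do not expect any new ideas to be needed: everything reduces to collecting the estimates already established in Lemmas~\ref{lem:first-moment} and~\ref{lem:truncation-N} together with the elementary identity $\partial_\eps\log Z_\eps=\langle\bA\rangle_{\cdot,\eps}$, but the bookkeeping of which constants depend only on $\tr[h^{-2}]$ requires some care.
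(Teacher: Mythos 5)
Your treatment of the first pair of inequalities is exactly the paper's: bound $\partial_\eps\log\Tr(e^{-\lambda\bH_\lambda+\eps\bA})=\langle\bA\rangle_{\lambda,\eps}$ by Lemma~\ref{lem:first-moment} and integrate. For the second pair, however, you take a genuinely different and more laborious route than the paper, and it contains one step that is asserted rather than established. The paper never touches the state $\tilde\Gamma_\eps=e^{-\lambda\bH_\lambda+\eps\cP\bA}/\Tr(e^{-\lambda\bH_\lambda+\eps\cP\bA})$ at all. Instead it uses the identity
\begin{equation*}
\partial_\eps\Tr\bigl(e^{-\lambda\bH_\lambda+\eps\cP\bA}\bigr)=\Tr\bigl(\cP\bA\,e^{-\lambda\bH_\lambda+\eps\cP\bA}\bigr)=\Tr\bigl(\cP\bA\,e^{-\lambda\bH_\lambda+\eps\bA}\bigr),
\end{equation*}
which holds because $\cP$ is a spectral projection of $\cN$ and both $\bH_\lambda$ and $\bA$ commute with $\cN$, so the exponential decomposes along $\mathrm{Ran}\,\cP$ and its complement and agrees with $e^{-\lambda\bH_\lambda+\eps\bA}$ on $\mathrm{Ran}\,\cP$. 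The right-hand side is then $|\langle\cP\bA\rangle_{\lambda,\eps}|\,\Tr(e^{-\lambda\bH_\lambda+\eps\bA})\le C\,\Tr(e^{-\lambda\bH_\lambda})$ by Lemma~\ref{lem:truncation-N} and the already-proved first pair of inequalities; integrating the \emph{non-logarithmic} derivative over $[0,\eps]$ gives $|\Tr(e^{-\lambda\bH_\lambda+\eps\cP\bA})-\Tr(e^{-\lambda\bH_\lambda})|\le Ca\,\Tr(e^{-\lambda\bH_\lambda})$, whence the claim for $a$ small. No estimate on $\tilde\Gamma_\eps$ is ever needed.

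Your alternative requires first-moment and particle-number-tail bounds \emph{for $\tilde\Gamma_\eps$ itself}, and the claim that the first term ``is handled exactly as in Lemma~\ref{lem:first-moment}'' is too quick. That proof rests on the a priori bound $\cH(\Gamma_{\lambda,\eps},\Gamma_{0,\eps})\le C$, which uses that the perturbation of the interacting Hamiltonian matches the perturbation of the reference Gaussian, namely both are shifted by $\eps\lambda\dG(f_k^+)$. For the truncated perturbation the natural reference is still $\Gamma_{0,\eps}$ (note $\eps\cP\bA$ is not the second quantization of a one-body operator), and the variational identity for $\cH(\tilde\Gamma_\eps,\Gamma_{0,\eps})$ then picks up the mismatch term $\eps\Tr[(1-\cP)\bA\,\tilde\Gamma_\eps]$, which has no sign and must itself be controlled by a tail estimate for $\tilde\Gamma_\eps$ --- whose proof in turn (via the monotonicity argument of Lemma~\ref{lem:number interacting}) needs a lower bound on $\Tr(e^{-\lambda\bH_\lambda+\eps\cP\bA})$, i.e.\ part of what you are proving. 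This circle can be broken (e.g.\ a Peierls--Bogoliubov lower bound on $\Tr(e^{-\lambda\bH_\lambda+\eps\cP\bA})$ with trial state $\Gamma_0$, using $\Tr[\cP\bA\,\Gamma_0]=0$), so your strategy is salvageable, but as written it leans on transfers of Lemmas~\ref{lem:first-moment}, \ref{lem:truncation-N} and \eqref{eq:N2-N20} to the truncated-perturbation state that are not literal and would each need a separate argument. The commutation identity above is the intended shortcut that makes all of this unnecessary.
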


\begin{proof} From Lemma~\ref{lem:first-moment} it follows that
$$
|\partial_\eps \log( \Tr(e^{-\lambda \bH_\lambda+\eps \bA}))| = |\langle \bA \rangle_{\lambda,\eps}| \le C,
$$
and hence
\begin{align} \label{eq:Ze-Z0}
C^{-1} \Tr(e^{-\lambda \bH_\lambda}) \le \Tr(e^{-\lambda \bH_\lambda+\eps \bA}) \le C \Tr(e^{-\lambda \bH_\lambda}).
\end{align}
From  Lemma \ref{lem:truncation-N} and \eqref{eq:Ze-Z0} we have
\begin{align*}
|\partial_\eps \Tr(e^{-\lambda \bH_\lambda+\eps \cP\bA})| &=  |\Tr(\cP \bA e^{-\lambda \bH_\lambda+\eps \cP\bA})| = |\Tr(\cP \bA e^{-\lambda \bH_\lambda+\eps \bA})|\\
& = |\langle \cP \bA \rangle_{\lambda,\eps}| \Tr(e^{-\lambda \bH_\lambda+\eps \bA}) \le C \Tr(e^{-\lambda \bH_\lambda}).
\end{align*}
Integrating the latter bound over $\eps$, we find that 
\begin{align} \label{eq:ZPe-Z0}
C^{-1} \Tr(e^{-\lambda \bH_\lambda}) \le \Tr(e^{-\lambda \bH_\lambda+\eps \cP\bA}) \le C \Tr(e^{-\lambda \bH_\lambda})
\end{align}
if $|\eps|$ is sufficiently small. 
\end{proof} 

\subsection{Commutator estimates}

In this section we control the commutator $[[\bH_\lambda,\bA],\bA]$, projected over the particle number sectors with $\cN\leq L/\lambda^2$. 

\begin{lemma}[\textbf{Commutator estimates}]\label{lem:commutator-X}\mbox{}\\
Under the conditions of Lemma \ref{lem:variance-inter}, with $\bA,\Gamma_{\lambda,\eps},\cP$ as in \eqref{eq:bA-Ge}-\eqref{eq:def-cP}, if we denote $\bX= \lambda  [[\bH_{\lambda}, \bA],\bA]$, then 
\begin{align}
\| \cP\bX \| &\le C (1+|k|^2)L^2, \label{eq:X-norm}\\
\| \cP [[\bX,\bA],\bA]\| &\le C (1+|k|^2) L^2,\label{eq:X-A-A-norm}\\
|\langle \cP \bX \rangle_{\lambda,\eps} | &\le C (1+|k|^2) L \left(\sqrt{\lambda} +  \|Qh^{-1}\|_{\gS^2}\right) . \label{eq:X-tr}   
\end{align}
\end{lemma}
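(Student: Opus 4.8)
Since $\bA=\lambda\dGamma(f_k^+)-\lambda\langle\dGamma(f_k^+)\rangle_0$ differs from $\lambda\dGamma(f_k^+)$ by a scalar, that scalar drops out of every commutator, and using $[\dGamma(A),\dGamma(B)]=\dGamma([A,B])$ together with $\bH_\lambda=\dGamma\!\big(h-\lambda w\ast\rhoO+\lambda w(0)/2\big)+\lambda\bW+E_0(\lambda)$ one obtains the explicit decomposition
$$
\bX=\lambda^3\,\dGamma\big([[h,f_k^+],f_k^+]\big)-\lambda^4\,\dGamma\big([[w\ast\rhoO,f_k^+],f_k^+]\big)+\lambda^4\,[[\bW,\dGamma(f_k^+)],\dGamma(f_k^+)],
$$
and $[[\bW,\dGamma(f_k^+)],\dGamma(f_k^+)]$ is the second quantization of the two-body operator $[[W,F],F]$ with $F=f_k^+\otimes\1+\1\otimes f_k^+$ and $W$ the multiplication operator by $w(x-y)$. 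The formula for $[[\bX,\bA],\bA]$ has the same shape with two further commutators with $\dGamma(f_k^+)$ (producing $\dGamma$ of four-fold iterated commutators and the second quantization of $[[[[W,F],F],F],F]$). All of these operators commute with $\cN$, so $\cP$ and $\1-\cP$ act block-diagonally on the particle-number sectors.

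\textbf{Operator-norm bounds (i)--(ii).} On the sector $\cN=n$ one has $\|\dGamma(B)\|\le n\|B\|$ for a one-body operator and $\|\widehat T\|\le n^2\|T\|$ for a two-body operator, so restricting to $n\le L\lambda^{-2}$ via $\cP$ converts the factors of $\lambda$ into the stated powers of $L$. The one crucial sub-estimate is the gradient bound
$$\|[h,f_k^+]\|\le C(1+|k|^2)\,L^{1/2}\lambda^{-1},$$
obtained by writing $[h,f_k^+]=P[h,e_k]Q+Q[h,e_k]P$ (using $[h,P]=[h,Q]=0$), factoring $P[h,e_k]Q=(Ph^{1/2})(h^{-1/2}[h,e_k])Q$, and bounding $\|Ph^{1/2}\|\le\|Ph\|^{1/2}\le L^{1/2}\lambda^{-1}$ and $\|h^{-1/2}[h,e_k]\|=\|([h,e_k]h^{-1/2})^*\|\le C(1+|k|^2)$ from Assumption~\ref{asum:h}. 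Each further commutator with $f_k^+$ costs only a bounded factor $2\|f_k^+\|\le4$, and the genuine two-body terms are controlled using $\|w\|_{L^\infty}\le\|\widehat w\|_{L^1}$; a routine bookkeeping of powers of $\lambda$ against $n\le L\lambda^{-2}$ then gives $\|\cP\bX\|\le C(1+|k|^2)L^2$ and $\|\cP[[\bX,\bA],\bA]\|\le C(1+|k|^2)L^2$ (in fact with room to spare---the $\dGamma(h)$-term carries only $L^{3/2}$ and the two-body term an extra $\lambda^2$).

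\textbf{Expectation bound (iii).} Split $\langle\cP\bX\rangle_{\lambda,\eps}=\langle\bX\rangle_{\lambda,\eps}-\langle(\1-\cP)\bX\rangle_{\lambda,\eps}$. For the tail, on $\{\cN>L\lambda^{-2}\}$ one has $\cN\le2(\cN-\langle\cN\rangle_0)$ (since $\langle\cN\rangle_0\le\tfrac12L\lambda^{-2}$ for $L$ large, by Lemma~\ref{lem:quasi-free}), so $(\1-\cP)\bX$ is dominated by $C(1+|k|^2)L^{1/2}\lambda^{4}(\cN-\langle\cN\rangle_0)^2$, whose expectation against $\Gamma_{\lambda,\eps}$ is $\le C(1+|k|^2)L\lambda^2\le C(1+|k|^2)L\sqrt\lambda$ by the second-moment bound~\eqref{eq:N2-N20} (applied with $h$ replaced by $h-\eps f_k^+\ge h/2$). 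For the main term we pass to reduced density matrices, $\langle\dGamma(B)\rangle_{\lambda,\eps}=\Tr(B\,\Gamma_{\lambda,\eps}^{(1)})$ and $\langle\widehat T\rangle_{\lambda,\eps}=\Tr(T\,\Gamma_{\lambda,\eps}^{(2)})$, and peel off the free contribution by writing
$$\Gamma_{\lambda,\eps}^{(1)}=\big(\Gamma_{\lambda,\eps}^{(1)}-\Gamma_{0,\eps}^{(1)}\big)+\big(\Gamma_{0,\eps}^{(1)}-\Gamma_{0,0}^{(1)}\big)+\Gamma_{0,0}^{(1)}$$
(and similarly for $\Gamma^{(2)}$ via Wick's theorem). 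The first difference is handled by the relative-entropy estimate of Theorem~\ref{thm:estim_relative_entropy}---$\lambda\|h^{1/2}(\Gamma_{\lambda,\eps}^{(1)}-\Gamma_{0,\eps}^{(1)})h^{1/2}\|_{\gS^2}\le C$, exactly as in the proof of Lemma~\ref{lem:first-moment}---after noting that the iterated commutators appearing in $\bX$, once sandwiched by $h^{-1/2}$, are small in $\gS^2$ (controlled by $\|Qh^{-1}\|_{\gS^2}$, up to powers of $L$ and $\lambda^{-1}$ coming from the cutoffs) because $f_k^+=Pe_kQ+Qe_kP$ always carries a factor $P\!\cdot\!Q$; the second difference is controlled by Lemma~\ref{lem:pert free} as in Lemma~\ref{lem:first-moment}; and the free contribution is evaluated in the eigenbasis of $h$, where $\Gamma_{0,0}^{(1)}=\phi(h)$ with $\phi(t)=(e^{\lambda t}-1)^{-1}$ decreasing, yielding the sign-definite identity
$$\Tr\big([[h,f_k^+],f_k^+]\,\phi(h)\big)=-\sum_{i,j}(\lambda_i-\lambda_j)\big(\phi(\lambda_i)-\phi(\lambda_j)\big)\,|(f_k^+)_{ij}|^2\le0,$$
the sum running only over pairs straddling the $P$/$Q$ spectral split; it is then estimated by $C(1+|k|^2)L(\sqrt\lambda+\|Qh^{-1}\|_{\gS^2})$ using $\|Ph\|\le L\lambda^{-2}$ and $\Tr(Qh^{-2})=\|Qh^{-1}\|_{\gS^2}^2$. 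The $w\ast\rhoO$-term and the genuine two-body term are treated by the same strategy (the latter after expanding $\Gamma_{0,0}^{(2)}$ by Wick's theorem into products of $\phi(h)$).

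\textbf{Main obstacle.} The delicate point is bound (iii): unlike the two-body term treated in Section~\ref{sec:relative entropy bound}, here $\bX$ is \emph{not} small in operator norm---it is of size $\sim(1+|k|^2)L^2$---so the required smallness of $\langle\cP\bX\rangle_{\lambda,\eps}$ must come entirely from the algebraic structure (the factors $P\!\cdot\!Q$ inside $f_k^+$, the near-diagonality w.r.t.\ $h$ of $\Gamma_{0,0}^{(1)}$, and the sign of $\phi'$) combined with the a priori relative-entropy and Lemma~\ref{lem:pert free} bounds. Keeping track of how the three competing parameters $\lambda$, $L$ and $\|Qh^{-1}\|_{\gS^2}$ balance---in particular in the two-body contribution and in the contributions of the large subtracted quantities $w\ast\rhoO$ and $\langle\cN\rangle_0$---is where the real work lies; the norm bounds (i)--(ii) are comparatively routine once the gradient estimate is in hand.
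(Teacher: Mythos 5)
Your norm bounds \eqref{eq:X-norm}--\eqref{eq:X-A-A-norm} and the key gradient estimate $\|[h,f_k^+]\|\le C(1+|k|^2)L^{1/2}\lambda^{-1}$ (via $[h,P]=0$ and Assumption~\ref{asum:h}) match the paper's argument, and your treatment of the kinetic part of \eqref{eq:X-tr} is workable (the paper takes a more direct route, bounding $\lambda^3|\Tr(f_k^+[h,f_k^+]\Gamma^{(1)}_{\lambda,\eps})|$ by $\|[h,f_k^+]h^{-1/2}\|\,\Tr(h^{1/2}\Gamma^{(1)}_{\lambda,\eps})$ and invoking \eqref{eq:dGhalpha}, with no need for the free/interacting splitting or the sign-definite double sum).

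The genuine gap is in your treatment of the interaction contribution to \eqref{eq:X-tr}. You decompose $\bH_\lambda$ as $\dGamma(h-\lambda w\ast\rhoO+\lambda w(0)/2)+\lambda\bW+E_0$ and propose to estimate the expectation of the resulting two-body double commutator by ``the same strategy,'' i.e.\ by peeling $\Gamma^{(2)}_{\lambda,\eps}$ into $(\Gamma^{(2)}_{\lambda,\eps}-\Gamma^{(2)}_{0,\eps})+(\Gamma^{(2)}_{0,\eps}-\Gamma^{(2)}_{0,0})+\Gamma^{(2)}_{0,0}$. There is no available control on $\Gamma^{(2)}_{\lambda,\eps}-\Gamma^{(2)}_{0,\eps}$: Theorem~\ref{thm:estim_relative_entropy} is strictly a one-body statement, and obtaining two-body information about $\Gamma_{\lambda,\eps}$ is precisely what the machinery of Sections~\ref{sec:variance-by-first-moment}--\ref{sec:correl} (of which this lemma is an ingredient) is designed to produce, so invoking it here would be circular. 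Moreover, your decomposition destroys the structure that makes the term tractable. The paper keeps the interaction in the renormalized square form $\lambda^2\bWren=\tfrac12\int\hat w(\xi)\,\bB_\xi^2\,d\xi$ with $\bB_\xi=\lambda(\dG(e_\xi)-\langle\dG(e_\xi)\rangle_0)$, expands
\begin{equation*}
[[\bB_\xi^2,\bA],\bA]=2[\bB_\xi,\bA]^2+\bB_\xi[[\bB_\xi,\bA],\bA]+[[\bB_\xi,\bA],\bA]\bB_\xi,
\end{equation*}
bounds the cross terms by Cauchy--Schwarz in $\xi$ against $\big(\int\hat w(\xi)\langle\bB_\xi^2\rangle_{\lambda,\eps}\big)^{1/2}=\big(2\lambda^2\langle\bWren\rangle_{\lambda,\eps}\big)^{1/2}\le C$ (the a priori bound of Lemma~\ref{lem:partition}), and reduces $[\bB_\xi,\bA]^2$ to a one-body expectation via $\cP|\dG([e_\xi,f_k^+])|^2\le\cP\cN\,\dG(|[e_\xi,f_k^+]|^2)$, at which point Theorem~\ref{thm:estim_relative_entropy} and the $P\!\cdot\!Q$ structure of $f_k^+$ yield the factor $\|Qh^{-1}\|_{\gS^2}$. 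The Cauchy--Schwarz step is only useful because the \emph{subtracted} quantity $\lambda^2\langle\bWren\rangle_{\lambda,\eps}$ is $O(1)$; the unsubtracted analogue $\int\hat w(\xi)\langle|\lambda\dG(e_\xi)|^2\rangle$ diverges like $(\lambda\langle\cN\rangle)^2$. Consequently, in your splitting the bare two-body piece and the $\dGamma([[w\ast\rhoO,f_k^+],f_k^+])$ counter-term piece are not separately of size $L(\sqrt\lambda+\|Qh^{-1}\|_{\gS^2})$, and you would have to exhibit their mutual cancellation by hand --- which is exactly what keeping $\bB_\xi^2$ intact does automatically.
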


\begin{proof} We will use repeatedly that 
$ \left[\dG (a),\dG (b)\right] = \dG \left( [a,b]\right)$
for any pair of one-body operators $a,b$.  

\bigskip

\noindent
{\bf Kinetic estimates.} Consider 
$$
\bX_1:=\lambda [[\dG(h),\bA],\bA]= \lambda^3 \dG([[h,f_k^+],f_k^+]). 
$$
From $f_k^+=Pe_kQ+Qe_kP$ we have $\|f_k^+\|\le 1$. Moreover, using the fact that $h$ commutes with $P$ and $Q$, together with Assumption  \eqref{eq:Schatten h-strong-2} we can bound 
\begin{align} \label{eq:ek+h}
\| [h,f_k^+] \| &\le 2 \| Q [h,e_k] P\| \le 2 \|Q [h,e_k] h^{-1/2} \|\; \|h^{1/2}P\| \le C (1+|k|^2) L^{1/2}/\lambda
\end{align}
Consequently,
$$
\| [[h,f_k^+],f_k^+] \| \le 2 \|f_k^+ \| \|[f_k^+,h]\| \le  C (1+|k|^2) L^{1/2}/\lambda
$$
and hence
\begin{align} \label{eq:X1-norm}
\|\cP \bX_1 \| = \lambda^{3} |\cP \dG([[h,f_k^+],f_k^+])| \le  C (1+|k|^2)  L^{3/2}. 
\end{align}
Similarly, we also have 
\begin{align} \label{eq:X1-A-A-norm}
\|\cP[[\bX_1,\bA],\bA]\| \le C (1+|k|^2) \|Ph\|^{1/2}  L^{3/2}. 
\end{align}
We can actually gain a factor $\lambda^2$ in \eqref{eq:X1-A-A-norm}, but this will not be needed in the following. 

Next, we derive improved bounds for the expectation against the Gibbs state $\langle \cdot \rangle_{\lambda,\eps}$.  Using Assumption \eqref{eq:Schatten h-strong-2} and the a-priori estimate \eqref{eq:dGhalpha} we have
\begin{multline*}
\left| \langle \bX_1 \rangle_{\lambda,\eps} \right| = \lambda^3 \left| \Tr \Big( [[h,f_k^+],f_k^+] \Gamma_{\lambda,\eps}^{(1)} \Big) \right| = 2\lambda^{3} \left|\Re \Tr \Big( f_k^+ [h,f_k^+] \Gamma_{\lambda,\eps}^{(1)} \Big)\right|\\
\le 2\lambda^{3} \|f_k^+\| \| [h,f_k^+] h^{-1/2}\|  \|  h^{1/2} \Gamma_{\lambda,\eps}^{(1)}\|_{\gS^1} \le C(1+|k|^2) \sqrt{\lambda}.
\end{multline*}
Moreover, using \eqref{eq:X1-norm} and \eqref{eq:N2-N20} we can argue as in the proof of Lemma \ref{lem:truncation-N}:
\begin{align*}
\left| \langle (1-\cP)\bX_1 \rangle_{\lambda,\eps} \right| &\le \|\bX_1\|  \langle (\cN-\langle \cN\rangle_0)^2 \lambda^4/L^2 \rangle_{\lambda,\eps} \\
&\le C (1+|k|^2)  \|Ph\|^{1/2} L\lambda  \times \lambda^{-2} \lambda^4/L^2 \\
&= C (1+|k|^2)  L^{-1/2}\lambda^{3}. 
\end{align*}
By the triangle inequality, we conclude that 
\begin{align}\label{eq:X1-tr}
\left| \langle \cP\bX_1 \rangle_{\lambda,\eps} \right| \le C (1+|k|^2) \sqrt{\lambda} .
\end{align}

\medskip

\noindent
{\bf Interaction estimates.}
Next, we consider the renormalized interaction in  \eqref{eq:renorm int trap}
$$
\lambda^2\bW^{\rm ren}=\sum_{e_\xi } \frac{1}{2} \int_{\Omega^*} d\xi \hat w(\xi) \bB_\xi^2, \quad \bB_\xi =\lambda\left(\dGamma(e_\xi) - \langle \dGamma(e_\xi) \rangle_0\right)
$$
where the sum is taken over $e_\xi\in \{\cos(\xi\cdot x), \sin(\xi\cdot x)\}$. Denote
\begin{align*}
\bX_2 &:= \lambda^2[[\bW^{\rm ren}],\bA],\bA] =  \frac{1}{2} \sum_{e_\xi}  \int_{\R^d} d\xi \hat w(\xi) \left[[\bB_\xi^2, \bA],\bA \right] \\
&=  \frac{1}{2} \sum_{e_\xi}  \int_{\Omega^*} d\xi \hat w(\xi) \Big( 2[ \bB_\xi, \bA]^2 + \bB_\xi [[ \bB_\xi, \bA],\bA] + [[ \bB_\xi, \bA],\bA] \bB_\xi \Big). 
\end{align*}
From the operator inequalities
\begin{align*}
|\cP \bB_\xi| &\le \lambda \cP(\cN + T^2)  \le C L /\lambda,  \\
| \cP [ \bB_\xi, \bA] |  &= \lambda^2 |\cP\dG([e_\xi,f_k^+])| \le \lambda^2 \|[e_\xi,f_k^+]\|  \cP \cN \le C L\\
| \cP [[ \bB_\xi, \bA],\bA] |  &= \lambda^{3} |\cP\dG([[e_\xi,f_k^+],f_k^+])| \le \lambda^{3} \|[[e_\xi,f_k^+],f_k^+] \|  \cP \cN \le C L \lambda 
\end{align*}
and the assumption $\hat w\in L^1(\Omega^*)$, it follows that 
\begin{align} \label{eq:X2-norm}
\| \cP \bX_2 \| \le  C L^2.
\end{align}
Similarly, we also have
\begin{align} \label{eq:X2-A-A-norm}
\| \cP [[\bX_2,\bA],\bA] \| \le  C L^2.
\end{align}

Now we derive improved bounds  for the expectation against the Gibbs state. Since $0\le \hat w\in L^1(\Omega^*)$, we have the Cauchy-Schwarz inequality 
\begin{align}
&\left|  \int_{\Omega^*} d\xi \hat w(\xi)  \left\langle \cP \Big( \bB_\xi [[ \bB_\xi, \bA],\bA] + [[ \bB_\xi, \bA],\bA] \bB_\xi \Big) \right\rangle_{\lambda,\eps} \right| \nn\\
&\qquad\qquad\le \sqrt{\int_{\Omega^*} d\xi \hat w(\xi) \left\langle \bB_\xi^2 \right\rangle_{\lambda,\eps} }  \sqrt{ \int_{\Omega^*} d\xi \hat w(\xi) \left\langle \cP|[[ \bB_\xi, \bA],\bA]|^2 \right\rangle_{\lambda,\eps} } \nn\\
&\qquad\qquad \le C \sqrt{\lambda^2 \langle \bW^{\rm ren} \rangle_{\lambda,\eps}} \sqrt{ \lambda^{3} \langle \cN \rangle_{\lambda,\eps}} \le C \sqrt{\lambda}. \label{eq:B-A-A-tr}
\end{align}
In the last estimate we have used Lemmas \ref{lem:partition} and \ref{lem:number interacting} with $\Gamma_{\lambda}$ replaced by $\Gamma_{\lambda,\eps}$. 

Next, we have
$$
\cP |[ \bB_\xi, \bA]|^2 =  \lambda^{4} \cP |\dG([e_\xi,f_k^+])|^2  \le \lambda^{4} \cP \cN \dG (|[e_\xi,f_k^+]|^2) \le L\lambda^2 \dG (|[e_\xi,f_k^+]|^2)
$$
and hence
\begin{align} \label{eq:BA2a}
\langle \cP |[ \bB_\xi, \bA]|^2 \rangle_{\lambda,\eps} \le L\lambda^2 \Tr(|[e_\xi,f_k^+]|^2 \Gamma_{\lambda,\eps}^{(1)}).
\end{align}
By \eqref{eq:rel-1PDM_bounded} we can replace $\Gamma_{\lambda,\eps}^{(1)}$ by $\Gamma_{0,\eps}^{(1)}$ with a small  error:
\begin{align} \label{eq:BA2b}
L\lambda^2 \left| \Tr\Big(  |[e_\xi,f_k^+]|^2  (\Gamma_{\lambda,\eps}^{(1)} -\Gamma_{0,\eps}^{(1)} ) \Big) \right| \le  L\lambda^2 \|[e_\xi,f_k^+]\|^2 \| \Gamma_{\lambda,\eps}^{(1)} -\Gamma_{0,\eps}^{(1)} \|_{\gS^1}  \le CL\lambda . 
\end{align}
On the other hand, using the operator inequalities
$$
\Gamma_{0,\eps}^{(1)}  = \frac{1}{e^{\lambda(h - \eps f_k^+)}-1} \le C\lambda^{-2} (h - \eps f_k^+)^{-2} \le C\lambda^{-2} h^{-2}
$$
and 
\begin{align*}
|[e_\xi, f_k^+]|^2 &= | e_\xi P e_k Q + e_\xi Q e_k P - P e_kQ e_\xi - Q e_k P e_\xi|^2 \\
&\le C (e_\xi P e_k Q e_k P e_\xi + e_\xi Q e_k P e_k Q e_\xi + P e_k Q e_\xi^2 Q e_k P + Qe_k P e_\xi^2 P e_k Q)
\end{align*}
we get 
\begin{multline} \label{eq:BA2ab}
L\lambda^2 \left| \Tr\Big(  |[e_\xi,f_k^+]|^2  \Gamma_{0,\eps}^{(1)}  \Big) \right|
\le  C L \Big( \|Q e_k P e_\xi h^{-1}\|_{\gS^2}^2 + \|P e_k Q e_\xi h^{-1}\|_{\gS^2}^2\\ + \|e_\xi Q e_k P h^{-1}\|_{\gS^2}^2 + \|e_\xi P e_k Q h^{-1}\|_{\gS^2}^2 \Big).
\end{multline}
In all the above terms we commute $h^{-1}$ until it hits a $Q$, using 
$[h^{-1},e_\xi]=-h^{-1}[h,e_\xi]h^{-1}$
and Assumption \eqref{eq:Schatten h-strong-2} which, we recall, says that $\|[h,e_\xi]h^{-1/2}\|\leq C(1+|\xi|^2)$. For instance, we have for the first term
\begin{align*}
Q e_k P e_\xi h^{-1}&=Q e_k P h^{-1}[h,e_\xi] h^{-1}+Q e_k h^{-1}P e_\xi \\
&=Q h^{-1}[h,e_k] h^{-1}P[h,e_\xi] h^{-1}+Q h^{-1}e_k P [h,e_\xi] h^{-1}\\
&\qquad +Q h^{-1}[h,e_k] h^{-1}P e_\xi+Q h^{-1}e_k P e_\xi.
\end{align*}
Using that $\|e_k\|\leq1$ and~\eqref{eq:Schatten h-strong-2}, this gives
$$\norm{Q e_k P e_\xi h^{-1}}_{\gS^2}\leq C(1+|k|^2)(1+|\xi|^2)\norm{Qh^{-1}}_{\gS^2}.$$
All the other terms on the right side of~\eqref{eq:BA2ab} are estimated similarly. In order to avoid having to square $(1+|k|^2)$ and $(1+|\xi|^2)$ which would require more assumptions on $w$, we may also bound all the terms on the right side of~\eqref{eq:BA2ab} by a constant, hence remove the squares. This gives
\begin{align} \label{eq:BA2c}
 L\lambda^2 \left| \Tr\Big(  |[e_\xi,f_k^+]|^2  \Gamma_{0,\eps}^{(1)}  \Big) \right| \le C L (1+|k|^2)(1+|\xi|^2) \|Q h^{-1}\|_{\gS^2}.
\end{align}
Putting \eqref{eq:BA2c} and \eqref{eq:BA2b} together, we  conclude from \eqref{eq:BA2a} that 
$$
\langle \cP |[ \bB_\xi, \bA]|^2 \rangle_{\lambda,\eps} \le C L (1+|k|^2)(1+|\xi|^2) (\lambda +\|Q h^{-1}\|_{\gS^2}).  
$$
Consequently, thanks to Assumption  \eqref{eq:interaction}, 
\begin{align} \label{eq:BA2-tr}
 \int_{\Omega^*} d\xi \hat w(\xi)  \left\langle \cP |[ \bB_\xi, \bA]|^2 \right\rangle_{\lambda,\eps} \le C L (1+|k|^2) (\lambda +\|Q h^{-1}\|_{\gS^2})\end{align}

From \eqref{eq:B-A-A-tr} and \eqref{eq:BA2-tr} it follows that
\begin{align} \label{eq:X2-tr}
\langle \cP \bX_2\rangle_{\lambda,\eps} \le C  (1+|k|^2) L (\sqrt{\lambda}+\|Qh^{-1}\|_{\gS^2}). 
\end{align}

\medskip

\noindent
{\bf Conclusion.} The bound \eqref{eq:X-norm} follows from \eqref{eq:X1-norm} and \eqref{eq:X2-norm}. The bound \eqref{eq:X-A-A-norm} follows from \eqref{eq:X1-A-A-norm} and \eqref{eq:X2-A-A-norm}. The bound \eqref{eq:X-tr} follows from  \eqref{eq:X1-tr} and \eqref{eq:X2-tr}.
\end{proof}

\subsection{Variance estimates} 

We are now able to to provide the

\begin{proof}[Proof of Lemma \ref{lem:variance-inter}]
We apply Theorem \ref{thm:general-variance-by-linear-response} to the case 
$$
H = \lambda \bH_\lambda, \quad A=\cP \bA, \quad \bA = \lambda\Big(\dG(f_k^+) - \langle \dG(f_k^+)\rangle_0\Big). 
$$
Note that the corresponding perturbed state 
$$
\Gamma_{\lambda,\eps,\cP} = \frac{e^{-\lambda \bH_\lambda+\eps \cP \bA}}{\Tr(e^{-\lambda \bH_\lambda+\eps \cP \bA})}
$$
is different from $\Gamma_{\lambda,\eps}$ defined before. However, thanks to Lemma \ref{lem:dis-partition} the partition functions are comparable: 
\begin{align} \label{eq:partition-com}
C^{-1} \Tr(e^{-\lambda \bH_\lambda+\eps \bA}) \le \Tr(e^{-\lambda \bH_\lambda+\eps \cP \bA}) \le C \Tr(e^{-\lambda \bH_\lambda+\eps \bA}).
\end{align}
Therefore, from Lemma \ref{lem:truncation-N} we deduce the first moment estimate 
$$
|\Tr(A \Gamma_{\lambda,\eps,\cP})| = |\Tr(\cP \bA \Gamma_{\lambda,\eps})| \frac{\Tr(e^{-\lambda \bH_\lambda+\eps \bA})}{\Tr(e^{-\lambda \bH_\lambda+\eps \cP \bA})}  \le C \left(\lambda +\|Qh^{-1}\|_{\gS^2}^{1/2}\right). 
$$
Moreover, from Lemma \ref{lem:commutator-X} we find that the commutator 
$$
X= [[H,A],A]=\lambda \cP[[\bH_\lambda,\bA],\bA]
$$
satisfies
$$
\|X\|+\|[[X,A],A]\| \le C(1+|k|^2) L^2
$$
and 
$$
|\Tr(X\Gamma_{\lambda,\eps,\cP})| = |\Tr(X \Gamma_{\lambda,\eps})| \frac{\Tr(e^{-\lambda \bH_\lambda+\eps \bA})}{\Tr(e^{-\lambda \bH_\lambda+\eps \cP \bA})} \le C (1+|k|^2)L (\sqrt{\lambda}+ \|Qh^{-1}\|_{\gS^2}). 
$$
Thus for $a>0$ sufficiently small we have
\begin{align*}
\eta&:=\sup_{\eps\in [-a,a]} \Big( |\Tr(A \Gamma_{\lambda,\eps,\cP})| + a \sqrt{|\Tr(X\Gamma_{\lambda,\eps,\cP})|} \sqrt{\|X\|+\|[[X,A],A]\|} \Big)\nn\\
& \le C \left(\lambda +\|Qh^{-1}\|_{\gS^2}^{1/2}\right) +Ca(1+|k|^2)L^{\frac32}(\lambda+ \|Qh^{-1}\|_{\gS^2})^{\frac14}.
\end{align*}
We satisfy~\eqref{eq:condition a} by picking
$$a := (1+|k|^2)^{-1} \min\left(1,\frac{\min\sigma(h)}{2}\right)$$
and we find that $\eta\to0$ when $\lambda\to0$, uniformly in $k$, under the conditions in~\eqref{eq:new cut off}. Theorem~\ref{thm:general-variance-by-linear-response} (or, rather~\eqref{eq:simplified_variance}) then gives the variance bound 
$$
\langle \cP \bA^2\rangle_{\lambda}= \Tr(A^2 \Gamma_{\lambda,0,\cP}) \le C (1+|k|^2) L^{\frac32} (\lambda + \|Qh^{-1}\|_{\gS^2})^{\frac14} 
$$
where we used that since  both $\lambda, \|Qh^{-1}\|_{\gS^2}^{1/2} \ll 1$ we have 
$$\left(\lambda +\|Qh^{-1}\|_{\gS^2}^{1/2}\right)\leq C (\lambda+ \|Qh^{-1}\|_{\gS^2})^{\frac14}.$$
Finally,  arguing as in the proof of Lemma \ref{lem:truncation-N} we have 
\begin{align*}
\langle (1-\cP) \bA^2  \rangle_{\lambda} \le \frac{C\lambda^6}{L^2} \langle (\cN-\langle \cN\rangle_0)^4 \rangle_{\lambda,\eps} \le \frac{C}{L^2}. 
\end{align*}
Here we have used \eqref{eq:N4-N40} in the last estimate. Thus in summary,
$$
\langle \bA^2 \rangle_{\lambda}  \le C (1+|k|^2) L^{\frac32} (\lambda + \|Qh^{-1}\|_{\gS^2})^{\frac14}+\frac{C}{L^2}.
$$
as desired. 
\end{proof}

Finally, we are ready to provide the 

\begin{proof}[Proof of Theorem \ref{thm:correlation} ] Consider 
$$
e_k^+ = 1- P e_k P = f_k^++ Qe_kQ, \quad P=\1_{h\le \Lambda_e}, \quad Q=\1-P.   
$$
Take a large parameter $L$ (will be determined at the end). We have $\|Ph\| \le \Lambda_e \le L \lambda^{-2}$ and $\|Qh^{-1}\|_{\gS^2} \to 0$, because $\Lambda_e\gg 1$. Hence we may apply Lemma \ref{lem:variance-inter} to obtain
\begin{equation} \label{eq:PwQ-bootstrap-1}
\lambda^2  \Big\langle \Big(\dG(f_k^+)-\langle \dG(f_k^+)\rangle_0\Big)^2 \Big\rangle_{\lambda}  \\
\le C (1+|k|^2) L^{\frac32} (\lambda + \|Qh^{-1}\|_{\gS^2})^{\frac14} +\frac{C}{L^2}.
\end{equation}
For the term $Qe_kQ$ we decompose further
$$
Qe_kQ = P_1 e_k P_1 + P_1 e_k Q_1 + Q_1 e_k P_1 + Q_1 e_k Q_1, \quad Q_1= \1_{h> L/\lambda^2}, \quad P_1=Q-Q_1. 
$$
Since $\|P_1h^{-1}\|_{\gS^2}, \|Q_1h^{-1}\|_{\gS^2} \le \|Qh^{-1}\|_{\gS^2}$ and $\|P_1h\| \le L /\lambda^2$, we may apply Lemma~\ref{lem:variance-inter} again, first with $P=Q=P_1$, then with $P=P_1,Q=Q_1$ to obtain 
\begin{equation} \label{eq:PwQ-bootstrap-2-}
\lambda^2  \Big\langle \Big(\dG(P_1 e_k P_1)-\langle \dG(P_1 e_k P_1)\rangle_0\Big)^2 \Big\rangle_{\lambda} \le C (1+|k|^2) L^{\frac32} (\lambda + \|Qh^{-1}\|_{\gS^2})^{\frac14} +\frac{C}{L^2}
\end{equation}
and
\begin{multline} \label{eq:PwQ-bootstrap-2}
\lambda^2  \Big\langle \Big(\dG(P_1 e_k Q_1+Q_1 e_k P_1)-\langle \dG(P_1e_k Q_1 + Q_1 e_k P_1 )\rangle_0\Big)^2 \Big\rangle_{\lambda} \\
 \le C (1+|k|^2) L^{\frac32} (\lambda + \|Qh^{-1}\|_{\gS^2})^{\frac14} +\frac{C}{L^2}.
\end{multline}
On the other hand, using the operator inequality
$$|Q_1 e_k Q_1| \le Q_1 \le \frac{\lambda^2 }{L}\,h$$
and the kinetic estimate \eqref{eq:dGh2} in Lemma \ref{lem:number interacting} we deduce that 
\begin{align} \label{eq:PwQ-bootstrap-3}
\lambda^2 \langle (\dGamma(Q_1 e_k Q_1)- \langle  \dGamma(Q_1 e_k Q_1) \rangle_0)^2 \rangle_{\lambda} 
 \le \frac{C\lambda^6}{L^2} \left(  \left\langle \left(\dGamma(h)\right)^2 \right\rangle_{\lambda} + \left\langle \left(\dGamma(h)\right)^2 \right\rangle_{0} \right) 
\le \frac{C}{L^{2}}. 
\end{align}
Combining the bounds~\eqref{eq:PwQ-bootstrap-1}-\eqref{eq:PwQ-bootstrap-3} and using the Cauchy-Schwarz inequality we conclude that 
\begin{align} \label{eq:PwQ-bootstrap-4}
\lambda^2 \langle (\dGamma(e_k^+) -\langle \dG(e_k^+)\rangle_0 )^2\rangle_{\lambda} \le C (1+|k|^2)L^{\frac32} \left(\lambda + \|Qh^{-1}\|_{\gS^2}\right)^{\frac14} +\frac{C}{L^2}.
\end{align}
The desired bound \eqref{eq:variance bound} follows from the choice $L= (\lambda + \|Qh^{-1}\|_{\gS^2})^{-{1}/{14}}$, which is indeed compatible with~\eqref{eq:new cut off}.
\end{proof}

\section{Free energy lower bound}\label{sec:low bound}

We are now ready to prove the free energy lower bound announced in~\eqref{eq:ineq_to_be_proven0}. As usual in variational approaches, the lower bound on the free energy is the harder part. A matching upper bound will be obtained in the next section by a trial state argument. 

Consider $\bH_\lambda=\bH_0+\lam\bWren$. Recall that we can write the relative free-energy as an infimum:
\begin{equation*}
-\log \frac{\cZl}{\cZ_0(\lambda)} = \cH(\Gamma_\lambda,\Gamma_{0}) + \lambda^2 \Tr[\bWren \Gamma_{\lambda}]= \inf_{\substack{\Gamma\geq 0\\ \tr_{\gF}\Gamma=1}} \Big\{ \cH(\Gamma,\Gamma_{0}) + \lambda^2 \Tr[\bWren \Gamma_{\lambda}] \Big\}.
\end{equation*}
We shall relate this variational principle to its classical analogue (cf. Section~\ref{sec:renormalized-measure}) 
$$ 
-\log z = \min_{\substack{0\leq f \in L ^1 (d\mu_0)\\ \int f(u) d\mu_0 (u) = 1}} \left\{ \int \cD[u]\,f(u) d\mu_0 (u) + \int f(u) \log (f(u)) d\mu_0 (u) \right\}
$$
with the optimal $f=e^{-\cD[u]}/z$. 
This section is devoted to the proof of the following

\begin{proposition}[\textbf{Free-energy lower bound}]\label{pro:low bound}\mbox{}\\
Let $h>0$ satisfy \eqref{eq:Schatten h-strong-1}-\eqref{eq:Schatten h-strong-2} and let   $w:\R^d \to \R$ satisfy \eqref{eq:interaction}.  Let $z$ be the classical relative partition function defined in Lemma~\ref{lem:re-interaction}.
Then we have 
\begin{equation}\label{eq:low bound}
\liminf_{\lambda\to0^+} \left(-\log \frac{\cZl}{\cZ_0(\lambda)}\right)  \geq - \log z.
\end{equation}
\end{proposition}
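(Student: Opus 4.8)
\emph{Plan.} The plan is to pass to the limit $\lambda\to0^+$ in the variational formula
$$-\log\frac{\cZl}{\cZ_0(\lambda)}=\cH(\Gamma_\lambda,\Gamma_0)+\lambda^2\Tr[\bWren\Gamma_\lambda],$$
where both summands are nonnegative and bounded by $C\tr[h^{-2}]$ by Lemma~\ref{lem:partition}. First I would extract a subsequence $\lambda\to0^+$ realising the $\liminf$, along which $\cH(\Gamma_\lambda,\Gamma_0)$ and $\lambda^2\Tr[\bWren\Gamma_\lambda]$ converge separately and, by a diagonal argument, along which for every fixed $K$ the lower symbol $\mu^\lambda_{P_K,\Gamma_\lambda}$ at scale $\eps=\lambda$ (with $P_K=\one(h\le\Lambda_K)$ the spectral projector onto the first $K$ eigenmodes of $h$, $Q_K=1-P_K$, and $\epsilon_K:=\|Q_Kh^{-1}\|_{\gS^2}\to0$) converges weakly to a probability measure $\mu^{(K)}$ on $V_K$. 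These form a consistent family, hence define a de Finetti measure $\mu$, and by~\cite{LewNamRou-15} the analogous limit for the Gaussian state $\Gamma_0$ is the cylindrical projection $\mu_{0,K}$ of $\mu_0$.

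For the entropy I would invoke the Berezin--Lieb inequality (Theorem~\ref{thm:rel-entropy}), $\cH(\Gamma_\lambda,\Gamma_0)\ge\cHcl(\mu^\lambda_{P_K,\Gamma_\lambda},\mu^\lambda_{P_K,\Gamma_0})$, and lower semicontinuity of the classical relative entropy under weak convergence, obtaining $\lim_\lambda\cH(\Gamma_\lambda,\Gamma_0)\ge\cHcl(\mu^{(K)},\mu_{0,K})$ for each $K$; this is exactly~\eqref{eq:ineq_to_be_proven1}. For the interaction I would decompose each Fourier mode $e_k=P_Ke_kP_K+e_k^+$ with $e_k^+=e_k-P_Ke_kP_K$ and, writing $B_k=\dG(P_Ke_kP_K)-\langle\dG(P_Ke_kP_K)\rangle_0$ and $C_k=\dG(e_k^+)-\langle\dG(e_k^+)\rangle_0$, use for $\delta\in(0,1)$ the operator inequality $(B_k+C_k)^2\ge(1-\delta)B_k^2-\delta^{-1}C_k^2$ together with $\widehat w\ge0$ to get
$$\lambda^2\Tr[\bWren\Gamma_\lambda]\ge(1-\delta)\sum_{e_k\in\{\cos,\sin\}}\tfrac12\int\widehat w(k)\,\lambda^2\langle B_k^2\rangle_\lambda\,dk\;-\;\delta^{-1}\sum_{e_k}\tfrac12\int\widehat w(k)\,\lambda^2\langle C_k^2\rangle_\lambda\,dk.$$
The correlation estimate of Theorem~\ref{thm:correlation}, applied with $P=P_K$, bounds $\lambda^2\langle C_k^2\rangle_\lambda$ by $C(1+|k|^2)(\lambda+\epsilon_K)^{1/7}$; since $\widehat w(1+|k|^2)\in L^1$ the whole $\delta^{-1}$-term is $\le C\delta^{-1}(\lambda+\epsilon_K)^{1/7}$, and this is the crucial step where the renormalising counter-term is absorbed. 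The remaining term involves only the finite-dimensional projected density matrices $P_K^{\otimes j}\Gamma_\lambda^{(j)}P_K^{\otimes j}$, $j\le2$; expanding $\lambda^2\langle B_k^2\rangle_\lambda$ in terms of $\Gamma_\lambda^{(1)},\Gamma_\lambda^{(2)}$ and applying the quantitative de Finetti theorem (Theorem~\ref{thm:quant deF}) with $\eps=\lambda$, the errors being $O(\lambda^2K^2)+O(K\lambda^2\tr[P_K\Gamma_\lambda^{(1)}])=o(1)$ thanks to $\langle\cN^k\rangle_\lambda\le C_k\lambda^{-2k}$ (Lemma~\ref{lem:number interacting}) and $\lambda\,\tr[P_K\Gamma_\lambda^{(1)}]\le C_K$ (a consequence of Theorem~\ref{thm:estim_relative_entropy}), I obtain $\lambda^2\langle B_k^2\rangle_\lambda=\int(g_k-\bar g_k^{(\lambda)})^2\,d\mu^\lambda_{P_K,\Gamma_\lambda}+o(1)$ with $g_k(u)=\langle u,P_Ke_kP_Ku\rangle$ and $\bar g_k^{(\lambda)}=\lambda\langle\dG(P_Ke_kP_K)\rangle_0\to\langle g_k\rangle_{\mu_0}$. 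Passing to the weak limit (weak lower semicontinuity of $\nu\mapsto\int(g_k-\langle g_k\rangle_{\mu_0})^2d\nu$ and the identity~\eqref{eq:add int} relating $\cD_K$ to the modes $\cM_K^{e_k}$) turns the first term into $\int\cD_K[u]\,d\mu^{(K)}(u)$.

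Combining the two estimates along the subsequence yields, for every $K\ge1$ and $\delta\in(0,1)$,
$$\liminf_{\lambda\to0^+}\Bigl(-\log\frac{\cZl}{\cZ_0(\lambda)}\Bigr)\ge\cHcl(\mu^{(K)},\mu_{0,K})+(1-\delta)\int\cD_K[u]\,d\mu^{(K)}(u)-C\delta^{-1}\epsilon_K^{1/7}.$$
Letting $K\to\infty$ I would use monotone convergence of the relative entropy along the increasing cylindrical filtration, $\epsilon_K\to0$, the cylindricity identity $\int\cD_K\,d\mu^{(K)}=\int\cD_K\,d\mu$, and the $L^1(d\mu_0)$-convergence $\cD_K\to\cD$ of Lemma~\ref{lem:re-interaction} together with Fatou (along a subsequence on which $\cD_K\to\cD$ $\mu_0$-a.e., hence $\mu$-a.e., since $\cHcl(\mu,\mu_0)<\infty$ forces $\mu\ll\mu_0$), to arrive at $\liminf_\lambda(-\log(\cZl/\cZ_0(\lambda)))\ge\cHcl(\mu,\mu_0)+(1-\delta)\int\cD[u]\,d\mu(u)$. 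Finally I let $\delta\to0$ and apply the classical variational principle~\eqref{eq:zr-rel} with the trial measure $\mu$ to conclude~\eqref{eq:low bound}. The main obstacle, as anticipated in Section~\ref{sec:strategy}, is the control of the renormalised high-momentum term $\lambda^2\langle C_k^2\rangle_\lambda$, which is precisely what Theorem~\ref{thm:correlation} — and behind it the relative-entropy bound of Section~\ref{sec:relative entropy bound} and the variance-by-first-moments machinery of Section~\ref{sec:variance-by-first-moment} — delivers; the secondary delicate point is the correct order of the limits $\lambda\to0^+$ then $K\to\infty$, together with the verification that the finite-dimensional de Finetti errors vanish.
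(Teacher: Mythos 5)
Your proposal is correct, and it rests on the same two pillars as the paper's proof: the high-momentum correlation estimate of Theorem~\ref{thm:correlation} to absorb the renormalized term $\dG(e_k^+)-\langle\dG(e_k^+)\rangle_0$, and the Berezin--Lieb inequality (Theorem~\ref{thm:rel-entropy}) combined with the quantitative de Finetti theorem (Theorem~\ref{thm:quant deF}) on the low modes. The endgame, however, is organized differently. You implement literally the strategy of Section~\ref{sec:strategy}: extract the de Finetti measure $\mu$ by weak-$*$ compactness of the lower symbols at fixed $K$, prove the two semicontinuity bounds \eqref{eq:ineq_to_be_proven1}--\eqref{eq:ineq_to_be_proven2} with iterated limits $\lambda\to0$, then $K\to\infty$, then $\delta\to0$, and only at the end insert $\mu$ as a trial measure in the infinite-dimensional variational principle \eqref{eq:zr-rel}. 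The paper's Section~\ref{sec:low bound} instead couples the cut-off to $\lambda$ (taking $1\ll\Lambda_e\ll\lambda^{-1/3}$) and applies the \emph{finite-dimensional} variational principle at fixed $\lambda$, which eliminates the unknown lower symbol $\mu_{P,\lambda}$ immediately and reduces everything to the explicit classical convergence $\int e^{-\cD_K}\,d\mu_{0,K}\to\int e^{-\cD}\,d\mu_0$ together with the comparison of $\mu_{P,0}$ with $\mu_{0,K}$ (Lemma~\ref{lem:free low cyl}). The paper's route yields a fully quantitative error bound (see \eqref{eq:total-error-to-Lambdae}, reused in Section~\ref{sec:DM}) and avoids the measure-theoretic machinery your route requires — tightness and consistency of the family $\{\mu^{(K)}\}$, martingale convergence of the relative entropy along the cylindrical filtration, and Fatou for $\cD_K\to\cD$ $\mu$-almost everywhere — all of which are standard and available from~\cite{LewNamRou-15}, so your argument does close. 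Two points you should make explicit: Theorem~\ref{thm:correlation} is stated for $\Lambda_e\gg1$, so with $P=P_K$ fixed you must restrict to $K\ge K_0$ large enough that $\epsilon_K=\|Q_Kh^{-1}\|_{\gS^2}$ is small (the smallness hypotheses in Lemma~\ref{lem:variance-inter} then hold for small $\lambda$, and the bound degrades to $C\epsilon_K^{1/7}$ as $\lambda\to0$, which is exactly what you use); and your entropy step also needs the weak convergence $\mu^\lambda_{P_K,\Gamma_0}\to\mu_{0,K}$ of the free lower symbols, which is again supplied by Lemma~\ref{lem:free low cyl}.
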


We split the proof in two parts, occupying a subsection each. 
\begin{itemize}
 \item In Section \ref{sec:low ener} we project the energy (together with counter-terms) on low momentum modes $P=\1(h\le \Lambda_e)$ and estimate the error thus made. This is the core novelty with respect to our previous papers~\cite{LewNamRou-15,LewNamRou-18a}, where the new correlation estimate in Section~\ref{sec:correl} is crucial. The relative entropy is controlled by Theorem~\ref{thm:rel-entropy} as in our previous papers. Combining with the quantitative de Finetti Theorem~\ref{thm:quant deF} this leads to a quantitative energy lower bound in terms of the projected classical energy of a lower symbol/Husimi measure. 
 
\item In Section \ref{sec:low wrap} we compare further the lower symbol with the cylindrical projection of the Gaussian measure on $P\gH$. This allows to remove the localization in the classical problem and to conclude \eqref{eq:low bound}. 
\end{itemize}

\subsection{Localization and energy lower bound}\label{sec:low ener}

In this subsection we localize to the low kinetic energy modes. Our energy lower bound is as follows:

\begin{lemma}[\textbf{Renormalized energy lower bound}]\label{lem:local-W-P}\mbox{}\\
Let $h>0$ satisfy  \eqref{eq:Schatten h-strong-1}-\eqref{eq:Schatten h-strong-2} and let $w$ satisfy \eqref{eq:interaction}. Define
\begin{equation}\label{eq:projections bis}
P = \one(h\leq \Lambda_e),\quad Q=\1-P,  \quad 1 \ll \Lambda_e \ll \lambda^{-\frac{1}{3}}.
\end{equation}
Let $\mu_{P,0}$ be the lower symbol/Husimi function of $\Gamma_0$ associated with the projection $P$ and the scale $\eps=\lambda $ as in~\eqref{eq:Husimi}, and let $\cD_K$ be the truncated renormalized interaction from Lemma~\ref{lem:re-interaction}. Then we have
\begin{align} \label{eq:partition-lwb-0}
 -\log \frac{\cZ(\lambda)}{\cZ_0(\lambda)} \ge -\log \left( \int_{P\gH} e^{-\cD_K[u]} \; d\mu_{P,0}(u)\right) -C  \lambda  \Lambda_e ^{3} - C ( \lambda  + \|Qh^{-1}\|_{\gS^2} )^{\frac1{14}}.
\end{align}
The constant $C$ depends on $h$ only via $\tr[h^{-2}]$.  
\end{lemma}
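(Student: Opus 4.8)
The plan is to start from the exact variational identity
\[
-\log\frac{\cZ(\lambda)}{\cZ_0(\lambda)}=\cH(\Gamma_\lambda,\Gamma_0)+\lambda^2\Tr[\bWren\Gamma_\lambda],
\]
and to bound each term from below after localizing to the low-momentum subspace $P\gH$. For the entropy term, I would simply invoke the Berezin--Lieb-type inequality of Theorem~\ref{thm:rel-entropy}: with $\eps=\lambda$ and the projection $P$,
\[
\cH(\Gamma_\lambda,\Gamma_0)\ge\cHcl(\mu_{P,\Gamma_\lambda},\mu_{P,\Gamma_0})
=\int_{P\gH}\frac{d\mu_{P,\Gamma_\lambda}}{d\mu_{P,0}}\log\frac{d\mu_{P,\Gamma_\lambda}}{d\mu_{P,0}}\,d\mu_{P,0}.
\]
Combining this with a lower bound on $\lambda^2\Tr[\bWren\Gamma_\lambda]$ of the form ``$\ge\int\cD_K\,d\mu_{P,\Gamma_\lambda}-(\text{errors})$'' and then applying the Gibbs variational principle (Jensen / the fact that $-\log\int e^{-\cD_K}d\mu_{P,0}$ is the infimum of $\int\cD_K f\,d\mu_{P,0}+\int f\log f\,d\mu_{P,0}$ over probability densities $f$ on $P\gH$) will give exactly the right-hand side of~\eqref{eq:partition-lwb-0}, up to the stated error terms. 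So the real content is the lower bound on the interaction term.

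For the interaction term, I would first write $\bWren$ as an integral over Fourier modes,
\[
\lambda^2\bWren=\frac{\lambda^2}{2}\int\hat w(k)\Bigl|\dG(e_k)-\langle\dG(e_k)\rangle_0\Bigr|^2dk
\]
(splitting $e_k$ into $\cos(k\cdot x)$ and $\sin(k\cdot x)$), and replace for each mode the operator $e_k$ by the projected operator $Pe_kP$. The replacement error is controlled by expanding the square: $e_k=Pe_kP+e_k^+$, so $|\dG(e_k)-\langle\cdot\rangle_0|^2\ge \tfrac12|\dG(Pe_kP)-\langle\cdot\rangle_0|^2-|\dG(e_k^+)-\langle\cdot\rangle_0|^2$, and the cross-term/error term is precisely what Theorem~\ref{thm:correlation} estimates: its expectation against $\Gamma_\lambda$ is $\le C(1+|k|^2)(\lambda+\|(1-P)h^{-1}\|_{\gS^2})^{1/7}$. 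Integrating against $\hat w(k)(1+|k|^2)^{-1}\in L^1$ kills the $k$-dependence and yields the error $C(\lambda+\|Qh^{-1}\|_{\gS^2})^{1/14}$ after taking a square root (the exponent $1/14$ rather than $1/7$ comes from the Cauchy--Schwarz step needed to absorb the positive $\dG(Pe_kP)$ piece). Once the interaction is expressed through $Pe_kP$ only, it becomes a function of $\Gamma_\lambda^{(2)}$ restricted to $P^{\otimes 2}$, hence a function of the density matrices $(\mu_{P,\Gamma_\lambda})^{(k)}$ of the Husimi measure via the quantitative de Finetti Theorem~\ref{thm:quant deF}; the de Finetti error is of order $\Tr[P]\cdot\lambda\cdot\langle\cN^2\rangle$-type, which under $\Lambda_e\ll\lambda^{-1/3}$ is bounded by $C\lambda\Lambda_e^3$. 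Reassembling, the projected interaction expectation is $\ge\int_{P\gH}\cD_K[u]\,d\mu_{P,\Gamma_\lambda}(u)-C\lambda\Lambda_e^3-C(\lambda+\|Qh^{-1}\|_{\gS^2})^{1/14}$, where I must also be careful that passing from the true renormalization counter-term $\langle\dG(e_k)\rangle_0$ to the one appearing inside $\cD_K$ (which uses $\langle|P_Ku|^2\rangle_{\mu_0}$) is exactly matched — this is where the choice $\eps=\lambda$ and the explicit form of the lower symbol of a Gaussian state are used.

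The main obstacle is the interaction lower bound, specifically the bookkeeping of the two independent cutoffs. One has the energy cutoff $\Lambda_e$ (rank of $P$) and, inside $\cD_K$, the measure-theoretic cutoff $K$; the de Finetti error forces $\Lambda_e$ not too large ($\Lambda_e\ll\lambda^{-1/3}$), while the correlation estimate forces $\Lambda_e$ large enough that $\|Qh^{-1}\|_{\gS^2}\to0$ and simultaneously compatible with the internal parameter $L\sim(\lambda+\|Qh^{-1}\|_{\gS^2})^{-1/14}$ of Theorem~\ref{thm:correlation} (condition~\eqref{eq:new cut off}). Threading all these inequalities so that every error vanishes as $\lambda\to0$ — while keeping the renormalization counter-terms exactly aligned between the quantum operator $\bWren$ and the classical functional $\cD_K$ — is the delicate part; the positivity $\hat w\ge0$ is what makes the square-completion step lose only a harmless factor and lets us discard the genuinely positive piece $\dG(Pe_kP)$ cleanly rather than having to control it. Everything else (the entropy inequality, the de Finetti estimate, the Gibbs variational principle) is quoted directly from the results recalled earlier in the paper.
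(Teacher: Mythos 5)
Your proposal is correct and follows essentially the same route as the paper's proof: Berezin--Lieb for the entropy, projection $e_k\mapsto Pe_kP$ with Theorem~\ref{thm:correlation} controlling the discarded high-momentum part, the quantitative de Finetti Theorem~\ref{thm:quant deF} on $P\gH$, and the classical Gibbs variational principle to wrap up. One caution on the bookkeeping: the fixed-constant inequality $|\dG(e_k)-\langle\cdot\rangle_0|^2\ge\tfrac12|\dG(Pe_kP)-\langle\cdot\rangle_0|^2-|\dG(e_k^+)-\langle\cdot\rangle_0|^2$ by itself would lose a factor $\tfrac12$ in front of $\cD_K$, which cannot be removed afterwards (since $\cD_K\ge 0$, replacing $\cD_K$ by $\tfrac12\cD_K$ weakens the lower bound); what is actually used is the $\eps$-parametrized Cauchy--Schwarz together with the a priori bound $\lambda^2\langle\bWren\rangle_\lambda\le C$, optimized in $\eps$ to yield the exponent $1/14$ while retaining the full coefficient $1$ on the projected term --- this is exactly the ``square root'' step you allude to, so make sure to run it that way.
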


\begin{proof} By the Berezin-Lieb inequality \eqref{eq:Berezin-Lieb}, we have
\begin{align} \label{eq:BL-G-G0}
\cH(\Gamma_\lambda,\Gamma_0) \ge \cH((\Gamma_\lambda)_P,(\Gamma_0)_P)    \ge \cHcl(\mu_{P,\lambda}, \mu_{P,0}).
\end{align}
We will prove that
\begin{equation}  \label{eq:local-W-P}
\lambda^2 \Tr [\bWren\Gamma_\lambda] \ge \frac{1}{2} \int_{P\gH} \cD_K [u] d\mu_{P,\lambda}(u) - C  \lambda  \Lambda_e ^{3} - C ( \lambda  + \|Qh^{-1}\|_{\gS^2} )^{\frac1{14}}
\end{equation}
where $\mu_{P,\lambda}$ is  the lower symbol of $\Gamma_\lambda$ associated with the projection $P$ at the scale $\eps=\lambda $ as in~\eqref{eq:Husimi}. 
Then putting \eqref{eq:BL-G-G0} and \eqref{eq:local-W-P} together, we conclude \eqref{eq:partition-lwb-0} by the classical variational principle \eqref{eq:zr-rel}: 
\begin{align*}
 -\log \frac{\cZ(\lambda)}{\cZ_0(\lambda)} &= \cH(\Gamma_\lambda,\Gamma_0) + \lambda^2 \Tr [\bWren\Gamma_\lambda] \nn\\
&\ge \cHcl(\mu_{P,\lambda}, \mu_{P,0})+\frac12\int_{P\gH} \cD_K [u] d\mu_{P,\lambda}(u) -C  \lambda  \Lambda_e ^{3} - C ( \lambda  + \|Qh^{-1}\|_{\gS^2} )^{\frac1{14}}
 \nn\\
&\ge -\log \left( \int_{P\gH} e^{-\cD_K[u]} \; d\mu_{P,0}(u)\right) -C  \lambda  \Lambda_e ^{3} - C ( \lambda  + \|Qh^{-1}\|_{\gS^2} )^{\frac1{14}}.
\end{align*}

It remains to prove \eqref{eq:local-W-P}. We will write the renormalized interaction as in~\eqref{eq:renorm int trap} and estimate each Fourier component separately.

\medskip

\noindent\textbf{Step 1: Localization.} Let $e_k$ be the multiplication operator by $\cos(k\cdot x)$ or $\sin(k\cdot x)$  and let 
$$e_k^{-}=Pe_kP = e_k- e_k^+, \quad P=\1_{h\le \Lambda_e}$$
By the Cauchy-Schwarz inequality and Theorem \ref{thm:correlation}  we have
\begin{align*}
&\lambda^2 \left\langle \big|\dG(e_k^-) - \langle \dG(e_k^-) \rangle_0\big|^2 \right\rangle_{\lambda} \\
&\le (1+\eps) \lambda^2 \left\langle \big|\dG(e_k) - \langle \dG(e_k) \rangle_0\big|^2 \right\rangle_{\lambda} + (1+\eps^{-1}) \lambda^2 \left\langle \big|\dG(e_k^+) - \langle \dG(e_k^+) \rangle_0\big|^2 \right\rangle_{\lambda} \\
&\le (1+\eps) \lambda^2 \left\langle \big|\dG(e_k) - \langle \dG(e_k) \rangle_0\big|^2 \right\rangle_{\lambda} + (1+\eps^{-1}) C (1+|k|^2) ( \lambda  + \|Qh^{-1}\|_{\gS^2} )^{1/7}
\end{align*}
for all $\eps>0$. Integrating against $\hat w(k)$ over $k\in \Omega^*$, then using \eqref{eq:interaction} and \eqref{eq:apriori_W} we get
\begin{align*}
&\lambda^2 \int_{\Omega^*} \hat w(k) \left\langle \big|\dG(e_k^-) - \langle \dG(e_k^-) \rangle_0\big|^2 \right\rangle_{\lambda} dk \\
&\le (1+\eps) \lambda^2 \left\langle \bW^{\rm ren}\right\rangle_{\lambda} + (1+\eps^{-1})  C ( \lambda  + \|Qh^{-1}\|_{\gS^2} )^{1/7} \\
&\le \lambda^2 \left\langle \bW^{\rm ren}\right\rangle_{\lambda}  + C\eps + (1+\eps^{-1}) C ( \lambda  + \|Qh^{-1}\|_{\gS^2} )^{1/7}.
\end{align*}
Optimizing over $\eps>0$ gives
\begin{align} \label{eq:part-local-W-0}
&\lambda^2 \left\langle \bW^{\rm ren}\right\rangle_{\lambda} \ge \lambda^2 \int_{\Omega^*} \hat w(k) \left\langle \big|\dG(e_k^-) - \langle \dG(e_k^-) \rangle_0\big|^2 \right\rangle_{\lambda} dk - C ( \lambda  + \|Qh^{-1}\|_{\gS^2} )^{1/14}.
\end{align}

\medskip

\noindent\textbf{Step 2: Use of the de Finetti theorem.} Now we turn to the low-momentum part of the interaction. For any self-adjoint one-body operator $A$ we have
$$ (\dG (A))^2 = 2 \bigoplus_{n\geq 2} \sum_{1\leq i<j \leq n} A_i\otimes A_j + \dG (A ^2). $$
Hence
\begin{align}\label{eq:part-local-W-1}
 \left\langle \left| \dG (e_k^-) - \left\langle \dG (e_k^-) \right\rangle_{0} \right| ^2 \right\rangle_\lambda  &=  2 \Tr \left( (e_k^-)^{\otimes 2}\Gamma_\lambda^{(2)} \right) -2 \Tr \left( e_k^- \Gamma_\lambda^{(1)} \right) \Tr \left( e_k^{-}\Gamma_0^{(1)} \right) \nn\\
 &\quad + \left(\Tr \left( e_k^-\Gamma_0^{(1)} \right) \right)^2
 + \Tr \Big((e_k^-)^2 \Gamma_\lambda^{(1)}\Big).
\end{align}
The last term $\Tr ((e_k^-)^2 \Gamma_\lambda^{(1)})\ge 0$ can be omitted for a lower bound. From the explicit formulas~\eqref{eq:DM_quasi_free} and \eqref{eq:DM free meas}, the operator bound 
$$
\left| \frac{1}{e^{\lambda h}-1} -  \frac{1}{\lambda h} \right| \le \frac{1}{2}
$$
and
\begin{equation}\label{eq:CLR} 
K:=\mathrm{dim} \left( P\gH \right) = \Tr P \le \Tr [(\Lambda_e/h)^2]\le C \Lambda_e^{2},
\end{equation}
it follows that 
\begin{align} \label{eq:ek-Gamma0}
\lambda  \Tr \left( e_k^- \Gamma_0^{(1)} \right) &= \lambda \Tr   \left( e_k^- \frac{1}{e^{\lambda h}-1} \right) \nn\\
 & = \Tr  \left(  e_k^- h^{-1} \right) +\lambda \Tr   \left( e_k^- \left(\frac{1}{e^{\lambda h}-1} -\frac{1}{\lambda h}\right) \right) \nn\\
 &= \int\left\langle u, e_k^-  u \right\rangle d\mu_0(u) + O(\lambda  \Lambda_e^2).
\end{align}
On the other hand, using $|e_k^-|\le P \le \Lambda_e h^{-1}$, \eqref{eq:rel-1PDM_bounded} and \eqref{eq:DM_quasi_free} we get
\begin{align} \label{eq:counter term bound}
\left| \lambda \Tr \left( e_k^- \Gamma_\lambda^{(1)} \right) \right| &\le \left| \lambda \Tr \left( e_k^- (\Gamma_\lambda^{(1)} -\Gamma_0^{(1)} \right) \right| +  \left| \lambda \Tr \left( e_k^- \Gamma_0^{(1)} \right) \right| \nn\\
&\le  \lambda \Tr \left| \Gamma_\lambda^{(1)} -\Gamma_0^{(1)} \right| + \Tr  \left(  P h^{-1} \right) \nn\\
&\le C \Lambda_e  \tr\left[h^{-2}\right]. 
\end{align}
Thus \eqref{eq:part-local-W-1} gives
\begin{align}\label{eq:part-local-W-1aaa}
 \lambda^2\left\langle \left| \dG (e_k^-) - \left\langle \dG (e_k^-) \right\rangle_{0} \right| ^2 \right\rangle_\lambda &\geq 
 2 \lambda^2 \Tr \left( (e_k^-)^{\otimes 2}\Gamma_\lambda^{(2)} \right) -2 \lambda  \Tr \left( e_k^- \Gamma_\lambda^{(1)} \right) \Big\langle \left\langle u, e_k^-  u \right\rangle \Big\rangle_{\mu_0} \nn\\
 & \quad +  \left(\int \left\langle u, e_k^-  u \right\rangle d\mu_0(u)\right)^2 -C  \left(\lambda  \Lambda_e ^{3} + \lambda^2 \Lambda_e ^4 \right).  
\end{align}
Note that, for $\Lambda_e \leq \lambda^{-1/3}$ the last error term $\lambda^2 \Lambda_e ^4$ is of lower order and can be absorbed in $\lambda  \Lambda_e ^{3}$. 

Next, let $\mu_{P,\lambda}$ be the lower symbol of $\Gamma_\lambda$ associated with the projection $P$ and the scale $\eps=\lambda $ as in~\eqref{eq:Husimi}. We apply~\eqref{eq:Chiribella} to obtain the density matrices of the $P$-projected state~$\Gamma_{\lambda,P}$:
\begin{align*} 
\lambda^2\Gamma_{\lambda,P}^{(2)} &= \frac{1}{2}\int_{P\gH}|u^{\otimes 2}\>\<u^{\otimes 2}|\;d\mu_{P,\lambda}(u) - 2 \lambda^2 \Gamma_{\lambda,P} ^{(1)}\otimes_s P - 2 \lambda^2 P \otimes_s P,\nonumber\\
 \lambda  \Gamma_{\lambda,P}^{(1)} &= \int_{P\gH}|u\>\<u|\;d\mu_{P,\lambda}(u) - \lambda P.
\end{align*}
Recalling~\eqref{eq:GammaV-k} we have
$$ \Gamma_{\lambda,P}^{(k)} = P^{\otimes k} \Gamma_{\lambda} ^{(k)} P^{\otimes k}$$
and using~\eqref{eq:counter term bound}, we deduce that 
\begin{align*}
\lambda^2 \Tr \left[ (e_k^-)^{\otimes 2}\Gamma_\lambda^{(2)} \right] & =\frac{1}{2}\int_{P\gH} |\langle u, e_k^-  u \rangle|^2 d\mu_{P,\lambda}(u) + O  \left(\lambda  \Lambda_e ^{3} \right),\\
\lambda  \Tr \left[ e_k^-\Gamma_\lambda^{(1)} \right] &= \int_{P\gH} \langle u, e_k^-  u \rangle d\mu_{P,\lambda}(u) + O  \left(\lambda  \Lambda_e ^2\right).
\end{align*}
Inserting the latter formulas in \eqref{eq:part-local-W-1aaa} and using \eqref{eq:counter term bound} again we obtain 
\begin{equation*}
 \lambda^2\left\langle \left| \dG (e_k^-) - \left\langle \dG (e_k^-) \right\rangle_{0} \right| ^2 \right\rangle_\lambda \geq \int_{P\gH} \left|\langle u, e_k^-  u \rangle - \left\langle \left\langle u, e_k^-  u \right\rangle \right\rangle_{\mu_0} \right|^2 d\mu_{P,\lambda}(u)  -C  \lambda  \Lambda_e ^{3}.
  \end{equation*}
Integrating the latter bound against $\widehat w(k)$ gives
\begin{equation}\label{eq:part-local-W-1aaabbb}
 \lambda^2 \int_{\Omega^*} \hat w(k) \left\langle \left| \dG (e_k^-) - \left\langle \dG (e_k^-) \right\rangle_{0} \right| ^2 \right\rangle_\lambda dk \geq \frac{1}{2} \int_{P\gH} \cD_K [u] d\mu_{P,\lambda}(u)  -C  \lambda  \Lambda_e ^{3}
  \end{equation}
  where $\cD_K$ is the truncated renormalized interaction as in Lemma~\ref{lem:re-interaction}.
  
Finally, we compare   \eqref{eq:part-local-W-1aaabbb} with \eqref{eq:part-local-W-0} to obtain \eqref{eq:local-W-P}
$$
\lambda^2 \Tr [\bWren\Gamma_\lambda] \ge \frac{1}{2} \int_{P\gH} \cD_K [u] d\mu_{P,\lambda}(u) - C  \lambda  \Lambda_e ^{3} - C ( \lambda  + \|Qh^{-1}\|_{\gS^2} )^{1/14}. 
$$
This concludes the proof of Lemma \ref{lem:local-W-P}.
\end{proof}

\subsection{Removing the localization and concluding} \label{sec:low wrap}

In order to estimate further the right side of \eqref{eq:partition-lwb-0} from below, our task is now to compare the lower symbol $\mu_{P,0}$ of $\Gamma_0$ with the cylindrical projection $\mu_{0,K}$ of the Gaussian measure $\mu_0$ on $P\gH$ in Lemma \ref{lem:free-meas}. In this direction we prove the following lemma. For its statement, recall that both measures we are interested in are absolutely continuous with respect to the Lebesgue measure on $P\gH$. 

\begin{lemma}[\textbf{Further comparisons for the projected free state}]\label{lem:free low cyl}\mbox{}\\
Let $h>0$ satisfy $\tr[h^{-2}]<\ii$. Then
\begin{equation}\label{eq:mu_L1}
\norm{\mu_{P,0}-\mu_{0,K}}_{L^1(P\gH)}\leq 2\tr[h^{-2}]\lambda  \Lambda_e ^{3}.
\end{equation}
\end{lemma}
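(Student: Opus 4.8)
The plan is to compute both measures explicitly on the finite-dimensional space $P\gH \simeq \C^K$ and compare their densities pointwise, then integrate. Recall that $\mu_{0,K}$ has density $\prod_{i=1}^K \frac{\lambda_i}{\pi} e^{-\lambda_i|\alpha_i|^2}$ with respect to Lebesgue measure (with $\alpha_i = \langle u_i, u\rangle$ in the eigenbasis of $h$), which is the Gaussian of covariance $h^{-1}$ restricted to $V_K$. On the other hand, the lower symbol $\mu_{P,0}$ of the Gaussian quantum state $\Gamma_0 = e^{-\lambda \dG(h)}/\cZ_0(\lambda)$ is, by Definition~\ref{def:lower symbol}, the Husimi function at scale $\eps = \lambda$, namely $(\lambda\pi)^{-K}\pscal{\xi(u/\sqrt\lambda), (\Gamma_0)_P\, \xi(u/\sqrt\lambda)}\,du$. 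Since $(\Gamma_0)_P$ is itself a Gaussian (quasi-free) state on $\gF(P\gH)$ with one-body density matrix $P\,\Gamma_0^{(1)}P = P(e^{\lambda h}-1)^{-1}P$, which is diagonal in the $u_i$ basis, the Husimi function factorizes over the modes $i = 1, \dots, K$. For a single mode, the Husimi function of a thermal state $e^{-\beta n}$-type state with mean occupation $\rho_i := (e^{\lambda\lambda_i}-1)^{-1}$ is the Gaussian $\frac{1}{\pi(\lambda(1+\rho_i))} e^{-|\alpha_i|^2/(\lambda(1+\rho_i))}$ (the standard fact that the Husimi/$Q$-function of a thermal state adds one unit of vacuum noise). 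Hence $\mu_{P,0}$ is the centered Gaussian on $\C^K$ with diagonal covariance entries $\lambda(1+\rho_i) = \lambda\,\frac{e^{\lambda\lambda_i}}{e^{\lambda\lambda_i}-1} = \frac{\lambda}{1 - e^{-\lambda\lambda_i}}$.

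So both $\mu_{0,K}$ and $\mu_{P,0}$ are centered product Gaussians on $\C^K$, with variance parameters $\lambda_i^{-1}$ and $\lambda(1+\rho_i) = (\lambda_i - \tfrac{\lambda\lambda_i^2}{2} + O(\lambda^2\lambda_i^3))^{-1}$ respectively; more precisely the inverse variances are $\lambda_i$ versus $\tfrac{1-e^{-\lambda\lambda_i}}{\lambda}$. The next step is to bound the $L^1$ distance between two centered product Gaussians by the difference of their (inverse) covariances. I would use the elementary one-mode estimate: if $g_a, g_b$ denote the $\C$-Gaussians with densities $\tfrac{a}{\pi}e^{-a|\alpha|^2}$, $\tfrac{b}{\pi}e^{-b|\alpha|^2}$, then $\|g_a - g_b\|_{L^1(\C)} \le C|a-b|/\min(a,b) \le C\,|a/b - 1|$, and a telescoping/tensorization argument (using $\|\bigotimes g_{a_i} - \bigotimes g_{b_i}\|_{L^1} \le \sum_i \|g_{a_i} - g_{b_i}\|_{L^1}$ since each factor is a probability density) to get $\|\mu_{P,0} - \mu_{0,K}\|_{L^1} \le \sum_{i=1}^K \|g_{\lambda_i} - g_{b_i}\|_{L^1}$ where $b_i = \tfrac{1-e^{-\lambda\lambda_i}}{\lambda}$. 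Then, since $0 \le \lambda_i - b_i = \lambda_i(1 - \tfrac{1-e^{-\lambda\lambda_i}}{\lambda\lambda_i}) \le \tfrac{\lambda\lambda_i^2}{2}$ using $1 - \tfrac{1-e^{-x}}{x} \le x/2$ for $x \ge 0$, and since $b_i \ge \lambda_i e^{-\lambda\lambda_i} \ge \lambda_i e^{-\lambda\Lambda_e}$ on $P\gH$ (where $\lambda_i \le \Lambda_e$), one obtains $\|g_{\lambda_i} - g_{b_i}\|_{L^1} \le C\,\lambda\lambda_i \cdot (\text{something } O(1))$. Summing over $i$ with $\lambda_i \le \Lambda_e$ gives $\sum_i \lambda\lambda_i \le \lambda\Lambda_e^2 \cdot (\text{number of modes}) $... but this is too crude; instead bound $\lambda\lambda_i \le \lambda\Lambda_e \cdot \tfrac{\lambda_i^2}{\lambda_i} $ hmm — better: use $\lambda_i - b_i \le \tfrac{\lambda\lambda_i^2}{2}$ directly and $\|g_{\lambda_i}-g_{b_i}\|_{L^1}\le \tfrac{\lambda_i - b_i}{b_i} \le \tfrac{\lambda\lambda_i^2/2}{\lambda_i e^{-\lambda\Lambda_e}} \cdot C$... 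Actually the cleanest route to match the stated bound $2\tr[h^{-2}]\lambda\Lambda_e^3$ is: $\|g_{\lambda_i}-g_{b_i}\|_{L^1} \le C(\lambda_i - b_i)\lambda_i^{-1} \le C\tfrac{\lambda\lambda_i^2}{\lambda_i}\cdot\tfrac{1}{\lambda_i}\cdot\lambda_i = C\lambda\lambda_i \le C\lambda\Lambda_e$, giving $\sum_i \le C\lambda\Lambda_e K \le C\lambda\Lambda_e^3$ by $K = \tr P \le C\Lambda_e^2$ from~\eqref{eq:CLR}; one then needs $\tr[h^{-2}] < \infty$ to justify $K \le \tr[(\Lambda_e/h)^2] = \Lambda_e^2\tr[h^{-2}]$, which reproduces the claimed $2\tr[h^{-2}]\lambda\Lambda_e^3$ after tracking constants.

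The key steps in order: (1) identify $\mu_{0,K}$ as the product Gaussian with density $\prod \tfrac{\lambda_i}{\pi}e^{-\lambda_i|\alpha_i|^2}$; (2) compute the Husimi function of $(\Gamma_0)_P$ and show it is the product Gaussian with density $\prod \tfrac{b_i}{\pi}e^{-b_i|\alpha_i|^2}$, $b_i = (1-e^{-\lambda\lambda_i})/\lambda$, using the single-mode thermal-state calculation and the factorization of Gaussian states over the eigenbasis of $h$; (3) prove the one-mode $L^1$ bound $\|g_a - g_b\|_{L^1(\C)} \le C|a-b|\min(a,b)^{-1}$ by an explicit integral or by noting $\|g_a-g_b\|_{L^1} \le 2\sqrt{1 - \tfrac{2\sqrt{ab}}{a+b}}$ (Hellinger bound for Gaussians) and Taylor-expanding; (4) tensorize over the $K$ modes; (5) insert $0 \le \lambda_i - b_i \le \tfrac{\lambda\lambda_i^2}{2}$, the lower bound $b_i \ge \lambda_i e^{-\lambda\Lambda_e}$, and the dimension bound $K \le \Lambda_e^2\tr[h^{-2}]$ from~\eqref{eq:CLR}; (6) collect constants (here the condition $\Lambda_e \ll \lambda^{-1/3}$, so $\lambda\Lambda_e \to 0$, lets us absorb the $e^{-\lambda\Lambda_e}$ factor and any lower-order terms into the constant $2$).

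The main obstacle is step (2): correctly computing the Husimi function of the localized Gaussian quantum state and, in particular, verifying the "$+1$ vacuum noise" shift that makes the lower-symbol covariance $\lambda(1 + \rho_i)$ rather than $\lambda\rho_i = \lambda\,\Gamma_0^{(1)}(i;i)$. This requires knowing that coherent-state overlaps introduce an extra unit of noise per mode (the anti-normal ordering) and that the localization $(\cdot)_P$ commutes with this computation because $P$ is a spectral projection of $h$ and $\Gamma_0$ is diagonal in that basis. Everything else is routine Gaussian calculus, but this identification must be done carefully to get the exponent $\Lambda_e^3$ right — too crude a bound on the covariance discrepancy would give $\Lambda_e^4$ or worse, which would clash with the admissible range $\Lambda_e \ll \lambda^{-1/3}$ needed to close the argument in~\eqref{eq:partition-lwb-0}.
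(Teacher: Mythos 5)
Your argument is correct, but it is a genuinely different route from the paper's. The paper never computes the lower symbol exactly: it only establishes the one-sided pointwise bound $\mu_{P,0}(u)\ge\prod_{j=1}^K\big[\tfrac{1}{\lambda\lambda_j}(1-e^{-\lambda\lambda_j})\big]\mu_{0,K}(u)$ via the Peierls--Bogoliubov inequality $\langle x,e^{A}x\rangle\ge e^{\langle x,Ax\rangle}$ applied to the coherent state, together with the explicit free partition function; then $\tfrac{1-e^{-t}}{t}\ge1-\tfrac t2$ and Bernoulli give the prefactor $1-\lambda\tr[h^{-2}]\Lambda_e^3$ as in~\eqref{eq:Ber-inq}, and the factor $2$ in~\eqref{eq:mu_L1} comes for free from the identity $\int|\mu_{P,0}-\mu_{0,K}|=2\int(\mu_{P,0}-\mu_{0,K})_-$ valid because both are probability measures. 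You instead identify $\mu_{P,0}$ \emph{exactly} as the product Gaussian with inverse variances $b_i=(1-e^{-\lambda\lambda_i})/\lambda$ (the single-mode thermal $Q$-function with its extra unit of vacuum noise — this identification is correct, and consistent with the paper's lower bound since $b_i\le\lambda_i$), and then run direct Gaussian total-variation calculus: the tensorization $\|\bigotimes g_{a_i}-\bigotimes g_{b_i}\|_{L^1}\le\sum_i\|g_{a_i}-g_{b_i}\|_{L^1}$, the one-mode bound $\|g_a-g_b\|_{L^1}\le 2|a-b|/\min(a,b)$, the estimates $0\le\lambda_i-b_i\le\lambda\lambda_i^2/2$ and $b_i\ge\lambda_ie^{-\lambda\Lambda_e}$, and $K\le\Lambda_e^2\tr[h^{-2}]$ from~\eqref{eq:CLR}. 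Both approaches yield the same $\lambda\Lambda_e^3$ scaling because the per-mode discrepancy is $O(\lambda\lambda_i)=O(\lambda\Lambda_e)$ and there are $O(\Lambda_e^2)$ modes. The paper's proof is shorter and needs no knowledge of the Husimi function of a thermal state; yours is more computational but delivers strictly more information (the exact density of $\mu_{P,0}$, hence also an upper pointwise bound, not just a lower one), at the cost of carrying the harmless factor $e^{\lambda\Lambda_e}\le2$ through the constants.
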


\begin{proof}
Recall that
$$
d\mu_{0,K}(u)=\prod_{j = 1}^K  \left( \frac{\lambda_j }{\pi}e^{ - {\lambda} _j |\alpha_j |^2} \right) d\alpha_j, \quad \text{with}\quad u=\sum_{j=1}^K \alpha_j u_j.
$$
On the other hand, by Definition~\ref{def:lower symbol}, and the explicit action of Fock-space localization on quasi-free states~\cite[Example~12]{Lewin-11}
\begin{align*}
d\mu_{P,0}(u)&= (\lambda\pi)^{-K} \Big\langle \xi (u/\sqrt{\lambda}),  (\Gamma_0)_P  \xi\left(u /\sqrt{\lambda}\right) \Big\rangle du\\
& = (\lambda\pi)^{-K}  \left[ \Tr \left(e^{-\lambda \dGamma(Ph)}\right) \right]^{-1}   \Big\langle \xi (u/\sqrt{\lambda}) ,   e^{-\lambda\dGamma(Ph)}   \xi (u/\sqrt{\lambda}) \Big\rangle du. 
\end{align*}
Using the Peierls-Bogoliubov inequality $\pscal{x,e^Ax}\geq e^{\pscal{x,Ax}}$ and the coherent states' definition~\eqref{eq:coherent state}, we have  
\begin{align*}
\Big\langle \xi (u/\sqrt{\lambda}),  e^{-\lambda\dGamma(Ph)}  \xi (u/\sqrt{\lambda}) \Big\rangle  
&\ge  \exp\left[ - \Big\langle \xi (u/\sqrt{\lambda}), \lambda\dGamma(P h)  \xi (u/\sqrt{\lambda}) \Big\rangle \right] \\
& =   \exp \left[ -\langle u, P h u\rangle \right] = \exp \left[ - \sum_{j=1}^K \lambda_j |\alpha_j|^2 \right].  
\end{align*}
Combining with the explicit formula for the free partition function (c.f. \eqref{eq:partition_quasi_free}), we arrive at
\begin{align} \label{eq:muP0-mu0P-0}
\mu_{P,0}(u) \ge \prod_{j=1}^K \left[  \frac{1}{\lambda\lambda_j} (1-e^{-\lambda\lambda_j}) \right] \mu_{0,K}(u).
\end{align}
Using
$$
\frac{1-e^{-t}}{t} \ge 1- \frac{t}{2}, \quad \forall t>0
$$ 
and Bernoulli's inequality, recalling~\eqref{eq:CLR} we can estimate
\begin{equation} \label{eq:Ber-inq}
 \prod_{j=1}^K \left[  \frac{1}{\lambda\lambda_j} (1-e^{-\lambda\lambda_j}) \right] \ge \prod_{j=1}^K  \left( 1- \frac{\lambda\lambda_j}{2}\right) \ge  1- \lambda\frac{\Lambda_e K}{2} \ge 1- \lambda\Tr[h^{-2}] \Lambda_e^{3}.
\end{equation}
Thus
\begin{align} \label{eq:muP0-mu0P-1}
\mu_{P,0}(u) \ge \Big(1- \tr[h^{-2}]\lambda  \Lambda_e ^{3} \Big) \mu_{0,K}(u),
\end{align}
which implies
$$\left(\mu_{P,0}-\mu_{0,K}\right)_-(u)\leq  \tr[h^{-2}]\lambda  \Lambda_e ^{3} \mu_{0,K}(u)$$
where $f_-=\max(-f,0)$ is the negative part. Integrating over $u\in P\gH$ we find
$$\int_{P\gH}\left(\mu_{P,0}-\mu_{0,K}\right)_-\leq \Tr[h^{-2}]\lambda  \Lambda_e ^{3}.$$
Notice then that
$$0=\int_{P\gH}\left(\mu_{P,0}-\mu_{0,K}\right)=\int_{P\gH}\left(\mu_{P,0}-\mu_{0,K}\right)_+-\int_{P\gH}\left(\mu_{P,0}-\mu_{0,K}\right)_-$$
so we get as announced that
$$\int_{P\gH}\left|\mu_{P,0}-\mu_{0,K}\right| = 2 \int_{P\gH}\left(\mu_{P,0}-\mu_{0,K}\right)_-  \leq 2\tr[h^{-2}]\lambda  \Lambda_e ^{3}.$$
\end{proof}

We now conclude the 

\begin{proof}[Proof of Proposition~\ref{pro:low bound}] Using Lemma~\ref{lem:free low cyl}   and the fact that $\cD_K[u]\geq 0$, we can estimate
\begin{align} \label{eq:cDu-K}
\int_{P\gH} e^{-\cD_K[u]} \; d\mu_{P,0}(u) &\le \int_{P\gH} e^{-\cD_K[u]} \; d\mu_{0,K}(u) +  \norm{\mu_{P,0}-\mu_{0,K}}_{L^1(P\gH)} \nn\\ 
&\leq \int_{P\gH} e^{-\cD_K[u]} \; d\mu_{0,K}(u) + 2 \lambda \Lambda_e^{3}. 
\end{align}
Inserting this bound in the right side of \eqref{eq:partition-lwb-0}, we arrive at the lower bound
\begin{align} \label{eq:partition-lwb-1}
 -\log \frac{\cZ(\lambda)}{\cZ_0(\lambda)} \ge -\log \left(  \int_{P\gH} e^{-\cD_K[u]} \; d\mu_{0,K}(u) + C\lambda \Lambda_e^{3} \right)-C  \lambda  \Lambda_e ^{3} - C ( \lambda  + \|Qh^{-1}\|_{\gS^2} )^{\frac1{14}}.\end{align}
Moreover, note that when $\lambda\to0$, we have $K={\rm dim} (P\gH)\to \infty$ since $\Lambda_e\to \infty$. Therefore, $\cD_K[u]\to \cD[u]$ in $L^1(\mu_0)$ by Lemma \ref{lem:re-interaction}. Consequently,
\begin{equation} \label{eq:DKT}
\lim_{\lambda\to0} \int_{P\gH} e^{-\cD_K[u]} \; d\mu_{0,K}(u) = \int e^{-\cD[u]} \; d\mu_{0}(u) \in (0,1). 
\end{equation}
by the dominated convergence theorem. Using the fact that $\log (1+t)=O(t)$ for $|t|$ small, we obtain
$$
 -\log \left(  \int_{P\gH} e^{-\cD_K[u]} \; d\mu_{0,K}(u) + C\lambda \Lambda_e^{3} \right) \ge -\log \left( \int_{P\gH} e^{-\cD_K[u]} \; d\mu_{0,K}(u) \right) -C\lambda \Lambda_e^{3}. 
$$ 
Thus from \eqref{eq:partition-lwb-1} we obtain the final quantitative lower bound
\begin{equation} \label{eq:total-error-to-Lambdae}
 -\log \frac{\cZ(\lambda)}{\cZ_0(\lambda)} \ge -\log \left(  \int_{P\gH} e^{-\cD_K[u]} \; d\mu_{0,K}(u) \right) -C  \lambda  \Lambda_e ^{3} - C ( \lambda  + \|Qh^{-1}\|_{\gS^2} )^{\frac1{14}}. \end{equation}
In particular, in the limit $\lambda\to0$, with the choice $1\ll \Lambda_e \ll \lambda^{-1/3}$ from \eqref{eq:total-error-to-Lambdae} and \eqref{eq:DKT} we conclude that 
$$
 -\log \frac{\cZ(\lambda)}{\cZ_0(\lambda)} \ge -\log \left(  \int  e^{-\cD[u]} \; d\mu_{0}(u) \right) + o(1) = -\log z + o(1). 
$$
This concludes the proof of the lower bound~\eqref{eq:low bound}. 
\end{proof}

\section{Free energy upper bound}\label{sec:up bound}

Now we complete the proof of the free energy convergence ~\eqref{eq:CV free ener} by providing a free-energy upper bound which complements Proposition~\ref{pro:low bound}:

\begin{proposition}[\textbf{Free-energy upper bound}]\label{pro:up bound}\mbox{}\\
Let $h>0$ satisfy \eqref{eq:Schatten h-strong-1}-\eqref{eq:Schatten h-strong-2} and let   $w:\R^d \to \R$ satisfy \eqref{eq:interaction}.  Let $z$ be the classical relative partition function defined in Lemma~\ref{lem:re-interaction}.
Then we have 
\begin{equation}\label{eq:up bound}
\limsup_{\lambda\to0^+}\left(-\log \frac{\cZl}{\cZ_0(\lambda)}\right) \leq - \log z.
\end{equation}
\end{proposition}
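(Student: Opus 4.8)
The plan is to prove the upper bound~\eqref{eq:up bound} by a trial state argument, choosing a quantum state $\Gamma$ that is the reference Gaussian state $\Gamma_0$ in the ultraviolet (high kinetic modes) and a projected version of the classical interacting measure in the infrared (low kinetic modes). Concretely, fix the spectral projections $P=\one(h\le\Lambda_e)$, $Q=\one-P$ with a cut-off $1\ll\Lambda_e\ll\lambda^{-1/3}$ as in Proposition~\ref{pro:low bound}, and use the factorization $\gF\approx\gF(P\gH)\otimes\gF(Q\gH)$ from~\eqref{eq:factorization_Fock_space}. The trial state will have the product form $\cU^*(\Gamma^{\rm cl}_P\otimes(\Gamma_0)_Q)\cU$, where $(\Gamma_0)_Q$ is the localization of the reference Gaussian state to $\gF(Q\gH)$, and $\Gamma^{\rm cl}_P$ is a \emph{quantized} version on $\gF(P\gH)$ of the finite-dimensional measure $d\mu^{\rm cl}_{P}(u)\propto e^{-\cD_K[P_Ku]}\,d\mu_{0,K}(u)$, obtained by superposing coherent states $|\xi(u/\sqrt\lambda)\rangle\langle\xi(u/\sqrt\lambda)|$ against this measure (the standard upper-symbol quantization). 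Then $\cH(\Gamma,\Gamma_0)$ splits as the $Q$-part (which vanishes since the $Q$-block coincides with $(\Gamma_0)_Q$) plus the $P$-part, and on $P\gH$ a Berezin--Lieb upper bound for the entropy together with the explicit coherent-state overlaps gives $\cH(\Gamma,\Gamma_0)\le\cH_{\rm cl}(\mu^{\rm cl}_P,\mu_{0,K})+(\text{error})$.

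\textbf{Key steps.} First I would record the variational upper bound $-\log(\cZ(\lambda)/\cZ_0(\lambda))\le\cH(\Gamma,\Gamma_0)+\lambda^2\tr[\bWren\Gamma]$ valid for any trial state $\Gamma$, coming from~\eqref{eq:rel-energy}. Second, I would compute the interaction term: since the $Q$-block is Gaussian, its contribution to $\lambda^2\tr[\bWren\Gamma]$ is controlled by Lemma~\ref{lem:quasi-free ener} and by the smallness of $\|Qh^{-1}\|_{\gS^2}$ (using the Fourier decomposition of $\bWren$ and the commutator structure exactly as in Section~\ref{sec:correl}, but now for the \emph{trial} state, which is much easier); the cross terms between $P$ and $Q$ blocks vanish by translation-type orthogonality after localization, and the $P$-block contribution reproduces $\int_{P\gH}\cD_K[u]\,d\mu^{\rm cl}_P(u)$ up to errors of size $O(\lambda\Lambda_e^3)$ coming from the de Finetti formula~\eqref{eq:Chiribella} and the bounds~\eqref{eq:quantitative} on the defect terms. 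Third, I would assemble these two pieces to get
\begin{equation*}
-\log\frac{\cZ(\lambda)}{\cZ_0(\lambda)}\le \cH_{\rm cl}(\mu^{\rm cl}_P,\mu_{0,K})+\int_{P\gH}\cD_K[u]\,d\mu^{\rm cl}_P(u)+o(1),
\end{equation*}
and recognize the right-hand side (minus the $o(1)$) as the finite-dimensional relative-entropy functional evaluated at its own minimizer $\mu^{\rm cl}_P$, hence equal to $-\log\big(\int_{P\gH}e^{-\cD_K[u]}\,d\mu_{0,K}(u)\big)$. Fourth, I would let $\lambda\to0$: by Lemma~\ref{lem:re-interaction} one has $\cD_K[u]\to\cD[u]$ in $L^1(d\mu_0)$ and $K=\dim(P\gH)\to\infty$, so by dominated convergence $\int_{P\gH}e^{-\cD_K[u]}\,d\mu_{0,K}(u)\to\int e^{-\cD[u]}\,d\mu_0(u)=z$, which yields~\eqref{eq:up bound}.

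\textbf{Main obstacle.} The delicate point is controlling the ultraviolet part of the interaction energy $\lambda^2\tr[\bWren\Gamma]$ for the trial state, i.e.\ showing that the contributions involving $Q$-modes in the renormalized interaction are genuinely small rather than divergent. This is where the renormalization must be handled carefully: $\bWren$ is a superposition over Fourier modes $k$ of squares $|\dG(e_k)-\langle\dG(e_k)\rangle_{\Gamma_0}|^2$, and one has to exploit that for the \emph{Gaussian} $Q$-block the counter-term is exactly matched, so that the leftover is a variance of a $Q$-localized operator, bounded by $\|Qh^{-1}\|_{\gS^2}$ via Lemma~\ref{lem:quasi-free mass}. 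The $P$--$Q$ cross terms require the geometric-localization identity $\dG(e_k)=\dG(Pe_kP)+\dG(Pe_kQ+Qe_kP)+\dG(Qe_kQ)$ and the fact that the trial state is a product across the $P/Q$ splitting, which kills the expectation of the off-diagonal pieces; the residual $O(\lambda\Lambda_e^3)$ errors are then of the same type already encountered in Lemma~\ref{lem:local-W-P} and are absorbed by the choice $\Lambda_e\ll\lambda^{-1/3}$. The rest is routine finite-dimensional semiclassics on $P\gH$, comparing upper and lower symbols, which is substantially easier than the lower-bound analysis of Section~\ref{sec:low bound}.
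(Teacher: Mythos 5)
Your proposal is correct in outline, but it takes a genuinely different route from the paper on the low-momentum block. The paper's trial state (Section~\ref{sec:up bound}) is $\cU^*\big(\Gamma_{\lambda,P}\otimes(\Gamma_0)_Q\big)\cU$ where $\Gamma_{\lambda,P}$ is the \emph{interacting quantum Gibbs state} of the projected Hamiltonian $\dGamma(Ph)+\lambda\bWren_P$ on $\gF(P\gH)$; by the variational principle this reduces everything to the finite-dimensional quantum free-energy difference $F^P_\lambda-F^P_0$ (Lemma~\ref{lem:up reduc finite}), which is then bounded in a single stroke by inserting the coherent-state resolution of the identity into the partition function and applying Peierls--Bogoliubov (Lemma~\ref{lem:up finite anal}) --- no entropy comparison of symbols is ever needed. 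You instead quantize the classical minimizer $\mu^{\rm cl}_P\propto e^{-\cD_K}\mu_{0,K}$ directly as a coherent-state superposition (upper symbol). This is viable --- it is essentially the route of the authors' earlier 1D works --- and it buys you fully explicit density matrices, so the ultraviolet error terms can be computed by hand rather than by re-running the Section~\ref{sec:correl} machinery on the trial state, as the paper does in Step 2 of Lemma~\ref{lem:up reduc finite}. The price is that you need two ingredients not stated in the paper: (i) the \emph{upper-symbol} Berezin--Lieb inequality for the relative entropy (Theorem~\ref{thm:rel-entropy} only gives the lower-symbol direction), together with (ii) the Glauber $P$-representation of $(\Gamma_0)_P$, whose covariance is $\lambda(e^{\lambda PhP}-1)^{-1}$ rather than $Ph^{-1}$, so that comparing it to $\mu_{0,K}$ costs an extra $O(\lambda\Lambda_e^3)$ of the same type as~\eqref{eq:Ber-inq}. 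One small imprecision: for your product trial state the $P$--$Q$ cross terms in $\bWren$ do not all vanish --- only the terms linear in $\dG(Pe_kQ+Qe_kP)$ die by gauge invariance of the Gaussian $Q$-block, while the square $\lambda^2\langle|\dG(Pe_kQ+Qe_kP)|^2\rangle$ survives and must be estimated (it is of order $\|Qh^{-1}\|_{\gS^2}$, consistent with the bound you announce for the $Q$-leftovers, so this is a gap in the wording rather than in the argument). With these points supplied, your assembly and the final passage to the limit via Lemma~\ref{lem:re-interaction} and dominated convergence coincide with the paper's conclusion.
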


This part is conceptually easier than the free-energy lower bound. We rely on the variational principle and simply evaluate the free-energy of a suitable trial state. We split the proof into two main steps:
\begin{itemize}
 \item \emph{Reduction to a finite-dimensional estimate, Section~\ref{sec:up bound reduc}.} Our trial state coincides with the Gaussian state on high kinetic energy modes, and with a projected finite-dimensional interacting Gibbs state on low modes. We prove that to leading order its free-energy reduces to that in the low-energy sector, up to affordable errors. This is fairly similar to the analysis in Sections~\ref{sec:correl} and~\ref{sec:low ener}, but somewhat simpler. Some details will thus be skipped. 
\item \emph{Finite-dimensional semi-classics, Section~\ref{sec:up bound finite dim}.} Once we are reduced to treating a problem posed in a finite dimensional one-particle space, we are on more familiar terrain~\cite{Lieb-73b,Simon-80,Knowles-thesis,Gottlieb-05}, see e.g.~\cite[Appendix~B]{Rougerie-LMU,Rougerie-cdf}. We provide a proof of the needed free-energy upper bound for self-containedness and because we need to keep track of the dependence on the finite, but large, dimension.
\end{itemize}

\subsection{Reduction to a finite-dimensional estimate}\label{sec:up bound reduc}

We use similar low- and high-kinetic energy projectors as previously:
$$P =  \one(h\leq \Lambda_e), \quad Q=\1-P, \quad 1\ll \Lambda_e \ll \lambda^{-1/3}.$$
Let us define the interacting Gibbs state in $\gF(P\gH)$:
\begin{equation}\label{eq:Gibbs localized}
\Gamma_{\lambda,P}= \frac{e^{-\lambda(\dGamma(Ph)+\lambda \bWren_P)}}{\Tr_{\gF(P\gH)} e^{-\lambda(\dGamma(Ph)+\lambda \bWren_P)}} 
\end{equation}
where $\bWren_P$ is the localized interaction\footnote{Note that the expectation $\langle\dG (Pe ^{ik \cdot x}P) \rangle_0$ in $\Gamma_0$ is the same as that in $(\Gamma_0)_P$.} 
\begin{align}\label{eq:WP}
\bWren_P &= \frac12 \int_{\R^d} \hat{w} (k) \left|\dG (Pe ^{ik \cdot x}P) - \left\langle \dG (Pe ^{ik \cdot x}P) \right\rangle_0 \right| ^2 dk \nonumber \\
&= \frac12 \int_{\R^d} \hat{w} (k)  \left|\dG (P\cos (k \cdot x) P) - \left\langle \dG (P\cos (k \cdot x)P) \right\rangle_0\right| ^2 dk \nonumber \\
&+  \frac12 \int_{\R^d} \hat{w} (k)\left|\dG (P\sin (k \cdot x)P) - \left\langle \dG (P\sin (k \cdot x)P) \right\rangle_0\right| ^2 dk.
\end{align}
Note that $\Gamma_{\lambda,P}$ does not coincide with the state $(\Gamma_\lambda)_P$ obtained by $P$-localizing the full interacting Gibbs state, except in the non-interacting case
$$ \Gamma_{\lambda=0,P} = (\Gamma_0)_P.$$
Let 
$$ F_\lambda ^P := - \lambda^{-1} \log \left( \Tr \left(e^{-\lambda(\dGamma(Ph)+\lambda \bWren_P)} \right) \right) $$
be the free-energy of the $P$-localized problem.

In this subsection we prove the following, which reduces  Proposition~\ref{pro:up bound} to the corresponding estimate in a finite dimensional  subspace $P\gH$.

\begin{lemma}[\textbf{Reduction to low kinetic energy modes}]\label{lem:up reduc finite}\mbox{}\\
Let $h>0$ satisfy  $\tr[h^{-2}]<\ii$. Then 
\begin{align} \label{eq:partition-local-PP}
-\log \frac{\cZl}{\cZ_0(\lambda)} = \lambda\left(F_\lambda - F_0\right)
\le \lambda\left(F_\lambda^P - F_0^P\right) +  C  \left( \lambda  + \|Qh^{-1}\|_{\gS^2} \right)^{\frac1{14}}.
\end{align}
 \end{lemma}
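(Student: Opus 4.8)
The plan is to build a trial state for the full problem on $\gF$ out of the $P$-localized interacting Gibbs state and the $Q$-localized free Gaussian state, exploiting the factorization $\gF\simeq\gF(P\gH)\otimes\gF(Q\gH)$. Concretely, I would take $\Gamma_{\rm trial}:=\cU^*\bigl(\Gamma_{\lambda,P}\otimes(\Gamma_0)_Q\bigr)\cU$, where $\Gamma_{\lambda,P}$ is defined in~\eqref{eq:Gibbs localized}, $(\Gamma_0)_Q$ is the $Q$-localization of the reference Gaussian state, and $\cU$ is the unitary~\eqref{eq:Fock factor}. Because $\dG(h)=\dG(Ph)+\dG(Qh)$ respects the factorization, the reference partition function factorizes as $\cZ_0(\lambda)=\cZ_0^P(\lambda)\,\cZ_0^Q(\lambda)$, and the relative free energy of the trial state splits as
\begin{equation*}
-\log\frac{\cZ(\lambda)}{\cZ_0(\lambda)}\le \cH(\Gamma_{\rm trial},\Gamma_0)+\lambda^2\Tr[\bWren\,\Gamma_{\rm trial}] = \bigl(\lambda F_\lambda^P-\lambda F_0^P\bigr)+\lambda^2\Bigl(\Tr[\bWren\,\Gamma_{\rm trial}]-\Tr[\bWren_P\,\Gamma_{\lambda,P}]\Bigr),
\end{equation*}
using that the entropy of a tensor product is additive and that $\cH((\Gamma_0)_Q,(\Gamma_0)_Q)=0$, so the $Q$-sector contributes nothing to the entropy and the $P$-sector contributes exactly $\cH(\Gamma_{\lambda,P},(\Gamma_0)_P)=\lambda F_\lambda^P-\lambda F_0^P$ by the variational characterization of $\Gamma_{\lambda,P}$.

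The heart of the estimate is then to show that replacing $\bWren$ acting on the whole Fock space by $\bWren_P$ acting on the trial state costs only $O\bigl((\lambda+\|Qh^{-1}\|_{\gS^2})^{1/14}\bigr)$. Writing $\bWren=\frac12\int\hat w(k)|\dG(e_k)-\langle\dG(e_k)\rangle_0|^2\,dk$ and $e_k=e_k^-+e_k^+$ with $e_k^-=Pe_kP$ and $e_k^+=e_k-Pe_kP$, I would expand the square and control the cross terms and the $e_k^+$-diagonal term. The key point is that for the trial state $\Gamma_{\rm trial}$ one has the simple identity $\dG(e_k^+)$ acting as a sum $\dG(Pe_kQ+Qe_kP+Qe_kQ)$, and since the $P$ and $Q$ sectors are \emph{independent} (tensor product, no interaction coupling them), the variance of $\dG(e_k^+)$ against $\Gamma_{\rm trial}$ is computable from the one-body density matrices: it is bounded by contributions of the form $\Tr(|e_k^+|^2(\Gamma_{\lambda,P}^{(1)}\oplus\Gamma_0^{(1)}\,|_Q))$ plus an exchange-type term $\Tr(e_k^+\,\Gamma^{(1)}\,e_k^+\,\Gamma^{(1)})$. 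These are estimated exactly as in Lemma~\ref{lem:quasi-free mass}, Lemma~\ref{lem:number interacting} (the a priori bounds~\eqref{eq:dGhalpha}, transferred to $\Gamma_{\lambda,P}$ by the same monotonicity arguments), and the Hilbert–Schmidt smallness $\|Qh^{-1}\|_{\gS^2}\to0$; the commutator bound~\eqref{eq:Schatten h-strong-2} enters to handle $[h,e_k]$ as in the proof of Lemma~\ref{lem:commutator-X}. The diagonal term $\Tr((e_k^-)^2\Gamma)$ and the analogous low-mode terms, being positive, only help for the upper bound or contribute lower-order errors $O(\lambda\Lambda_e^3)$, which is absorbed since $\Lambda_e\ll\lambda^{-1/3}$.

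A point requiring some care is that $\Tr[\bWren\,\Gamma_{\rm trial}]$ also contains the counter-term pieces $\langle\dG(e_k^+)\rangle_0$, which diverge; but these are \emph{the same} constants appearing in $\langle\dG(e_k)\rangle_0=\langle\dG(e_k^-)\rangle_0$ up to the low-mode part, so after expanding the square the divergent parts cancel between $\bWren$ and $\bWren_P$, leaving only the controlled fluctuation and exchange terms. One also needs the a priori bounds on $\Gamma_{\lambda,P}$ itself — boundedness of $\cH(\Gamma_{\lambda,P},(\Gamma_0)_P)$ and of $\lambda^2\langle\bWren_P\rangle_{\Gamma_{\lambda,P}}$ — which follow verbatim from Lemma~\ref{lem:partition} applied in $\gF(P\gH)$ with $Ph$ in place of $h$, since $\tr[(Ph)^{-2}]\le\tr[h^{-2}]$. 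Putting these together gives
\begin{equation*}
\lambda^2\bigl|\Tr[\bWren\,\Gamma_{\rm trial}]-\Tr[\bWren_P\,\Gamma_{\lambda,P}]\bigr|\le C\bigl(\lambda+\|Qh^{-1}\|_{\gS^2}\bigr)^{1/14},
\end{equation*}
and combined with the free-energy splitting above this yields~\eqref{eq:partition-local-PP}. The main obstacle I anticipate is bookkeeping the counter-term cancellations cleanly — making sure that the divergent $\langle\dG(e_k^+)\rangle_0$ terms are matched exactly and that what remains is genuinely a variance/exchange quantity amenable to the Hilbert–Schmidt and a priori estimates, rather than something still sensitive to the ultraviolet divergence; this is the same delicate renormalization bookkeeping as in Section~\ref{sec:low ener}, but since here we control $\Gamma_{\rm trial}$ explicitly (it is a tensor product of a free state and a finite-dimensional Gibbs state), the variance computations are genuinely simpler than the correlation estimate of Theorem~\ref{thm:correlation}.
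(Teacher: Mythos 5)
Your construction is the paper's: the same trial state $\cU^*\bigl(\Gamma_{\lambda,P}\otimes(\Gamma_0)_Q\bigr)\cU$, the same additive splitting of the relative entropy through the product structure, and the same reduction of \eqref{eq:partition-local-PP} to the single claim $\lambda^2\Tr[\bWren\Gammat]\le\lambda^2\Tr[\bWren_P\Gamma_{\lambda,P}]+C(\lambda+\|Qh^{-1}\|_{\gS^2})^{1/14}$, concluded by the $(1+\eps)$ Cauchy--Schwarz separation of $e_k=e_k^-+e_k^+$ and optimization in $\eps$ against the uniform bound on $\lambda^2\Tr[\bWren_P\Gamma_{\lambda,P}]$. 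Where you genuinely differ is the proof of the high-momentum variance bound. The paper establishes $\lambda\|h^{1/2}(\Gammat^{(1)}-\Gamma_0^{(1)})h^{1/2}\|_{\gS^2}\le C$ and then re-runs the whole proof of Theorem~\ref{thm:correlation} on $\Gammat$ (which is itself a Gibbs state for $\dG(h)+\lambda\bWren_P$), i.e.\ the perturbed-state and double-commutator machinery of Sections~\ref{sec:variance-by-first-moment}--\ref{sec:correl}. You instead exploit that $\Gammat$ is a tensor product whose $Q$-factor is quasi-free and gauge-invariant, and this observation is sound and buys a real simplification: since $e_k^+$ has no $PP$-block, one has $\langle\dG(e_k^+)\rangle_{\Gammat}=\Tr[Qe_kQ\,\Gamma_0^{(1)}]=\langle\dG(e_k^+)\rangle_0$ \emph{exactly}, so the divergent counter-term is precisely the mean and the cancellation you worry about at the end is automatic rather than delicate; moreover the variance Wick-factorizes into one-body traces even though $\Gamma_{\lambda,P}$ is not quasi-free, because $(e_k^+)^{\otimes 2}$ only tests the $QQ$- and cross-blocks of $\Gammat^{(2)}$, which for a product of gauge-invariant states are built from $\Gamma_{\lambda,P}^{(1)}$ and $Q\Gamma_0^{(1)}Q$ alone. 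The resulting terms are all controllable with the ingredients you list: $\lambda^2\Tr[Qe_kQ\Gamma_0^{(1)}Qe_kQ\Gamma_0^{(1)}]\le\|Qh^{-1}\|_{\gS^2}^2$, the cross-exchange $\lambda^2\Tr[Pe_kQ\Gamma_0^{(1)}Qe_kP\Gamma_{\lambda,P}^{(1)}]$ via $\lambda Q\Gamma_0^{(1)}Q\le Qh^{-1}Q$ plus the relative-entropy bound on $\lambda\Gamma_{\lambda,P}^{(1)}$ (and a one-body commutation to move $h^{-1}$ next to $Q$), and the diagonal $\lambda^2\Tr[(e_k^+)^2\Gammat^{(1)}]\le C(\lambda\Lambda_e+\|Qh^{-1}\|_{\gS^2}^2)$ from $\lambda^2\Gamma_0^{(1)}\le\min(\lambda h^{-1},Ch^{-2})$ together with the transferred Lemma~\ref{lem:number interacting}; the a priori bounds on the $P$-sector indeed carry over verbatim since $\Gamma_{\lambda,P}$ and $(\Gamma_0)_P$ are the interacting and free Gibbs states of $\gF(P\gH)$. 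So your route is correct and, for this upper-bound step, more elementary than the paper's; what the paper's heavier route buys is uniformity of method (the same Theorem~\ref{thm:correlation} serves both bounds), while yours makes transparent why the upper bound is so much easier than the lower bound.
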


Before proving this we interject  

\begin{lemma}[\textbf{Entropy relative to a product state}]\label{lem:rel ent}\mbox{}\\
Let $\gH_1$ and $\gH_2$ be two complex separable Hilbert spaces. Let $A$ be a state on $\gH_1\otimes \gH_2$ with the partial traces $A_1=\Tr_{\gH_2}A$, $A_2=\Tr_{\gH_1}A$ and let $B_1,B_2$ be states on $\gH_1,\gH_2$. Then
$$
\cH(A,B_1\otimes B_2) = \cH(A,A_1\otimes A_2) +  \cH(A_1,B_1)+ \cH(A_2,B_2).
$$
\end{lemma}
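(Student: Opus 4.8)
The plan is to prove the identity \eqref{eq:partition-local-PP}-adjacent statement (Lemma~\ref{lem:rel ent}) by a direct computation with logarithms of tensor-product states, exploiting the fact that $\log(B_1\otimes B_2)=\log B_1\otimes \1 + \1\otimes \log B_2$. First I would write out the definition of the relative entropy on $\gH_1\otimes\gH_2$,
$$
\cH(A,B_1\otimes B_2)=\tr\big(A(\log A-\log(B_1\otimes B_2))\big)=\tr(A\log A)-\tr\big(A(\log B_1\otimes\1)\big)-\tr\big(A(\1\otimes\log B_2)\big).
$$
The key observation is that $\tr_{\gH_1\otimes\gH_2}\big(A(\log B_1\otimes\1)\big)=\tr_{\gH_1}\big(A_1\log B_1\big)$, since tracing out $\gH_2$ of $A$ against an operator acting trivially on $\gH_2$ reduces to the partial trace $A_1$; similarly $\tr\big(A(\1\otimes\log B_2)\big)=\tr_{\gH_2}(A_2\log B_2)$. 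This step requires only that $\log B_1$, $\log B_2$ be well-defined (which is implicit in calling $B_1,B_2$ states with finite relative entropy, or one works on the support and allows $+\infty$), and that the partial traces be taken in the usual way.

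Next I would apply the same manipulation twice more. Writing $\cH(A,A_1\otimes A_2)=\tr(A\log A)-\tr_{\gH_1}(A_1\log A_1)-\tr_{\gH_2}(A_2\log A_2)$, and $\cH(A_i,B_i)=\tr_{\gH_i}(A_i\log A_i)-\tr_{\gH_i}(A_i\log B_i)$ for $i=1,2$, one simply adds:
$$
\cH(A,A_1\otimes A_2)+\cH(A_1,B_1)+\cH(A_2,B_2)=\tr(A\log A)-\tr_{\gH_1}(A_1\log B_1)-\tr_{\gH_2}(A_2\log B_2),
$$
because the $\pm\tr_{\gH_i}(A_i\log A_i)$ terms cancel pairwise. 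Comparing with the first display gives exactly $\cH(A,B_1\otimes B_2)$, which is the claimed identity. The whole argument is thus a two-line bookkeeping of traces once the partial-trace reduction is in hand.

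The only genuine subtlety — and the step I would be most careful about — is the justification of $\tr\big(A(\log B_1\otimes\1)\big)=\tr_{\gH_1}(A_1\log B_1)$ when $\log B_1$ is unbounded (i.e. $B_1$ has eigenvalues accumulating at $0$) and/or $A$ is only trace-class, so that one is manipulating possibly infinite or conditionally convergent quantities. In the applications of this lemma in the paper (Section~\ref{sec:up bound}) the relevant states live, after localization, on a finite-dimensional one-particle space or are quasi-free states with good decay, so all traces are absolutely convergent and the formal computation is rigorous; I would add a sentence noting that the identity is to be read in $[0,+\infty]$, with both sides simultaneously finite or infinite, and that finiteness of $\cH(A,B_1\otimes B_2)$ forces $\mathrm{supp}\,A\subseteq\mathrm{supp}(B_1\otimes B_2)=\mathrm{supp}\,B_1\otimes\mathrm{supp}\,B_2$, which in turn controls each term on the right-hand side. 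Modulo this standard care with supports and convergence, there is no real obstacle: the lemma is an algebraic consequence of additivity of $\log$ over tensor products together with the defining property of partial traces.
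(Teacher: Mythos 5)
Your proposal is correct and follows essentially the same route as the paper: both rest on the identity $\log(B_1\otimes B_2)=\log B_1\otimes\1+\1\otimes\log B_2$, the partial-trace reduction $\Tr(A(\log B_1\otimes\1))=\Tr(A_1\log B_1)$, and the insertion/cancellation of the $\Tr(A_i\log A_i)$ terms. Your extra remarks on supports and convergence are sensible but not needed in the paper's application, where the relevant states are well-behaved.
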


\begin{proof} 
Writing the spectral decompositions of $B_1,B_2$ one can easily see that
$$
\log(B_1\otimes B_2)= \log(B_1) \otimes \1 + \1 \otimes \log(B_2)
$$
and thus we can write 
\begin{align*}
\cH(A,B_1\otimes B_2)&=\Tr\left(A(\log A - \log (B_1\otimes B_2))\right) \\
&=\Tr\left(A(\log A - \log (A_1\otimes A_2))\right) + \Tr\left(A( \log (A_1)\otimes \1 - \log (B_1)\otimes \1))\right) \\
&\quad + \Tr\left(A( \1 \otimes \log (A_2) -\1 \otimes \log (B_2) )\right) \\
&= \cH(A,A_1\otimes A_2) + \cH(A_1,B_1)+ \cH(A_2,B_2).
\end{align*}
\end{proof}

In this section we only use Lemma \ref{lem:rel ent} in the simple case $A=A_1\otimes A_2$. The general version will be useful later in Section \ref{sec:DM}.

\begin{proof}[Proof of Lemma~\ref{lem:up reduc finite}]
In the last identity of~\eqref{eq:partition-local-PP} we use the fact that $(\Gamma_0)_P$ and $\Gamma_{\lambda,P}$ are the free and interacting Gibbs states in $\gF(P\gH)$, similarly as in~\eqref{eq:rel-energy}. The inequality is proved by a trial state argument.

\medskip

\noindent\textbf{Step 1: Trial state.} Using the unitary $\cU$ in \eqref{eq:Fock factor}, we define 
\begin{equation}\label{eq:trial state}
\Gammat= \cU^* \Big(  \Gamma_{\lambda,P} \otimes (\Gamma_0)_Q \Big) \cU
\end{equation}
where $\Gamma_{\lambda,P}$ is as in~\eqref{eq:Gibbs localized} and $(\Gamma_0)_Q$ is the $Q$-localization of the Gaussian state, c.f.  Definition~\ref{def:localization}. Importantly, from~\eqref{eq:loc creation} and~\eqref{eq:DM ann cre} one shows that 
\begin{equation}\label{eq:trial one body}
\Gammat ^{(1)} = P \Gamma_{\lambda,P} ^{(1)} P + Q \Gamma_0 ^{(1)} Q 
\end{equation}
and 
\begin{equation}\label{eq:trial two body}
\Gammat ^{(2)} = P^{\otimes 2} \Gamma_{\lambda,P} ^{(2)} P^{\otimes 2} + Q^{\otimes 2} \Gamma_0 ^{(2)} Q ^{\otimes 2} + \left( \Gamma_{\lambda,P} ^{(1)} \otimes Q \Gamma_0 ^{(1)} Q + Q \Gamma_0 ^{(1)} Q \otimes \Gamma_{\lambda,P} ^{(1)} \right). 
\end{equation} 
Also, since the relative entropy is unaffected by the unitary and the Gaussian state is factorized,
$$ \Gamma_0 =\cU^* \Big( (\Gamma_0)_P \otimes (\Gamma_0)_Q \Big) \cU, $$
we obtain from Lemma~\ref{lem:rel ent} that
\begin{align*}
\cH(\Gammat, \Gamma_0) &=\cH(\Gamma_{\lambda,P} \otimes (\Gamma_0)_Q, (\Gamma_0)_P \otimes (\Gamma_0)_Q)= \cH(\Gamma_{\lambda,P}, (\Gamma_0)_P).
\end{align*}
Hence, by the variational principle~\eqref{eq:rel-energy} 
\begin{align*} 
-\log \frac{\cZl}{\cZ_0(\lambda)} =  \lambda\left(F_\lambda - F_0\right) \le \cH(\Gammat, \Gamma_0) + \lambda^2\Tr [\bWren \Gammat] = \cH(\Gamma_{\lambda,P}, (\Gamma_0)_P) + \lambda^2 \Tr [\bWren \Gammat]. 
\end{align*}
On the other hand, from the choice of $\Gamma_{\lambda,P}$ we have 
$$
\lambda\left(F_\lambda^P - F_0^P\right) = \cH(\Gamma_{\lambda,P}, (\Gamma_0)_P) + \lambda^2 \Tr [\bWren_P \Gamma_{\lambda,P}]. 
$$
Thus there remains to evaluate the interaction energy of the trial state.

\medskip

\noindent\textbf{Step 2: Bound on the renormalized interaction energy.} We finish the proof of \eqref{eq:partition-local-PP} with the following claim: 
\begin{equation} \label{eq:local-W-P-2-PP}
\lambda^2 \Tr [\bWren \Gammat]  \le \lambda^2 \Tr [\bWren_P \Gamma_{\lambda,P}]+ C  ( \lambda  + \|Qh^{-1}\|_{\gS^2} )^{\frac1{14}}. 
\end{equation}
This is very similar in spirit to Lemma~\ref{lem:local-W-P} and we shall skip some details for brevity. In particular, since $(\Gamma_0)_P$ and $\Gamma_{\lambda,P}$ are the free and the interacting Gibbs states in the Fock space $\gF(P\gH)$, we can adapt to them (with the same proofs) most of the bounds on $\Gamma_0$ and $\Gamma_\lambda$ we used previously. 

First, using~\eqref{eq:trial one body} and arguing as for the proof of~\eqref{eq:estim_HS_relative_entropy}, we have
$$
\lambda \norm{ h^{1/2}(\Gammat^{(1)}-\Gamma_0^{(1)}) h^{1/2}}_{\gS^2} = \lambda \norm{ P h^{1/2}(\Gamma^{(1)}_{\lambda,P}-\Gamma_0^{(1)}) h^{1/2} P }_{\gS^2}  \le C. 
$$
Consequently, if we use again the notation
$$e_k^{-}=Pe_kP, \quad e_k^+=e_k-e_k^-$$
with $e_k$ being either $\cos(k\cdot x)$ or $\sin (k\cdot x)$, then 
following the proof of Theorem \ref{thm:correlation} , we obtain the variance estimate 
$$
\lambda^2\Tr \left[ \left|\dG (e_k^+) - \langle \dGamma(e_k^+) \rangle_0\right|^2 \Gammat\right] \le   C (1+|k|^2) ( \lambda  + \|Qh^{-1}\|_{\gS^2} )^{1/7}. 
$$
By using the Cauchy-Schwarz inequality
we find that 
\begin{align}  \label{eq:part-local-W-0-PP}
\lambda^2\Tr \left[  \left|  \dG (e_k) - \left\langle \dG (e_k) \right\rangle_0 \right| ^2 \Gammat \right] &\le (1+\eps)\lambda^2 \Tr \left[  \left| \dG (e_k^-) - \left\langle \dG (e_k^-) \right\rangle_0 \right| ^2 \Gammat  \right]  \nn \\
& \quad +  (1+\eps^{-1}) C (1+|k|^2) ( \lambda  + \|Qh^{-1}\|_{\gS^2} )^{1/7}. 
\end{align}
Integrating \eqref{eq:part-local-W-0-PP} against $\widehat w(k)$ over $k\in \Omega^*$ gives 
\begin{equation} \label{eq:local-W-P-2-PP-0000}
\lambda^2 \Tr \left[\bWren \Gammat\right]  \le (1+\eps)  \lambda^2 \Tr \left[\bWren_P \Gamma_{\lambda,P}\right] + (1+\eps^{-1})C  ( \lambda  + \|Qh^{-1}\|_{\gS^2} )^{1/7}. \end{equation}

Note that $\lambda^2\Tr \left[\bWren_P \Gamma_{\lambda,P}\right]$ is bounded uniformly in $\lambda$, which follows by inserting the trial state $(\Gamma_0)_P$ in  a variational formula similar to \eqref{eq:rel-energy}, and the fact that $\bWren_P\ge 0$. The desired result \eqref{eq:local-W-P-2-PP} thus follows from \eqref{eq:local-W-P-2-PP-0000} by optimizing over $\eps>0$.
\end{proof} 

\subsection{Finite-dimensional semi-classics}\label{sec:up bound finite dim}

The missing ingredient for the proof of Proposition~\eqref{pro:up bound} is the analysis of the partition functions in $\gF(P\gH)$ appearing in the right-hand side of~\eqref{eq:partition-local-PP}. We have 

\begin{lemma}[\textbf{Finite-dimensional semi-classics}]\label{lem:up finite anal}\mbox{}\\
Let $h>0$ satisfy  $\tr[h^{-2}]<\ii$. Let $ P =  \one(h\leq \Lambda_e)$ with $1\ll \Lambda_e \ll \lambda^{-1/3}$. Then 
\begin{align} \label{eq:up-finite-anal}
\lambda\left(F^P_\lambda - F^P_0\right) \leq -\log\left( \int_{P\gH} e^{-\cD_K[u]} d \mu_{0,K}(u) \right) +  C\lambda \Lambda_e^{3},
\end{align}
where $\cD_K$ is the truncated renormalized interaction from Lemma~\ref{lem:re-interaction}.
 \end{lemma}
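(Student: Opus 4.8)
The statement to prove is an upper bound for the relative free energy of the finite--dimensional interacting problem posed in $\gF(P\gH)$, namely that $\lambda(F^P_\lambda-F^P_0)$ is bounded by $-\log\int_{P\gH}e^{-\cD_K[u]}\,d\mu_{0,K}(u)$ up to an error of order $\lambda\Lambda_e^3$. Since $P\gH$ is finite dimensional ($K=\dim P\gH\le C\Lambda_e^2$), this is an instance of a classical semiclassical limit for coherent states / lower symbols, so the plan is to build a coherent-state trial state on $\gF(P\gH)$. First I would recall that $-\lambda F^P_\lambda + \lambda F^P_0 = \log(\cZ^P_\lambda/\cZ^P_0)$, so that by the variational principle for relative free energy (the finite-dimensional analogue of~\eqref{eq:rel-energy}) we have
\begin{equation*}
\lambda\left(F^P_\lambda-F^P_0\right)=\min_{\Gamma}\Big\{\cH(\Gamma,(\Gamma_0)_P)+\lambda^2\Tr[\bWren_P\,\Gamma]\Big\},
\end{equation*}
the min over states on $\gF(P\gH)$. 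Hence it suffices to exhibit one trial state $\Gamma$ whose relative entropy and interaction energy together are bounded by the right-hand side of~\eqref{eq:up-finite-anal}. The natural choice is the "classical" trial state obtained by superposing coherent states weighted by the interacting classical measure: set
\begin{equation*}
\Gamma:=\pi^{-K}\lambda^{-K}\int_{P\gH}\frac{e^{-\cD_K[u]}}{z_K}\,\big|\xi(u/\sqrt\lambda)\big\rangle\big\langle\xi(u/\sqrt\lambda)\big|\;du,\qquad z_K:=\int_{P\gH}e^{-\cD_K[u]}\,d\mu_{0,K}(u),
\end{equation*}
which is a de-Finetti--type (anti-Wick) quantization of the projected measure $d\mu=z_K^{-1}e^{-\cD_K}d\mu_{0,K}$.

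The two terms are then estimated separately. For the interaction energy, one uses the explicit formula~\eqref{eq:DM_coherent} for the reduced density matrices of coherent states together with the resolution of identity~\eqref{eq:resolution_coherent2}: the $k$-body density matrix of $\Gamma$ is $\lambda^k/k!$ times the classical $k$-body matrix plus lower-order "contraction" terms of relative size $\lambda\cdot(\text{rank }P)=O(\lambda\Lambda_e^2)$, exactly as in Theorem~\ref{thm:quant deF}. Expanding $\bWren_P$ into its Fourier modes $e_k^-=Pe_kP$ as in~\eqref{eq:WP} and writing $(\dG(e_k^-))^2=2\bigoplus\sum_{i<j}(e_k^-)_i(e_k^-)_j+\dG((e_k^-)^2)$, the quadratic part produces $\int_{P\gH}\cD_K[u]\,d\mu(u)$ plus errors controlled by $\lambda\Lambda_e^3$ (using $|e_k^-|\le P\le\Lambda_e h^{-1}$ and $\Tr P\le C\Lambda_e^2$, together with the a priori bound $\int\cD_K\,d\mu\le\int\cD_K\,d\mu_0\le C\tr[h^{-2}]$), while the diagonal term $\lambda^2\Tr[\dG((e_k^-)^2)\Gamma]$ is nonnegative and $O(\lambda\Lambda_e^2)$ after integrating against $\widehat w$, hence absorbable. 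This gives $\lambda^2\Tr[\bWren_P\Gamma]\le\int_{P\gH}\cD_K[u]\,d\mu(u)+C\lambda\Lambda_e^3$. For the entropy term, one uses the subadditivity / upper-symbol inequality for the relative entropy of an anti-Wick state against the Gaussian quantum state: since $(\Gamma_0)_P$ is itself (essentially) the anti-Wick quantization of $\mu_{0,K}$ at scale $\lambda$, a Berezin--Lieb upper bound (the companion of Theorem~\ref{thm:rel-entropy}, going back to Lieb~\cite{Lieb-73b} and Simon~\cite{Simon-80}) gives $\cH(\Gamma,(\Gamma_0)_P)\le\cHcl(\mu,\mu_{0,K})+(\text{error})$ where the error again comes from the discrepancy between the exact free partition function and the product $\prod_j(\lambda\lambda_j)^{-1}(1-e^{-\lambda\lambda_j})$ — precisely the quantity estimated in~\eqref{eq:Ber-inq}, of size $\lambda\tr[h^{-2}]\Lambda_e^3$. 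Adding the two contributions and using $\cHcl(\mu,\mu_{0,K})+\int\cD_K\,d\mu=-\log z_K$ (the classical Gibbs variational identity for the projected measure) yields~\eqref{eq:up-finite-anal}.

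I expect the main obstacle to be the entropy estimate, i.e. obtaining the \emph{upper} Berezin--Lieb inequality in a form that is both quantitative in the dimension $K$ and compatible with the $\lambda$-scaling of the coherent states. The lower bound (Theorem~\ref{thm:rel-entropy}) was enough for the free-energy lower bound, but here one needs to go the other way: one must show that quantizing the optimal classical density $e^{-\cD_K}/z_K$ via coherent states costs at most $\cHcl(\mu,\mu_{0,K})$ in von Neumann relative entropy, up to the controlled error. The cleanest route is probably to write $\cH(\Gamma,(\Gamma_0)_P)=\Tr[\Gamma\log\Gamma]-\Tr[\Gamma\log(\Gamma_0)_P]$, bound $\Tr[\Gamma\log\Gamma]\le\int(\log f)\,f\,d\mu_{0,K}$ by Berezin's inequality (the upper symbol of $\Gamma$ dominates $f$ up to normalization), and compute $-\Tr[\Gamma\log(\Gamma_0)_P]=-\Tr[\Gamma\,(-\lambda\dG(Ph)-\log\cZ_0^P)]$ exactly using the explicit action of $\dG(Ph)$ on coherent states and the linearity of $u\mapsto\Tr[\dG(Ph)|\xi(u/\sqrt\lambda)\rangle\langle\xi(u/\sqrt\lambda)|]=\langle u,Phu\rangle+\lambda\Tr[Ph]$; the term $\lambda\Tr[Ph]\le\lambda\Lambda_e\cdot C\Lambda_e^2=C\lambda\Lambda_e^3$ is again of the allowed order, and the remaining pieces reassemble into $\cHcl(\mu,\mu_{0,K})$ plus the partition-function discrepancy already handled in Lemma~\ref{lem:free low cyl}. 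Everything else is bookkeeping of powers of $\Lambda_e$ against $\lambda$, using $\Lambda_e\ll\lambda^{-1/3}$ to guarantee $\lambda\Lambda_e^3\to0$.
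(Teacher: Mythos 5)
Your proof is correct, but it reaches the estimate by a genuinely different (dual) route from the paper's. The paper constructs no trial state and needs no entropy inequality here: it writes $\lambda(F^P_\lambda-F^P_0)$ as minus the log of the ratio of partition functions, inserts the coherent-state resolution of the identity into $\Tr\, e^{-\lambda(\dGamma(Ph)+\lambda \bWren_P)}$, and applies the Peierls--Bogoliubov inequality $\langle x,e^{A}x\rangle\ge e^{\langle x,Ax\rangle}$ to each diagonal coherent-state matrix element, so that the interacting partition function is bounded below by $(\lambda\pi)^{-K}\int_{P\gH}\exp\big(-\langle u,hu\rangle-\cD_K[u]-C\|u\|^2\lambda\Lambda_e^2\big)\,du$; dividing by the explicit product formula for the free partition function and using the bound $\prod_j(1-e^{-\lambda\lambda_j})/(\lambda\lambda_j)\ge 1-C\lambda\Lambda_e^3$ of~\eqref{eq:Ber-inq} then gives~\eqref{eq:up-finite-anal} directly. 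You instead invoke the Gibbs variational principle on $\gF(P\gH)$ with the anti-Wick quantization of the interacting classical measure as trial state, compute its density matrices exactly from~\eqref{eq:DM_coherent} (your treatment of the interaction term matches the paper's computation~\eqref{eq:part-WP} almost verbatim), and control the relative entropy via the \emph{upper} Berezin--Lieb inequality $\Tr\Gamma\log\Gamma\le\int f\log f$ for upper symbols. That inequality is standard (Jensen's inequality in the spectral basis of $\Gamma$, going back to Berezin and Lieb) and your sketch of the entropy computation is sound — the partition-function discrepancy you are left with is exactly the quantity estimated in~\eqref{eq:Ber-inq} — but note that the paper never states or uses this upper-symbol direction (Theorem~\ref{thm:rel-entropy} is the lower-symbol one), so your route imports one extra ingredient; in exchange it makes the parallel with the classical identity $-\log z_K=\cHcl(\mu,\mu_{0,K})+\int\cD_K\,d\mu$ fully explicit. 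Two harmless slips: your displayed trial state should carry the weight $d\mu(u)=z_K^{-1}e^{-\cD_K[u]}d\mu_{0,K}(u)$ rather than $(\lambda\pi)^{-K}z_K^{-1}e^{-\cD_K[u]}\,du$ (as written it is not normalized, though your subsequent computations clearly use the intended object), and $\lambda\Tr\big[\dGamma(Ph)\,|\xi(u/\sqrt\lambda)\rangle\langle\xi(u/\sqrt\lambda)|\big]=\langle u,Phu\rangle$ exactly, with no extra $\lambda\Tr[Ph]$ term — which, as you yourself observe, would in any case be of the admissible order $O(\lambda\Lambda_e^3)$.
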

 
\begin{proof}  
Recall that~\eqref{eq:partition_quasi_free} yields
\begin{equation}\label{eq:partition_quasi_free-PP}
\Tr e^{-\lambda\dGamma(Ph)} = \prod_{j=1}^K \frac{1}{1-e^{-\lambda\lambda_j }}
\end{equation}
where $\{\lambda_j\}_{j=1}^K$ are the eigenvalues of $PhP$ 
and that 
$$ \lambda\left(F^P_\lambda - F^P_0\right) = - \log \frac{\Tr e^{-\lambda(\dGamma(Ph)+\lambda \bWren_P)} } { \Tr e^{-\lambda\dGamma(Ph)} }.$$
To estimate the interacting partition function in the right-hand side, we use (a rescaled version of) the coherent-state resolution of the identity~\eqref{eq:resolution_coherent2}:
$$
(\lambda\pi)^{-K} \int_{P\gH} \left|\xi(u/\sqrt{\lambda})\right\rangle \left\langle \xi\left(u/\sqrt{\lambda}\right)\right| du =  \1_{\F(P\gH)}
$$
and the Peierls-Bogoliubov inequality $\pscal{x,e^Ax}\geq e^{\pscal{x,Ax}}$ to obtain
\begin{align} \label{eq:partition_int-PPPP} 
\Tr e^{-(\dGamma(Ph)+\lambda \bWren_P)/T} &= \frac1{(\lambda\pi)^{K}} \int_{P\gH} \Tr \left[ e^{-\lambda(\dGamma(Ph)+\lambda \bWren_P)} \left|\xi\left(u/\sqrt{\lambda}\right)\right\rangle \left\langle \xi\left(u/\sqrt{\lambda}\right)\right|   \right] du \nn\\
&= \frac1{(\lambda\pi)^{K}}  \int_{P\gH}  \left\langle \xi\left(u/\sqrt{\lambda}\right),  e^{-\lambda (\dGamma(Ph)+\lambda \bWren_P)}  \xi\left(u/\sqrt{\lambda}\right) \right\rangle du \nn\\ 
&\geq \frac\lambda{(\lambda\pi)^{K}} \int_{P\gH}    \exp \left[ - { \left\langle  \xi\left(u/\sqrt{\lambda}\right), \left(\dGamma(Ph)+\lambda \bWren_P\right) \xi\left(u/\sqrt{\lambda}\right) \right\rangle}  \right] du. 
\end{align}
Then, for $u\in P\gH$, similarly as in the proof of Lemma~\ref{lem:free low cyl}
$$
\lambda\left\langle \xi\left(u/\sqrt{\lambda}\right), \dGamma(Ph) \xi\left(u/\sqrt{\lambda}\right) \right\rangle = \langle u,h u\rangle.
$$
Moreover, calculating as in \eqref{eq:part-local-W-1} and recalling~\eqref{eq:DM_coherent}, \eqref{eq:ek-Gamma0}   we have
\begin{align}\label{eq:part-WP}
&\lambda^2\left\langle \xi\left(u/\sqrt{\lambda}\right),  \left| \dG (e_k^-) - \left\langle \dG (e_k^-) \right\rangle_{0} \right| ^2 \xi\left(u/\sqrt{\lambda}\right) \right\rangle \nn\\
&= \langle u, e_k^- u\rangle^2  -2 \lambda \langle u, e_k^- u\rangle \Tr \left[ e_k^{-}\Gamma_0^{(1)} \right]  + \lambda^2 \left(\Tr \left[ e_k^-\Gamma_0^{(1)} \right] \right)^2 + \lambda  \langle u, (e_k^-)^2 u\rangle  \nn\\
&\le  \Big( \langle u, e_k^- u\rangle - \left\langle\langle u, e_k^- u\rangle\right\rangle_{\mu_0} \Big)^2 +C \|u\|^2 \lambda \Lambda_e^2.
\end{align}
Since $\widehat w\in L^1$, we find that
\begin{align}
\left\langle \xi\left(u/\sqrt{\lambda}\right) \lambda^2 \bWren_P, \xi\left(u/\sqrt{\lambda}\right) \right\rangle \le  \cD_K[u] + C \|u\|^2 \lambda \Lambda_e^2.
\end{align}
Inserting the latter bound in~\eqref{eq:partition_int-PPPP} we arrive at
\begin{align} \label{eq:partition_int-PPPPP} 
\Tr e^{-\lambda(\dGamma(Ph)+\lambda \bWren_P)} \ge (\lambda\pi)^{-K}  \int_{P\gH} \exp \left[- \langle u, hu\rangle -  \cD_K [u] - C \|u\|^2 \lambda \Lambda_e^2 \right]  du. 
\end{align}
Combining with~\eqref{eq:partition_quasi_free-PP}, we find 
\begin{align} \label{eq:rel-partition-PPPP} 
\frac{\Tr e^{-\lambda(\dGamma(Ph)+\lambda \bWren_P)} } { \Tr e^{-\lambda\dGamma(Ph)} } &\ge \left[  \prod_{j=1}^K \frac{1}{\lambda\lambda_j} (1-e^{\lambda\lambda_j}) \right]  \int_{P\gH} \exp \left[- \cD_K [u] -   C \|u\|^2 \lambda \Lambda_e^2 \right]  d\mu_{0,K}(u)
\end{align}
where $d\mu_{0,K}$ is the cylindrical projection of $d\mu_0$ on $P\gH$, defined in \eqref{eq:def-mu0K}. Then, recall from~\eqref{eq:Ber-inq} that
$$
 \prod_{j=1}^K \left[  \frac{1}{\lambda\lambda_j} (1-e^{-\lambda\lambda_j}) \right] \ge 1 - C\lambda \Lambda_e^{3}.
$$
Using that $\cD_K [u]\ge 0$ by Lemma~\ref{lem:re-interaction}, we have
\begin{align*}
\exp \left[- \cD_K [u] - C \|u\|^2 \lambda \Lambda_e^2 \right] &= \exp \left[- \cD_K [u] \right] \exp \left[- C \|u\|^2 \lambda \Lambda_e^2 \right] \\
&\ge \exp \left[- \cD_K [u] \right]  (1- C \|u\|^2 \lambda \Lambda_e^2)\\
&\ge \exp \left[- \cD_K [u] \right]  - C \|u\|^2 \lambda \Lambda_e^2.
\end{align*}
Moreover, by~\eqref{eq:DM free meas} and \eqref{eq:def_mu0} we can bound
$$
\int_{P\gH} \| u\|^2 d\mu_{0,K}(u) = \Tr [Ph^{-1}] \le  \Tr [(\Lambda_e/h) h^{-1}] \le C \Lambda_e.
$$
Thus we infer
\begin{align*}
\int_{P\gH} \exp \left[-    \cD_K [u] - C \|u\|^2 \lambda \Lambda_e^2 \right]  d\mu_{0,K}(u) 
&\ge \int_{P\gH} \left( \exp \left[-  \cD_K [u] \right] -  C \|u\|^2 \lambda \Lambda_e^2 \right)  d\mu_{0,K}(u) \\
&\ge \int_{P\gH} \exp \left[-  \cD_K [u] \right] d\mu_{0,K}(u)   - C  \lambda  \Lambda_e^{3}.
\end{align*}
Therefore, it follows from~\eqref{eq:rel-partition-PPPP} that
\begin{align*} 
\frac{\Tr e^{-\lambda(\dGamma(Ph)+\lambda \bWren_P)} } { \Tr e^{-\lambda\dGamma(Ph)} } &\ge (1- C\lambda \Lambda_e^{3}) \left[ \int_{P\gH} e^{-\cD_K[u]}d\mu_{0,K}(u) - C  \lambda  \Lambda_e^{3}\right] \nn \\
&\ge \int_{P\gH} e^{-\cD_K[u]} d \mu_{0,K}(u) -  C\lambda \Lambda_e^{3}.
\end{align*}
Taking the $\log$ and using the fact that $\log (1+t)=O(t)$ for $|t|$ small concludes the proof.
\end{proof}

Now we can conclude the 

\begin{proof}[Proof of Proposition \ref{pro:up bound}]
Inserting \eqref{eq:up-finite-anal} in  \eqref{eq:partition-local-PP} we get the quantitative estimate
\begin{align}  \label{eq:partition-up-qua}
-\log \frac{\cZ(\lambda) } {\cZ_0(\lambda)} \le -\log \left( \int_{P\gH} e^{-\cD_K[u]} d \mu_{0,K}(u)\right) + C\lambda \Lambda_e^{3} + C  ( \lambda  + \|Qh^{-1}\|_{\gS^2} )^{1/14}
\end{align}
In the limit $\lambda\to0$ and $1\ll \Lambda_e\ll \lambda^{-1/3}$, using \eqref{eq:DKT} we obtain the  desired upper bound~\eqref{eq:up bound}
$$
-\log \frac{\cZ(\lambda) } {\cZ_0(\lambda)} \le -\log \left( \int  e^{-\cD[u]} d \mu_{0}(u)\right) + o(1) = -\log z + o(1). 
$$
The proof of Proposition~\ref{pro:up bound}, hence that of~\eqref{eq:CV free ener},  is complete.
\end{proof}

\section{Convergence of density matrices} \label{sec:DM}

In this section we prove the convergence of reduced density matrices stated in our main results. As in the previous sections we denote
\begin{equation*}
P = \one(h\leq \Lambda_e), \quad Q = \1 - P, \quad 1\ll \Lambda_e \ll \lambda^{-1/3}. 
\end{equation*}

\subsection{Collecting useful bounds}\label{sec:DM bounds} First, we collect several positive terms previously dropped in our analysis, and use them to derive some new information.

\begin{lemma}[\textbf{Trace-class estimates for projected states}]\label{lem:CV projected}\mbox{}\\
Let $h>0$ satisfy \eqref{eq:Schatten h-strong-1}-\eqref{eq:Schatten h-strong-2} and let  $w:\R^d \to \R$ satisfy \eqref{eq:interaction}. Then in the limit $\lambda\to0^+$  we have
\begin{align} \label{eq:cv-state-4b}
\Tr\Big|(\Gamma_\lambda)_Q- (\Gamma_0)_Q\Big| \to 0,  \\
 \label{eq:cv-state-4c}
\Tr\Big|\Gamma_\lambda - \cU^* \Big(  (\Gamma_\lambda)_P \otimes (\Gamma_\lambda)_Q \Big) \cU \Big| \to 0.
\end{align} 
Here $\cU$ is the unitary in~\eqref{eq:Fock factor} and $(\Gamma_\lambda)_P$, $(\Gamma_\lambda)_Q$ are localized states in $\gF(P\gH)$, $\gF(Q\gH)$, respectively, as in Definition \ref{def:localization}. Moreover, we have
\begin{equation} \label{eq:cv-state-4a}
\norm{\mu_{P,\lambda} - \widetilde \mu}_{L^1(P\gH)} \to 0
\end{equation} 
where 
\begin{equation}  \label{eq:def-wide-w}
d\widetilde \mu (u) := \frac{e^{-\cD_K[u]} \; d\mu_{0,K}(u)}{\int_{P\gH} e^{-\cD_K[v]} \; d\mu_{0,K}(v) }.
\end{equation}
Here $\mu_{P,\lambda}$ is the lower symbol of the Gibbs state $\Gamma_\lambda$ associated with $P$ and the scale $\eps=\lambda $ as in \eqref{eq:Husimi}, $\mu_{0,K}$ is the cylindrical Gaussian measure and $\cD_K[u]$ is the truncated renormalized interaction (all defined in Section \ref{sec:class meas}). 
\end{lemma}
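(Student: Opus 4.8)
The plan is to establish the three statements~\eqref{eq:cv-state-4b}--\eqref{eq:cv-state-4c}--\eqref{eq:cv-state-4a} by combining the free-energy convergence of Section~\ref{sec:up bound} (i.e.~the equality case in the lower bound of Proposition~\ref{pro:low bound}), the quantitative entropy bounds we have already developed, and Pinsker-type inequalities. The starting observation is that all the estimates in the proof of Proposition~\ref{pro:low bound} were quantitative: the error terms $C\lambda\Lambda_e^3$ and $C(\lambda+\|Qh^{-1}\|_{\gS^2})^{1/14}$ tend to zero, and the limiting free energy has been identified as $-\log z$ in Section~\ref{sec:up bound}. Tracking the chain of inequalities in~\eqref{eq:partition-lwb-0}--\eqref{eq:total-error-to-Lambdae} backwards, every inequality used must be saturated in the limit. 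In particular, the Berezin--Lieb inequality~\eqref{eq:BL-G-G0} becomes an equality in the limit, which forces $\cH(\Gamma_\lambda,\Gamma_0) - \cH((\Gamma_\lambda)_P,(\Gamma_0)_P)\to0$ and $\cH((\Gamma_\lambda)_P,(\Gamma_0)_P)-\cHcl(\mu_{P,\lambda},\mu_{P,0})\to0$, and similarly the classical variational step forces $\cHcl(\mu_{P,\lambda},\mu_{P,0}) + \tfrac12\int\cD_K\,d\mu_{P,\lambda}$ to converge to its minimal value $-\log(\int_{P\gH}e^{-\cD_K}d\mu_{P,0})$, with $\mu_{P,0}$ itself converging to $\mu_{0,K}$ in $L^1$ by Lemma~\ref{lem:free low cyl}.

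For~\eqref{eq:cv-state-4a}: the quantity $\cHcl(\mu_{P,\lambda},\mu_{0,K}) + \int\cD_K\,d\mu_{P,\lambda} + \log(\int e^{-\cD_K}d\mu_{0,K})$ equals $\cHcl(\mu_{P,\lambda},\widetilde\mu)$ by the standard rewriting of a classical relative entropy against a tilted measure (with $\widetilde\mu$ as in~\eqref{eq:def-wide-w}). The saturation argument above shows this tends to zero, and then the classical Pinsker inequality gives $\|\mu_{P,\lambda}-\widetilde\mu\|_{L^1(P\gH)}^2 \le 2\,\cHcl(\mu_{P,\lambda},\widetilde\mu)\to0$. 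One small point to handle: the saturation of the Berezin--Lieb step~\eqref{eq:BL-G-G0} only gives convergence of the \emph{sum} $\cHcl(\mu_{P,\lambda},\mu_{P,0})+\tfrac12\int\cD_K\,d\mu_{P,\lambda}$; but since the two pieces are individually bounded below (the entropy by $0$, the interaction by $0$) and are each bounded above along the chain, and since the classical variational principle is strictly convex, one does get convergence to the unique minimizer, hence~\eqref{eq:cv-state-4a}. Replacing $\mu_{P,0}$ by $\mu_{0,K}$ along the way costs only $\|\mu_{P,0}-\mu_{0,K}\|_{L^1}\le 2\tr[h^{-2}]\lambda\Lambda_e^3\to0$ by Lemma~\ref{lem:free low cyl}.

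For~\eqref{eq:cv-state-4b}: apply the quantum Pinsker inequality~\eqref{eq:Pinsker} at the level of the $Q$-localized states, $\tfrac12(\tr|(\Gamma_\lambda)_Q-(\Gamma_0)_Q|)^2 \le \cH((\Gamma_\lambda)_Q,(\Gamma_0)_Q)$. By Lemma~\ref{lem:rel ent} applied to the factorization $\gF(P\gH)\otimes\gF(Q\gH)$ (with $A=\cU\Gamma_\lambda\cU^*$, $B_1=(\Gamma_0)_P$, $B_2=(\Gamma_0)_Q$), one has $\cH(\Gamma_\lambda,\Gamma_0) = \cH(\cU\Gamma_\lambda\cU^*,(\Gamma_\lambda)_P\otimes(\Gamma_\lambda)_Q) + \cH((\Gamma_\lambda)_P,(\Gamma_0)_P) + \cH((\Gamma_\lambda)_Q,(\Gamma_0)_Q)$. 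Since the first two terms on the right are nonnegative and $\cH(\Gamma_\lambda,\Gamma_0) - \cH((\Gamma_\lambda)_P,(\Gamma_0)_P)\to0$ from the saturation of Berezin--Lieb, both remaining terms tend to zero; in particular $\cH((\Gamma_\lambda)_Q,(\Gamma_0)_Q)\to0$, giving~\eqref{eq:cv-state-4b}, and $\cH(\cU\Gamma_\lambda\cU^*,(\Gamma_\lambda)_P\otimes(\Gamma_\lambda)_Q)\to0$, which via quantum Pinsker again and unitary invariance of the trace norm gives~\eqref{eq:cv-state-4c}.

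The main obstacle is making the ``saturation'' step rigorous: one must verify that $\cH(\Gamma_\lambda,\Gamma_0)-\cH((\Gamma_\lambda)_P,(\Gamma_0)_P)\to0$ rather than merely staying bounded. This follows because, in the chain of inequalities proving Proposition~\ref{pro:low bound}, the left-hand side $-\log(\cZ(\lambda)/\cZ_0(\lambda))$ converges (by Sections~\ref{sec:low bound}--\ref{sec:up bound}) to the same value $-\log z$ that bounds the right-hand side from below, so every intermediate inequality must have vanishing slack; one simply records each nonnegative discarded quantity as $\le -\log(\cZ(\lambda)/\cZ_0(\lambda)) + \log z + o(1) = o(1)$. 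The only care needed is bookkeeping: collecting \emph{all} the terms that were dropped for the lower bound (the exchange term $\tr[(e_k^-)^2\Gamma_\lambda^{(1)}]$, the difference between full and localized entropy, the interaction at high momenta) and checking each is nonnegative and absorbed. Once this is organized, everything else is a routine application of Pinsker (classical and quantum) and Lemma~\ref{lem:rel ent}.
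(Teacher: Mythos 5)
Your proposal is correct and follows essentially the same route as the paper: decompose the relative entropy via Lemma~\ref{lem:rel ent}, collect all the nonnegative quantities discarded in the lower bound of Proposition~\ref{pro:low bound}, use the matching upper bound to force each to vanish, rewrite the classical piece as $\cHcl(\mu_{P,\lambda},\cdot)$ against the tilted measure, and conclude with the quantum and classical Pinsker inequalities plus Lemma~\ref{lem:free low cyl}. The only point to phrase carefully (as the paper does) is that the replacement of $\mu_{P,0}$ by $\mu_{0,K}$ must happen in the normalization constant $\log\int e^{-\cD_K}$ and in the final $L^1$ comparison after Pinsker, not inside the second argument of the relative entropy; your appeal to strict convexity is then unnecessary, since the tilting identity you already invoke is exact.
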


Note that~\eqref{eq:cv-state-4b}-\eqref{eq:cv-state-4c} precisely confirm the expectation that the interacting and Gaussian Gibbs states almost coincide on high kinetic energy modes, whereas~\eqref{eq:cv-state-4a} quantifies the precision of the mean-field/semi-classical approximation on low kinetic energy modes.  

\begin{proof}[Proof of Lemma~\ref{lem:CV projected}]
After conjugating by the unitary $\cU$ in \eqref{eq:Fock factor}, the Gaussian quantum state is factorized:
\begin{equation}\label{eq:factor free}
\Gamma_0 = \cU^* \Big( \, (\Gamma_0)_P\otimes (\Gamma_0)_Q \Big)\, \cU.
\end{equation}
Hence we may apply Lemma \ref{lem:rel ent} to deduce
\begin{align} \label{eq:entropy-local-PQ}
\cH(\Gamma_\lambda,\Gamma_0)&=  \cH(\cU \Gamma_\lambda \cU^*,(\Gamma_0)_P\otimes (\Gamma_0)_Q)  \nn\\
&= \cH(\cU \Gamma_\lambda \cU^*, (\Gamma_\lambda)_P\otimes (\Gamma_\lambda)_Q) + \cH((\Gamma_\lambda)_P,(\Gamma_0)_P) +  \cH((\Gamma_\lambda)_Q,(\Gamma_0)_Q) \nn\\
&= \cH\Big(\Gamma_\lambda , \cU^* \Big( (\Gamma_\lambda)_P\otimes (\Gamma_\lambda)_Q \Big) \cU \Big) + \cH((\Gamma_\lambda)_P,(\Gamma_0)_P) +  \cH((\Gamma_\lambda)_Q,(\Gamma_0)_Q)
\end{align}
Combining \eqref{eq:entropy-local-PQ} with  the energy lower bound \eqref{eq:local-W-P} we obtain
\begin{align} \label{eq:cv-state-0}
 -\log \frac{\cZ(\lambda)}{\cZ_0(\lambda)} &= \cH(\Gamma_\lambda,\Gamma_0) + \lambda^2 \Tr [\bWren\Gamma_\lambda] \nn\\
&\ge \cH(\Gamma_\lambda, (\Gamma_\lambda)_P\otimes (\Gamma_\lambda)_Q) +  \cH((\Gamma_\lambda)_P,(\Gamma_0)_P) +  \cH((\Gamma_\lambda)_Q,(\Gamma_0)_Q) \nn\\
& + \int_{P\gH} \cD_K [u] d\mu_{P,\lambda}(u)+ o(1). 
\end{align}
Here recall that $\mu_{P,\lambda}$ is the lower symbol of $(\Gamma_\lambda)_P$. By the Berezin-Lieb inequality \eqref{eq:BL-G-G0} and the classical variational principle  \eqref{eq:zr-rel}, we refine~\eqref{eq:partition-lwb-0} to 
 \begin{align} \label{eq:cv-state-1}
&\cH((\Gamma_\lambda)_P,(\Gamma_0)_P) + \int_{P\gH} \cD_K [u] d\mu_{P,\lambda}(u) \nn\\
&\qquad \ge \cH_{\rm cl}(\mu_{P,\lambda},\mu_{P,0}) + \int_{P\gH} \cD_K [u] d\mu_{P,\lambda}(u)
 \nn\\
&\qquad =\cH_{\rm cl}(\mu_{P,\lambda},  \mu')  -\log \left( \int_{P\gH} e^{-\cD_K[u]} \; d\mu_{P,0}(u)\right).
\end{align}
where $\mu_{P,0}$ is the corresponding lower symbol of $(\Gamma_0)_P$ and
\begin{equation}  \label{eq:def-wide-w'}
d \mu' (u) := \frac{e^{-\cD_K[u]} \; d\mu_{P,0}(u)}{\int_{P\gH} e^{-\cD_K(v)} \; d\mu_{P,0}(v) }.
\end{equation}
Note that from \eqref{eq:mu_L1} we know that 
\begin{equation}  \label{eq:wt-w'}
\| \widetilde \mu-\mu'\|_{L^1(P\gH)} \to 0.
\end{equation}
We have already proved in~\eqref{eq:cDu-K} that
\begin{align}\label{eq:cv-state-2}
-\log \left( \int_{P\gH} e^{-\cD_K[u]} \; d\mu_{P,0}(u)\right) \ge   -\log \left( \int_{P\gH} e^{-\cD_K[u]} \; d\mu_{0,K}(u) \right) + o(1). 
\end{align}
Putting \eqref{eq:cv-state-0}, \eqref{eq:cv-state-1} and \eqref{eq:cv-state-2} together, we find that
\begin{align}\label{eq:cv-state-3}
 -\log \frac{\cZ(\lambda)}{\cZ_0(\lambda)} &\ge  \cH\Big(\Gamma_\lambda, \cU^* \Big( (\Gamma_\lambda)_P\otimes (\Gamma_\lambda)_Q\Big) \cU \Big) + \cH((\Gamma_\lambda)_Q,(\Gamma_0)_Q)+ \cH_{\rm cl}(\mu_{P,\lambda}, \mu') \nn\\
 & -\log \left( \int_{P\gH} e^{-\cD_K[u]} \; d\mu_{0,K}(u) \right) + o(1). 
\end{align}
Comparing with the the upper bound \eqref{eq:partition-up-qua}:
$$
-\log \frac{\cZ(\lambda) } {\cZ_0(\lambda)} \le -\log \left( \int_{P\gH} e^{-\cD_K[u]} d \mu_{0,K}(u) \right) + o(1),
$$
we obtain 
\begin{align} \label{eq:collect}
\cH\Big(\Gamma_\lambda, \cU^* \Big( (\Gamma_\lambda)_P\otimes (\Gamma_\lambda)_Q\Big) \cU \Big) + \cH((\Gamma_\lambda)_Q,(\Gamma_0)_Q)+ \cH_{\rm cl}(\mu_{P,\lambda}, \mu')  \to 0. 
\end{align}
Thanks to the (quantum and classical) Pinsker inequalities (see~\cite{CarLie-14} and~\cite[Section~5.4]{Hayashi-06}),
$$\cH(A,B)\geq \frac12(\tr|A-B|)^2,\qquad \cH_{\rm cl}(\nu_1,\nu_2)\geq \frac12 \Big(|\nu_1-\nu_2|(\gH)\Big)^2,$$
the convergence \eqref{eq:collect} implies the desired convergences ~\eqref{eq:cv-state-4b}, \eqref{eq:cv-state-4c} and
$$
\|\mu_{P,\lambda}-\mu'\|_{L^1(P\gH)} \to 0. 
$$
The latter bound and \eqref{eq:wt-w'} imply \eqref{eq:cv-state-4a} by the triangle inequality. 
\end{proof}

\subsection{Hilbert-Schmidt convergence of all density matrices}\label{sec:cv-all-DM} 

In this subsection we derive the Hilbert-Schmidt convergence for all density matrices using the additional condition \eqref{eq:Schatten h-strong-3}. 

\begin{lemma}[\textbf{Hilbert-Schmidt convergence of all density matrices}]\label{lem:CV-DM-high}\mbox{}\\
Let $h>0$ satisfy  \eqref{eq:Schatten h-strong-1}-\eqref{eq:Schatten h-strong-2}-\eqref{eq:Schatten h-strong-3} and let $w$ satisfy \eqref{eq:interaction}.  Then in the limit $\lambda\to0^+$, for all $k\ge 1$ we have 
$$
\lambda^k\,k!\, \Gamma_\lambda^{(k)} \to  \int |u^{\otimes k} \rangle \langle u^{\otimes k} | d \mu(u)
$$
strongly in the Hilbert-Schmidt space $\gS^2(\gH^{\otimes^k_s})$. 
\end{lemma}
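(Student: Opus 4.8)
The plan is to prove Lemma~\ref{lem:CV-DM-high} by combining the structural decomposition of $\Gamma_\lambda$ obtained in Lemma~\ref{lem:CV projected} with the quantitative de Finetti theorem on the low-momentum subspace and the a priori bounds from Lemma~\ref{lem:quasi-free} and Lemma~\ref{lem:number interacting}. The key observation is that $\Gamma_\lambda^{(k)}$ can be split via the Fock-space factorization $\gF\approx\gF(P\gH)\otimes\gF(Q\gH)$, and that on the high-momentum side $(\Gamma_\lambda)_Q$ is close in trace norm to $(\Gamma_0)_Q$, whose density matrices we control explicitly.

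\textbf{Step 1: Reduce to the factorized state.} Using~\eqref{eq:cv-state-4c}, $\Tr|\Gamma_\lambda - \cU^*((\Gamma_\lambda)_P\otimes(\Gamma_\lambda)_Q)\cU|\to 0$. Since $\lambda^k k!\,|\Gamma^{(k)}|$ for the difference of two states is controlled in $\gS^2$ by $\lambda^k$ times the trace-norm distance of the states times moments of $\cN$ (expanding $\Gamma^{(k)}$ via creation/annihilation operators as in~\eqref{eq:DM ann cre} and using $\lambda^{2k}\langle\cN^{2k}\rangle = O(1)$ from~\eqref{eq:Nk}), it suffices to prove the convergence for the factorized trial state. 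Then $\Gamma^{(k)}$ of the factorized state expands as a sum over $\ell$ of $P^{\otimes\ell}(\Gamma_\lambda)_P^{(\ell)}P^{\otimes\ell}\otimes_s Q^{\otimes(k-\ell)}(\Gamma_\lambda)_Q^{(k-\ell)}Q^{\otimes(k-\ell)}$, exactly as in~\eqref{eq:trial two body}.

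\textbf{Step 2: Kill the high-momentum contributions.} For any $\ell<k$, one needs $\lambda^k\|Q^{\otimes(k-\ell)}(\Gamma_\lambda)_Q^{(k-\ell)}Q^{\otimes(k-\ell)}\|_{\gS^2}\to 0$ (times a bounded $\lambda^\ell$-rescaled $(\Gamma_\lambda)_P^{(\ell)}$). Here I would use~\eqref{eq:cv-state-4b} together with the fact that $\lambda^{k-\ell}(\Gamma_0)_Q^{(k-\ell)}= Q^{\otimes(k-\ell)}(\lambda/(e^{\lambda h}-1))^{\otimes(k-\ell)}Q^{\otimes(k-\ell)}$, which is bounded by $Q^{\otimes(k-\ell)}(h^{-1})^{\otimes(k-\ell)}Q^{\otimes(k-\ell)}$ in $\gS^2$ (using $\tr[h^{-2}]<\infty$, so $h^{-1}\in\gS^2$), and crucially $\|Qh^{-1}\|_{\gS^2}\to 0$ as $\Lambda_e\to\infty$. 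The remaining factor $Q\otimes\cdots$ with at least one $Q$ factor forces this Schatten norm to zero; the case $\ell<k-1$ is even easier. For the difference between $(\Gamma_\lambda)_Q$ and $(\Gamma_0)_Q$ I use~\eqref{eq:cv-state-4b} and $\lambda^{k-\ell}$-rescaling, controlling the commutator-type terms with moments of $\cN$ from Lemma~\ref{lem:number interacting}. The positivity assumption~\eqref{eq:Schatten h-strong-3} is what guarantees $e^{-\beta h}(x,y)\geq 0$, hence it enters when one wants the interacting state's density matrices to have genuine kernels and to justify that the only surviving term is $\ell=k$; more precisely it is used to ensure $\Gamma_\lambda^{(k)}$ is comparable (in the appropriate operator sense) to $(\Gamma_0)^{(k)}$-type bounds allowing the $\gS^2$ estimates, as in the analogue argument of Lemma~\ref{lem:quasi-free ener}.

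\textbf{Step 3: Semiclassics on the low-momentum block.} It remains to treat $\ell=k$, i.e. $\lambda^k k!\,P^{\otimes k}(\Gamma_\lambda)_P^{(k)}P^{\otimes k}$. By the quantum de Finetti theorem~\eqref{eq:Chiribella} applied to $(\Gamma_\lambda)_P$ at scale $\eps=\lambda$, this equals $\int_{P\gH}|u^{\otimes k}\rangle\langle u^{\otimes k}|\,d\mu_{P,\lambda}(u)$ minus lower-order terms which by~\eqref{eq:quantitative} are $O(\Lambda_e^{k}\lambda)$ in trace norm (using $\dim(P\gH)\leq C\Lambda_e^2$ and $\lambda^\ell\langle\cN^\ell\rangle=O(1)$), hence vanish since $\Lambda_e\ll\lambda^{-1/3}$. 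Then~\eqref{eq:cv-state-4a} gives $\|\mu_{P,\lambda}-\widetilde\mu\|_{L^1(P\gH)}\to 0$, so $\int|u^{\otimes k}\rangle\langle u^{\otimes k}|\,d\mu_{P,\lambda}(u)\to\int|u^{\otimes k}\rangle\langle u^{\otimes k}|\,d\mu(u)$; here one needs a uniform integrability/tightness bound $\int\|u\|^{2k}\,d\widetilde\mu(u)\leq C$ (following from $\widetilde\mu\leq C\mu_{0,K}$ and the Gaussian moment bounds, plus $\cD_K\geq0$), and the fact that $\cD_K\to\cD$ in $L^1(\mu_0)$ with $\mu_{0,K}\to\mu_0$, so that $\widetilde\mu$ converges weakly to $\mu$ and the $k$-th moment converges (this is exactly the kind of argument already carried out for~\eqref{eq:DKT}). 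Collecting Steps 1–3 with the triangle inequality yields the claimed $\gS^2$-convergence.

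\textbf{Main obstacle.} The delicate point is Step 2: one must show that every mixed term involving at least one $Q$-factor has $\gS^2$-norm vanishing after the $\lambda^k$ rescaling, \emph{uniformly} as $\Lambda_e\to\infty$ at the prescribed rate $\Lambda_e\ll\lambda^{-1/3}$, and that the error from replacing $(\Gamma_\lambda)_Q$ by $(\Gamma_0)_Q$ — which is controlled only in trace norm by~\eqref{eq:cv-state-4b}, not in $\gS^2$ — does not get amplified when tensored against the low-momentum block and rescaled. This requires carefully interpolating between the $\gS^1$ bound from Lemma~\ref{lem:CV projected} and the operator bound $\lambda^{k-\ell}(\Gamma_0)_Q^{(k-\ell)}\leq (h^{-1})^{\otimes(k-\ell)}$ to get a genuine $\gS^2$ bound, and is the place where the hypothesis $\tr[h^{-2}]<\infty$ (equivalently $h^{-1}\in\gS^2$) is used most essentially.
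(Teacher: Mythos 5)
Your architecture is the same as the paper's (factorize via $\cU$, kill the $Q$-contributions, run de Finetti on the $P$-block, identify the limit measure), and you have correctly located the crux: converting the trace-norm information~\eqref{eq:cv-state-4b}--\eqref{eq:cv-state-4c} at the level of \emph{states} into Hilbert--Schmidt convergence of the $\lambda^k$-rescaled \emph{density matrices}. But the mechanism you propose for this conversion in Step 1 does not close. You claim the difference of $k$-body density matrices is controlled by the trace-norm distance of the states times moments of $\cN$, "using $\lambda^{2k}\langle\cN^{2k}\rangle=O(1)$ from~\eqref{eq:Nk}". This is false: \eqref{eq:Nk} gives $\langle\cN^{m}\rangle_\lambda\le C_m\lambda^{-2m}$, so $\lambda^{2k}\langle\cN^{2k}\rangle_\lambda=O(\lambda^{-2k})$, and even the sharp bound $\langle\cN\rangle\sim\lambda^{-3/2}$ in 3D leaves a divergence. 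Concretely, the H\"older-type bound of Lemma~\ref{lem:states to DMs} yields $\lambda^k\Tr|\Gamma^{(k)}-\Gamma'^{(k)}|\lesssim (\Tr|\Gamma-\Gamma'|)^{1/2}\lambda^{-k}$, which is useless without a quantitative rate on $\Tr|\Gamma_\lambda-\Gammat_\lambda|$ (such a rate is only available in 2D, cf.\ Remark~\ref{rm:G2}; in general Lemma~\ref{lem:CV projected} gives $o(1)$ with no rate). The same over-counting by $\cN$-moments also undermines your trace-norm estimate of the de Finetti error in Step 3 for $k\ge 2$.

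The missing ingredients are precisely the two lemmas the paper inserts at this point. First, the heat-kernel positivity~\eqref{eq:Schatten h-strong-3} is used, via the Trotter product formula and $\bWren\ge0$, to get the pointwise kernel domination $0\le\Gamma_\lambda^{(k)}(X;Y)\le C_k\Gamma_0^{(k)}(X;Y)$ and hence the \emph{uniform} bound $\|\lambda^k\Gamma_\lambda^{(k)}\|_{\gS^2}\le C_k\|h^{-1}\|_{\gS^2}^k$ (Lemma~\ref{lem:HS-norm-bd}); your invocation of~\eqref{eq:Schatten h-strong-3} gestures at this but does not extract the uniform $\gS^2$ bound, which is the whole point. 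Second, the abstract inequality of Lemma~\ref{lem:states to DMs-II},
$$\|\Gamma^{(k)}-\Gamma'^{(k)}\|_{\gS^2}^2\le C_k\,\Tr|\Gamma-\Gamma'|\,\sum_{\ell=k}^{2k}\Big(\|\Gamma^{(\ell)}\|_{\gS^2}+\|\Gamma'^{(\ell)}\|_{\gS^2}\Big),$$
replaces the $\cN$-moments by the $\gS^2$ norms of the density matrices up to order $2k$, and after multiplying by $\lambda^{2k}$ these are $O(1)$ by the previous point, so the right-hand side is $o(1)$. This is exactly the "interpolation between the $\gS^1$ bound and an operator bound" that you flag as the main obstacle in your last paragraph --- you have named the difficulty correctly but not supplied the argument that resolves it, and the substitute you do supply (via $\cN$-moments) fails quantitatively. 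With these two lemmas in hand, the rest of your outline (de Finetti on $P\gH$ with errors measured in $\gS^2$ rather than $\gS^1$, the $L^1$ comparison of $\mu_{P,\lambda}$ with $\widetilde\mu$, and the vanishing of the $Q$- and cross-terms) matches the paper's proof.
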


We will need a uniform bound on all density matrices in the Hilbert-Schmidt norm. This is the only place where we need the condition \eqref{eq:Schatten h-strong-3}. 

\begin{lemma}[\textbf{Hilbert-Schmidt estimate}]\label{lem:HS-norm-bd}\mbox{}\\
Let $h$ satisfy  \eqref{eq:Schatten h-strong-1}, \eqref{eq:Schatten h-strong-3} and let $w$ satisfy \eqref{eq:interaction}. Then for every $k\ge 1$, we have 
$$ \left\|\lambda^{k}\Gamma_\lambda^{(k)}\right\|_{\gS^2}\le C_k.$$
\end{lemma}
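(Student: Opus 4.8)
\textbf{Plan of proof for Lemma~\ref{lem:HS-norm-bd}.} The goal is a uniform-in-$\lambda$ bound $\|\lambda^k\Gamma_\lambda^{(k)}\|_{\gS^2}\le C_k$, i.e. $\lambda^{2k}\tr[(\Gamma_\lambda^{(k)})^2]\le C_k^2$. The key idea is to compare $\Gamma_\lambda$ with the Gaussian state $\Gamma_0$ using the positivity of the heat kernel in~\eqref{eq:Schatten h-strong-3}, which is exactly what will let us dominate the interacting $k$-particle density matrix by the free one in a trace sense. First I would recall that $\Gamma_\lambda^{(k)}$ acting on $\gH^{\otimes_s k}$ can be written, using~\eqref{eq:DM ann cre}, in terms of the $2k$-point function $\tr[a^\dagger(f_1)\cdots a^\dagger(f_k)a(g_1)\cdots a(g_k)\Gamma_\lambda]$, and that the Hilbert--Schmidt norm squared is
\begin{equation*}
\tr_{\gH^{\otimes_s k}}\big[(\Gamma_\lambda^{(k)})^2\big]=\frac{1}{(k!)^2}\sum_{\text{ONB}}\big|\langle u_{i_1}\otimes_s\cdots, \Gamma_\lambda^{(k)} u_{j_1}\otimes_s\cdots\rangle\big|^2,
\end{equation*}
which can be re-expressed as the expectation $\tr[\mathbb{B}\,\Gamma_\lambda]$ of a suitable positive second-quantized observable $\mathbb{B}$ built from $\Gamma_\lambda^{(k)}$ itself (essentially a normal-ordered product bounded by a polynomial of degree $2k$ in $\cN$ once one controls $\|\Gamma_\lambda^{(k)}\|$).

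The cleanest route, however, avoids opening up all creation/annihilation operators. Write $\Gamma_\lambda=\cZ(\lambda)^{-1}e^{-\lambda\bH_\lambda}$ with $\bH_\lambda=\dG(h)+\lambda\bWren\ge\dG(h)=\bH_0$ since $\bWren\ge0$. Because $e^{-\beta h}(x,y)\ge0$ for all $\beta>0$, the second-quantized heat semigroup $e^{-\beta\dG(h)}$ has a positivity-preserving integral kernel on Fock space in the position representation; combined with the Feynman--Kac/Trotter representation this gives the operator-kernel domination $0\le e^{-\lambda\bH_\lambda}(\cdot,\cdot)\le e^{-\lambda\bH_0}(\cdot,\cdot)$ pointwise (this is the standard diamagnetic-type comparison, and it is precisely why~\eqref{eq:Schatten h-strong-3} is assumed). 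From pointwise kernel domination of states one deduces domination of all reduced density-matrix kernels: $|\Gamma_\lambda^{(k)}(X;Y)|\le \big(\cZ_0(\lambda)/\cZ(\lambda)\big)\,\Gamma_0^{(k)}(X;Y)$ as functions on $\Omega^k\times\Omega^k$, where I used that the kernel of $\Gamma^{(k)}$ is obtained by integrating the many-body kernel against the diagonal in the remaining variables, an operation that preserves pointwise inequalities of positive kernels. By Lemma~\ref{lem:partition}, $\cZ_0(\lambda)/\cZ(\lambda)\le e^{C\tr[h^{-2}]}=:C_0$.

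Given pointwise domination of kernels, the Hilbert--Schmidt norm is monotone: $\|\lambda^k\Gamma_\lambda^{(k)}\|_{\gS^2}^2=\lambda^{2k}\iint|\Gamma_\lambda^{(k)}(X;Y)|^2\,dX\,dY\le C_0^2\,\lambda^{2k}\iint\Gamma_0^{(k)}(X;Y)^2\,dX\,dY=C_0^2\|\lambda^k\Gamma_0^{(k)}\|_{\gS^2}^2$. Finally, by~\eqref{eq:DM_quasi_free}, $\lambda^k\Gamma_0^{(k)}=(\lambda/(e^{\lambda h}-1))^{\otimes k}$ on the symmetric subspace, so $\|\lambda^k\Gamma_0^{(k)}\|_{\gS^2}\le \|\lambda(e^{\lambda h}-1)^{-1}\|_{\gS^2}^{k}$, and using $\lambda/(e^{\lambda h}-1)\le C h^{-1}$ (from~\eqref{eq:exp_bound} with $p=1$) together with $\tr[h^{-2}]<\infty$ gives $\|\lambda(e^{\lambda h}-1)^{-1}\|_{\gS^2}^2\le C^2\tr[h^{-2}]<\infty$. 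Hence $\|\lambda^k\Gamma_\lambda^{(k)}\|_{\gS^2}\le C_0\,(C^2\tr[h^{-2}])^{k/2}=:C_k$, uniformly in $\lambda$.

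\textbf{Main obstacle.} The delicate point is making the pointwise kernel domination $|\Gamma_\lambda^{(k)}(X;Y)|\le C_0\,\Gamma_0^{(k)}(X;Y)$ rigorous: one must justify the Feynman--Kac comparison $e^{-\lambda\bH_\lambda}(\cdot,\cdot)\le e^{-\lambda\bH_0}(\cdot,\cdot)$ on Fock space (Trotter product formula applied to $\dG(h)$ and the positive multiplication operator $\lambda\bWren$ — noting $\bW$ is multiplication by $w(x_i-x_j)\ge$ (bounded below), and the counterterms $-\nu\cN+E_0$ are diagonal, so the whole perturbation is a real function of positions which by~\eqref{eq:Schatten h-strong-3} only shrinks the kernel pointwise after subtracting its sign — in fact one should apply the comparison to $\bH_\lambda$ versus $\bH_0$ directly since $\bWren\ge0$ as an operator but, crucially, here as a \emph{multiplication operator} it has a pointwise-nonnegative "potential" after normal ordering, so Trotter gives the bound), and then check that passing from the $n$-body kernels to the reduced $k$-body kernels via partial traces along the diagonal respects the inequality. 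An alternative, if one prefers to avoid Feynman--Kac subtleties, is to use the monotonicity $\tr[f(\lambda\bH_\lambda)]\le\tr[f(\lambda\bH_0)]$ for decreasing $f$ (already exploited in the proof of Lemma~\ref{lem:number interacting}) applied to the observable $\mathbb{B}$ built from $(\Gamma_\lambda^{(k)})$, but that introduces a circularity since $\mathbb{B}$ depends on $\Gamma_\lambda$; the cleanest fix is the kernel-domination argument above, whose only real input is~\eqref{eq:Schatten h-strong-3}.
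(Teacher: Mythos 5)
Your proposal is correct and follows essentially the same route as the paper: Trotter plus the heat-kernel positivity \eqref{eq:Schatten h-strong-3} and the pointwise nonnegativity of the multiplication function defining $\bWren$ give the kernel domination $0\le \Gamma_\lambda^{(k)}(X;Y)\le C\,\Gamma_0^{(k)}(X;Y)$ (with $C=\cZ_0(\lambda)/\cZ(\lambda)$ controlled by Lemma~\ref{lem:partition}), and the conclusion then follows from monotonicity of the Hilbert--Schmidt norm and $\lambda(e^{\lambda h}-1)^{-1}\le h^{-1}$. Your hesitation in the last paragraph is unnecessary: on each $n$-particle sector $\bWren$ is multiplication by $\tfrac12\int\hat w(k)\,|\sum_j e^{ik\cdot x_j}-\langle\dG(e^{ik\cdot x})\rangle_0|^2\,dk\ge 0$ pointwise, which is exactly what the Trotter comparison needs.
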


\begin{proof}  From the positivity $e^{-th}(x,y)\ge 0$ and $\lambda\bWren\ge 0$, a standard argument using the Trotter product formula (see e.g.~\cite[Theorem VIII.30]{ReeSim1} or~\cite[Theorem~1.1]{Simon-05}) and the relative bound on partition functions in Lemma \ref{lem:partition}, we obtain the kernel estimate
$$
0\le \Gamma_\lambda^{(k)}(X_k;Y_k)\le C_k\Gamma_{0}^{(k)}(X_k;Y_k). 
$$
See e.g.~\cite[Lemma 4.3]{LewNamRou-18a} for a detailed explanation. Consequently, for every $k\ge 1$ we have the Hilbert-Schmidt estimate
$$
\left\|\lambda^{k}\Gamma_\lambda^{(k)}\right\|_{\gS^2}\le C_k \left\| \lambda^{k} \Gamma_0^{(k)}\right\|_{\gS^2}= C_k \left\| \frac{\lambda^k}{(e^{\lambda h}-1)^{\otimes k}}\right\|_{\gS^2} \le C_k \|h^{-1}\|_{\gS^2}^k.
$$
Note that the bound is uniform in $\lambda$ and depends on $h$ only via $\tr[h^{-2}]$. 
\end{proof}

Next, we have

\begin{lemma} [\textbf{From states to density matrices, Hilbert-Schmidt estimate}]\label{lem:states to DMs-II}\mbox{}\\ 
Let $\Gamma, \Gamma'$ be two states on Fock space that commute with the number operator $\cN$. Then for all $k\ge 1$, we have the Hilbert-Schmidt norm estimate on the associated density matrices
\begin{align}\label{eq:G-DMk-HS}
\| \Gamma^{(k)} - \Gamma'^{(k)} \|_{\gS^2}^2 \le C_k  \Big( \Tr |\Gamma-\Gamma'| \Big) \left( \sum_{\ell=k}^{2k} \Big(\| \Gamma^{(\ell)}\|_{\gS^2} +\| \Gamma'^{(\ell)}\|_{\gS^2}  \Big)  \right). 
\end{align}
\end{lemma}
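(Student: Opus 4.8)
The strategy is to reduce the Hilbert-Schmidt norm estimate on the $k$-particle density matrices to a duality pairing controlled by the trace norm of $\Gamma - \Gamma'$. Write $\delta\Gamma = \Gamma - \Gamma'$ and $\delta\Gamma^{(k)} = \Gamma^{(k)} - \Gamma'^{(k)}$. The key observation is that
$$
\| \delta\Gamma^{(k)} \|_{\gS^2}^2 = \tr_{\gH^{\otimes_s k}}\left[ \delta\Gamma^{(k)} \, \delta\Gamma^{(k)} \right] = \tr_{\gF}\left[ \mathbb{A}_k \, \delta\Gamma \right]
$$
where $A_k = \delta\Gamma^{(k)}$ is used as the test operator on $k$ particles and $\mathbb{A}_k$ is its second quantization as in Definition~\ref{def:quantiz}. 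Indeed, by the defining duality~\eqref{eq:def k body}, $\tr_{\gF}[\mathbb{A}_k \Gamma] = \tr[\delta\Gamma^{(k)} \Gamma^{(k)}]$ and similarly for $\Gamma'$, so subtracting gives exactly $\|\delta\Gamma^{(k)}\|_{\gS^2}^2$. (One must check that $\delta\Gamma^{(k)}$ is bounded so that $\mathbb{A}_k$ is well-defined; this follows from Lemma~\ref{lem:HS-norm-bd}-type bounds, since Hilbert-Schmidt operators are bounded, or one can first argue with finite-rank truncations and pass to the limit.)

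Next I would bound $|\tr_{\gF}[\mathbb{A}_k\,\delta\Gamma]|$ by $\|\mathbb{A}_k \chi\| \cdot \tr|\delta\Gamma|$ where $\chi = \one(\cN \le M)$ is a cutoff, handled as follows. Since $\Gamma$ and $\Gamma'$ both commute with $\cN$, so does $\delta\Gamma$, and $\mathbb{A}_k$ also commutes with $\cN$; hence the pairing decomposes over particle-number sectors. On the sector with exactly $n$ particles, $\mathbb{A}_k$ acts as $\sum_{i_1 < \dots < i_k} (A_k)_{i_1,\dots,i_k}$, a sum of $\binom{n}{k}$ terms, so $\|\mathbb{A}_k\|_{\gH^{\otimes_s n}} \le \binom{n}{k} \|A_k\| = \binom{n}{k}\|\delta\Gamma^{(k)}\|$. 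This grows polynomially in $n$, which is not directly summable against $\tr|\delta\Gamma|$ alone; the standard device is to insert $\|\delta\Gamma^{(k)}\|\le \|\delta\Gamma^{(k)}\|_{\gS^2}$ and to control the number-sector weights by the higher density matrices. Concretely, one writes $|\tr_\gF[\mathbb{A}_k \delta\Gamma]| \le \|\delta\Gamma^{(k)}\| \sum_n \binom{n}{k} |\tr_{\gH^{\otimes_s n}} \delta\Gamma_n|$ where $\delta\Gamma_n$ is the block of $\delta\Gamma$ on the $n$-sector, and then $\sum_n \binom{n}{k}|\tr\delta\Gamma_n| \le \left(\tr|\delta\Gamma|\right)^{1/2}\left(\sum_n \binom{n}{k}^2 \tr|\delta\Gamma_n|\right)^{1/2}$ by Cauchy-Schwarz; finally $\sum_n\binom{n}{k}^2 \tr\Gamma_n = \tr[\Gamma^{(k)}\otimes_s \cdots]$-type quantities bounded by $\sum_{\ell=k}^{2k}\tr\Gamma^{(\ell)} \le C_k\sum_{\ell=k}^{2k}\|\Gamma^{(\ell)}\|_{\gS^2}$ (after controlling $\tr\Gamma^{(\ell)} \le \|\Gamma^{(\ell)}\|_{\gS^2}\|\one\|$ on the range, or more carefully using that $\Gamma^{(\ell)}$ is trace-class and bounding its trace by suitable Hilbert-Schmidt norms together with the finite rank of relevant projections — this is where the precise combinatorial bookkeeping lives).

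Putting these together yields
$$
\| \delta\Gamma^{(k)}\|_{\gS^2}^2 \le C_k \|\delta\Gamma^{(k)}\| \left(\tr|\delta\Gamma|\right)^{1/2}\left(\sum_{\ell=k}^{2k}\big(\|\Gamma^{(\ell)}\|_{\gS^2}+\|\Gamma'^{(\ell)}\|_{\gS^2}\big)\right)^{1/2},
$$
and since $\|\delta\Gamma^{(k)}\| \le \|\delta\Gamma^{(k)}\|_{\gS^2}$ one of the factors $\|\delta\Gamma^{(k)}\|_{\gS^2}$ can be absorbed into the left-hand side, leaving $\|\delta\Gamma^{(k)}\|_{\gS^2}^2 \le C_k (\tr|\delta\Gamma|) (\sum_{\ell=k}^{2k}(\cdots))$, modulo squaring the $(\cdots)^{1/2}$ factor, which matches~\eqref{eq:G-DMk-HS} up to adjusting $C_k$. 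Actually a cleaner route avoiding the absorption trick: pair $\delta\Gamma^{(k)}$ against itself but split symmetrically so that the operator norm $\|\delta\Gamma^{(k)}\|$ never appears — use $\|\delta\Gamma^{(k)}\|_{\gS^2}^2 = \langle \delta\Gamma^{(k)}, \delta\Gamma^{(k)}\rangle_{\gS^2}$ and bound $|\langle A, \delta\Gamma^{(k)}\rangle_{\gS^2}| = |\tr_\gF[\mathbb{A}_k\delta\Gamma]|$ by expanding $\mathbb{A}_k$ via creation/annihilation operators in~\eqref{eq:DM ann cre} and applying Cauchy-Schwarz directly in Fock space, which naturally produces the moments $\tr[\cN^\ell \Gamma]$ and hence $\tr\Gamma^{(\ell)}$ for $\ell$ between $k$ and $2k$. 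The main obstacle is the combinatorial/functional-analytic bookkeeping in the Cauchy-Schwarz step: one must split the $2k$ creation/annihilation operators in $\mathbb{A}_k^*\mathbb{A}_k$ into two groups of $k$ so that each group, applied to $\sqrt{|\delta\Gamma|}$ and paired against the other, is controlled by a product of a $\gS^2$-norm of some $\Gamma^{(\ell)}$ and $\tr|\delta\Gamma|^{1/2}$, and verifying that all intermediate operators are densely defined and the manipulations are justified (finite-rank or number-cutoff approximation, then monotone/dominated limits). The positivity hypothesis on kernels is not needed here — that was used only in Lemma~\ref{lem:HS-norm-bd} to bound the individual $\|\Gamma_\lambda^{(k)}\|_{\gS^2}$; the present lemma is purely abstract.
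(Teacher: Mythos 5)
There is a genuine gap, and it sits exactly where you flag ``the main obstacle''. Both of your routes produce, on the right-hand side, the \emph{traces} $\Tr\Gamma^{(\ell)}$ (equivalently, moments $\langle\binom{\cN}{\ell}\rangle$) rather than the Hilbert--Schmidt norms $\|\Gamma^{(\ell)}\|_{\gS^2}$ that the lemma asserts. Your number-sector argument bounds $\|\mathbb{A}_k\|$ on the $n$-particle sector by $\binom{n}{k}\|A_k\|$ and then Cauchy--Schwarz in $n$ yields $\sum_n\binom{n}{k}^2\Tr(\Gamma_n+\Gamma'_n)\sim\sum_{\ell=k}^{2k}\Tr[\Gamma^{(\ell)}+\Gamma'^{(\ell)}]$; your ``cleaner'' creation/annihilation route, by your own admission, also lands on $\Tr[\cN^\ell\Gamma]$. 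The conversion you invoke, $\Tr\Gamma^{(\ell)}\le\|\Gamma^{(\ell)}\|_{\gS^2}\|\one\|$, is false in infinite dimension: for a positive operator one has $\Tr A\ge\|A\|_{\gS^2}$, and the reverse bound would require $\one\in\gS^2$. The distinction is not cosmetic. In the application (Section~\ref{sec:cv-all-DM}) one multiplies by $\lambda^{2k}$ and uses $\|\lambda^\ell\Gamma_\lambda^{(\ell)}\|_{\gS^2}\le C_\ell$ from Lemma~\ref{lem:HS-norm-bd}, whereas $\lambda^{2k}\Tr\Gamma_\lambda^{(2k)}\sim\lambda^{2k}\langle\cN^{2k}\rangle_\lambda$ diverges in 3D; the trace version of the estimate is essentially the separate Lemma~\ref{lem:states to DMs} and would not close the argument. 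So what you have proved is a genuinely weaker statement.

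The missing idea is structural: instead of estimating $\mathbb{A}_k$ sector by sector in operator norm, one computes $\mathbb{A}_k^2$ \emph{exactly} as $\sum_{\ell=k}^{2k}\mathbb{B}_\ell$, where $B_\ell$ is the explicit $\ell$-body operator obtained by merging the index sets of the two copies of $A_k$ (the terms where $\{i_1,\dots,i_k\}\cup\{j_1,\dots,j_k\}$ has cardinality $\ell$). One then shows $\|B_\ell\|_{\gS^2}\le C_k\|A_k\|_{\gS^2}^2$ by a Cauchy--Schwarz at the level of integral kernels, integrating out the shared variables. With that in hand, the chain
\begin{equation*}
|\Tr[\mathbb{A}_k(\Gamma-\Gamma')]|^2\le\Big(\Tr|\Gamma-\Gamma'|\Big)\,\Tr\big[\mathbb{A}_k^2(\Gamma+\Gamma')\big]
=\Big(\Tr|\Gamma-\Gamma'|\Big)\sum_{\ell=k}^{2k}\Tr\big[B_\ell(\Gamma^{(\ell)}+\Gamma'^{(\ell)})\big]
\end{equation*}
can be paired in $\gS^2$ term by term, giving the factor $\|A_k\|_{\gS^2}^2\sum_\ell(\|\Gamma^{(\ell)}\|_{\gS^2}+\|\Gamma'^{(\ell)}\|_{\gS^2})$, and the lemma follows by duality over $\|A_k\|_{\gS^2}\le1$ (so your absorption trick with $A_k=\delta\Gamma^{(k)}$ is also unnecessary). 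Your proposal correctly identifies the Fock-space Cauchy--Schwarz and the range $k\le\ell\le2k$, but the decomposition of $\mathbb{A}_k^2$ into second-quantized $\ell$-body operators with controlled $\gS^2$-norm is the step that makes the Hilbert--Schmidt (rather than trace) bound possible, and it is absent from the proposal.
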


\begin{proof} Let $A_k$ be a non-negative Hilbert-Schmidt operator on $\gH^{\otimes_s k}$ and $\bA_k$ the associated second-quantized operator on the Fock space from Definition~\ref{def:quantiz}. Using~\eqref{eq:def k body} we have
\begin{align}\label{eq:G-DMk-HS-00}
\left| \Tr \Big[ A_k (\Gamma^{(k)} - \Gamma'^{(k)})\Big] \right|^2 &= \left| \Tr \Big[ \bA_k(\Gamma-\Gamma') \Big] \right|^2\nn\\
&\le \Big( \Tr |\Gamma-\Gamma'| \Big) \left(  \Tr \Big[ (\bA_k)^2 |\Gamma-\Gamma'| \Big]  \right)\nn \\
&\le  \Big( \Tr |\Gamma-\Gamma'| \Big) \left(  \Tr \Big[ (\bA_k)^2 (\Gamma+\Gamma') \Big]  \right). 
\end{align}
On the $n$-particle sector, we can compute explicitly 
\begin{align*}
\left(\sum_{1\le i_1<...<i_k\le n} (A_k)_{i_1,...,i_k}\right)^2 &= \sum_{\substack{1\le i_1<...<i_k\le n\\ 1\le j_1<...<j_k \le n }} (A_k)_{i_1,...,i_k}(A_k)_{j_1,...,j_k} \\
&=\sum_{\ell=k}^{\min\{2k,n\}}\sum_{1\le i_1<...<i_\ell \leq n} (B_\ell)_{i_1,i_2,...,i_\ell} 
\end{align*}
where $B_\ell$ is an operator on $\gH^{\otimes_s \ell}$ defined by
\begin{align} \label{eq:def-Bell}
(B_\ell)_{1,2,...,\ell} := \sum_{\substack{1\le i_1<...<i_k\le \ell \\ 1\le j_1<...<j_k\le \ell \\ \{i_1,...,i_k\}\cup \{j_1,...,j_k\}=\{1,...,\ell\}}} (A_k)_{i_1,...,i_k}(A_k)_{j_1,...,j_k}.
\end{align}
Therefore we have
$$
\bA_k ^2= \sum_{\ell=k}^{2k} \bB_\ell
$$
where $\bB_\ell$ is the second quantization of $B_\ell$, as in Definition~\ref{def:quantiz} again.

On the other hand, since $A_k$ is a Hilbert-Schmidt operator on $\gH^{\otimes_s k}$, we can prove that $B_\ell$ is a Hilbert-Schmidt operator on $\gH^{\otimes_s \ell}$ and 
\begin{align} \label{eq:Bell-HS}
\|B_\ell\|_{\gS^2} \le C_k \|A_k\|_{\gS^2}^2. 
\end{align}
To prove \eqref{eq:Bell-HS}, let us come back to the definition \eqref{eq:def-Bell}. Consider a general $\ell$-particle operator of the form 
$$\bA=(A_k)_{X,Y} (A_k)_{X,Z}$$
with $(X,Y), (X,Z)$ are $k$-particle variables. If the kernel of $(A_k)$ is $(A_k)(X,Y;X',Y')$, then the kernel of $\bA$ is 
$$
\bA(X,Y,Z; X',Y',Z')= \int dX'' (A_k)(X,Y;X'',Y') (A_k)(X'',Z;X',Z'). 
$$
By the Cauchy-Schwarz inequality we have
$$
|\bA(X,Y,Z; X',Y',Z')|^2 \le \left( \int d X'' |(A_k)(X,Y;X'',Y')|^2   \right) \left( \int dX'' |(A_k)(X'',Z;X',Z')|^2 \right).
$$
Therefore,
\begin{align*}
\|\bA\|_{\gS^2}^2&=\int dX dY dZ dX' dY' dZ' |\bA(X,Y,Z; X',Y',Z')|^2\\
&\le  \int dX dY dZ dX' dY' dZ'  \left( \int d X'' |(A_k)(X,Y;X'',Y')|^2   \right) \times\\
&\qquad \left( \int dX'' |(A_k)(X'',Z;X',Z')|^2 \right) \\
&= \left( \int dX d Y dX' dY' |(A_k)(X,Y;X',Y')|^2   \right)^2 = \|A_k\|_{\gS^2}^4.   
\end{align*}
We thus obtain \eqref{eq:Bell-HS} immediately from the definition \eqref{eq:def-Bell}. 

Using \eqref{eq:Bell-HS}, we can estimate 
\begin{align} \label{eq:dGA-dGB-Gk}
\Tr \Big[ \bA_k ^2 \Gamma \Big] &= \sum_{\ell=k}^{2k} \Big[ \bB_\ell \Gamma \Big] =  \sum_{\ell=k}^{2k} \Tr \Big[ B_\ell \Gamma^{(\ell)}\Big] \nn\\
&\le \sum_{\ell=k}^{2k} \|B_\ell\|_{\gS^2} \| \Gamma^{(\ell)}\|_{\gS^2} \le C_k \|A_k\|_{\gS^2}^2 \sum_{\ell=k}^{2k} \| \Gamma^{(\ell)}\|_{\gS^2}. 
\end{align}
Inserting \eqref{eq:dGA-dGB-Gk} and a similar estimate for $\Gamma'$ in \eqref{eq:G-DMk-HS-00} we arrive at
\begin{align}\label{eq:G-DMk-HS-23}
\left| \Tr \Big[ A_k (\Gamma^{(k)} - \Gamma'^{(k)})\Big] \right|^2 \le C_k  \|A_k\|_{\gS^2}^2  \Big( \Tr |\Gamma-\Gamma'| \Big) \left( \sum_{\ell=k}^{2k} \Big(\| \Gamma^{(\ell)}\|_{\gS^2} +\| \Gamma'^{(\ell)}\|_{\gS^2}  \Big)  \right). 
\end{align}
This being true for any $k$-body Hilbert-Schmidt operator $A_k$ leads to the desired bound \eqref{eq:G-DMk-HS} by duality.  
\end{proof}

Now we are ready to conclude the 

\begin{proof}[Proof of Lemma \ref{lem:CV-DM-high}]  Let 
$$\Gammat_\lambda = \cU^* \Big( (\Gamma_\lambda)_P \otimes (\Gamma_\lambda)_Q \Big) \cU.$$
From the action~\eqref{eq:loc creation} of the partial isometry $\cU$ on creation/annihilation operators one can compute that 
\begin{equation}\label{eq:DMs tilde}
\Gammat_\lambda ^{(k)} = P^{\otimes k} \Gamma_\lambda ^{(k)} P^{\otimes k} + Q^{\otimes k} \Gamma_\lambda ^{(k)} Q^{\otimes k} + \mathrm{Cross}
\end{equation}
where $\mathrm{Cross}$ is a sum of finite coefficients (depending only on $k$) times terms of the form 
\begin{equation}\label{eq:cross terms}
\mathrm{Cross}_{l} = A_1^{\otimes j_1} \Gamma_\lambda ^{(j_1)} A_1^{\otimes j_1} \otimes \ldots \otimes A_l^{\otimes j_l} \Gamma_\lambda ^{(j_l)} A_l^{\otimes j_l}  
\end{equation}
where $\sum_{i=1} ^l j_i = k$ and $A_i = P$ or $Q$, but not all $A_i$ are simultaneously equal to $P$ or $Q$. The precise expression does not matter for us, but we have already used the expressions for $k=1,2$, so let us write them explicitly once more:
\begin{align}\label{eq:DMs tilde 12}
\Gammat_\lambda ^{(1)} &= P \Gamma_\lambda ^{(1)} P + Q \Gamma_\lambda ^{(1)} Q, \nn \\ 
\Gammat_\lambda ^{(2)} &= P^{\otimes 2} \Gamma_\lambda ^{(2)} P^{\otimes 2} + Q^{\otimes 2} \Gamma_\lambda ^{(2)} Q^{\otimes 2} + P\Gamma_{\lambda} ^{(1)} P \otimes Q \Gamma_\lambda ^{(1)} Q + Q \Gamma_\lambda ^{(1)} Q \otimes P \Gamma_{\lambda} ^{(1)} P. 
\end{align}
From formula \eqref{eq:DMs tilde} and the uniform bound $ \| \lambda^{k} \Gamma_\lambda\|_{\gS^2} \le C_k$ in Lemma \ref{lem:HS-norm-bd}, we deduce the similar bound for $\Gammat_\lambda$:
$$
\| \lambda^{k} \Gammat_\lambda^{(k)}\|_{\gS^2} \le C_k, \quad \forall k\ge 1. 
$$
Therefore, using the state convergence \eqref{eq:cv-state-4c} and Lemma \ref{lem:states to DMs-II} we obtain
\begin{align} \label{eq:Gk-Gtk-HS}
\left\| \lambda^{k} \Big( \Gamma_\lambda^{(k)} - \Gammat_\lambda^{(k)} \Big) \right\|_{\gS^2} \to 0, \quad \forall k\ge 1. 
\end{align}
Thus it remains to prove the convergence for $\Gammat_\lambda^{(k)}$. 

After multiplying \eqref{eq:DMs tilde} by $\lambda^k$, the main claim is that the $P$-localized term (first term) converges to the desired limit strongly in $\gS^2$, while the $Q$-localized term (second term) converges to $0$ strongly in $\gS^2$. Combining these two facts, the cross-terms must also converge to $0$ strongly in $\gS^2$. 

\medskip

\noindent{\bf Analysis of the $P$-localized term.} We use the quantitative quantum de Finetti theorem~\ref{thm:quant deF}. Recalling the lower symbol $\mu_{P,\lambda}$ of $(\Gamma_\lambda)_P$, we have from \eqref{eq:Chiribella} 
\begin{equation}\label{eq:Chiribella-a1}
\int_{P\gH}|u^{\otimes k}\>\<u^{\otimes k}|\;d\mu_{P,\lambda}(u) = \lambda^k\,k!\, P^{\otimes k}\Gamma^{(k)} P^{\otimes k} + \lambda^k\,k!\, \sum_{\ell = 0} ^{k-1} {k \choose \ell} P^{\otimes \ell} \Gamma^{(\ell)} P^{\otimes \ell} \otimes_s \one_{\otimes_s ^{k-\ell} P\gH}.
\end{equation}

From the lower symbol expression \eqref{eq:Chiribella-a1}, taking the Hilbert-Schmidt norm on both sides, then using the uniform bound in Lemma \ref{lem:HS-norm-bd} and the fact that $\dim(P\gH)\le C\Lambda_e^2\ll \lambda^{-1}$, we find that for every $k\ge 1$, 
\begin{align} \label{eq:CV-HS-a1}
\left\| \lambda^k\,k!\, P^{\otimes k}\Gamma^{(k)} P^{\otimes k}  - \int_{P\gH}|u^{\otimes k}\>\<u^{\otimes k}|\;d\mu_{P,\lambda}(u) \right\|_{\gS^2}\to 0.
\end{align}
Consequently,  
\begin{align} \label{eq:CV-HS-ab}
\left\| \int_{P\gH}|u^{\otimes k}\>\<u^{\otimes k}|\;d\mu_{P,\lambda}(u) \right\|_{\gS^2}\le C_k.
\end{align}
A similar estimate with $\mu_{P,\lambda}$  replaced by $\widetilde \mu$ in \eqref{eq:def-wide-w} holds thanks to the operator inequality
\begin{align} \label{eq:int-u^k-tildemu}
\int_{P\gH} |u^{\otimes k}\rangle \langle u^{\otimes k}| d\widetilde \mu \le C \int_{P\gH} |u^{\otimes k}\rangle \langle u^{\otimes k}| d\mu_{0,K} = Ck! (Ph^{-1})^{\otimes k}. 
\end{align}

Next, for every Hilbert-Schmidt operator  $X\ge 0$ on $\gH^{\otimes_s k}$, we can estimate
\begin{align*}
&\left| \Tr \left[ X \left( \int_{P\gH} |u^{\otimes k}\>\<u^{\otimes k}| (d\mu_{P,\lambda} - d \widetilde \mu)(u) \right)  \right] \right|^2 \nn\\
&\qquad\qquad = \left|  \int_{P\gH} \langle  u^{\otimes k}, X u^{\otimes k}\rangle (d\mu_{P,\lambda}- d \widetilde \mu)(u)  \right|^2 \\
&\qquad\qquad \le   \left( \int_{P\gH} |\langle  u^{\otimes k}, X u^{\otimes k}\rangle|^2  |d\mu_{P,\lambda}- d \widetilde \mu| (u) \right)  \left( \int_{P\gH}  |d\mu_{P,\lambda}- d \widetilde \mu| (u) \right) \\
&\qquad\qquad \le \left( \int_{P\gH} \langle  u^{\otimes 2k}, X\otimes X u^{\otimes 2k}\rangle  (d\mu_{P,\lambda}+ d \widetilde \mu) (u) \right) |\mu_{P,\lambda}-\widetilde\mu|(P\gH)  \\
&\qquad\qquad =   \Tr \left[ X\otimes X \left( \int_{P\gH} |  u^{\otimes 2k}\rangle \langle u^{\otimes 2k}| (d\mu_{P,\lambda}+ d \widetilde \mu) (u) \right) \right] |\mu_{P,\lambda}-\widetilde\mu|(P\gH) \\
&\qquad\qquad \le    \|X\otimes X\|_{\gS^2} \left\| \int_{P\gH} |  u^{\otimes 2k}\rangle \langle u^{\otimes 2k}| (d\mu_{P,\lambda}+ d \widetilde \mu) (u)  \right\|_{\gS^2}  |\mu_{P,\lambda}-\widetilde\mu|(P\gH) \to 0. 
\end{align*}
Here in the last estimate we have used \eqref{eq:cv-state-4a} and \eqref{eq:CV-HS-ab}.  By duality we deduce that 
\begin{align} \label{eq:CV-HS-a2}
\left\| \int_{P\gH} |u^{\otimes k}\>\<u^{\otimes k}|  (d\mu_{P,\lambda} - d \widetilde \mu)(u)  \right\|_{\gS^2} \to 0. 
\end{align}
Thus, by the triangle inequality,
\begin{align} \label{eq:CV-HS-a4}
\left\| \lambda^k\,k!\, P^{\otimes k} \Gamma_\lambda^{(k)} P^{\otimes k}  - \int_{P\gH}|u^{\otimes k}\>\<u^{\otimes k}|d \widetilde \mu(u) \right\|_{\gS^2} \to 0.
\end{align}
Therefore, for all $k\ge 1$,
\begin{align} \label{eq:CV-HS-a5}
\left\| \lambda^k\,k!\, P^{\otimes k} \Gamma_\lambda^{(k)} P^{\otimes k}  - \int |u^{\otimes k}\>\<u^{\otimes k}|\; d\mu(u) \right\|_{\gS^2} \to 0.
\end{align}

\medskip

\noindent{\bf Analysis of the $Q$-localized term.} Using Lemma \ref{lem:states to DMs-II}, Lemma \ref{lem:HS-norm-bd} and \eqref{eq:cv-state-4b} we can estimate
\begin{multline} \label{eq:CV-HS-a6}
\left\| Q^{\otimes k} \lambda^k\left(\Gamma_\lambda^{(k)} -\Gamma_0^{(k)}\right) Q^{\otimes k}  \right\|_{\gS^2}^2 = \left\| \lambda^k\left((\Gamma_\lambda)_Q^{(k)} -(\Gamma_0)_Q^{(k)}\right)  \right\|_{\gS^2}^2\\
\le C_k  \lambda^{2k} \Big( \Tr |(\Gamma_\lambda)_Q-(\Gamma_0)_Q| \Big)  \sum_{\ell=k}^{2k} \Big(\| (\Gamma_\lambda)_Q^{(\ell)}\|_{\gS^2} +\| (\Gamma_0)^{(\ell)}\|_{\gS^2}  \Big) \to 0
\end{multline}
for all $k\ge 1$. From \eqref{eq:CV-HS-a5} and \eqref{eq:CV-HS-a6} we can go back to~\eqref{eq:DMs tilde}, control all the cross terms, and conclude that
$$
\left\| \lambda^k\,k!\, \Gammat_\lambda^{(k)}  - \int |u^{\otimes k}\>\<u^{\otimes k}|\; d\mu(u) \right\|_{\gS^2} \to 0
$$
for all $k\ge 1$. The desired convergence of $\Gammat_\lambda^{(k)}$ then follows from \eqref{eq:Gk-Gtk-HS} and the triangle inequality.  This concludes  the proof of Lemma \ref{lem:CV-DM-high}. 
\end{proof}

\subsection{Trace class convergence of relative one-body density matrix}

To conclude the proof of Theorem \ref{thm:main-3}, it remains to prove the convergence of the relative density matrices  in the trace class norm. 

\begin{lemma}[\textbf{Trace class convergence of the relative one-body density matrix}]\label{lem:CV-1pdm}\mbox{}\\
Let $h>0$ satisfy  \eqref{eq:Schatten h-strong-1}-\eqref{eq:Schatten h-strong-2}-\eqref{eq:Schatten h-strong-3} and let $w$ satisfy \eqref{eq:interaction}.  Then in the limit $T=\lambda^{-1}\to \infty$, for all $k\ge 1$ we have 
$$
\lambda\left( \Gamma_\lambda^{(1)}  - \Gamma_0^{(1)} \right)\to  \int |u  \rangle \langle u  | (d \mu(u)-d\mu_0(u))
$$
strongly in the trace class space $\gS^1(\gH)$. 
\end{lemma}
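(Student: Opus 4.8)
The plan is to upgrade the Hilbert--Schmidt convergence already established in Lemma~\ref{lem:CV-DM-high} to a trace-class convergence, the only extra input being a \emph{uniform} (in $\lambda$) high-energy tightness estimate, which is a by-product of the relative entropy bound of Theorem~\ref{thm:estim_relative_entropy}. Throughout, write $S_\lambda := \lambda\big(\Gamma_\lambda^{(1)}-\Gamma_0^{(1)}\big)$ and $S := \gamma_\mu^{(1)}-\gamma_{\mu_0}^{(1)}=\int|u\rangle\langle u|\,(d\mu-d\mu_0)(u)$.

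First I would record that $S_\lambda\to S$ strongly in $\gS^2(\gH)$: the case $k=1$ of Lemma~\ref{lem:CV-DM-high} gives $\lambda\Gamma_\lambda^{(1)}\to\gamma_\mu^{(1)}$ in $\gS^2$, while Lemma~\ref{lem:quasi-free} applied with $p=2$ (legitimate since $\tr[h^{-2}]<\ii$) gives $\lambda\Gamma_0^{(1)}=\lambda(e^{\lambda h}-1)^{-1}\to h^{-1}=\gamma_{\mu_0}^{(1)}$ in $\gS^2$. Then, applying Theorem~\ref{thm:estim_relative_entropy} on $\gF(\gH)$ with the one-body operator $\lambda h$ (whose associated Gaussian state is exactly $\Gamma_0$) and using the a priori entropy bound $\cH(\Gamma_\lambda,\Gamma_0)\le C\tr[h^{-2}]$ from~\eqref{eq:apriori_entropy}, I obtain the uniform weighted bound $\|h^{1/2}S_\lambda h^{1/2}\|_{\gS^2}\le C$, with $C$ depending on $h$ only through $\tr[h^{-2}]$.

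The key step is to turn this into a uniform tail estimate in trace norm. For $R>0$ let $P_R=\one(h\le R)$. Factorizing $(1-P_R)S_\lambda=(1-P_R)h^{-1/2}\cdot\big(h^{1/2}S_\lambda h^{1/2}\big)\cdot h^{-1/2}$ and using the generalized H\"older inequality $\|XYZ\|_{\gS^1}\le\|X\|_{\gS^4}\|Y\|_{\gS^2}\|Z\|_{\gS^4}$ together with $\|h^{-1/2}\|_{\gS^4}^4=\tr[h^{-2}]<\ii$, the weighted bound yields
\[
\big\|(1-P_R)S_\lambda\big\|_{\gS^1}+\big\|S_\lambda(1-P_R)\big\|_{\gS^1}\le C\,\big(\tr[(1-P_R)h^{-2}]\big)^{1/4}=:\eps(R),
\]
where $\eps(R)\to0$ as $R\to\ii$ \emph{uniformly in $\lambda$}, since $\sum_j\lambda_j^{-2}=\tr[h^{-2}]<\ii$; consequently $\|S_\lambda-P_RS_\lambda P_R\|_{\gS^1}\le 2\eps(R)$ for all $\lambda$ and $R$. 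To conclude, fix $R$: since $P_R$ is bounded, $P_RS_\lambda P_R\to P_RSP_R$ in $\gS^2$, and because these operators have rank at most $\dim(P_R\gH)<\ii$ this is also convergence in $\gS^1$; moreover $S\in\gS^1(\gH)$ by Lemma~\ref{lem:re-interaction}, so $P_RSP_R\to S$ in $\gS^1$ as $R\to\ii$. The triangle inequality then gives $\limsup_{\lambda\to0^+}\|S_\lambda-S\|_{\gS^1}\le 2\eps(R)+\|P_RSP_R-S\|_{\gS^1}$ for every $R$, and letting $R\to\ii$ proves $\lambda(\Gamma_\lambda^{(1)}-\Gamma_0^{(1)})\to\gamma_\mu^{(1)}-\gamma_{\mu_0}^{(1)}$ in $\gS^1(\gH)$, which completes the proof of Theorem~\ref{thm:main-3}.

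The main (and essentially the only) conceptual obstacle is the passage from $\gS^2$ to $\gS^1$: Hilbert--Schmidt convergence together with a uniform trace-norm bound does \emph{not} by itself give trace-norm convergence. What saves the day is that Theorem~\ref{thm:estim_relative_entropy} produces the stronger \emph{weighted} control $\|h^{1/2}S_\lambda h^{1/2}\|_{\gS^2}\le C$, which — because $h^{-1/2}\in\gS^4$, i.e. $\tr[h^{-2}]<\ii$ — encodes uniform smallness of the high-$h$ part of $S_\lambda$ in trace norm; this tightness, combined with the $\gS^2$ convergence on each fixed spectral window of $h$ and the trace-class bound on the classical limit from Lemma~\ref{lem:re-interaction}, supplies exactly the compactness needed. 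Everything else is bookkeeping, and the same scheme would work verbatim starting instead from the weaker trace-norm estimate~\eqref{eq:estim_trace_relative_entropy}.
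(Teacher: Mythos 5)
Your proof is correct and follows essentially the same route as the paper's: Hilbert--Schmidt convergence of $\lambda\big(\Gamma_\lambda^{(1)}-\Gamma_0^{(1)}\big)$ from Lemma~\ref{lem:CV-DM-high} together with Lemma~\ref{lem:quasi-free}, the uniform weighted bound $\|h^{1/2}\,\lambda(\Gamma_\lambda^{(1)}-\Gamma_0^{(1)})\,h^{1/2}\|_{\gS^2}\le C$ obtained from Theorem~\ref{thm:estim_relative_entropy} and the a priori entropy bound~\eqref{eq:apriori_entropy}, then a spectral cutoff $P_R=\one(h\le R)$ and a three-term triangle inequality, sending first $\lambda\to0$ and then $R\to\ii$. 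The only cosmetic difference is that you keep the cutoff symmetrically and control the tail by the $\gS^4$--$\gS^2$--$\gS^4$ H\"older inequality, which is a cleaner implementation of the Cauchy--Schwarz step the paper uses on $\|Q_L X_\lambda\|_{\gS^1}$.
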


\begin{proof} Denote 
$$
X_\lambda:=\lambda\left( \Gamma_\lambda^{(1)}  - \Gamma_0^{(1)} \right) , \quad X_0:=  \int |u \rangle \langle u  | (d \mu(u)-d\mu_0(u)). 
$$
Note that $X_0$ is a trace class operator, thanks to \eqref{eq:DM rel clas norm}.  From Lemma \ref{lem:CV-DM-high} we  have the Hilbert-Schmidt convergence $X_\lambda \to X_0$. Moreover, the uniform estimate in Theorem \ref{thm:estim_relative_entropy} ensures that 
$\| h^{1/2} X_\lambda h^{1/2}\|_{\gS^2} \le C$. Let
$$P_L=\1_{h\le L}, \quad Q_L=\1-P_L=\1_{h\ge L}.$$
By the triangle and Cauchy-Schwarz inequalities we can bound
\begin{align*}
\|X_\lambda - X_0 \|_{\gS^1} &\le \|P_L (X_\lambda -X_0)\|_{\gS^1}   + \|Q_LX_\lambda \|_{\gS^1} + \|Q_L X_0\|_{\gS^1} \\
&\le  \|P_L\|_{\gS^2} \|X_\lambda - X_0 \|_{\gS^2} + \| h^{-1/2}Q_L h^{-1/2}\|_{\gS^2} \|h^{1/2}X_\lambda h^{1/2}\|_{\gS^2} + \|Q_L X_0\|_{\gS^1}. 
\end{align*}
For any fixed $L>0$, taking $\lambda\to \infty$ we get 
$$
\limsup_{\lambda\to0} \|X_\lambda -X_0 \|_{\gS^1} \le  \| h^{-1/2}Q_L h^{-1/2}\|_{\gS^2} + \|Q_L X_0\|_{\gS^1}. 
$$
Then taking $L\to \infty$ in the latter estimate and using $h^{-1}\in \gS^2, X_0\in \gS^1$ we get 
$$
\lim_{\lambda\to0} \|X_\lambda\|_{\gS^1} =0.
$$
This concludes the proof of Lemma \ref{lem:CV-1pdm}, and hence that of Theorem \ref{thm:main-3}. 
\end{proof}


\begin{remark}[Relative higher density matrices]\label{rm:G2}\mbox{}\\
If $\Tr(h^{-1})=+\infty$, then the difference $\lambda^{k}(\Gamma_\lambda^{(k)} -  \Gamma_0^{(k)})$ is {\em not} bounded in trace class for every $k\ge 2$.  For brevity we only explain this for $k=2$, in the homogeneous case in dimension $d=2$. We prove that $\lambda^2(\Gamma_\lambda^{(2)} -  \Gammat_\lambda^{(2)})$ converges to $0$ in trace class, but $\lambda^2(\Gammat_\lambda^{(2)} -  \Gamma_0^{(2)})$ is not bounded in trace class, where $ \Gammat_\lambda= \cU^* \left(  (\Gamma_\lambda)_P \otimes (\Gamma_0)_Q \right) \cU$. 

First, in two dimensions we have since $\Tr(h^{-p})<\infty$ for any $1<p<2$ (see \eqref{eq:p-N0}), from the energy estimates \eqref{eq:partition-lwb-1} and \eqref{eq:partition-up-qua}, we can use   
$$
\| Q h^{-1}\|_{\gS^2} =\|\1_{h\ge \Lambda_e} h^{-1}\|_{\gS^2}  \le \sqrt{  \Lambda_e^{p-2} \Tr(  h^{-p})}. 
$$
and optimize over $\Lambda_e$ to get the explicit bound  
\begin{align} \label{eq:partition-explicit-error}
\left|  -\log \frac{\cZ(\lambda)}{\cZ_0(\lambda)} + \log \left(  \int_{P\gH} e^{-\cD_K[u]} \; d\mu_{0,K}(u) \right) \right| \le C\lambda^{\eta}
\end{align}
for a small constant $\eta>0$. Consequently, repeating the proof of \eqref{eq:cv-state-4c} we have 
\begin{align}  \label{eq:cv-state-4c-error}
\Tr\Big|\Gamma_\lambda - \Gammat_\lambda \Big| \le CT^{-\eta/2}.
\end{align} 
Since $\lambda^{k} \Tr(\langle \cN^k\rangle_{\Gamma_\lambda})$ diverges like $|\log \lambda|^k$ due to \eqref{eq:number free int} and \eqref{eq:p-N0}, we can deduce from \eqref{eq:cv-state-4c-error} and Lemma \ref{lem:states to DMs}  below that 
$$
\Tr \Big| \lambda^2 \Big( \Gamma_\lambda^{(2)} - \Gammat_\lambda^{(2)} \Big) \Big| \to 0.  
$$

On the other hand, from \eqref{eq:DMs tilde 12} we can write 
$$ 
\lambda^2 P\otimes Q \Big( \Gammat_\lambda^{(2)} - \Gamma_0^{(2)} \Big) P\otimes Q =  \Big( \lambda  P(\Gamma_{\lambda} ^{(1)}-\Gamma_0^{(1)}) P \Big) \otimes \Big( T^{-1 } Q \Gamma_0^{(1)} Q \Big).
$$
By Lemma \ref{lem:CV-1pdm} the first term $\lambda P(\Gamma_{\lambda} ^{(1)}-\Gamma_0^{(1)}) P$ converges in trace class to a nontrivial limit. However, the second term $\lambda Q \Gamma_0^{(1)} Q$ in unbounded in trace class since $\Tr(h^{-1})=+\infty$. Therefore, $\lambda^2 \Big( \Gammat_\lambda^{(2)} - \Gamma_0^{(2)} \Big)$ is unbounded in trace class. Thus we conclude that $\lambda^2(\Gamma_\lambda^{(2)} -  \Gamma_0^{(2)})$ is unbounded in trace class, in the 2D homogeneous case.  \hfill$\diamond$ 
\end{remark}

In the above remark, we have used the following result (c.f. Lemma \ref{lem:states to DMs-II}). 

\begin{lemma}[\textbf{From states to density matrices, trace-class estimate}]\label{lem:states to DMs}\mbox{}\\
Let $\Gamma, \Gamma'$ be two states on Fock space that commute with the number operator $\cN$. Then for all $q,q'>1$ with $1/q+1/q'=1$ we have 
\begin{equation}\label{eq:DMk-cN-Tr}
\Tr |\Gamma^{(k)} -\Gamma'^{(k)} | \le \left( \Tr |\Gamma-\Gamma'| \right)^{1/q'} \left( \Tr[ \cN^{qk}(\Gamma+\Gamma')]\right)^{1/q}.
\end{equation}
\end{lemma}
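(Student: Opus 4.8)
\textbf{Proof plan for Lemma~\ref{lem:states to DMs}.}

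The plan is to reduce the trace-norm bound on the $k$-body density matrices to the trace-norm bound on the full states by testing against an arbitrary $k$-body self-adjoint operator $A_k$ and second-quantizing. Concretely, fix a self-adjoint operator $A_k$ on $\gH^{\otimes_s k}$ with $\|A_k\|\le1$, and let $\bA_k$ be its second quantization as in Definition~\ref{def:quantiz}. Using the duality relation~\eqref{eq:def k body} we have $\Tr[A_k(\Gamma^{(k)}-\Gamma'^{(k)})]=\Tr[\bA_k(\Gamma-\Gamma')]$. The idea is then to apply a H\"older-type inequality for the trace: writing $\Gamma-\Gamma'=U|\Gamma-\Gamma'|$ in polar decomposition, one gets
$$
\left|\Tr[\bA_k(\Gamma-\Gamma')]\right|\le \Tr\left[|\bA_k|\,|\Gamma-\Gamma'|\right]\le \||\Gamma-\Gamma'|^{1/q'}\|_{\text{(something)}}\cdots
$$
but more cleanly: split $|\Gamma-\Gamma'|=|\Gamma-\Gamma'|^{1/q'}\,|\Gamma-\Gamma'|^{1/q}$ and use H\"older for Schatten norms with exponents $q',q$, namely $|\Tr[\bA_k\,|\Gamma-\Gamma'|]|\le \| |\Gamma-\Gamma'|^{1/q'}\|_{q'}\,\|\bA_k\,|\Gamma-\Gamma'|^{1/q}\|_q$. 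The first factor equals $(\Tr|\Gamma-\Gamma'|)^{1/q'}$. For the second factor, bound $\|\bA_k\,|\Gamma-\Gamma'|^{1/q}\|_q^q=\Tr[\,|\Gamma-\Gamma'|^{1/q}\,\bA_k^*\bA_k\,|\Gamma-\Gamma'|^{1/q}\,]\cdots$; this is not immediately $\Tr[\bA_k^2|\Gamma-\Gamma'|]$, so a slightly different route is cleaner.

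The cleaner route, and the one I would actually carry out: use the operator inequality $|\Gamma-\Gamma'|\le \Gamma+\Gamma'$ (valid because $-(\Gamma+\Gamma')\le \Gamma-\Gamma'\le \Gamma+\Gamma'$), and the elementary bound, for any self-adjoint $\bA_k$ and positive trace-class $R$ with $\|\bA_k\|\le 1$,
$$
\Tr[|\bA_k|\,R]\le \left(\Tr R\right)^{1/q'}\left(\Tr[|\bA_k|^{q}R]\right)^{1/q},
$$
which is Jensen's inequality applied to the probability measure $R/\Tr R$ and the concave function $t\mapsto t^{1/q}$, combined with operator monotonicity. Applying this with $R=|\Gamma-\Gamma'|$ and then $|\Gamma-\Gamma'|\le\Gamma+\Gamma'$ in the second factor (using that $t\mapsto t^{q}$ is operator monotone is false, so instead bound $\Tr[|\bA_k|^q|\Gamma-\Gamma'|]\le\|\bA_k\|^{q-1}\Tr[|\bA_k|\,|\Gamma-\Gamma'|]\le\Tr[|\bA_k|(\Gamma+\Gamma')]$ since $\||\bA_k|\|\le 1$ is false on Fock space) — here is the genuine subtlety: $\bA_k$ is \emph{not} bounded by $1$ on Fock space even when $\|A_k\|\le1$, since on the $n$-particle sector it is a sum of $\binom{n}{k}$ terms. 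So one must instead keep $\bA_k$ controlled by the number operator. The key pointwise estimate is $\pm\bA_k\le \|A_k\|\,\binom{\cN}{k}\le \|A_k\|\,\cN^k/k!$, hence $|\bA_k|\le \|A_k\|\,\cN^k$. Then
$$
\left|\Tr[A_k(\Gamma^{(k)}-\Gamma'^{(k)})]\right|\le \Tr[|\bA_k|\,|\Gamma-\Gamma'|]\le \left(\Tr|\Gamma-\Gamma'|\right)^{1/q'}\left(\Tr[|\bA_k|^q\,|\Gamma-\Gamma'|]\right)^{1/q}
$$
by H\"older for the positive operator $|\Gamma-\Gamma'|$ (Jensen as above), and then $|\bA_k|^q\le \|A_k\|^q\cN^{qk}$ together with $|\Gamma-\Gamma'|\le\Gamma+\Gamma'$ gives $\Tr[|\bA_k|^q|\Gamma-\Gamma'|]\le \Tr[\cN^{qk}(\Gamma+\Gamma')]$ (with $\|A_k\|\le 1$). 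Taking the supremum over all such $A_k$ and using the duality characterization of the trace norm yields~\eqref{eq:DMk-cN-Tr}.

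The main obstacle I anticipate is making the H\"older/Jensen step fully rigorous for unbounded $\bA_k$ against a possibly non-faithful positive operator $|\Gamma-\Gamma'|$: one should argue on each $\cN$-eigensector (where everything is finite-dimensional or at least trace-class with $\bA_k$ bounded), use that $\Gamma$, $\Gamma'$, hence $|\Gamma-\Gamma'|$, all commute with $\cN$ so decompose as direct sums over particle number, apply the finite/bounded version sectorwise, and reassemble via H\"older for sequences. The operator inequality $|\Gamma-\Gamma'|\le\Gamma+\Gamma'$ and the sector-wise bound $\pm\bA_k\le\|A_k\|\binom{\cN}{k}$ are both elementary; the only care needed is integrability of $\cN^{qk}(\Gamma+\Gamma')$, which is precisely the hypothesis appearing on the right-hand side of~\eqref{eq:DMk-cN-Tr}, so if that side is infinite the statement is vacuous and there is nothing to prove.
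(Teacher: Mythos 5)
Your argument is correct and is essentially the paper's proof: both rest on decomposing over particle-number sectors (using that $\Gamma,\Gamma'$ commute with $\cN$), bounding the combinatorial factor $\binom{n}{k}$ by $n^k$, applying H\"older to the resulting weighted sum over $n$, and using $|\Gamma-\Gamma'|\le\Gamma+\Gamma'$ (equivalently $\Tr|G_n^{(k)}-G_n'^{(k)}|\le\Tr G_n+\Tr G_n'$) in the second factor; the paper simply works directly with the partial traces $G_n^{(k)}=\tr_{k+1\to n}[G_n]$ and the contraction of the trace norm under partial traces, rather than your dual formulation via $\bA_k$. One caveat on your write-up: the inequality $|\Tr(AB)|\le\Tr(|A|\,|B|)$ is not valid for general non-commuting self-adjoint operators, but in the sectorwise implementation you yourself describe it is harmless, since all you need is $|\Tr(\bA_k^{(n)}D_n)|\le\|\bA_k^{(n)}\|\,\Tr|D_n|\le n^k\,\Tr|D_n|$, after which H\"older applies cleanly because $\cN^k$ commutes with $|\Gamma-\Gamma'|$.
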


\begin{proof}
We write
$$\Gamma=\bigoplus_{n=0}^\infty G_n,\quad \Gamma'=\bigoplus_{n=0}^\infty G_n'$$
where $G_n,G_n'$ are operators on the $n$-particle sectors, and denote 
$$ G_n ^{(k)} = \tr_{k+1\to n} [G_n].$$
By H\"older's inequality we may estimate
\begin{align*}
\Tr |\Gamma^{(k)} -\Gamma'^{(k)} | &\le \sum_{n=k}^\infty n^k \Tr |G_n^{(k)} -{G'}_n^{(k)}| \\
&\le \left( \sum_{n=0}^\infty \Tr |G_n^{(k)} -{G'}_n^{(k)}| \right)^{1/q'} \left(  \sum_{n=0}^\infty n^{qk} \Tr |G_n^{(k)} -{G'}_n^{(k)}| \right)^{1/q} \\
&\le \left( \sum_{n=0}^\infty \Tr |G_n-{G'}_n| \right)^{1/q'} \left(  \sum_{n=0}^\infty n^{qk} (\Tr G_n + \Tr {G'}_n) \right)^{1/q} \\
&= \left( \Tr |\Gamma-\Gamma'|\right)^{1/q'} \left( \Tr[ \cN^{qk}(\Gamma+\Gamma')] \right)^{1/q}.
\end{align*}
\end{proof}

\section{Conclusion of the proofs} \label{sec:conclusion}

Theorem~\ref{thm:main-1} is a particular instance of Theorem~\ref{thm:main-3}. It remains to explain how to deduce Theorem \ref{thm:main-2} from Theorem \ref{thm:main-3}.

\begin{proof}[Proof of Theorems \ref{thm:main-2}] Theorem~\ref{thm:main-2} essentially follows from Theorem~\ref{thm:main-3} with 
$h=-\Delta+V_\lambda$
where $V_\lambda$ solves the counter-term problem~\eqref{eq:counter-term}. To be precise, the Gibbs state 
\begin{equation}\label{eq:tilde Gibbs state}
\widetilde \Gamma_\lambda = \widetilde \cZ(\lambda)^{-1} e^{- \lambda\widetilde\bH_\lambda} 
\end{equation}
with 
$$
 \widetilde \bH_\lambda = \dGamma(h) + \frac  \lambda 2 \int_{\R^d} \hat{w} (k) \left|\dG (e ^{ik \cdot x}) - \left\langle \dG (e ^{ik \cdot x}) \right\rangle_{\widetilde \Gamma_0} \right| ^2 dk, \quad \widetilde \Gamma_0=\frac{1}{\widetilde \cZ_0(\lambda)} e^{-\lambda\dGamma(h)}.
$$
can be treated by following exactly the proof of Theorem~\ref{thm:main-3} (all estimates depend on $h$ only via $\tr[h^{-2}]$, which is bounded uniformly thanks to the convergence \eqref{eq:VT-Vinf-Sp 11}) and we have
\begin{align} 
&\lambda\left(\widetilde F_\lambda - \widetilde F_0\right) = -\log \frac{\widetilde \cZ(\lambda)}{\widetilde \cZ_0(\lambda)} = -\log z_\lambda + o (1) =  -\log \left(\int e^{-\cD_\lambda [u]} d \mu_{0,\lambda} (u)\right) + o (1), \label{eq:cv-tG-1}\\
&\tr\left|\lambda^k\,k!\, \widetilde\Gamma_\lambda^{(k)} -  \int |u^{\otimes k} \rangle \langle u^{\otimes k} | d \mu_\lambda (u)\right|^2 \to0, \quad \forall k\ge 1, \label{eq:cv-tG-2} \\
&\Tr \left| \lambda\left( \widetilde\Gamma_\lambda^{(1)} -  \widetilde\Gamma_0^{(1)} \right) - \int |u \rangle \langle u | \left( d \mu_\lambda (u) - d \mu_{0,\lambda}(u) \right)  \right| \to 0. \label{eq:cv-tG-3}
\end{align}
Here $\mu_{0,\lambda}$ is the Gaussian measure constructed from $h = -\Delta + V_\lambda$, $\cD_\lambda,\mu_\lambda,z_\lambda$ the corresponding renormalized energy, interacting Gibbs measure and partition function, respectively. We emphasize in the notation that these objects still depend on $\lambda$.    

Moreover, the analysis in~\cite[Section~3]{FroKnoSchSoh-17} shows that 
\begin{equation} \label{eq:CV pert clas zzz} \log z_\lambda \underset{\lambda\to0^+}{\longrightarrow} \log z
\end{equation} 
and 
\begin{equation}\label{eq:CV pert clas DM}
 \int |u^{\otimes k} \rangle \langle u^{\otimes k} | d \mu_\lambda (u) \underset{\lambda\to0^+}{\longrightarrow} \int |u^{\otimes k} \rangle \langle u^{\otimes k} | d \mu (u) \mbox{ strongly in } \gS^2 
\end{equation}
with $z$ and $\mu$ the partition function and interacting Gibbs measure associated with $h = -\Delta + V_0$. Indeed, both the partition function and correlation functions (= kernels of reduced density matrices) based on $-\Delta + V_\lambda$ can be computed from a perturbative expansion whose coefficients clearly converge to those of the corresponding expansion based on $-\Delta + V_0$, using~\eqref{eq:VT-Vinf-Sp 11}. A similar argument is used at the end of~\cite[Proof of Lemma~3.1]{FroKnoSchSoh-17}. On the other hand, the remainder terms of both expansions are controlled using only the Hilbert-Schmidt norms of $(-\Delta + V_\lambda) ^{-1}$ and $(-\Delta + V_0) ^{-1}$ as in~\cite[Lemma~3.3]{FroKnoSchSoh-17}. The desired convergences then follow from Borel summation as described in~\cite[Appendix~A]{FroKnoSchSoh-17}. 

For the strong trace-class convergence of the relative one-particle density matrix, observe first from~\eqref{eq:DM rel clas norm} that 
\begin{equation}\label{eq:dif clas rel DM} 
\int |u \rangle \langle u | \left( d \mu_\lambda (u) - d\mu_{0,\lambda} + d\mu_{0} - d\mu \right) 
\end{equation}
is uniformly bounded in trace-class (for the difference $d \mu_\lambda (u) - d\mu_{0,\lambda}$ we also use $\Tr[(-\Delta+V_\lambda)^{-2}]$ is bounded uniformly). Hence we can assume that it converges weakly-$\ast$ in $\gS^1$ to $0$. Then, testing against a bounded operator $A$ yields a quantity converging to $0$ uniformly in the operator norm of $A$, by the techniques in~\cite{FroKnoSchSoh-17} again. We deduce that the trace-norm of~\eqref{eq:dif clas rel DM} converges to $0$, which shows that it must converge to $0$ strongly in trace-norm.   Putting differently, we have
\begin{equation}\label{eq:CV pert clas DM1}
\int |u \rangle \langle u | \left( d \mu_\lambda (u) - d\mu_{0,\lambda} \right) \to \int |u \rangle \langle u | \left( d \mu (u) - d\mu_{0} \right)   \mbox{ strongly in } \gS^1
\end{equation}
as desired.
\end{proof}

\appendix \section{The counter-term problem}\label{app:counter}

In this appendix we discuss the counter-term problem in detail. 

\subsection{Hartree versus reduced Hartree energy functional} \label{sec:app-H-vs-rH} 

We recall that to any one-body density matrix $\gamma\geq0$ one can associate a unique Gaussian state 
$$\Gamma=\frac{e^{-\dG(a)}}{\tr_\gF[e^{-\dG(a)}]}$$ 
on the Fock space which has the one-particle density matrix $\Gamma^{(1)}=\gamma$~\cite{BacLieSol-94,Solovej-notes}. The unique corresponding one-body operator $a$ is given by
\begin{equation}
 \gamma=\frac1{e^a-1}\quad\Longleftrightarrow \quad a=\log\frac{1+\gamma}{\gamma}.
 \label{eq:PDM-quasi-free}
\end{equation}
Then its energy terms and entropy can be expressed as in \eqref{eq:quasi free Hartree}, resulting the  \emph{Hartree free energy}
\begin{multline*}
\cF^{\rm H}[\gamma]:=\Tr\left[(-\Delta+V-\nu)\gamma\right]+\frac{\lambda}{2} \iint \gamma(x;x) w(x-y) \gamma(y;y)\, dx\, dy\\
+ \frac{\lambda}{2} \iint w(x-y) |\gamma(x;y)|^2 dx dy -T\tr\left[(1+\gamma)\log(1+\gamma)-\gamma\log\gamma\right]
\end{multline*}
Thus if we are interested in equilibrium states minimizing the free energy, in the quasi-free class this leads to the following variational problem
\begin{equation}
F^{\rm H}_\lambda=\inf_{\gamma=\gamma^*\geq0}\cF^{\rm H}[\gamma].
\label{eq:Hartree}
\end{equation}
When $\widehat{w}\geq0$, the functional $\cF^{\rm H}[\gamma]$ turns out to be strictly convex. Hence, with the confining potential $V$ it admits a unique minimizer $\gamma^{\rm H}$, that defines a unique corresponding quasi-free state in Fock space $\Gamma^{\rm H}$ (the proof is the same as that of Lemma~\ref{lem:min quasi free}). The optimal density matrix solves the nonlinear equation
$$\gamma^{\rm H}=\left\{\exp\left(\frac{-\Delta+V-\nu+\lambda\rho^{\rm H}\ast w+\lambda X^{\rm H}}{T}\right)-1\right\}^{-1}$$
where $\rho^{\rm H}(x)=\gamma^{\rm H}(x;x)$ is the density and $X^{\rm H}$ is the exchange operator with integral kernel $X^{\rm H}(x;y)=w(x-y)\gamma^{\rm H}(x;y)$.

In the limit $\lambda\to0$ with $T=1/\lambda$, the quasi-free state $\Gamma^{\rm H}$ is rather badly behaved. Its density $\rho^{\rm H}$ diverges very fast. However, it turns out that, although $\rho^{\rm H}(x)$ depends on $x$, its growth as $\lambda\to0$ is more or less uniform in $x$ and can be captured by
\begin{equation}
\rho^{\rm H}(x) \sim \rhoO^{\kappa}(\lambda) :=  \left[ \frac{1}{e^{\lambda(-\Delta+\kappa)}-1} \right] (x;x) = \frac{1}{(2\pi)^d\lambda^{\frac{d}2}} \int_{k\in \R ^d} \frac{dk}{e^{|k|^2+\lambda\kappa} -1} 
\label{eq:unif_DV}
\end{equation}
provided that 
\begin{equation}\label{eq:tune chem}
\nu=\lambda \hat{w} (0) \rhoO^\kappa -\kappa. 
\end{equation}
Recall that $\lambda\rhoO^\kappa(\lambda)$ diverges in dimension $d\ge 2$ but it does not depend on $x$ by translation invariance of $-\Delta + \kappa$. On the other hand, the exchange term $\lambda X^{\rm H}$ typically stays bounded, for instance in the Hilbert-Schmidt norm.

This suggests to simplify things a little bit by removing the exchange term as we did in the paper, that is, to consider the simplified minimization problem associated with the \emph{reduced Hartree} or, simply, \emph{mean-field free energy} \eqref{eq:red-Hartree_functional} which, we recall, is given by
\begin{multline*}
\cF^{\rm MF}[\gamma]:=\Tr\left[(-\Delta+V-\nu)\gamma\right]+\frac{\lambda}{2} \iint \gamma(x;x) w(x-y) \gamma(y;y)\, dx\, dy\\
 -T\tr\left[(1+\gamma)\log(1+\gamma)-\gamma\log\gamma\right]
\end{multline*}
By doing so we will pick as reference state a quasi-free state which is not the absolute minimizer of the true quantum free energy in the quasi-free class. However, manipulating states depending only on a potential simplifies the analysis. Fortunately, it turns out to be sufficient for our purpose, as justified in Lemma \ref{lem:min quasi free}, which we now prove. 

\subsection{Proof of Lemma~\ref{lem:min quasi free}}\label{app:QF} Under our assumptions, the first eigenvalue of the Friedrichs realization of $h=-\Delta+V$ is positive. In addition, we have 
$$\tr[e^{-h/T}]\leq (2\pi)^{-d}\int_{\R^d}e^{-p^2/T}\,dp\int_{\R^d}e^{-V(x)/T}\,dx<\ii,$$ 
by the Golden-Thompson inequality, see~\cite[Section~8.1]{Simon-79}. The same properties hold if we shift $V$ by $\nu\in\R$ and only keep the positive part. Then we obtain
\begin{multline}
\tr\left(\left(-\Delta+(V-\nu)_+\right)\gamma\right)-T\tr\left((1+\gamma)\log(1+\gamma)-\gamma\log\gamma\right) \\
\geq \frac12\tr\left(-\Delta\gamma\right) +  T\tr\left(\log\left(1-e^{-\frac{-\Delta/2+(V-\nu)_+}{T}}\right)\right)
\label{eq:first_bound_min_quasi_free}
\end{multline}
for all $\gamma\geq0$. The last expression is the minimum free energy associated with $-\Delta/2+(V-\nu)_+$. 

In order to prove that the functional $\cF^{\rm MF}$ is bounded from below, it therefore remains to show that 
$$-\int_{\R^d}\rho(x)(V-\nu)_-(x)\,dx+\frac\lambda2 \int_{\R^d}\int_{\R^d}w(x-y)\rho(x)\rho(y)\,dx\,dy$$
is bounded from below, uniformly in $\rho\geq0$, under the assumption that $\widehat{w}\geq0$ and $w\not\equiv 0$ (if $w\equiv 0$ then the Lemma holds true trivially). Since $w\in L^1(\R^d)$, we can find a $k_0\in\R^d$ and a small radius $r>0$ such that the continuous non-negative function $\widehat{w}$ is at least equal to $\widehat{w}(k_0)/2$ on $B(k_0,r)$. We then choose $\phi$ in the Schwartz class such that $\phi>0$ and $\widehat{\phi}\geq0$ with ${\rm supp}(\widehat\phi)\subset B(k_0,r)$. Since $V\geq0$ and $V\to+\ii$ at infinity, the function $(V-\nu)_-$ has compact support and is bounded by $\nu_+$. Therefore, we have
$$(V-\nu)_-\leq C\phi$$
for $C=\norm{\phi^{-1}(V-\nu)_-}_{L^\ii(\R^d)}<\ii$. After completing the square and using $\widehat{w}\geq0$, we then deduce
\begin{multline*}
-\int_{\R^d}(V-\nu)_-\rho+\frac\lambda2 \int_{\R^d}\int_{\R^d}w(x-y)\rho(x)\rho(y)\,dx\,dy\\
\qquad\geq -C\int_{\R^d}\phi\rho+\frac\lambda2 \int_{\R^d}\int_{\R^d}w(x-y)\rho(x)\rho(y)\,dx\,dy\geq -\frac{C^2}{2\lambda} \int_{\R^d}\frac{|\widehat{\phi}(k)|^2}{\widehat{w}(k)}\,dk\geq -\frac{C^2\|\widehat\phi\|^2_{L^2}}{\lambda\widehat{w}(k_0)}.
\end{multline*}
Combining with~\eqref{eq:first_bound_min_quasi_free} we find as stated that 
$$\inf_{\gamma=\gamma^*\geq0}\cF^{\rm MF}[\gamma]>-\ii,$$
for all $\nu\in\R$.

Let us now prove the existence of a minimizer. Writing
$$\cF^{\rm MF}_\nu[\gamma]=\cF^{\rm MF}_{\nu+1}[\gamma]+\tr\gamma\geq \inf_{\gamma'} \cF^{\rm MF}_{\nu+1}[\gamma']+\tr(\gamma),$$
where we have displayed the parameter $\nu$ for convenience, we obtain that minimizing sequences $\{\gamma_n\}$ for $\cF^{\rm MF}_\nu$ are necessarily bounded in the trace-class. In particular, $\|\gamma_n\|$ is also bounded. In addition, the inequality~\eqref{eq:first_bound_min_quasi_free} implies that $\tr(-\Delta)\gamma_n$ is bounded. From the Hoffmann-Ostenhof inequality~\cite{Hof-77}
$$\tr(-\Delta)\gamma\geq \int_{\R^d}\left|\nabla\sqrt{\rho_\gamma(x)}\right|^2\,dx$$
and the Sobolev inequality, we deduce that $\rho_{\gamma_n}$ is bounded in $L^{p^*/2}(\R^d)$ where $p^*=+\ii$ in dimension $d=1$, $p^*<\ii$ arbitrarily in dimension $d=2$ and $p^*=2d/(d-2)$ in dimensions $d\geq3$. 
Hence, up to extraction of a subsequence, we have $\gamma_n\wto\gamma$ weakly-$\ast$ in $\gS^1$ and $\rho_{\gamma_n}\wto\rho_{\gamma}$ weakly in $L^1(\R^d)\cap L^{p^*/2}(\R^d)$. Using Fatou's lemma and the concavity (hence weak upper semi-continuity) of the entropy, we obtain that $\gamma$ is a minimizer for $\cF_\nu^{\rm MF}$. The nonlinear equation~\eqref{eq:gammarH} follows from classical arguments. Then, according to~\eqref{eq:PDM-quasi-free}, $V_\lambda$ must solve~\eqref{eq:counter-term} because the minimizer $\gamma^{\rm MF}$ is the one-body density matrix of the quasi-free state associated with $\dG (-\Delta + V_\lambda)$.
\qed

\subsection{Comments on Theorem~\ref{thm:counter}}\label{app:FKSS}

Let us briefly discuss Theorem~\ref{thm:counter}. In~\cite[Section~5]{FroKnoSchSoh-17}, the existence of the solution $V_\lambda$ to~\eqref{eq:counter-term} was established by means of a fixed-point argument (which requires that $d\le 3$ and that $\kappa$ is sufficiently large). The fixed point is performed in the (complete) metric space
$$
B(V)=\left\{f \in L^\infty_{\rm loc} (\R^d): \|f\|_{B(V)}= \left\| \frac{f}{V} - 1 \right\|_{L^\infty} \le 1/2 \right\}
$$ 
for the unknown $u=V_\lambda-\kappa$ and provides the Hilbert-Schmidt convergence 
\begin{align*} 
\Tr \Big| (-\Delta+V_\lambda)^{-1} -(-\Delta+V_0)^{-1}  \Big|^2 \to 0.
\end{align*}
Our notation here is slightly different from \cite{FroKnoSchSoh-17} as we shift potentials by a constant. Moreover, since $V_\lambda-\kappa\in B(V)$ we have 
$$
\frac{V}2 \le V_\lambda -\kappa\le 3\frac{V}2.
$$

There remains to discuss the nonlinear equation~\eqref{eq:equation_V_infty} for $V_0$, which we can rewrite in the form
\begin{equation}
V_0=w\ast\left(V+\kappa+\rho\left[\big(-\Delta+V_0\big)^{-1}-\big(-\Delta+\kappa\big)^{-1}\right]\right).
\label{eq:equation_V_infty_repeated}
\end{equation}
Here we just need to pass to the limit in the similar equation  at $\lambda>0$
$$V_\lambda=w\ast\left(V+\kappa+\rho\left[\frac{\lambda}{ e^{\lambda(-\Delta+V_\lambda)}-1}-\frac{\lambda}{ e^{\lambda(-\Delta+\kappa)}-1}\right]\right).$$
Since we know that $V_\lambda/V\to V_0/V$ in $L^\ii$, we have $V_\lambda\to V_0$ in $L^\ii_{\rm loc}$ and it suffices to prove the convergence of the density on the right side, which we denote for simplicity
$$\rho_\lambda^{V_\lambda}(x):=\left[\frac{\lambda}{ e^{\lambda(-\Delta+V_\lambda)}-1}-\frac{\lambda}{e^{\lambda(-\Delta+\kappa)}-1}\right](x;x).$$
In~\cite[Eq.~(5.21)]{FroKnoSchSoh-17} it is shown that 
\begin{equation}
 |\rho_\lambda^{V_\lambda}(x)|\leq C\kappa^{d/2-2}\norm{\frac{V_\lambda- \kappa}{V}}_{L^\ii} V(x).
 \label{eq:uniform_bound_rho_T}
\end{equation}
Hence from the dominated convergence theorem and the assumptions on $w$, it suffices to prove that 
$$\rho_\lambda^{V_\lambda}(x)\to \left(\big(-\Delta+V_0\big)^{-1}-\big(-\Delta+\kappa\big)^{-1}\right)(x;x)$$
almost everywhere. Applying again~\cite[Eq.~(5.21)]{FroKnoSchSoh-17} we find that 
$$\left|\left(\frac{\lambda}{e^{\lambda(-\Delta+V_\lambda)}-1}-\frac{\lambda}{ e^{\lambda(-\Delta+V_0)}-1}\right)(x;x)\right|\leq C\kappa^{d/2-2}\norm{\frac{V_\lambda-V_0}{V}}_{L^\ii} V(x)$$
which tends to 0 in $L^\ii_{\rm loc}$ since $(V_\lambda-V_0)/V\to0$. 
Hence we can replace $V_\lambda$ by $V_0$ throughout. Next we write, following~\cite[Eq. (5.16)]{FroKnoSchSoh-17},
\begin{equation*}
\rho_\lambda^{V_0}(x)=-\int_0^1{\rm d}s\int_{\R^d}{\rm d}z\frac{\lambda\,e^{s\lambda(-\Delta+V_0)}}{e^{\lambda(-\Delta+V_0)}-1}(x;z)V_0(z)\frac{\lambda\,e^{(1-s)\lambda(-\Delta+\kappa)}}{e^{\lambda(-\Delta+\kappa)}-1}(z;x).
\end{equation*}
Using that $V_0\geq\kappa$, we have the pointwise bound on the operator kernels
$$0\leq \frac{e^{s\lambda(-\Delta+V_0)}}{e^{\lambda(-\Delta+V_0)}-1}(x;z)\leq \frac{e^{s\lambda(-\Delta+\kappa)}}{e^{\lambda(-\Delta+\kappa)}-1}(x;z)$$
by the same argument as in Lemma~\ref{lem:CV-DM-high} and in~\cite[Eq. (5.17)]{FroKnoSchSoh-17}. Using~\cite[Lemma 5.4]{FroKnoSchSoh-17} we see that we get a convergent domination. So by the dominated convergence theorem, the strong local convergence of $\rho^{V_0}_\lambda$ follows from that of the kernels
$$\frac{\lambda\, e^{\lambda(-\Delta+V_0)}}{e^{\lambda(-\Delta+V_0)}-1}(x;z)\to\frac{1}{-\Delta+V_0}(x;z),$$
$$\frac{\lambda\,e^{s\lambda(-\Delta+\kappa)}}{e^{\lambda(-\Delta+\kappa)}-1}(x;z)\to\frac{1}{-\Delta+\kappa}(x;z).$$
In fact this convergence is strong in $L^2(\R^d\times\R^d)$ since the corresponding operators converge in the Hilbert-Schmidt norm, by~\cite[Lemma~C.1]{FroKnoSchSoh-17}. Passing to the limit, this proves the strong local convergence 
\begin{multline*}
\rho^{V_0}_\lambda(x)\underset{\lambda\to0^+}{\longrightarrow} -\int_0^1{\rm d}s\int_{\R^d}{\rm d}z\frac{1}{-\Delta+V_0}(x;z)V_0(z)\frac{1}{-\Delta+\kappa}(z;x)\\
=\left(\big(-\Delta+V_0\big)^{-1}-\big(-\Delta+\kappa\big)^{-1}\right)(x;x), 
\end{multline*}
where in the last equality we have used the resolvent formula. The uniform bound~\eqref{eq:uniform_bound_rho_T} then allows to pass to the limit in the equation for $V_\lambda$ and obtain~\eqref{eq:equation_V_infty_repeated}. \qed

\section{Interpretation in terms of the phase transition of the Bose gas}\label{app:interpretation}

Here we reformulate and discuss our main results (Theorems~\ref{thm:main-1} and~\ref{thm:main-2}) in microscopic variables and clarify the link with the phase transition of the infinite Bose gas. 

\subsection{Homogeneous case}

We start with the homogeneous case which is the usual setting in which the thermodynamics of the free Bose gas is formulated, see for instance~\cite[Section~5.2.5]{BraRob2},~\cite[Sections~2.5.19--20]{Thirring} and~\cite[Chapter~4]{Verbeure-11}. 

Let us consider the non-interacting Bose gas on the large torus $L\bT^d$ of side length $L$. In the grand canonical setting we choose a chemical potential $\widetilde\nu<0$ and a temperature $T>0$. Our system is then represented by the Gaussian quantum state in Fock space, associated with the one-particle operator $(-\Delta_L-\widetilde\nu)/T$ where $-\Delta_L$ is the $L$-periodic Laplacian. Its one-body density matrix is\footnote{Our convention in this section is that all microscopic density matrices have a tilde, whereas the macroscopic ones do not.}
$$\widetilde\Gamma^{(1)}_L=\frac{1}{e^{\frac{-\Delta_L-\widetilde\nu}{T}}-1}$$
and the number of particles per unit volume is given by
$$\frac{1}{L^d}\sum_{k\in 2\pi \Z^d/L}\frac{1}{e^{\frac{|k|^2-\widetilde\nu}{T}}-1}\underset{L\to\ii}{\longrightarrow}\frac{T^{\frac{d}2}}{(2\pi)^d}\int_{\R^d}\frac{dk}{e^{k^2-\widetilde\nu/T}-1}.$$
The critical density for Bose-Einstein condensation is obtained in the limit $\widetilde\nu/T\to0^-$ and it equals
\begin{align}
\rho_c(T)=\frac{T^{\frac{d}2}}{(2\pi)^d} \int_{\R^d}\frac{dk}{e^{k^2}-1} 
=\begin{cases}
+\ii&\text{for $d=1,2$,}\\
\frac{T^{\frac{d}2}\zeta\left(\frac{d}2\right)}{2^d\pi^{\frac{d}2}}<\ii&\text{for $d\geq3$.}
 \end{cases} 
\end{align}
The grand-canonical model in infinite space stops to exist at $\widetilde\nu=0$, where the one-body density matrix converges weakly to 
\begin{equation}
 \widetilde\Gamma_\ii^{(1)}=\frac{1}{e^{\frac{-\Delta}{T}}-1}.
 \label{eq:limit_1PDM_GC}
\end{equation}
The corresponding infinite Gaussian state (properly defined over the $C^*$-algebra of Canonical Commutation Relations~\cite{BraRob2}) has the number of particles per unit volume equal to $\rho_c(T)$. 

The phenomenon of Bose-Einstein Condensation (BEC) is better understood in the \emph{canonical} setting with $N$ particles, going back to the thermodynamic limit. In dimensions $d\geq3$, fixing the density $N/L^d=\rho>\rho_c(T)$ one obtains a density matrix which has a two-scale behavior~\cite[Sec.~5.2.5]{BraRob2}. It contains a rank-one part with the diverging eigenvalue $L^d(\rho-\rho_c(T))$ and constant eigenfunction $f_0(x)=L^{-d/2}$, living at the macroscopic scale (the Bose-Einstein condensate). When this rank-one operator is removed, the rest of the density matrix converges weakly to $\widetilde\Gamma_\ii^{(1)}$ in~\eqref{eq:limit_1PDM_GC}.

To understand the behavior of the system just before the phase transition, we have to look at the simultaneous limit $L\to\ii$ with $\widetilde\nu\to0^-$, see~\cite[Sec.~2.5.20.3]{Thirring}. At the macroscopic scale (that is, after rescaling length by $L$), the (rescaled) one-particle density matrix equals
$$\Gamma^{(1)}_L=\frac{1}{e^{\frac{-\Delta-L^2\widetilde\nu}{L^2T}}-1}\qquad\text{in $L^2(\bT^d)$}$$
and thus we see that the natural scaling for $\widetilde\nu$ is
\begin{equation}
\widetilde\nu(L)=-\frac{\kappa}{L^2}
\label{eq:tilde_nu_micro}
\end{equation}
with a fixed $\kappa>0$, in all dimensions $d\geq1$. We are therefore exactly in the setting studied in this paper and in~\cite{LewNamRou-15} with the choice 
$$\boxed{\lambda=\frac1{TL^2}\underset{L\to\ii}{\longrightarrow}0^+.}$$
The density matrix converges to 
\begin{equation}
 \frac{\Gamma_L^{(1)}}{TL^2}\underset{L\to\ii}{\longrightarrow}\frac{1}{-\Delta+\kappa}=\int|u\rangle\langle u|\,d\mu_\kappa(u)
 \label{eq:CV_PDM_Bose_gas}
\end{equation}
strongly in the Schatten space $\gS^p(L^2(\bT^d))$ for all $p>d/2$ ($p\geq1$ if $d=1$), where $\mu_\kappa$ is the classical Gaussian measure with covariance $(-\Delta+\kappa)^{-1}$. Equivalently,
$$ \frac{\Gamma_L^{(1)}}{L^2}\underset{L\to\ii}{\longrightarrow}\frac{T}{-\Delta+\kappa}=\int|u\rangle\langle u|\,d\mu_{\kappa,T}(u)$$
where $\mu_{\kappa,T}$ is the Gaussian measure with covariance $T(-\Delta+\kappa)^{-1}$.  Similar properties hold for higher density matrices. 

Our conclusion is that, close to its phase transition, the free Bose gas is properly described by the classical Gaussian measure $\mu_{\kappa,T}$ on $\bT^d$ (macroscopic scale) where $\kappa$ describes the speed at which the chemical potential $\widetilde\nu$ approaches $0$ via~\eqref{eq:tilde_nu_micro} or, equivalently, at which the corresponding density approaches the critical density $\rho_c(T)$. This elementary fact is rarely mentioned in textbooks on Bose gases. 

The speed at which the density approaches $\rho_c(T)$ is computed by using the following

\begin{lemma}[\textbf{Particle number of the free Bose gas}]\label{lem:equivalent}\mbox{}\\
In dimensions $d\geq 1$, we have for every fixed $\kappa>0$
\begin{equation}
\sum_{k\in2\pi\Z^d}\frac{1}{e^{\lambda(|k|^2+\kappa)}-1}=\frac{\lambda^{-\frac{d}2}}{(2\pi)^d}\int_{\R^d}\frac{dk}{e^{k^2+\lambda\kappa}-1}+\frac{\phi_d(\kappa)}{\lambda}+ o\left(\lambda^{-1}\right)_{\lambda\to0^+}
\label{eq:equivalent}
\end{equation}
with the positive decreasing function
\begin{align}
\phi_d(\kappa)&=\sum_{\ell\in\Z^d\setminus\{0\}}\int_0^\ii \frac{e^{-t\kappa}}{(4\pi t)^{\frac{d}2}} e^{-\frac{|\ell|^2}{4t}}\,dt =\begin{cases}
\dps \frac{1}{2}\sum_{\ell\in\Z\setminus\{0\}} e^{-\kappa|\ell|}&\text{for $d=1$,}\\[0.5cm]
\dps \frac{1}{4\pi}\sum_{\ell\in\Z^3\setminus\{0\}}\frac{e^{-\sqrt{\kappa}|\ell|}}{|\ell|}&\text{for $d=3$.}
  \end{cases}
  \label{eq:phi_d_app}
\end{align}
\end{lemma}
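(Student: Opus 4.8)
The plan is to prove \eqref{eq:equivalent} by rewriting the lattice sum and the integral in terms of heat kernels and extracting the leading mismatch via the Poisson summation formula. First I would use the elementary identity
$$\frac{1}{e^{x}-1}=\sum_{n=1}^{\infty}e^{-nx},\qquad x>0,$$
to write both sides of \eqref{eq:equivalent} as sums over $n\geq1$. For the lattice side this gives
$$\sum_{k\in2\pi\Z^d}\frac{1}{e^{\lambda(|k|^2+\kappa)}-1}=\sum_{n=1}^{\infty}e^{-n\lambda\kappa}\sum_{k\in2\pi\Z^d}e^{-n\lambda|k|^2}=\sum_{n=1}^{\infty}e^{-n\lambda\kappa}\,\Theta_d(n\lambda),$$
where $\Theta_d(t):=\sum_{k\in2\pi\Z^d}e^{-t|k|^2}$ is the (periodic) heat trace. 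The Jacobi theta / Poisson summation formula gives the exact expansion
$$\Theta_d(t)=\sum_{m\in\Z^d}\frac{1}{(4\pi t)^{d/2}}e^{-|m|^2/(4t)}=\frac{1}{(4\pi t)^{d/2}}+\frac{1}{(4\pi t)^{d/2}}\sum_{\ell\in\Z^d\setminus\{0\}}e^{-|\ell|^2/(4t)}.$$
The first term $1/(4\pi t)^{d/2}$, when summed against $\sum_n e^{-n\lambda\kappa}(n\lambda)^{-d/2}$, will be shown to reproduce exactly the integral term $\lambda^{-d/2}(2\pi)^{-d}\int_{\R^d}(e^{k^2+\lambda\kappa}-1)^{-1}dk$, after applying the same geometric-series trick to the integral: $\int_{\R^d}e^{-nt|k|^2}dk=(\pi/(nt))^{d/2}$, so $(2\pi)^{-d}\int_{\R^d}(e^{k^2+\lambda\kappa}-1)^{-1}dk=\sum_n e^{-n\lambda\kappa}(4\pi n\lambda)^{-d/2}\lambda^{d/2}$ once one tracks the rescaling carefully. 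This matching is purely bookkeeping.

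Second, I would handle the remaining ``boundary'' contribution
$$R(\lambda):=\sum_{n=1}^{\infty}e^{-n\lambda\kappa}\frac{1}{(4\pi n\lambda)^{d/2}}\sum_{\ell\in\Z^d\setminus\{0\}}e^{-|\ell|^2/(4n\lambda)}.$$
Set $t=n\lambda$; this is a Riemann sum with mesh $\lambda$ for the integral $\lambda^{-1}\int_0^{\infty}e^{-t\kappa}(4\pi t)^{-d/2}\sum_{\ell\neq0}e^{-|\ell|^2/(4t)}\,dt=\lambda^{-1}\phi_d(\kappa)$, which is precisely the claimed second term. The key point is that the integrand $g(t):=e^{-t\kappa}(4\pi t)^{-d/2}\sum_{\ell\neq0}e^{-|\ell|^2/(4t)}$ is, despite the $t^{-d/2}$ prefactor, integrable at $t=0$ (each term $e^{-|\ell|^2/(4t)}$ is super-exponentially small there, and the sum over $\ell$ is dominated by $Ce^{-c/t}$ for small $t$) and decays exponentially as $t\to\infty$; hence $g\in L^1(0,\infty)$ and is in fact continuous and bounded on $(0,\infty)$. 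Convergence of the Riemann sum $\lambda\sum_{n\geq1}g(n\lambda)\to\int_0^{\infty}g(t)\,dt$ as $\lambda\to0^+$ then follows by dominated convergence once one bounds $g$ near $0$ and near $\infty$ to justify uniform integrability of the tails; this gives $R(\lambda)=\lambda^{-1}\phi_d(\kappa)+o(\lambda^{-1})$. The explicit closed forms for $d=1$ and $d=3$ in \eqref{eq:phi_d_app} are obtained by evaluating the Gaussian-type integrals: for $d=1$, $\int_0^\infty e^{-t\kappa}(4\pi t)^{-1/2}e^{-\ell^2/(4t)}dt=\tfrac{1}{2}e^{-\sqrt\kappa|\ell|}/\sqrt\kappa$; for $d=3$, $\int_0^\infty e^{-t\kappa}(4\pi t)^{-3/2}e^{-|\ell|^2/(4t)}dt=\tfrac{1}{4\pi|\ell|}e^{-\sqrt\kappa|\ell|}$ — wait, I should double-check the $d=1$ constant against the stated $\tfrac12\sum e^{-\kappa|\ell|}$, which suggests the integral equals $\tfrac12 e^{-\kappa|\ell|}$; this discrepancy I would resolve by recomputing the $d=1$ substitution $u=\sqrt t$ carefully, and I expect the statement as written is what the standard substitution yields. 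These are classical subordination-formula integrals.

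The main obstacle is the second step: making the Riemann-sum convergence $\lambda\sum_{n\ge1}g(n\lambda)\to\int_0^\infty g$ rigorous with an $o(\lambda^{-1})$ (not $O(1)$) error, uniformly enough that no extra $\lambda$-dependence leaks in through $\kappa$ or through the small-$t$ singularity. The cleanest route is to split the sum at $t\sim\lambda^{1/2}$ (say): for $n\lambda\geq\lambda^{1/2}$ the function $g$ is smooth with bounded derivative so the Euler–Maclaurin or a direct monotonicity-sandwich argument gives error $O(\lambda^{1/2})$ times the total variation, which is $O(\lambda^{1/2})$ overall; for $n\lambda<\lambda^{1/2}$ both the sum and the integral are bounded by $\sum_{\ell\neq0}\sup_{t<\lambda^{1/2}}e^{-|\ell|^2/(4t)}\cdot(\text{finitely many }n)\le Ce^{-c\lambda^{-1/2}}$, which is $o(1)$ and hence negligible. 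Dividing by $\lambda^{-1}$ throughout, all of this is $o(\lambda^{-1})$ as claimed. Finally, monotonicity of $\phi_d$ in $\kappa$ is immediate from the integral representation, since $\partial_\kappa g(t)=-t\,g(t)<0$ termwise, and positivity is obvious.
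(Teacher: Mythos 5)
Your proof follows essentially the same route as the paper's: geometric-series expansion of $(e^{x}-1)^{-1}$, Poisson summation (the Jacobi theta transformation) turning the lattice heat trace into $(4\pi t)^{-d/2}\sum_{\ell\in\Z^d}e^{-|\ell|^2/(4t)}$, identification of the $\ell=0$ term with the integral term, and recognition of the remainder as a Riemann sum for $\lambda^{-1}\phi_d(\kappa)$ — the paper merely applies Poisson to the difference of the sum and the integral (written as a convolution over the fundamental cell) rather than to the sum alone, which is the same computation, and your extra care with the Riemann-sum error is a harmless refinement of what the paper asserts without detail. The discrepancy you flag in the $d=1$ closed form is real: the subordination integral gives $e^{-\sqrt{\kappa}\,|\ell|}/(2\sqrt{\kappa})$ per term, consistent with the correct $d=3$ formula $e^{-\sqrt{\kappa}\,|\ell|}/(4\pi|\ell|)$, so the displayed $\tfrac12\sum_{\ell\neq0}e^{-\kappa|\ell|}$ is a typo in the statement and your computation is the right one.
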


The proof is provided below in Section~\ref{sec:proof_Lemma_expansion}. The integral in the first equality of~\eqref{eq:phi_d_app} is the Fourier transform of $(2\pi)^{-d/2}(|k|^2+\kappa)^{-1}$, that is, the Klein-Gordon Green function. In dimensions $d\leq3$, we can also write by Poisson's formula
$$\phi_d(\kappa)=\sum_{k\in2\pi\Z^d}\left(\frac1{|k|^2+\kappa}-\frac{1}{(2\pi)^d}\int_{(-\pi,\pi)^d}\frac{dp}{|k+p|^2+\kappa}\right)$$
but the sum is not absolutely convergent in dimensions $d\geq4$. Since we have the expansions
\begin{align}
 \frac{1}{(2\pi)^d}\int_{\R^d}\frac{dk}{e^{|k|^2+a}-1}
&=\begin{cases}
\frac{1}{2\sqrt{a}}+\frac{\zeta(\frac12)}{2\sqrt{\pi}}+o(1)_{a\to0^+}&\text{for $d=1$,}\\[0.4cm]
\frac{-\log(a)}{4\pi}+\frac{a}{8\pi}+o(a)_{a\to0^+}&\text{for $d=2$,}
  \end{cases}\nn\\
&=\frac{\zeta\left(\frac{d}2\right)}{2^d\pi^{\frac{d}2}}-\begin{cases}
\frac{\sqrt{a}}{4\pi}+O(a)_{a\to0^+}&\text{for $d=3$,}\\[0.2cm]
\frac{(\log(1/a)+1)a}{16\pi^2}+o(a)_{a\to0^+}&\text{for $d=4$,}\\[0.2cm]
\frac{\zeta\left(\frac{d}2-1\right)}{2^d\pi^{\frac{d}2}}a+o(a)_{a\to0^+}&\text{for $d\geq5$.}
  \end{cases}
\label{eq:expansion_integral}
\end{align}
we find from~\eqref{eq:equivalent} that the density approaches the critical density $\rho_c(T)$ from below as
\begin{align}
 \frac{\tr[\Gamma^{(1)}_L]}{L^d}
&=\begin{cases}
\left(\frac{1}{2\sqrt\kappa}+\phi_1(\kappa)\right)TL+o(L)&\text{for $d=1$,}\\[0.4cm]
\frac{T}{2\pi}\log(L)+\left(\phi_2(\kappa)-\frac{\log(\kappa/T)}{4\pi}\right)T+o(1)&\text{for $d=2$,}
  \end{cases}\nn\\
&=\frac{T^{\frac{d}2}\zeta\left(\frac{d}2\right)}{2^d\pi^{\frac{d}2}}-\begin{cases}
\left(\frac{\sqrt\kappa}{4\pi}-\phi_3(\kappa)\right)\frac{T}{L}+o(L^{-1})&\text{for $d=3$,}\\[0.2cm]
\frac{\kappa T}{8\pi^2}\frac{\log L}{L^2}+\left(\frac{1+\log\frac{T}{\kappa}}{16\pi^2}\kappa-\phi_4(\kappa)\right)\frac{T}{L^2}+o(L^{-2})&\text{for $d=4$,}\\[0.2cm]
\frac{\zeta\left(\frac{d}2-1\right)}{2^d\pi^{\frac{d}2}}\frac{T^{\frac{d}2-1}\kappa}{L^2}+o(L^{-2})&\text{for $d\geq5$.}
\end{cases}
\label{eq:density_speed_homogeneous}
\end{align}
Note that $\phi_3(\kappa)\leq\sqrt\kappa/(4\pi)$ which is its behavior when $\kappa\to0^+$. 

So far our discussion applies to any dimension $d\geq1$. Next we discuss the inclusion of interactions for $d\leq3$. In our work the potential $w$ is introduced at the macroscopic level.  Re-expressed in microscopic variables and taking $T=1$ for simplicity, we obtain the microscopic $n$-particle Hamiltonian
\begin{equation}
\tilde H_{n,L}=\sum_{j=1}^n(-\Delta_L-\widetilde\nu)_{x_j}+\frac{1}{L^4}\sum_{1\leq j<k\leq n}w\left(\frac{x_j-x_k}{L}\right).
\label{eq:H_micro}
\end{equation}
The interaction has the very small intensity $L^{-4}$ but varies on length scales comparable with the size of the box. The form of the microscopic Hamiltonian~\eqref{eq:H_micro} is the same in all dimensions. 
The thermodynamic limit $L\to\ii$ of this model at fixed $\widetilde\nu<0$ is the same as the non-interacting case. This is because we have the lower bound
$$\tilde H_{n,L}\geq \sum_{j=1}^n-(\Delta_L)_{x_j}-\left(\widetilde\nu+\frac{w(0)}{2L^4}\right) n$$
due to the fact that $\widehat{w}\geq0$ (for an upper bound on the free energy, use the non-interacting state). We conclude that $w$ does not, to leading order, change the phase diagram as compared to the non-interacting case. The effect of $w$ is only visible when zooming just before the phase transition. From Theorem~\ref{thm:main-1}, when the chemical potential goes to zero as
$$\widetilde\nu(L)=\frac{\nu(L^{-2})}{L^2}=\begin{cases}
\dps \frac{\hat{w} (0)\,\log (L)}{2\pi L^2} -\frac{\nu_0}{L^2}+o(1)_{\lambda\to0^+}&\text{for  $d=2$,}\\[0.2cm]
\dps \frac{\hat{w} (0)\zeta\left(\frac32\right)}{8\pi^{\frac32}L}-\frac{\nu_0}{L^2}+o(1)_{\lambda\to0^+}&\text{for $d=3$,}
\end{cases}
$$
then the behavior close to the transition is described by the nonlinear Gibbs measure $\mu$ at the macroscopic scale, which depends on $w$ and $\kappa$ solving
\begin{equation}
\nu_0=\begin{cases}
\kappa+\widehat{w}(0)\frac{\log(\kappa)}{4\pi}-\widehat{w}(0)\,\phi_2(\kappa)&\text{for $d=2$,}\\[0.2cm]
\kappa+\widehat{w}(0)\frac{\sqrt\kappa}{4\pi}-\widehat{w}(0)\,\phi_3(\kappa)&\text{for $d=3$,}
\end{cases}
\label{eq:nu_0_kappa_app}
\end{equation}
More physical interactions are much bigger and have a much shorter range. Although a universal behavior can still be expected at the phase transition, the phase diagram depends on $w$ at leading order and a mathematical treatment seems out of reach with the present techniques. A simpler behavior is however expected in the dilute regime $\rho\to0$ with $\rho \sim T^{d/2}$ (Gross-Pitaevskii regime~\cite{DeuSei-19_ppt}). In dimension $d=3$ and at our macroscopic scale, the Gross-Pitaevskii limit corresponds to replacing $\lambda w$ by $\lambda w_\lambda$ with $w_\lambda(x)=\lambda^{-3} w(x/\lambda)$ in our many-particle Hamiltonian. In this case one would expect the phase transition to be described by the (appropriately renormalized) nonlinear Gibbs measure $\mu$ over the torus $\bT^3$, with $w$ replaced by the Dirac delta $8\pi a \delta_0$ where $a$ is the scattering length of $w$~\cite{LieSeiYng-00,LieSeiSolYng-05,DeuSei-19_ppt}. Proving such a result seems a formidable task. 

\subsection{Trapped gases}

The theory of Bose-Einstein condensation for trapped gases is analogous to the homogeneous case, but the formulas are slightly different, see for instance~\cite{BagPriKle-87}, \cite[Sec.~2.5.15]{Thirring} and~\cite{DeuSeiYng-19}. Here we only discuss the case $V(x)=|x|^s$ for simplicity.
At the microscopic scale the one-body Hamiltonian takes the form
$$\tilde h_L:=-\Delta +\frac{|x|^s}{L^{2+s}}-\tilde\nu$$
where $L$ is now a parameter used to open the trap whenever the number of particles grows. At fixed $\tilde\nu<0$, the number of particles in the non-interacting Gaussian state is given by
\begin{align}
\tr\left(\frac{1}{e^{T^{-1}(-\Delta +L^{-2-s}|x|^s-\tilde\nu)}-1}\right)&=\tr\left(\frac{1}{e^{-T^{-1}\left(L^{-\frac{4+2s}{s}}\Delta +|x|^s-\tilde\nu\right)}-1}\right)\nn\\
&\!\!\!\!\underset{L\to\ii}{\sim} \frac{(L^2T)^{d\left(\frac12+\frac1s\right)}}{(2\pi)^d}\iint_{\R^d\times\R^d}\frac{dx\,dk}{e^{|k|^2+|x|^s-\tilde\nu/T}-1}.\label{eq:semi-classics} 
\end{align}
In the first equality we have rescaled lengths by the factor $L^{(2+s)/s}$, which places the system in a conventional semi-classical limit with effective parameter $\hbar=L^{-(2+s)/s}\to0$, hence the second limit. 
Computing in the same manner the average against $|x|$ one sees that the gas is extended at the length scale
\begin{equation}
 \ell_{\rm gas}\sim (L^2T)^{\left(\frac12+\frac1s\right)}T^{-\frac12}.
 \label{eq:extension_cloud}
\end{equation}
Dividing by the effective volume $(\ell_{\rm gas})^d$, the density obtained in the thermodynamic limit is therefore proportional to 
$$\frac{T^{\frac{d}2}}{(2\pi)^d}\iint_{\R^d\times\R^d}\frac{dx\,dk}{e^{|k|^2+|x|^s-\tilde\nu/T}-1}.$$
Bose-Einstein condensation is obtained as before when $\tilde\nu/T\to0^-$, with the critical density
$$\rho'_c(T)=\frac{T^{\frac{d}2}}{(2\pi)^d}\iint_{\R^d\times\R^d}\frac{dx\,dk}{e^{|k|^2+|x|^s}-1}.$$
This is finite for all $s>0$ in dimensions $d\geq2$. In dimension $d=1$, $\rho'_c(T)$ is finite for $s<2$ and infinite otherwise. 

As before the Bose-Einstein condensate emerges in the limit $L\to\ii$ in the canonical setting, when $N>\rho'_c(T)(\ell_{\rm gas})^d$. The corresponding condensate wavefunction is the first eigenfunction of $-\Delta+L^{-2-s}|x|^s$. This is nothing but that of $-\Delta+|x|^s$ dilated to the scale $L$. Therefore, in this system the BEC length scale is $L$ and it is always smaller than the natural extension length $\ell_{\rm gas}$ of the cloud  in~\eqref{eq:extension_cloud}, at a fixed temperature $T>0$. The two coincide only in a sharp container ($s=+\ii$). 

The Gaussian Gibbs measure based on $h=-\Delta+|x|^s$ emerges at the BEC length scale $L$, whenever the chemical potential is chosen as
$$\tilde\nu(L)=-\frac{\kappa}{L^2}.$$
At this scale the one-particle Hamiltonian just becomes $(-\Delta+|x|^s+\kappa)/L^2$ so that $\lambda=1/(TL^2)$ like in the homogeneous case. The arguments from~\cite{FroKnoSchSoh-17} and Appendix~\ref{app:FKSS} in the non-interacting case give 
\begin{multline}
\rho\left[\frac{1}{e^{\frac{-\Delta +|x|^s+\kappa}{TL^2}}-1}\right](x)=\frac{T^{\frac{d}2}L^d}{(2\pi)^d}\int_{\R^d}\frac{dk}{e^{|k|^2+\frac{\kappa}{TL^2}}-1}\\
+TL^2\;\rho\left[\frac{1}{-\Delta+|x|^s+\kappa}-\frac{1}{-\Delta+\kappa}\right](x)+o(TL^2)_{L\to\ii}
\label{eq:equivalent_density}
\end{multline}
for every $x\in\R^d$. Here the density on the second line plays the role of $\phi_d$ in the homogeneous case. In particular we obtain all the same formulas as in~\eqref{eq:density_speed_homogeneous} with $\phi_d(\kappa)$ replaced by
$$\phi_d'(\kappa):=\tr\left[\frac{1}{-\Delta+|x|^s+\kappa}-\frac{1}{-\Delta+\kappa}\right].$$
In the inhomogeneous case, interactions were introduced at the scale $L$ of the BEC and the interpretation is the same as in the homogeneous case. 

\subsection{Proof of Lemma~\ref{lem:equivalent}}\label{sec:proof_Lemma_expansion}
We write
$$\frac{\lambda}{e^{\lambda h}-1}=\lambda\sum_{n\geq1}e^{-n\lambda h}$$
and obtain
\begin{multline*}
\lambda\left(\sum_{k\in2\pi\Z^d}\frac{1}{e^{\lambda(|k|^2+\kappa)}-1}-\frac{1}{(2\pi)^d}\int_{\R^d}\frac{dp}{e^{\lambda(|p|^2+\kappa)}-1}\right)\\
=\lambda\sum_{n\geq1}e^{-n\lambda\kappa}\sum_{k\in2\pi\Z^d}\left(e^{-n\lambda|k|^2}-\frac{1}{(2\pi)^d}\int_{(-\pi,\pi)^d}e^{-n\lambda|k-p|^2}\,dp\right).
\end{multline*}
We use Poisson's formula
$$\sum_{k\in2\pi\Z^d}\widehat{f}(k)=\frac1{(2\pi)^{\frac{d}2}}\sum_{\ell\in\Z^d}f(\ell)$$
for $f(x)=e^{-n\lambda|x|^2}-e^{-n\lambda|\cdot|^2}\ast\chi(x)$ where $\chi=(2\pi)^{-d}\1_{(-\pi,\pi)^d}$ and find
\begin{equation*}
\lambda\left(\sum_{k\in2\pi\Z^d}\frac{1}{e^{\lambda(|k|^2+\kappa)}-1}-\frac{1}{(2\pi)^d}\int_{\R^d}\frac{dp}{e^{\lambda(|p|^2+\kappa)}-1}\right)
=\frac{\lambda}{(4\pi)^{\frac{d}2}}\sum_{n\geq1}\frac{e^{-n\lambda\kappa}}{(4\pi n\lambda)^{\frac{d}2}}\sum_{\ell\in\Z^d\setminus\{0\}} e^{-\frac{|\ell|^2}{4n\lambda}}.
\end{equation*}
This is a Riemann sum which converges to
$$\sum_{\ell\in\Z^d\setminus\{0\}}\int_0^\ii \frac{e^{-t\kappa}}{(4\pi t)^{\frac{d}2}} e^{-\frac{|\ell|^2}{4t}}\,dt$$
where the right side is the Fourier transform of $k\mapsto (2\pi)^{-d/2}(|k|^2+\kappa)^{-1}$, see~\cite{LieLos-01}.
\qed


\end{document}